\def\figdir/{/u/levy/MSRI/Book54/main/}
\newcommand\inv{^{-1}}
\newcommand\ff{\mathfrak F}
\newcommand\G{\Gamma}
\newcommand\fL{\mathfrak L}
\newcommand\g{\gamma}
\newcommand\Ra{\mathbb R}\newcommand\Ta{\mathbb T}\newcommand\Pa{\mathbb P}
\newcommand\Ca{\mathbb C}
\newcommand\Za{\mathbb Z}
\newcommand\Qa{\mathbb Q}
\newcommand\Ha{\mathbb H}
\newcommand\sw{\mathcal W}
 \DeclareMathOperator{\Isom}{Isom}
\DeclareMathOperator{\Diff}{Diff} 
\DeclareMathOperator{\Aff}{Aff}
\DeclareMathOperator{\Homeo}{Homeo}
\DeclareMathOperator{\Hom}{Hom} \DeclareMathOperator{\Aut}{Aut}
\DeclareMathOperator{\CAT}{CAT} 
\DeclareMathOperator{\disp}{disp}\DeclareMathOperator{\Out}{Out}
\newtheorem{theorem}{Theorem}[section]
\newtheorem{proposition}[theorem]{Proposition}
\newtheorem{lemma}[theorem]{Lemma}
\newtheorem{defn}[theorem]{Definition}
\newtheorem{example}[theorem]{Example}
\newtheorem{question}[theorem]{Question}
\newtheorem{conjecture}[theorem]{Conjecture}
\newtheorem{problem}[theorem]{Problem}
\newtheorem{remarks}[theorem]{Remarks}
\newtheorem{remark}[theorem]{Remark}
\newtheorem{principle}[theorem]{Prinicple}
\begin{document}

\title[Groups acting on manifolds]{Groups acting on manifolds: around the Zimmer program}
\author{David Fisher}
\address{Department of Mathematics\\
Rawles Hall\\
Indiana University\\
Bloomington, IN 47405}
\email{fisherdm@indiana.edu}

\thanks{Author partially supported by NSF grants DMS-0541917 and 0643546, a fellowship from the Radcliffe Institute
for Advanced Studies and visiting positions at \'{E}cole Polytechnique, Palaiseau and Universit\'{e} Paris-Nord, Villateneuse.}

\dedicatory{To Robert Zimmer on the occasion of his 60th birthday.}

\begin{abstract}
This paper is a survey on the {\em Zimmer program}.  In its broadest
form, this program seeks an understanding of actions of large groups
on compact manifolds.  The goals of this survey are $(1)$
to put in context the original questions and conjectures of Zimmer
and Gromov that motivated the program, $(2)$ to indicate the current
state of the art on as many of these conjectures and questions as
possible and $(3)$ to indicate a wide variety of open problems and
directions of research.
\end{abstract}

\maketitle

\tableofcontents

\section{Prologue}
\label{section:prologue}

Traditionally, the study of dynamical systems is concerned with actions of
$\mathbb R$ or $\mathbb Z$ on manifolds.  I.e. with flows and
diffeomorphisms.  It is natural to consider instead dynamical
systems defined by actions of larger discrete or continuous groups.
For non-discrete groups, the Hilbert-Smith conjecture is relevant, since
the conjecture states that non-discrete, locally compact totally disconnected groups do not
act continuously by homeomorphisms on manifolds. For smooth actions known
results suffice to rule out actions of totally disconnected groups, the Hilbert-Smith conjecture has been proven for H\"{o}lder diffeomorphisms \cite{Maleshich, Repovus}. For infinite discrete
groups, the whole universe is open. One might consider the ``generalized
Zimmer program" to be the study of homomorphisms $\rho:\Gamma{\rightarrow}\Diff(M)$
where $\Gamma$ is a finitely generated group and $M$ is a compact manifold.
In this survey much emphasis
will be on a program proposed by Zimmer. Here one considers a very special class
of both Lie groups and
discrete groups, namely semi-simple Lie groups of higher real rank and
their lattices.

Zimmer's program is motivated by several of Zimmer's own
theorems and observations, many of which we will discuss below. But
in broadest strokes, the motivation is simpler.  Given a group whose
linear and unitary representations are very rigid or constrained,
might it also be true that the group's representations into
$\Diff^{\infty}(M)$ are also very rigid or constrained at least when
$M$ is a compact manifold? For the higher rank lattices considered by
Zimmer, Margulis' superrigidity theorems classified finite dimensional
representations and the groups enjoy property $(T)$ of Kazhdan, which makes
unitary representations quite rigid.

This motivation also stems from an analogy
between semi-simple Lie groups and diffeomorphism groups.  When
$M$ is a compact manifold, not only is $\Diff^{\infty}(M)$ an
infinite dimensional Lie group, but its connected component is
simple. Simplicity of the connected component of
$\Diff^{\infty}(M)$ was proven by Thurston using results of
Epstein and Herman \cite{Thurston-BAMS,Epstein-simple,Herman-torus}. Herman had used Epstein's
work to see that the connected component of
$\Diff^{\infty}(\Ta^n)$ is simple and Thurston's proof of the
general case uses this. See also further work on the topic by
Banyaga and Mather \cite{Banyaga-structure,Mather-BAMS,Mather-commutatorI,Mather-commutatorII,Mather-ICM}, as well as Banyaga's
book \cite{Banyaga-book}.

A major motivation for Zimmer's program came from his cocycle
superrigidity theorem, discussed in detail below in Section \ref{section:csr}.
One can think of a homomorphism $\rho:\Gamma{\rightarrow}\Diff(M)$ as
defining a {\em virtual homomorphism} from $\Gamma$ to $GL(\dim(M), \Ra)$.
The notion of {\em virtual homomorphisms}, now more commonly referred to
in our context as cocycles over group actions, was introduced by Mackey \cite{Mackey-virtual}.

In all of Zimmer's early papers, it was
always assumed that we had a homomorphism $\rho:\Gamma{\rightarrow}\Diff^{\infty}(M, \omega)$ where $\omega$ is
a volume form on $M$. A major motivation for this is that the cocycle superrigidity theorem referred to in the
 last paragraph only applies to cocycles over measure preserving actions. That this hypothesis is necessary was
confirmed by examples of Stuck, who showed that no rigidity could be
hoped for unless one had an invariant volume form or assumed that
$M$ was very low dimensional \cite{Stuck}.  Recent work of Nevo and Zimmer does
explore the non-volume preserving case and will be discussed below,
along with Stuck's examples and some others due to Weinberger in
Section \ref{section:novolume}.

Let $G$ be a semisimple real Lie group, all of whose simple factors
are of real rank at least two.  Let $\Gamma$ in $G$ be a lattice.
The simplest question asked by Zimmer was: can one classify smooth
volume preserving actions of $\Gamma$ on compact manifolds? At the
time of this writing, the answer to this question is still unclear.
There certainly are a wider collection of actions than Zimmer may
have initially suspected and it is also clear that the moduli space of
such actions is not discrete, see \cite{Benveniste-thesis,Fisher-deformations,KatokLewis-global} or section
\ref{section:exoticactions} below.  But there are still few
enough examples known that a classification remains plausible.
And if one assumes some additional conditions on the actions,
then plausible conjectures and striking results
abound.

We now discuss four paradigmatic conjectures. To make this introduction
accessible, these conjectures are all special cases of more general
conjectures stated later in the text.  In particular, we state all
the conjectures for lattices in $SL(n,\mathbb R)$ when $n>2$.  The reader not
familiar with higher rank lattices can consider the examples
of finite index subgroups of $SL(n,\Za)$, with $n>2$, rather
than for general lattices in $SL(n,\Ra)$. We warn the reader that the algebraic structure of $SL(n, \Za)$ and it's finite
 index subgroups makes many results easier for these groups than for other lattices in $SL(n, \Ra)$.
 The conjectures we state concern, respectively,
$(1)$ classification of low dimensional actions, $(2)$ classification of geometric actions
$(3)$ classification of uniformly hyperbolic actions and $(4)$ the topology of manifolds
admitting volume preserving actions.

A motivating conjecture for much recent research is the following:

\begin{conjecture}[Zimmer's conjecture]
\label{conjecture:zimmersconjecturesl}
For any $n>2$, any homomorphism $\rho:SL(n,\mathbb Z){\rightarrow}\Diff(M)$
has finite image if $\dim(M)<n-1$.  The same for any lattice $\G$ in $SL(n\Ra)$.
\end{conjecture}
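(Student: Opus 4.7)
The plan is to combine Margulis' normal subgroup theorem with Zimmer's cocycle superrigidity and known distortion results in $\Diff(M)$. The first step is to observe that $\ker\rho$ is a normal subgroup of $SL(n,\mathbb Z)$, so by Margulis' normal subgroup theorem it is either central or of finite index. If it has finite index we are done; otherwise, after passing to a finite-index subgroup and modding out by the finite center, we may assume $\rho$ is essentially faithful and must derive a contradiction.

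For $\dim M = 1$ --- the only case possible when $n=3$ --- I would invoke the style of argument due to Witte: $SL(n,\mathbb Z)$ is boundedly generated by root subgroups isomorphic to $\mathbb Z^{n-1}$, and classical one-dimensional dynamics combined with the Steinberg relations rules out a faithful action on $S^1$ or $[0,1]$. For $\dim M \geq 2$, I would apply Zimmer's cocycle superrigidity to the derivative cocycle $D\rho:\Gamma\times M\to GL(\dim M, \mathbb R)$. This step requires an invariant probability measure $\mu$ on $M$, which one hopes to extract either by amenability arguments on auxiliary bundles over $M$ or by first locating $\rho(\Gamma)$-minimal sets. Once such a $\mu$ is available, cocycle superrigidity produces a companion homomorphism $\pi:SL(n,\mathbb R)\to GL(\dim M,\mathbb R)$; but every nontrivial real representation of $SL(n,\mathbb R)$ for $n\geq 3$ has dimension at least $n>\dim M$, so $\pi$ must be trivial. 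Consequently $D\rho$ is measurably cohomologous to a compact-valued cocycle, producing a measurable $\rho$-invariant Riemannian structure and constraining $\rho$ into a finite-dimensional Lie group of isometries, to which the linear form of Margulis superrigidity then applies to force the image to be finite.

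The central obstacle --- and the reason the conjecture remains open at the time of this writing --- is twofold. First, in the non-volume-preserving setting there is no canonical $\rho$-invariant probability measure, and property $(T)$ alone does not manufacture one; Stuck's examples indicate that this cannot be circumvented by abstract nonsense, and in general motivates adding the hypothesis of volume preservation. Second, even once such a measure is in hand, the invariant geometric structure produced by cocycle superrigidity is only measurable and possibly degenerate, so upgrading it to a continuous or smooth structure on which the action is genuinely isometric requires regularity arguments that are not known in the required generality. Dimension two poses an additional distinct difficulty, since $\Diff(\Sigma)$ for a surface $\Sigma$ already contains many distortion elements whose joint dynamics one must control --- a theme pursued by Franks--Handel and Polterovich but not yet settled in the full generality demanded by the conjecture.
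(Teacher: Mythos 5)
The statement you are trying to prove is stated in the paper as a conjecture, and the paper explicitly says the answer ``is still unclear''; there is no proof in the paper to compare against, and your own write-up concedes at two points that the argument does not close. So this is not a proof but a (largely accurate) map of the intended strategy, and the gaps you flag are precisely the ones that keep the conjecture open. To name them concretely: (1) Conjecture \ref{conjecture:zimmersconjecturesl} carries no volume-preservation hypothesis --- the sharpness example is the projective action on $\Pa^{n-1}$, which preserves no volume --- so the cocycle superrigidity machinery has no measure to start from, and ``hoping to extract'' an invariant probability measure from amenability on auxiliary bundles or from minimal sets is not a step anyone knows how to carry out; Zimmer's own version (Conjecture \ref{conjecture:lowdimvolume}) assumes an invariant volume form for exactly this reason. (2) Even granting an invariant measure, cocycle superrigidity only yields a \emph{measurable} invariant Riemannian metric (Proposition \ref{proposition:measmetric}), and the passage from a measurable to a smooth invariant metric is itself an open problem (Conjecture \ref{conjecture:metricissmooth}); without that regularity you cannot land $\rho(\G)$ in a finite-dimensional isometry group and invoke Margulis superrigidity. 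Known unconditional results require extra hypotheses (an invariant rigid geometric structure as in Theorem \ref{theorem:rgslowdim}, a periodic orbit as in Theorem \ref{theorem:Zfixingpointonsurface}, or distality).

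Two further points in your sketch would fail even as stated. Your dimension-one argument via bounded generation by root subgroups applies to $SL(n,\Za)$ and its finite-index subgroups, but the conjecture also asserts the conclusion for \emph{every} lattice in $SL(n,\Ra)$, including cocompact ones, which contain no unipotent elements; the paper records that even for continuous circle actions the cocompact (quasi-split $SL(3,\Ra)$ and $SL(3,\Ca)$) cases remain open. And your reduction via Margulis' normal subgroup theorem, while correct, only converts the problem into showing that a single infinite-order element acts trivially --- it does not by itself produce the contradiction. In short: the architecture you describe is the accepted one, but each of the two load-bearing steps (producing an invariant measure, and upgrading the measurable metric) is an open problem, so no proof results.
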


The dimension bound in the conjecture is clearly sharp, as $SL(n, \mathbb R)$
and all of its subgroups act on $\mathbb P^{n-1}$.  The conjecture is a special
case of a conjecture of Zimmer which concerns actions of higher rank lattices
on low dimensional manifolds which we state as Conjecture \ref{conjecutre:lowdimgen} below.
Recently much attention has focused on these conjectures concerning low dimensional actions.

In our second, geometric, setting (a special case of) a major motivating conjecture is the following:

\begin{conjecture}[Affine actions]
\label{conjecture:rgsslactions}
Let $\Gamma<SL(n,\mathbb R)$ be a lattice.  Then there
is a classification of actions of $\Gamma$ on compact manifolds which
preserve both a volume form and an affine connection.  All such actions
are algebraically defined in a sense to be made precise below.  (See Definition \ref{definition:affine}
below.)
\end{conjecture}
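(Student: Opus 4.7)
The plan is to combine Zimmer's cocycle superrigidity theorem with the rigidity forced by preservation of an affine connection. The affine connection $\nabla$ together with the volume form $\omega$ constitutes a rigid geometric structure in the sense of Gromov, so its automorphism group is finite-dimensional and (modulo regularity) local Killing fields extend globally on appropriate covers. The overall strategy is to use superrigidity to build a $\G$-invariant algebraic structure on $M$ measurably, then use the rigid geometric structure to promote this to a smooth (indeed real-analytic, after passing to a suitable cover) structure, and finally to identify the resulting object with a double-coset space.

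The first step is to apply Zimmer's cocycle superrigidity theorem (as recalled in Section \ref{section:csr}) to the derivative cocycle $\alpha:\G\times M\to SL(n,\Ra)$ associated to $\rho$. Volume preservation places the cocycle in $SL(\dim M,\Ra)$, so superrigidity produces, modulo a compact-valued cocycle, a cohomology between $\alpha$ and a cocycle of the form $\sigma\circ\pi$ for a rational representation $\sigma$ of the ambient semisimple Lie group. Concretely this yields, almost everywhere on $M$, a measurable $\G$-equivariant reduction of the frame bundle to a subbundle compatible with the weight decomposition of $\sigma$.

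The second step is to use the presence of $\nabla$ to upgrade this measurable equivariant structure to a smooth one and then to extract an algebraic model. Here I would invoke Gromov's open-dense orbit theorem for rigid geometric structures together with the Borel density theorem applied on an ergodic component, to argue that on a full measure set the infinitesimal stabilizers and the local geometry of $(M,\nabla,\omega)$ are algebraic, and that the measurable framing produced in Step~1 is in fact parallel with respect to $\nabla$. Arguments in the spirit of Feres--Labourie and Benveniste--Fisher would then identify $M$, up to finite covers, with a double-coset space $J\backslash H/\Lambda$ on which the action of $\G$ is induced from a homomorphism $\G\to H$ commuting with a $J$-action; this is the sense in which $\rho$ is ``algebraically defined'' in Definition \ref{definition:affine}.

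The hard part, and the principal reason the conjecture remains open, is precisely the measurable-to-smooth regularity bridge. Cocycle superrigidity is an essentially measure-theoretic statement, while matching the output to the affine structure requires at least continuity of the equivariant framing on an open dense set; in the absence of uniform hyperbolicity (as in the Anosov case, where Katok--Spatzier techniques supply this regularity) there is no general mechanism known to force it. A secondary obstacle is the case when the superrigidity output $\sigma$ is trivial or has a large kernel: one must then rule out exotic measurable $\G$-invariant framings and exotic topologies of $M$ by independent input, typically via the Margulis normal subgroup theorem together with constraints on $\pi_1(M)$ coming from the induced action on the universal cover.
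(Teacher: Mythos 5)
You should note first that the statement you were asked to prove is a \emph{conjecture}: the paper states it as an open problem (a special case of Conjecture \ref{conjecture:gromovzimmer}) and offers no proof, so there is nothing to compare your argument against except the surrounding discussion. Your outline does correctly reproduce the strategy that the survey itself suggests ought to work: cocycle superrigidity applied to the derivative cocycle (Section \ref{section:csr}), followed by an attempt to upgrade the measurable straightening section using the rigidity of the affine connection, in the spirit of Gromov, Feres--Labourie and Zeghib (Sections \ref{section:rgs} and \ref{subsection:affine}). The cases where this program has actually been carried out are exactly the partial results recorded in the paper: Theorem \ref{theorem:rgslowdim} (where $\dim(M)<d(G)$ and the conclusion is isometricity), Theorem \ref{theorem:affine} (dimensions up to $d(G)+1$), and the uniformly hyperbolic results of Section \ref{section:hyperbolic}.

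The genuine gap is the one you yourself name in your final paragraph, and it is fatal to reading your text as a proof. The passage from the measurable conclusion of cocycle superrigidity to a framing that is smooth, or even continuous on an open dense set, is precisely the open problem; compare Conjecture \ref{conjecture:metricissmooth} and the discussion of regularity in subsection \ref{subsection:csrfirstapps}. Your second step asserts that Gromov's open-dense theorem plus Borel density would show the measurable framing is parallel with respect to $\nabla$, but no such implication is known: the exotic examples of Section \ref{section:exoticactions} show that for volume-preserving actions of these same lattices the straightening section genuinely fails to be smooth on all of $M$, and the only reason those examples do not refute the conjecture is the result of \cite{BenvenisteFisher} that they preserve no rigid geometric structure. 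So a correct argument must use the affine connection in an essential and currently unavailable way, not merely as a finiteness or analyticity device. Two smaller points: Gromov's Theorem \ref{theorem:rigid} as stated requires a connected group action, analyticity and ergodicity, so for a lattice one must induce, and the paper explicitly warns after Theorem \ref{theorem:rigid} that induction introduces spurious representations of $\pi_1$; and once $\dim(M)\geq d_2(G)$ the affine examples on nilmanifolds and on spaces $G/\Lambda$ mean the conclusion cannot be reached by forcing curvature and torsion to vanish as in the proofs of Theorem \ref{theorem:affine}. In short, your proposal is an accurate roadmap of why the conjecture is believed and of where it is stuck, not a proof of it.
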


The easiest example of an algebraically defined action is the action of
$SL(n,\Za)$ (or any of it's subgroups) on $\Ta^n$.  We formulate our
definition of algebraically defined action to include all trivial actions,
all isometric actions, and all (skew) products of other actions with these.
The conjecture is a special case of a conjecture stated
below as Conjecture \ref{conjecture:gromovzimmer}.  In addition to
considering more general acting groups, we will consider more
general invariant geometric structures, not just affine connections.
A remark worth making is that, for the geometric structures we
consider, the automorphism group is always a finite dimensional Lie
group.  The question of when the full automorphism group of a
geometric structure is large is well studied from other points of
view, see particularly \cite{Kobayashi-book}.  However, this
question is generally most approachable when the large subgroup is
connected and much less is known about discrete subgroups. In
particular, geometric approaches to this problem tend to use
information about the connected component of the automorphism group
and give much less information about the group of components
particularly if the connected component is trivial.

One is also interested in the possibility of classifying actions under
strong dynamical hypotheses.  The following conjecture is motivated by
work of Feres-Labourie and Goetze-Spatzier \cite{FeresLabourie, GoetzeSpatzier-Duke, GoetzeSpatzier-Annals} and is similar to
conjectures stated in \cite{Gorodnik-problemlist, Hurder-survey}:

\begin{conjecture}
\label{conjecture:hyperbolicclassificationspec}
Let $\Gamma<SL(n,\mathbb R)$ be a lattice.  Then there
is a classification of actions of $\Gamma$ on a compact manifold $M$ which
preserve both a volume form and where one element $\gamma{\in}\G$ acts as
an Anosov diffeomorphism.  All such actions
are algebraically defined in a sense to be made precise below. (Again see Definition \ref{definition:affine}.)
\end{conjecture}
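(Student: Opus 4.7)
My plan is to combine Zimmer's cocycle superrigidity theorem with the hyperbolic dynamics of the distinguished Anosov element in order to linearize the derivative cocycle, and then to bootstrap to a smooth conjugacy with an algebraic model. Let $\g_0\in\G$ be the element acting as an Anosov diffeomorphism. The first step is to apply cocycle superrigidity to the derivative cocycle $\alpha:\G\times M\to GL(\dim M,\Ra)$; this is permitted because the action preserves a volume form. The output is a measurable framing $\phi:M\to GL(\dim M,\Ra)$ and a finite-dimensional representation $\pi:\G\to GL(\dim M,\Ra)$ (modulo a compact correction) satisfying $\phi(\g x)\alpha(\g,x)\phi(x)^{-1}=\pi(\g)$ almost everywhere.

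The second step is to match this measurable output with the continuous Anosov splitting $TM=E^s\oplus E^u$ supplied by $\g_0$. The Lyapunov exponents of $\alpha(\g_0,\cdot)$ with respect to the volume must coincide with the eigenvalue data of $\pi(\g_0)$, and the nonvanishing of these exponents forces $\pi(\g_0)$ to be hyperbolic. I would then invoke the nonstationary normal form technology developed by Katok--Spatzier and Feres--Labourie to upgrade $\phi$ from measurable to H\"older, and then from H\"older to $C^\infty$, along each stable and unstable leaf of $\g_0$. The local product structure of Anosov dynamics then promotes this leafwise regularity to the entire manifold, producing a smooth $\G$-invariant connection whose Lyapunov subbundles are flat. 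At this point the action falls under the hypothesis of Conjecture \ref{conjecture:rgsslactions}, which I am allowed to use, delivering the algebraic structure on the level of the derivative cocycle and the transverse geometry.

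The third and hardest step is to identify $M$ itself with a homogeneous space and the action with an algebraic one. Having a single Anosov diffeomorphism together with a smooth $\G$-invariant connection with hyperbolic holonomy should force $M$ to be an infranilmanifold, in the spirit of Franks--Manning for Anosov diffeomorphisms with nilpotent fundamental group. Concretely, I would try to construct a $\G$-equivariant map $M\to\Ta^n$ (or a nilmanifold) by integrating the representation $\pi$ along the stable and unstable foliations of $\g_0$, gluing via their transverse product structure, and then using the higher-rank elements of $\G$ together with Margulis arithmeticity to pin down the target as the algebraic model.

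The main obstacle is unquestionably this last step. Even without a large group action, the conjecture that every Anosov diffeomorphism of a compact manifold lives on an infranilmanifold is wide open. The presence of the lattice $\G$ is expected to obstruct exotic topology, and known exotic examples coming from the blow-up constructions of \cite{KatokLewis-global,Benveniste-thesis,Fisher-deformations} fail to produce Anosov elements; converting these heuristics into a proof, however, requires transferring information from the cocycle level to the topological level in a way that no currently available technique achieves in full generality. A secondary but nontrivial obstacle is the compact correction in cocycle superrigidity, which could a priori destroy the hyperbolicity of $\pi(\g_0)$ and must be shown to vanish once an Anosov element is present.
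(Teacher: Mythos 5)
The statement you are trying to prove is not a theorem of the paper: it is Conjecture \ref{conjecture:hyperbolicclassificationspec}, stated as an open problem, and the paper says explicitly that ``even assuming Franks' conjecture, Conjecture \ref{conjecture:hyperbolicclassificationspec} is open.'' There is therefore no proof in the paper to compare against, and your proposal cannot be accepted as a proof; it is a research program whose two hardest steps are precisely the ones that remain unresolved in the literature. To your credit, the outline matches the strategy the field actually pursues (cocycle superrigidity, then regularity of the straightening section, then topological identification of $M$), and your first step is sound: for a volume preserving action the Lyapunov exponents of the derivative cocycle equal the absolute values of the eigenvalues of $\pi(\g)$ (Proposition \ref{proposition:furstenberg}), the compact correction cannot shift exponents, and an Anosov element has exponents bounded away from zero, so $\pi(\g_0)$ is indeed hyperbolic --- your ``secondary obstacle'' is not actually an obstacle.

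The genuine gaps are in steps two and three. The regularity upgrade you invoke (``nonstationary normal form technology'' \`a la Katok--Spatzier and Feres--Labourie) is only known to work under much stronger hypotheses than a single Anosov element: Theorem \ref{theorem:goetzespatzier} needs a Cartan action, i.e.\ many Anosov elements with pairwise transverse one-dimensional strongest stable foliations spanning $TM$, and Theorem \ref{theorem:fereslabourieactions} needs hypotheses on the induced $G$ action that cannot be verified from your data. With one Anosov element there is no known mechanism for making the measurable framing H\"older, let alone smooth; this is exactly the ``regularity of the straightening section'' problem the paper identifies as the central difficulty, and the exotic examples of Section \ref{section:exoticactions} show the section genuinely fails to be smooth everywhere in closely related situations. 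Moreover, at the end of your second step you propose to ``invoke Conjecture \ref{conjecture:rgsslactions}, which I am allowed to use'' --- you are not; that is itself an open conjecture of the same paper, so this step is circular. Your third step rests on Franks' conjecture, which is open, and even granting it one obtains only a topological conjugacy of the single element $\g_0$ to an affine model, which the paper points out does not yield the smooth algebraic classification of the whole $\G$ action (the best known result in this direction, Theorem \ref{theorem:weaklyhyperbolictorus}, produces only a continuous semiconjugacy, and upgrading it to a smooth conjugacy is stated as an open problem, Conjecture \ref{conjecture:torusweakhyperbolic}).
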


In the setting of this particular conjecture, a proof of an older conjecture of Franks concerning Anosov
diffeomorphisms would imply that any manifold $M$ as in the conjecture was homeomorphic
to an infranilmanifold on which $\gamma$ is conjugate by a homeomorphism to a standard
affine Anosov map.  Even assuming Franks' conjecture, Conjecture \ref{conjecture:hyperbolicclassificationspec}
is open.  One can make a more general conjecture by only assuming that $\gamma$ has some uniformly partially hyperbolic
behavior.  Various versions of this are discussed in \S \ref{section:hyperbolic}, see particularly Conjecture \ref{conjecture:hyperbolicclassification}.

We end this introduction by stating a topological conjecture about {\em all} manifold admitting
smooth volume preserving actions of a simple higher rank algebraic group.  Very special cases of
this conjecture are known and the conjecture is plausible in light of existing examples.  More
precise variants and a version for lattice actions will be stated below in \S \ref{section:arithmeticquotients}.

\begin{conjecture}
\label{conjecture:fundamentalgroup}
Let $G$ be a simple Lie group of real rank at least two.  Assume $G$ has a faithful action preserving
volume on a compact manifold $M$.  Then $\pi_1(M)$ has a finite index subgroup $\Lambda$ such
that $\Lambda$ surjects onto an arithmetic lattice in a Lie group $H$ where $H$ locally contains
$G$.
\end{conjecture}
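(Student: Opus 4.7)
The plan is to exploit Zimmer's cocycle superrigidity theorem together with the commuting actions of $G$ and $\Gamma := \pi_1(M)$ on the universal cover $\tilde M$. After replacing $G$ by its universal cover if needed, the $G$-action on $M$ lifts to a smooth action on $\tilde M$ that commutes with the deck transformation action of $\Gamma$. Faithfulness of the $G$-action on $M$ should imply, at least on a suitable ergodic component of the invariant volume, that the derivative cocycle $\alpha:G \times M \to GL(n,\Ra)$ with $n = \dim M$ has noncompact algebraic hull, so that cocycle superrigidity has nontrivial content.

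First, I would apply cocycle superrigidity to $\alpha$. This produces a homomorphism $\pi:G \to GL(n,\Ra)$ together with a measurable trivialization $\phi:M \to GL(n,\Ra)$ such that $\alpha(g,x)$ is conjugate via $\phi$ to $\pi(g)$ modulo a compact-valued error. The candidate Lie group $H$ in the statement would be built from the Zariski closure of $\pi(G)$ inside $GL(n,\Ra)$, enlarged to accommodate the measurable centralizer through which deck transformations are to act. The crucial construction is then to use the commutativity of $G$ and $\Gamma$ on $\tilde M$ to translate $\phi$ into a measurable cocycle on $\Gamma$ with values in this centralizer, and to upgrade this data to an honest homomorphism $\rho:\Gamma \to H$.

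The principal obstacle, and where I expect new techniques to be needed, is precisely this passage from measurable data to a group homomorphism. The trivialization $\phi$ is only measurable, and standard arguments do not immediately give the regularity of the induced map on $\Gamma$ that one needs to recover discreteness of the image. Relevant tools to attempt include the Nevo--Zimmer structure theory for stationary measures, exponential mixing arguments in the spirit of Fisher--Margulis to improve regularity of $\phi$ along stable and unstable foliations of $\Ra$-regular elements of $G$, and invariance principles of Avila--Viana type. Once $\rho:\Gamma \to H$ is produced, its image should be discrete because $\Gamma$ acts properly discontinuously on $\tilde M$, and of finite covolume because $M$ is compact and the $G$-invariant volume pushes forward appropriately.

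With a discrete, finite covolume subgroup $\rho(\Gamma)<H$ in a Lie group $H$ that locally contains the higher rank simple group $G$, Margulis' arithmeticity theorem applies to $\rho(\Gamma)$. Passing to a finite index subgroup $\Lambda<\Gamma$ to control the kernel of $\rho$ and the component group of $H$, the image $\rho(\Lambda)$ is then the desired arithmetic lattice, and $\Lambda$ surjects onto it by construction. The geometric and algebraic steps at the end are comparatively routine once $\rho$ is in hand; the entire difficulty is concentrated in producing $\rho$ with enough regularity, which is the reason the conjecture has resisted proof despite considerable progress on the closely related Conjectures \ref{conjecture:zimmersconjecturesl} through \ref{conjecture:hyperbolicclassificationspec}.
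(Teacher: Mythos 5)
The statement you are addressing is stated in the paper as Conjecture \ref{conjecture:fundamentalgroup} and is open; the paper offers no proof, only the remark that very special cases are known. So there is no proof of record to compare against, and your own text concedes that the decisive step (producing the homomorphism $\rho$ from measurable data) is missing. What you have written is a strategy outline, and it is essentially the strategy behind the known partial results described in Section \ref{section:arithmeticquotients}, in particular Theorem \ref{theorem:lubotzkyzimmer}: use cocycle superrigidity and the commuting $\pi_1(M)$-action on $\tilde M$ to extract an arithmetic group inside a linear image of the fundamental group. But those results require the \emph{engaging} hypothesis (no loss of ergodicity on passing to finite covers), and a general faithful volume-preserving action need not be engaging; moreover the blow-up examples of Section \ref{section:exoticactions} (see also \cite{FisherWhyte-quotient}) show that $\pi_1(M)$ may admit no infinite-image linear representation at all, which is precisely why the conjecture is phrased in terms of a finite index subgroup and a quotient. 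Your outline does not address how to recover either hypothesis, and this is where the actual open content of the conjecture lies.

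Two specific steps would fail as written. First, the claim that $\rho(\Gamma)$ is discrete because $\Gamma$ acts properly discontinuously on $\tilde M$ is a non sequitur: proper discontinuity of the deck action has no bearing on discreteness of the image of a representation manufactured from a measurable trivialization. This is exactly why Theorem \ref{theorem:lubotzkyzimmer} concludes only that $\sigma(\pi_1(M))$ \emph{contains} an arithmetic group $H_{\Za}$ and is contained in $H_{\Qa}$, not that it is itself a lattice. Second, and consequently, you cannot invoke Margulis' arithmeticity theorem for $\rho(\Gamma)$, since you have not produced a discrete subgroup of finite covolume; the conjecture deliberately asks only that $\Lambda$ \emph{surject onto} an arithmetic lattice. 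In the known partial results the arithmetic group arises directly from the superrigidity machinery (commensurability with the integer points of the Zariski closure of the image), not from an appeal to arithmeticity of a lattice one has independently constructed.
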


The conjecture says, more or less, that admitting a $G$ action forces the fundamental group
of $M$ to be large.  Passage to a finite index subgroup and a quotient is necessary, see
subsection \ref{section:exoticactions} and \cite{FisherWhyte-quotient} for more
discussion.  The conjecture might be considered the analogue for group actions of Margulis'
arithmeticity theorem.

This survey is organized on the following lines.  We begin in \S \ref{section:examples}
by describing in some detail the kinds of groups we consider and examples of their
actions on compact manifolds.  In \S \ref{section:avantlalettre}
we digress with a prehistory of motivating results from rigidity theory.
Then in \S \ref{section:lowdimensional}, we discuss conjectures and theorems concerning actions
on ``low dimensional" manifolds. In this section, there are a number of related
results and conjectures concerning groups not covered by Zimmer's original conjectures.
Here low dimensional is in two senses $(1)$
compared to the group as in Conjecture \ref{conjecture:zimmersconjecturesl} and
$(2)$ absolutely small, as in dimension being between $1$ and $4$.  This discussion
is simplified by the fact that many theorems conclude with the non-existence of
actions or at least with all actions factoring through finite quotients.  In section \ref{section:csr}
we further describe Zimmer's motivations for his conjectures by discussing the cocycle
superrigidity theorem and some of its consequences for smooth group actions.
In  \S \ref{section:rgs} and \S \ref{section:hyperbolic}, we discuss, respectively, geometric and
dynamical conditions under which a simple classification might be possible. In Section \ref{section:arithmeticquotients},
we discuss another approach to classifying $G$ and $\G$ actions using topology and representations of
 fundamental groups in order to produce algebraically defined quotients of actions.  Then in
\S \ref{section:exoticactions}, we describe the known ``exotic examples" of the acting groups
we consider.  This constructions reveals some necessary complexity of a high dimensional classification.
We then describe known results and examples of actions not preserving volume  in \S \ref{section:novolume} and
some rather surprising group actions on manifolds in \S \ref{section:moreexamples}.  Finally
we end the survey with a collection of questions about the algebraic and geometric structure of
finitely generated subgroups of $\Diff(M)$.

\subsection*{Some remarks on biases and omissions.} Like any
survey of this kind, this work is informed by its authors biases
and experiences.  There is an additional bias that this paper
emphasizes developments that are close to Zimmer's own work and
conjectures.  In particular, the study of rigidity of group actions often focuses
on the low dimensional setting where all group actions are conjectured
to be finite or trivial.  While Zimmer did substantial work in this
setting, he also proved many results and made many conjectures in more
general settings where any potential classification of group actions is necessarily,
due to the existence of examples, more complicated.

Another omission is that almost nothing will be said here about local
rigidity of groups actions, since the author has recently written another
survey on that topic \cite{Fisher-survey}.  While that survey could already
use updating, that update will appear elsewhere.

For other surveys of Zimmer's program and rigidity of large group actions
the reader is referred to \cite{FeresKatok-survey, Labourie-survey, Zimmer-CBMS}.
The forthcoming book by Witte Morris and Zimmer, \cite{Zimmer-CBMS}, is a particularly
useful introduction to ideas and techniques in Zimmer's own work.
Also of interest are $(1)$ a brief survey of rigidity theory by Spatzier with a more
geometric focus \cite{Spatzier-survey} $(2)$ an older survey also by Spatzier, with a somewhat
broader scope, \cite{Spatzier-alexandria} $(3)$  a recent problem list by Margulis on rigidity theory with
a focus on measure rigidity \cite{Margulis-millenium} and $(4)$ a more recent survey by Lindenstrauss focused
on recent developments in measure rigidity \cite{Lindenstrauss-survey}.  Both of the last two
mentioned surveys are particularly oriented towards connections between rigidity and number theory
which are not mentioned at all in this survey.

Finally, while all mistakes and errors in this survey are the sole responsibility
of its author, I would like to thank many people whose comments lead to improvements.
These include Danny Calegari, Renato Feres, Etienne Ghys, Daniel Groves, Steve Hurder, Jean-Francois Lafont, Karin Melnick, Nicolas Monod,
Dave Witte Morris, Leonid Polterovich, Pierre Py, Andr\'{e}s Navas, Yehuda Shalom, Ralf Spatzier, and an
anonymous referee.

\section{A brief digression: some examples of groups and actions}
\label{section:examples}

In this section we briefly describe some of the groups that will
play important roles in the results discussed here.  The reader
already familiar with semi-simple Lie groups and their lattices
may want to skip to the second subsection where we give
descriptions of group actions.  The following convention is in
force throughout this paper.  For definitions of relevant terms the
reader is referred to the following subsections.

\noindent {\bf Convention:} In this article we will have occasion to
refer to three overlapping classes of lattices in Lie groups which are
slightly different.  Let $G$ be a semisimple Lie group and $\Gamma<G$
a lattice.  We call $\Gamma$ a {\em higher rank lattice} if all simple
factors of $G$ have real rank at least $2$. We call $\Gamma$ a {\em lattice
with $(T)$} if all simple factors of $G$ have property $(T)$. Lastly
we call $\Gamma$ an {\em irreducible higher rank lattice} if  $G$ has
real rank at least $2$ and $\Gamma$ is irreducible.

\subsection{Semi-simple groups and their lattices.}
\label{subsection:groups}

By a simple Lie group, we mean a connected Lie group all of whose
normal subgroups are discrete, though we make the additional
convention that $\Ra$ and $S^1$ are not simple. By a semi-simple
Lie group we mean the quotient of a product of simple Lie groups
by some subgroup of the product of their centers.  Note that with
our conventions, the center of a simple Lie group is discrete and
is in fact the maximal normal subgroup. There is an elaborate
structure theory of semi-simple Lie groups and the groups are
completely classified, see \cite{Helgason-book} or \cite{Knapp-book} for details. Here
we merely describe some examples, all of which are matrix groups.
All connected semisimple Lie groups are discrete central
extensions of matrix groups, so the reader will lose very little
by always thinking of matrix groups.
\begin{enumerate} \item The groups $SL(n,\Ra), SL(n,\Ca)$ and
$SL(n,\Ha)$ of $n$ by $n$ matrices of determinant one over the
real numbers, the complex numbers or the quaternions. \item The
group $SP(2n,\Ra)$ of $2n$ by $2n$ matrices of determinant one
which preserve a real symplectic form on $\Ra^{2n}$.  \item The
groups $SO(p,q), SU(p,q)$ and $SP(p,q)$ of matrices which preserve
inner products of signature $(p,q)$ where the inner product is
real linear on $\Ra^{p+q}$, hermitian on $\Ca^{p+q}$ or
quaternionic hermitian on $\Ha^{p+q}$ respectively.
\end{enumerate}

Let $G$ be a semi-simple Lie group which is a subgroup of
$GL(n,\Ra)$.  We say that $G$ has {\em real rank} $k$ if $G$ has a
$k$ dimensional abelian subgroup which is conjugate to a subgroup
of the real diagonal matrices and no $k+1$ dimensional abelian
subgroups with the same property.  The groups in $(1)$ have rank
$n-1$, the groups in $(2)$ have rank $n$ and the groups in $(3)$
have rank $\min(p,q)$.

Since this article focuses primarily on finitely generated groups,
we are more interested in discrete subgroups of Lie groups than in
the Lie groups themselves.  A discrete subgroup $\G$ in a Lie
group $G$ is called a lattice if $G/{\G}$ has finite Haar measure.
The lattice is called {\em cocompact} or {\em uniform} if $G/{\G}$
is compact and {\em non-uniform} or {\em not cocompact}
otherwise.  If $G=G_1{\times}{\cdots}{\times}G_n$ is a product
then we say a lattice $\G<G$ is {\em irreducible} if its
projection to each $G_i$ is dense.  It is more typical
in the literature to insist that projections to all factors
are dense, but this definition is more practical for our
purposes.  More generally we make the
same definition for an {\em almost direct product}, by which we
mean a direct product $G$ modulo some subgroup of the center
$Z(G)$. Lattices in semi-simple Lie groups can always be
constructed by arithmetic methods, see \cite{Borel} and also
\cite{WitteMorris-book} for more discussion.  In fact, one of the most important
results in the theory of semi-simple Lie groups is that if $G$ is
a semi-simple Lie group without compact factors, then all
irreducible lattices in $G$ are arithmetic unless $G$ is locally
isomorphic to $SO(1,n)$ or $SU(1,n)$.  For $G$ of real rank at
least two, this is Margulis' arithmeticity theorem, which he
deduced from his super-rigidity theorems \cite{Margulis-ICM,Raghunathan-russian,Margulis-book}. For
non-uniform lattices, Margulis had an earlier proof which does not
use the superrigidity theorems, see \cite{Margulis-nonuniformone, Margulis-nonuniformtwo}. This earlier
proof depends on the study of dynamics of unipotent elements on
the space $G/{\Gamma}$, and particularly on what is now known as
the ``non-divergence of unipotent flows". Special cases of the
super-rigidity theorems were then proven for $Sp(1,n)$ and
$F_4^{-20}$ by Corlette and Gromov-Schoen, which sufficed to imply
the statement on arithmeticity given above \cite{Corlette-Annals,GromovSchoen}. As we
will be almost exclusively concerned with arithmetic lattices, we
do not give examples of non-arithmetic lattices here, but refer
the reader to \cite{Margulis-book} and \cite{WitteMorris-book} for more discussion.  A
formal definition of arithmeticity, at least when $G$ is algebraic
is:

\begin{defn}
\label{defn:arithmetic} Let $G$ be a semisimple algebraic Lie
group and $\G<G$ a lattice.  Then $\G$ is arithmetic if there
exists a semi-simple algebraic Lie group $H$
such that
\begin{enumerate}
\item there is a homomorphism $\pi:H^0{\rightarrow}G$ with compact
kernel, \item there is a rational structure on $H$ such that the
projection of the integer points of $H$ to $G$ are commensurable
to $\G$, i.e. $\pi(H(\Za)){\cap}\G$ is of finite index in both
$H(\Za)$ and $\G$.
\end{enumerate}
\end{defn}

We now give some examples of arithmetic lattices.  The simplest is
to take the integer points in a simple (or semi-simple) group $G$
which is a matrix group, e.g. $SL(n,\Za)$ or $Sp(n,\Za)$. This
exact construction always yields lattices, but also always yields
non-uniform lattices.  In fact the lattices one can construct in
this way have very special properties because they will contain
many unipotent matrices.  If a lattice is cocompact, it will
necessarily contain no unipotent matrices.  The standard trick for
understanding the structure of lattices in $G$ which become
integral points after passing to a compact extension is called
{\em change of base}.  For much more discussion see
\cite{Margulis-book,WitteMorris-book,Zimmer-book}.  We give one example to illustrate the process.
Let $G=SO(m,n)$ which we view as the set of matrices in
$SL(n+m,\Ra)$ which preserve the inner product
$$\<v,w\>=\biggl(-\sqrt{2}\sum_{i=1}^mv_iw_i\biggr)
+\biggl(\sum_{i=m+1}^{n+m}v_iw_i\biggr)$$
where $v_i$ and $w_i$ are the $i$th components of $v$ and $w$.
This form, and therefore $G$, are defined over the field
$\Qa(\sqrt{2})$ which has a Galois conjugation $\sigma$ defined by
$\sigma(\sqrt{2})=-\sqrt{2}$.  If we looks at the points
$\G=G(\Za[\sqrt{2}])$, we can define an embedding of $\G$ in
$SO(m,n){\times}SO(m+n)$ by taking $\g$ to $(\g,\sigma(\g))$. It
is straightforward to check that this embedding is discrete.  In
fact, this embeds $\G$ in $H=SO(m,n){\times}SO(m+n)$ as integral
points for the rational structure on $H$ where the rational points
are exactly the points $(\textbf{M},\sigma(\textbf{M}))$ where
$\textbf{M}{\in}G(\Qa(\sqrt{2}))$.  This makes $\G$ a lattice in $H$ and it
is easy to see that $\G$ projects to a lattice in $G$, since $G$
is cocompact in $H$.  What is somewhat harder to verify is that
$\G$ is cocompact in $H$, for which we refer the reader to the
list of references above.

Similar constructions are possible with $SU(m,n)$ or $SP(m,n)$ in
place of $SO(m,n)$ and also with more simple factors and fields
with more Galois automorphisms.  There are also a number of other
constructions of arithmetic lattices using division algebras. See
\cite{PlatonovRapinchuk, WitteMorris-book} for a comprehensive treatment.

We end this section by defining a key property of many semisimple
groups and their lattices.  This is property $(T)$ of Kazhdan, and
was introduced by Kazhdan in \cite{Kazhdan-T} in order to prove that
non-uniform lattices in higher rank semi-simple Lie groups are
finitely generated and have finite abelianization. It has played a
fundamental role in many subsequent developments.  We do not give
Kazhdan's original definition, but one which was shown to be
equivalent by work of Delorme and Guichardet \cite{Delorme,Guichardet}.

\begin{defn}
\label{definition:T} A locally compact group $\G$ has property $(T)$ of Kazhdan if
$H^1(\G,\pi)=0$ for every continuous unitary representation $\pi$
of $\G$ on a Hilbert space. This is equivalent to saying that any
continuous isometric action of $\G$ on a Hilbert space has a fixed
point.
\end{defn}

\begin{remarks}
\begin{enumerate} \item Kazhdan's
definition is that the trivial representation is isolated in the Fell topology on the
unitary dual of $\G$.

\item If a continuous group $G$ has property $(T)$ so does any
lattice in $G$.  This result was proved in \cite{Kazhdan-T}.

\item Any semi-simple Lie group has property $(T)$ if and only if
it has no simple factors locally isomorphic to $SO(1,n)$ or
$SU(1,n)$. For a discussion of this fact and attributions, see
\cite{delaHarpeValette-book}.  For groups with all simple factors of real rank at
least three, this is proven in \cite{Kazhdan-T}.

\item No noncompact amenable group, and in particular no noncompact
abelian group, has property $(T)$.  An easy averaging argument
shows that all compact groups have property $(T)$.
\end{enumerate}
\end{remarks}

\noindent Groups with property $(T)$ play an important role in
many areas of mathematics and computer science.

\subsection{Some actions of groups and lattices.}
\label{subsection:actions}

Here we define and give examples of a general class of actions.
A major impetus in Zimmer's work is determining optimal conditions
for actions to lie in this class.  The class we describe is slightly more
general than the class Zimmer termed ``standard actions" in e.g. \cite{Zimmer-ICM}.
Let $H$ be a
Lie group and $L<H$ a closed subgroup.  Then a diffeomorphism $f$
of $H/L$ is called {\em affine} if there is a diffeomorphism
$\tilde f$ of $H$ such that $f([h])=\tilde f(h)$ where $\tilde f =
A{\circ}\tau_h$ with $A$ an automorphism of $H$ with $A(L)=L$ and
$\tau_h$ is left translation by some $h$ in $H$.  Two obvious
classes of affine diffeomorphisms are left translations on any
homogeneous space and linear automorphisms of tori, or more
generally automorphisms of nilmanifolds.  A group action is called
{\em affine} if every element of the group acts by an affine
diffeomorphism. It is easy to check that the full group of affine
diffeomorphisms $\Aff(H/L)$ is a finite dimensional Lie group and
an affine action of a group $D$ is a homomorphism
$\pi:D{\rightarrow}\Aff(H/L)$. The structure of $\Aff(H/L)$ is
surprisingly complicated in general, it is a quotient of a
subgroup of the group $\Aut(H){\ltimes}H$ where $\Aut(H)$ is a the
group of automorphisms of $H$.  For a more detailed discussion of
this relationship, see \cite[Section 6]{FisherMargulis-lrc}.  While it is not
always the case that any affine action of a group $D$ on $H/L$ can
be described by a homomorphism
$\pi:D{\rightarrow}\Aut(H){\ltimes}H$, this is true for two
important special cases:
\begin{enumerate}
\item $D$ is a connected semi-simple Lie group and $L$ is a
cocompact lattice in $H$, \item $D$ is a lattice in a semi-simple
Lie group $G$ where $G$ has no compact factors and no simple
factors locally isomorphic to $SO(1,n)$ or $SU(1,n)$, and $L$ is a
cocompact lattice in $H$.
\end{enumerate}
\noindent These facts are \cite[Theorem 6.4 and 6.5]{FisherMargulis-lrc} where
affine actions as in $(1)$ and $(2)$ above are classified.

The most obvious examples of affine actions of large groups are of
the following forms, which are frequently referred to as {\em
standard actions}:
\begin{enumerate}
\item Actions of groups by automorphisms of nilmanifolds. I.e. let
$N$ be a simply connected nilpotent group, $\Lambda<N$ a lattice
(which is necessarily cocompact) and assume a finitely generated
group $\G$ acts by automorphisms of $N$ preserving $\Lambda$.  The
most obvious examples of this are when $N=\Ra^n$, $\Lambda=\Za^n$
and $\G<SL(n,\Za)$, in which case we have a linear action of $\G$
on $\Ta^n$.

\item Actions by left translations.  I.e. let $H$ be a Lie group
and $\Lambda<H$ a cocompact lattice and $\G<H$ some subgroup. Then
$\G$ acts on $H/\Lambda$ by left translations.  Note that in this
case $\G$ need not be discrete.

\item Actions by isometries.  Here $K$ is a compact group which
acts by isometries on some compact manifold $M$ and $\G<K$ is a
subgroup.  Note that here $\G$ is either discrete or a discrete
extension of a compact group.
\end{enumerate}

We now briefly define a few more general classes of actions,
which we need to formulate most of the conjectures in this paper. We
first fix some notations. Let $A$ and $D$ be topological groups,
and $B<A$ a closed subgroup. Let
$\rho:D{\times}A/B{\rightarrow}A/B$ be a continuous affine action.

\begin{defn}
\label{definition:affine}
\begin{enumerate}
\item
Let $A,B,D$ and $\rho$
be as above. Let $C$ be a compact group of affine diffeomorphisms
of $A/B$ that commute with the $D$ action. We call the action of
$D$ on $C{\backslash}A/B$ a {\em generalized affine action}.
\item
Let $A$, $B$, $D$ and $\rho$ be as in $1$ above.
Let $M$ be a compact Riemannian manifold and
$\iota:D{\times}A/B{\rightarrow}\Isom(M)$ a $C^1$ cocycle.  We
call the resulting skew product $D$ action on $A/B{\times}M$ a
{\em quasi-affine action}. If $C$ and $D$ are as in $2$, and we have
a smooth cocycle
$\alpha:D{\times}C{\backslash}A/B{\rightarrow}\Isom(M)$ , then we call the resulting skew product action of $D$ on
$C{\backslash}A/B{\times}M$ a {\em generalized quasi-affine
action}.
\end{enumerate}
\end{defn}

Many of the conjectures stated in this paper will end with
the conclusion that all actions satisfying certain hypotheses
are generalized quasi-affine actions.  It is not entirely
clear that generalized quasi-affine actions of higher rank
groups and lattices are much more general than generalized
affine actions. The following discussion is somewhat technical
and might be skipped on first reading.

One can always take a product of a generalized affine
action with a trivial action on any manifold to obtain a generalized
quasi-affine action.  One can also do variants on the following.  Let
$H$ be a semisimple Lie group and $\Lambda<H$ a cocompact lattice.  Let
$\pi:\Lambda{\rightarrow}K$ be any homomorphism of $\Lambda$ into a compact
Lie group.  Let $\rho$ be a generalized affine action of $G$ on $C\backslash H/\Lambda$
and let $M$ be a compact manifold on which $K$ acts.  Then there
is a generalized quasi-affine action of $G$ on $(C \backslash H \times M)/\Lambda$.

\begin{question}
\label{question:quasiaffine}
Is every generalized quasi-affine action of a higher rank simple
Lie group of the type just described?
\end{question}

The question amounts to asking for an understanding of compact
group valued cocycles over quasi-affine actions of higher rank
simple Lie groups.  We leave it to the interested reader to formulate
the analogous question for lattice actions.  For some work in this
direction, see the paper of Witte Morris and Zimmer \cite{MorrisZimmer-compactcocycles}.


\subsection{Induced actions}
\label{subsection:induction}

We end this section by describing briefly the standard
construction of an {\em induced or suspended action}.  This notion
can be seen as a generalization of the construction of a flow
under a function or as an analogue of the more algebraic notion of
inducing a representation.  Given a group $H$, a (usually closed)
subgroup $L$, and an action $\rho$ of $L$ on a space $X$, we can
form the space $(H{\times}X)/L$ where $L$ acts on $H{\times}X$ by
$h\cdot(l,x)=(lh{\inv},\rho(h)x)$.  This space now has a natural
$H$ action by left multiplication on the first coordinate.  Many
properties of the $L$ action on $X$ can be studied more easily in
terms of properties of the $H$ action on $(H{\times}X)/L$.  This
construction is particularly useful when $L$ is a lattice in $H$.

This notion suggests the following principle:

\begin{principle}
\label{lemma:induction}
Let $\Gamma$ be a cocompact lattice in a Lie group $G$.  To classify $\Gamma$
actions on compact manifolds it suffices to classify $G$ actions on compact manifolds.
\end{principle}

The principle is a bit subtle to implement in practice, since we clearly need a sufficiently
detailed classification of $G$ actions to be able to tell which one's arise
as induction of $\G$ actions.  While it is a bit more technical to state and
probably more difficult to use there is an analogous prinicple for non-cocompact lattices.
Here one needs to classify $G$ actions on manifolds  which are not compact
but where the $G$ action preserves a finite volume.  In fact, one needs only to study such
actions on manifolds that are fiber bundles over $G/\Gamma$ with compact
fibers.

The lemma begs the question as to whether or not one should simply always
study $G$ actions.  While in many settings this is useful, it is not always.
In particular, many known results about $\G$ actions require
hypotheses on the $\G$ action where there is no useful way of rephrasing the
property as a property of the induced action. Or, perhaps more awkwardly,
require assumptions on the induced action which cannot be rephrased in
terms of hypotheses on the original $\G$ action.  We will illustrate the
difficulties in employing Priniciple \ref{lemma:induction}
at several points in this paper.

A case where the implications of the principle are particularly clear is a negative
result concerning actions of $SO(1,n)$.  We will make clear by the proof what we
mean by:

\begin{theorem}
\label{theorem:rankoneactions}
Let $G=SO(1,n)$.  Then one cannot classify actions of $G$ on compact manifolds.
\end{theorem}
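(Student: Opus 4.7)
The plan is to invoke Principle \ref{lemma:induction} and reduce the question to an unclassifiability result for a cocompact lattice. Pick any cocompact lattice $\Gamma < G = SO(1,n)$, available by arithmetic constructions whenever $n \geq 2$. Given a smooth $\Gamma$-action on a compact manifold $N$, induction produces a smooth $G$-action on the compact manifold $(G \times N)/\Gamma$, and the fiber over $[e] \in G/\Gamma$ is $\Gamma$-equivariantly identified with $N$; in particular smoothly conjugate induced $G$-actions restrict to smoothly conjugate $\Gamma$-actions. So any classification of smooth $G$-actions on compact manifolds would descend to a classification of smooth $\Gamma$-actions, and it suffices to show that the latter is hopeless.

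To exhibit wildly many $\Gamma$-actions I would exploit the failure of Margulis superrigidity for $SO(1,n)$. For $n = 2$, $\Gamma = \pi_1(\Sigma_g)$ is a closed surface group of genus $g \geq 2$, and the inclusion $\Sigma_g \hookrightarrow H_g$ into a handlebody induces a surjection $\Gamma \twoheadrightarrow F_g$ onto the free group of rank $g$. Pulling back $F_g$-actions on any compact manifold $M$ shows that the space of $\Gamma$-actions on $M$ contains $\Diff^\infty(M)^g$, an infinite-dimensional Fr\'echet manifold whose generic points parametrize pairwise non-conjugate actions. For $n \geq 3$, I would use the Johnson--Millson bending construction: a cocompact arithmetic lattice $\Gamma < SO(1,n)$ of simplest type contains a totally geodesic codimension-one submanifold, and bending along it produces a nontrivial one-parameter analytic family of representations $\rho_t \colon \Gamma \to SO(1,n+1)$. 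Composing with the natural $SO(1,n+1)$-action on $S^n$ gives a continuous family of smooth $\Gamma$-actions on $S^n$, pairwise non-conjugate since their character varieties differ.

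The main conceptual difficulty is specifying what \emph{cannot classify} means; this is precisely why the theorem statement is left informal and is to be clarified by the proof itself. The cleanest interpretation is descriptive-set-theoretic: the equivalence relation of smooth conjugacy on the Polish space of smooth $G$-actions on a fixed compact manifold fails to be concretely classifiable, in the sense that it is not Borel reducible to equality on a Polish space, nor even to any orbit equivalence relation of a Polish group action classifiable by countable structures. The constructions above embed into it the conjugacy problem for $F_g$-actions (for $n=2$) and the moduli of bending deformations (for $n \geq 3$), both of which defy any Borel classification by standard arguments from the theory of countable Borel equivalence relations. Any formalization of classification that separates non-conjugate actions therefore fails for $G = SO(1,n)$.
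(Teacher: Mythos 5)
Your reduction to a cocompact lattice via Principle \ref{lemma:induction}, and your treatment of $n=2$ via a surjection of a surface group onto a free group, is exactly the paper's argument: the paper picks, for \emph{every} $n$, a lattice $\G<SO(1,n)$ surjecting onto a non-abelian free group (citing Lubotzky), pulls back $F_k$- or $\Za$-actions, induces, and observes that classifying the induced $G$-actions would entail classifying single diffeomorphisms (resp.\ $k$-tuples of diffeomorphisms) up to conjugacy, which is hopeless. The one point you gloss over --- that isomorphic induced actions force isomorphic original actions --- is indeed true but not completely formal; the paper cites \cite{Fisher-deformations} for it, and your one-line justification (restriction to the fiber over $[e]$) is too quick, since a conjugacy of induced actions need not preserve that fiber.

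The genuine gap is your case $n\geq 3$. The Johnson--Millson bending construction produces only a \emph{finite-dimensional} analytic family of representations $\rho_t:\G\to SO(1,n+1)$, hence a finite-dimensional family of actions on $S^n$. This shows the moduli space of actions is not discrete, but it does not show that actions ``cannot be classified'': a finite-dimensional family parametrized by a character variety is concretely classifiable, and in particular is Borel reducible to equality on a Polish space --- so it fails your own descriptive-set-theoretic criterion stated in the last paragraph. (The parenthetical claim that the bending moduli ``defy any Borel classification'' is simply false.) There is also a secondary issue: non-conjugacy of the $\rho_t$ in $SO(1,n+1)$ does not immediately give non-conjugacy of the boundary actions on $S^n$ as abstract smooth actions, since a conjugating diffeomorphism need not lie in $SO(1,n+1)$. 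The fix is to do for all $n$ what you did for $n=2$: Lubotzky's theorem provides, for every $n$, at least one cocompact lattice in $SO(1,n)$ with a non-abelian free quotient (and hence a quotient $\Za$), so the classification problem for $G$-actions always contains the conjugacy problem for arbitrary diffeomorphisms of arbitrary compact manifolds, which is the genuinely unclassifiable object.
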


\begin{proof}
For every $n$ there is at least one lattice $\G<G$ which admits homomorphisms
onto non-abelian free groups, see e.g. \cite{Lubotzky-free}.  And therefore also onto $\Za$.  So we can
take any action of $F_n$ or $\Za$ and induce to a $G$ action.  It is relatively
easy to show that if the induced actions are isomorphic, then so are the actions
they are induced from, see e.g. \cite{Fisher-deformations}.   A classification of $\Za$ actions would amount to a classification
of diffeomorphisms and a classification of $F_n$ actions would involve classifying
all $n$-tuples of diffeomorphisms.  As there is no reasonable classification of diffeomorphisms
there is also no reasonable classification of $n$-tuples of diffeomorphisms.
\end{proof}

We remark that essentially the same theorem holds for actions of  $SU(1,n)$ where homomorphisms to $\Za$ exist for certain lattices \cite{Kazhdan-su}.  Much less is known about free quotients of lattices in $SU(1,n)$, see \cite{Livne-thesis}
for one example. For a surprising local rigidity result for some lattices in $SU(1,n)$,
see \cite[Theorem 1.3]{Fisher-H1}.

\section{Pre-history}
\label{section:avantlalettre}

\subsection{Local and global rigidity of homomorphisms into finite dimensional groups.}\label{subsection:selbergweil}

The earliest work on rigidity theory is a series of works by
Calabi--Vesentini, Selberg, Calabi and Weil, which resulted in the
following:

\begin{theorem}
\label{theorem:rigidityoflattices} Let $G$ be a semi-simple Lie
group and assume that $G$ is not locally isomorphic to
$SL(2,\Ra)$. Let $\G<G$ be an irreducible cocompact lattice, then
the defining embedding of $\G$ in $G$ is locally rigid, i.e. any embedding $\rho$
close to the defining embedding is conjugate to the defining embedding by a
small element of $G$.
\end{theorem}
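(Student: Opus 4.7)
The plan is to reduce local rigidity to a cohomological vanishing statement, and then establish that vanishing by Hodge theory on the compact locally symmetric space.

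Step 1 (Weil's lemma). Let $\rho_0\colon\Gamma\hookrightarrow G$ be the defining embedding and view $\mathfrak g$ as a $\Gamma$-module via $\Ad\circ\rho_0$. The Zariski tangent space to $\Hom(\Gamma,G)$ at $\rho_0$ is the space of $1$-cocycles $Z^1(\Gamma,\mathfrak g)$, and the tangent space to the $G$-conjugation orbit of $\rho_0$ is $B^1(\Gamma,\mathfrak g)$. Using that $\Gamma$ is finitely presented (this is where cocompactness first enters concretely), Weil's implicit-function-theorem argument shows that if $H^1(\Gamma,\mathfrak g)=0$ then $\Hom(\Gamma,G)$ is smooth at $\rho_0$ and its germ coincides with the $G$-orbit of $\rho_0$. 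So it suffices to prove $H^1(\Gamma,\mathfrak g)=0$.

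Step 2 (Passage to harmonic forms). Let $K<G$ be maximal compact, $X=G/K$ the symmetric space, and $M=\Gamma\backslash X$, which is compact. The flat bundle $E=\Gamma\backslash(X\times\mathfrak g)$ satisfies $H^*(\Gamma,\mathfrak g)\cong H^*(M,E)$. Equip $\mathfrak g$ with the admissible inner product obtained by modifying the Killing form with the Cartan involution (positive definite, $K$-invariant). By Hodge theory on the compact manifold $M$ with values in the flat bundle $E$, we may identify $H^1(M,E)$ with the space $\mathcal H^1(M,E)$ of $E$-valued harmonic $1$-forms.

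Step 3 (Bochner--Matsushima vanishing). Apply the Matsushima--Weil Weitzenb\"ock formula for $E$-valued $1$-forms: the Hodge Laplacian equals a nonnegative connection Laplacian plus a curvature operator built from the Riemann tensor of $X$ acting on $T^*X\otimes\mathfrak g\cong\mathfrak p^*\otimes\mathfrak g$ (here $\mathfrak g=\mathfrak k\oplus\mathfrak p$). A representation-theoretic calculation with $\Ad(K)$-weights shows this curvature term is positive semidefinite, and strictly positive on the orthogonal complement of the parallel sections, provided $G$ has no local factor isomorphic to $SL(2,\Ra)$. Hence every harmonic $E$-valued $1$-form is parallel and annihilated by the curvature, forcing $\mathcal H^1(M,E)=0$. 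Irreducibility of $\Gamma$ in $G$ lets one rule out deformations coming from a single factor via projection.

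The main obstacle is Step 3: the sign analysis of the curvature operator case-by-case across the classification of semisimple $G$. This is the substantive content proved by Calabi--Vesentini in the rank-one Hermitian cases, by Calabi for real hyperbolic spaces of dimension $\ge 3$, and by Matsushima in general (with Weil's unifying treatment); the exclusion of $SL(2,\Ra)$ is forced here because for $\mathbb H^2$ the curvature term degenerates and genuine Teichm\"uller deformations appear.
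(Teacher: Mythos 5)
The paper does not actually prove Theorem \ref{theorem:rigidityoflattices}: it is a survey, and the theorem is attributed to Selberg, Calabi--Vesentini, Calabi and Weil. What the paper records is that the original proofs all began by showing that a small perturbation of $\G$ remains discrete and cocompact, and then proceeded either by Selberg's algebraic/dynamical analysis of iterates of matrices or by the Calabi--Vesentini--Weil variational study of deformations of the locally symmetric structure; it separately records Weil's later observation (Theorem \ref{theorem:weil}) that the discreteness step can be bypassed once one knows $H^1(\G,\mathfrak g)=0$, that vanishing being proved explicitly by Matsushima and Murakami. Your proposal follows exactly this last, streamlined route: Weil's implicit-function-theorem criterion, then Hodge theory and a Matsushima--Weil Bochner formula on the compact locally symmetric space. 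That is a legitimate and standard proof, arguably cleaner than the original arguments, at the cost of concentrating all the content in the curvature computation of Step 3.

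There is, however, a genuine gap in Step 3 as written. The hypothesis of the theorem is that $G$ is not locally isomorphic to $SL(2,\Ra)$, not that $G$ has no local factor isomorphic to $SL(2,\Ra)$: an irreducible cocompact lattice in $SL(2,\Ra)\times SL(2,\Ra)$ (e.g.\ one arising from a quaternion algebra over a real quadratic field) is covered by the statement. On the corresponding product of hyperbolic planes the curvature operator you invoke is only positive semidefinite, with a large kernel coming from each factor, so the pointwise Bochner argument does not close; this is precisely where irreducibility must do real work, and your one sentence that irreducibility ``rules out deformations coming from a single factor via projection'' does not supply the argument. The actual treatment (in Weil's and Calabi--Vesentini's papers) shows that a harmonic form on which the curvature term vanishes is parallel along the remaining factors and then uses density of the projections of $\G$ to kill it; without this you have proved the theorem only for $G$ with no $SL(2,\Ra)$ factors, which is strictly weaker than the statement. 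A second, minor point: the identification $H^*(\G,\mathfrak g)\cong H^*(M,E)$ with $M=\G\backslash X$ requires $\G$ torsion free, so one should first pass to a torsion-free finite-index subgroup via Selberg's lemma and note that $H^1(\G,\mathfrak g)$ injects into the $H^1$ of that subgroup since the coefficients are a $\Qa$-vector space.
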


\begin{remarks}
\begin{enumerate}   \item If $G=SL(2,\Ra)$
the theorem is false and there is a large, well studied space of
deformation of $\G$ in $G$, known as the Teichmueller space. \item
There is an analogue of this theorem for lattices that are not
cocompact.  This result was proven later and has a more
complicated history which we omit here. In this case it is also
necessary to exclude $G$ locally isomorphic to $SL(2,\Ca)$.
\end{enumerate}
\end{remarks}

This theorem was originally proven in special cases by Calabi,
Calabi--Vesentini and Selberg.   In particular, Selberg gives a
proof for cocompact lattices in $SL(n,\Ra)$ for $n\geq{3}$ in
\cite{Selberg}, Calabi--Vesentini give a proof when the associated
symmetric space $X=G/K$ is K\"{a}hler in \cite{CalabiVesentini} and Calabi
gives a proof for $G=SO(1,n)$ where $n{\geq}3$ in \cite{Calabi}.
Shortly afterwards, Weil gave a complete proof of Theorem
\ref{theorem:rigidityoflattices} in \cite{Weil-II,Weil-I}.

In all of the original proofs, the first step was to show that any
perturbation of $\G$ was discrete and therefore a cocompact
lattice. This is shown in special cases in \cite{Calabi,CalabiVesentini,Selberg} and
proven in a somewhat broader context than Theorem
\ref{theorem:rigidityoflattices} in \cite{Weil-I}.

The different proofs of cases of Theorem
\ref{theorem:rigidityoflattices} are also interesting in that
there are two fundamentally different sets of techniques employed
and this dichotomy continues to play a role in the history of
rigidity.   Selberg's proof essentially combines algebraic facts
with a study of the dynamics of iterates of matrices.  He makes
systematic use of the existence of singular directions, or Weyl
chamber walls, in maximal diagonalizable subgroups of $SL(n,\Ra)$.
Exploiting these singular directions is essential to much later
work on rigidity, both of lattices in higher rank groups and of
actions of abelian groups.   It seems possible to generalize
Selberg's proof to the case of $G$ an $\Ra$-split semi-simple Lie
group with rank at least $2$. Selberg's proof, which depended on
asymptotics at infinity of iterates of matrices, inspired Mostow's
explicit use of boundaries in his proof of strong rigidity
\cite{Mostow-personal}. Mostow's work in turn provided inspiration for the use
of boundaries in later work of Margulis, Zimmer and others on
rigidity properties of higher rank groups.

The proofs of Calabi, Calabi--Vesentini and Weil involve studying
variations of geometric structures on the associated locally
symmetric space. The techniques are analytic and use a variational
argument to show that all variations of the geometric structure
are trivial.  This work is a precursor to much work in geometric
analysis studying variations of geometric structures and also
informs later work on proving rigidity/vanishing of harmonic forms
and maps. The dichotomy between approaches based on
algebra/dynamics and approaches that are in the spirit of
geometric analysis  continues through much of the history of
rigidity and the history of rigidity of group actions in
particular.

Shortly after completing this work, Weil discovered a new
criterion for local rigidity \cite{Weil-remark}.  In the context of
Theorem \ref{theorem:rigidityoflattices}, this allows one to avoid
the step of showing that a perturbation of $\G$ remains discrete.
In addition, this result opened the way for understanding local
rigidity of more general representations of discrete groups than the
defining representation.

\begin{theorem}
\label{theorem:weil} Let $\G$ be a finitely generated group, $G$ a
Lie group and $\pi:\G{\rightarrow}G$ a homomorphism.  Then $\pi$
is locally rigid if $H^1(\G,\mathfrak{g})=0$.  Here $\mathfrak{g}$
is the Lie algebra of $G$ and $\G$ acts on $\mathfrak{g}$ by
$Ad_G{\circ}\pi$.
\end{theorem}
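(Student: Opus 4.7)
The plan is to realize $\Hom(\G, G)$ as a closed analytic subset of $G^n$ (using a finite generating set), identify the tangent spaces to the representation variety and to the conjugation orbit through $\pi$ with $Z^1(\G, \mathfrak{g})$ and $B^1(\G, \mathfrak{g})$ respectively, and then use $H^1 = 0$ together with an implicit-function/Newton iteration to show the orbit exhausts a neighborhood of $\pi$. Fix generators $\gamma_1, \ldots, \gamma_n$ of $\G$ and embed $\Hom(\G, G) \hookrightarrow G^n$ via $\rho \mapsto (\rho(\gamma_i))_i$. Let $\alpha: G \to G^n$, $\alpha(g) = (g\pi(\gamma_i)g^{-1})_i$, be the orbit map; local rigidity amounts to showing that a neighborhood of $\pi$ in $\Hom(\G, G)$ lies in $\alpha(G)$.

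A short computation gives $d\alpha_e(v) = (v - \Ad(\pi(\gamma_i))v)_i$, so the image of $d\alpha_e$ is exactly the restriction of $B^1(\G, \mathfrak{g})$ to the generators. Conversely, for a smooth family $\rho_t$ of homomorphisms with $\rho_0 = \pi$, differentiating the multiplicativity relation at $t = 0$ shows that $u(\gamma) := \frac{d}{dt}\big|_{t=0}\rho_t(\gamma)\pi(\gamma)^{-1}$ satisfies the cocycle identity $u(\gamma \gamma') = u(\gamma) + \Ad(\pi(\gamma))u(\gamma')$, so that tangent vectors to $\Hom(\G, G)$ at $\pi$ are restrictions of elements of $Z^1(\G, \mathfrak{g})$. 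Thus the hypothesis $H^1(\G, \mathfrak{g}) = 0$ is precisely the equality $Z^1 = B^1$, identifying the formal tangent spaces of the orbit and of the representation variety at $\pi$.

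The main obstacle is promoting this infinitesimal identification to a genuine local statement, since a priori $\Hom(\G, G)$ could be singular at $\pi$. I would pick a linear complement $\mathfrak{n}$ to $\Ker d\alpha_e = \mathfrak{z}_{\mathfrak{g}}(\pi(\G))$ so that $\alpha|_{\exp \mathfrak{n}}$ is a local embedding near the identity, with image a smooth submanifold of $G^n$ through $\pi$ whose tangent space is $B^1 = Z^1$. Given $\rho$ close to $\pi$, write $\rho(\gamma_i) = \exp(u_i)\pi(\gamma_i)$ with small $u_i$, and extend $u$ to all of $\G$ by the analogous prescription; the failure of $\rho$ to be a homomorphism at each word translates into $u$ being a cocycle up to an error quadratic in $\max_i \|u_i\|$. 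Using $H^1 = 0$, solve for $v \in \mathfrak{n}$ with $v - \Ad(\pi(\cdot))v$ equal to $u$ modulo this quadratic error, and conjugate $\rho$ by $\exp(v)$; this reduces the discrepancy to quadratic order. Iterating gives a Newton-type scheme whose limit is a conjugator sending $\rho$ to $\pi$.

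The hardest part will be running this iteration uniformly across all $\gamma \in \G$ rather than only on the generators, because $\G$ need not be finitely presented and one cannot simply cite an implicit function theorem for a cut-out variety. Here finite generation is essential: it provides word-length estimates bounding $\|u(\gamma)\|$ on a word of length $\ell$ in terms of $\max_i \|u_i\|$ and the operator norms of $\Ad \pi(\gamma_i)$, which in turn control the quadratic error terms uniformly on $\G$. With those estimates in place, the standard quadratic-convergence argument for Newton iterations keeps the successive conjugators inside a fixed small neighborhood of the identity and produces a convergent limit, completing the proof.
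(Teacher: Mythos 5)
First, note that the paper does not actually prove this theorem: it quotes it from Weil \cite{Weil-remark} and records only that his argument ``uses only the implicit function theorem and elementary properties of the Lie group exponential map.'' The classical proof runs as follows: near $\pi$ the set $\Hom(\G,G)\subset G^n$ is a real analytic set (Noetherianity of the ring of analytic germs reduces the possibly infinite family of relations to a finite one locally); its Zariski tangent space at $\pi$ injects into $Z^1(\G,\mathfrak{g})$, while the conjugation orbit is a submanifold of $\Hom(\G,G)$ with tangent space $B^1(\G,\mathfrak{g})$; if $H^1=0$ the chain $\dim B^1=\dim Z^1\geq\dim T_\pi\Hom(\G,G)\geq\dim_\pi\Hom(\G,G)\geq\dim(G\cdot\pi)=\dim B^1$ collapses, so the orbit is open in $\Hom(\G,G)$ near $\pi$. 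Your Newton iteration is a genuinely different route: it trades the local theory of analytic sets for quantitative estimates, and it is the finite-dimensional prototype of the iteration schemes used for local rigidity of group actions. Your computation of $d\alpha_e$, the cocycle identity for first-order deformations, and the identification of $\ker d\alpha_e$ with the $\Ad\circ\pi$-invariant vectors are all correct, and the scheme does converge once it is set up purely on the generators.

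The one step that fails as written is your proposed treatment of the relations. You want to control $u(\gamma)=\log\bigl(\rho(\gamma)\pi(\gamma)^{-1}\bigr)$ ``uniformly across all $\gamma\in\G$'' via word-length estimates; but those estimates give $\|u(\gamma)\|\lesssim C^{\ell}\max_i\|u_i\|$ for a word of length $\ell$, with $C=\max_i\|\Ad\pi(\gamma_i)^{\pm 1}\|$ generally larger than $1$. So for long words $u(\gamma)$ is not small (indeed $\rho(\gamma)\pi(\gamma)^{-1}$ need not lie in the domain of $\log$), and no quadratic error bound uniform over $\G$ is available. Nothing of the sort is needed: two homomorphisms agreeing on the generators agree everywhere, so the iteration only has to converge on the tuple $(u_1,\dots,u_n)$. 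The point that does require an argument is that $(u_1,\dots,u_n)$ lies within $O(\epsilon^2)$ of $Z^1|_{\mathrm{gen}}$, where $\epsilon=\max_i\|u_i\|$; appealing to $H^1=0$ alone does not give this, since that hypothesis only says exact cocycles are coboundaries. Here is the fix. Each relator $r$ determines a linear map $L_r:\mathfrak{g}^n\to\mathfrak{g}$ (the value at $r$ of the free cocycle extension of a tuple), and $Z^1|_{\mathrm{gen}}=\bigcap_r\ker L_r$. Since $\mathfrak{g}^n$ is finite dimensional, this intersection equals $\ker L_{r_1}\cap\dots\cap\ker L_{r_m}$ for finitely many relators; for these fixed $r_j$ the identities $\rho(r_j)=\pi(r_j)=e$ together with the Baker--Campbell--Hausdorff formula give $\|L_{r_j}(u)\|=O(\epsilon^2)$ with constants depending only on the $r_j$, whence $\operatorname{dist}\bigl((u_i),Z^1|_{\mathrm{gen}}\bigr)=O(\epsilon^2)$. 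Now $Z^1=B^1$ lets you pick $v\in\mathfrak{n}$ with $\|v\|=O(\epsilon)$ and $\|u_i-(v-\Ad\pi(\gamma_i)v)\|=O(\epsilon^2)$ for all $i$, and conjugating by $\exp(v)$ yields $\epsilon'\leq C'\epsilon^2$. The quadratic decay makes the product of conjugators converge to a small element of $G$ carrying $\rho$ to $\pi$, completing your proof.
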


\noindent  Weil's proof of this result uses only the implicit
function theorem and elementary properties of the Lie group
exponential map.  The same theorem is true if $G$ is an algebraic
group over any local field of characteristic zero.  In \cite{Weil-remark},
Weil remarks that if $\G<G$ is a cocompact lattice and $G$
satisfies the hypothesis of Theorem
\ref{theorem:rigidityoflattices}, then the vanishing of
$H^1(\G,\mathfrak{g})$ can be deduced from the computations in
\cite{Weil-II}.  The vanishing of $H^1(\G,\mathfrak{g})$ is proven
explicitly by Matsushima and Murakami in \cite{Matsushima-Murakami}.

Motivated by Weil's work and other work of Matsushima, conditions
for vanishing of $H^1(\G,\mathfrak{g})$ were then studied by many
authors.  See particularly \cite{Matsushima-Murakami} and \cite{Raghunathan-1-vanishing}.
The results in these papers imply local rigidity of many linear
representations of lattices.

\subsection{Strong and super rigidity}
\label{subsection:nonlocalrigidity}

In a major and surprising development, it turns out that in many
instances, local rigidity is just the tip of the iceberg and that
much stronger rigidity phenomena exist.  We discuss now major developments
from the 60'S and 70's.

The first remarkable result in this direction is Mostow's rigidity
theorem, see \cite{Mostow-rigidity, Mostow-book} and references there.  Given $G$ as in
Theorem \ref{theorem:rigidityoflattices}, and two irreducible
cocompact lattices $\G_1$ and $\G_2$ in $G$, Mostow proves that
any isomorphism from $\G_1$ to $\G_2$ extends to an isomorphism of
$G$ with itself.  Combined with the principal theorem of
\cite{Weil-I} which shows that a perturbation of a lattice is again a
lattice, this gives a remarkable and different proof of Theorem
\ref{theorem:rigidityoflattices}, and Mostow was motivated by the
desire for a ``more geometric understanding" of Theorem
\ref{theorem:rigidityoflattices} \cite{Mostow-book}. Mostow's theorem is
in fact a good deal stronger, and controls not only homomorphisms
$\G{\rightarrow}G$ near the defining homomorphism, but any
homomorphism into any other simple Lie group $G'$ where the image
is lattice. As mentioned above, Mostow's approach was partially
inspired by Selberg's proof of certain cases of Theorem
\ref{theorem:rigidityoflattices}, \cite{Mostow-personal}.  A key step in
Mostow's proof is the construction of a continuous map between the
geometric boundaries of the symmetric spaces associated to $G$ and
$G'$.  Boundary maps continue to play a key role in many
developments in rigidity theory. A new proof of Mostow rigidity,
at least for $G_i$ of real rank one, was provided by Besson,
Courtois and Gallot.  Their approach is quite different and has
had many other applications concerning rigidity in geometry and
dynamics, see e.g. \cite{BCG-GAFA, BCG-ETDS, ConnellFarb}.

The next remarkable result in this direction is Margulis'
superrigidity theorem. Margulis proved this theorem as a tool to
prove arithmeticity of irreducible uniform lattices in groups of
real rank at least $2$.  For irreducible lattices in semi-simple
Lie groups of real rank at least $2$, the superrigidity theorems
classifies all finite dimensional linear representations.
Margulis' theorem holds for irreducible lattices in semi-simple
Lie groups of real rank at least two. Given a lattice $\G<G$ where
$G$ is simply connected, one precise statement of some of Margulis
results is to say that any linear representation $\sigma$ of $\G$
{\em almost extends} to a linear representation of $G$.  By this
we mean that there is a linear representation $\tilde \sigma$ of
$G$ and a bounded image representation $\bar \sigma$ of $\G$ such
that $\sigma(\g)=\tilde \sigma(\g)\bar \sigma(\g)$ for all $\g$ in
$G$. Margulis' theorems also give an essentially complete
description of the representations $\bar \sigma$, up to some
issues concerning finite image representations.  The proof here is
partially inspired by Mostow's work: a key step is the
construction of a measurable ``boundary map".  However the methods
for producing the boundary map in this case are very dynamical.
Margulis' original proof used Oseledec Multiplicative Ergodic
Theorem.  Later proofs were given by both Furstenberg and Margulis
using the theory of group boundaries as developed by Furstenberg
from his study of random walks on groups \cite{Furstenberg-poisson,Furstenberg-ICM}.
Furstenberg's probabilistic version of boundary theory has had a
profound influence on many subsequent developments in rigidity
theory. For more discussion of Margulis' superrigidity theorem,
see \cite{Margulis-ICM,Margulis-Inventsuperrigid,Margulis-book, Zimmer-book}.

Margulis theorem, by classifying all linear representations, and not
just one's with constrained images, leads one to believe that one
might be able to classify all homomorphisms to other interesting
classes of topological groups.  Zimmer's program is just one
aspect of this theory, in other directions, many authors have
studied homomorphisms to isometry groups of non-positively curved
(and more general) metric spaces.  See e.g.
\cite{Burger-ICM, GelanderKarlssonMargulis, Monod-irreducible}.

\subsection{Harmonic map approaches to rigidity}
\label{subsection:harmonic}
In the 90's, first Corlette and then Gromov-Schoen showed that one
could prove major cases of Margulis' superrigidity theorems also
for lattices in $Sp(1,n)$ and $F_4^{-20}$ \cite{Corlette-Annals, GromovSchoen}.
Corlette considered the case of representations over Archimedean fields and
Gromov and Schoen proved results over other local fields.
These proofs used harmonic maps and proceed in three steps.  First showing a harmonic map exists,
second showing it is smooth, and third using certain special Bochner-type formulas to show that
the harmonic mapping must be a local isometry. Combined with earlier work
of Matsushima, Murakami and Raghunathan, this leads to a complete
classification of linear representations for lattices in these groups \cite{Matsushima-Murakami,Raghunathan-1-vanishing}. It is worth noting that the
use of harmonic map techniques in rigidity theory had been pioneered by Siu,
who used them to prove generalizations of Mostow rigidity for certain
classes of K\"{a}hler manifolds \cite{Siu-Mostow}.  There is also much later
work on applying harmonic map techniques to reprove cases of Margulis' superrigidity
theorem by Jost-Yau and Mok-Siu-Yeung among others, see e.g. \cite{Jost-Yau, Mok-Siu-Yeung}.  We remark in passing
that the general problem of existence of harmonic maps for non-compact, finite
volume locally symmetric spaces has not been solved in general.  Results of Saper
and Jost-Zuo allow one to prove superrigidity for fundamental groups of many such
manifolds, but only while assuming arithmeticity \cite{JostZuo, Saper}.  The use of
harmonic maps in rigidity was inspired by the use of variational techniques and
harmonic forms and functions in work on local rigidity and vanishing of cohomology groups.
The original suggestion to use harmonic maps in this setting appears to go back to
Calabi \cite{Siu-YaleTalk}.

\subsection{A remark on cocycle super-rigidity}
\label{subsection:csrremark}
An important impetus for the study of rigidity of groups acting on manifolds
was Zimmer's proof of his cocycle superrigidity theorem.  We discuss this
important result below in section \ref{section:csr} where we also indicate
some of its applications to group actions.  This theorem is a generalization
of Margulis' superrigidity theorem to the class of {\em virtual homomorphisms}
corresponding to cocycles over measure preserving group actions.

\subsection{Margulis' normal subgroup theorem}
\label{subsection:normalsubgroups}
We end this section by mentioning another result of Margulis that has
had tremendous importance in results concerning group actions on manifolds.
This is the normal subgroups theorem, which says that any normal subgroup
in a higher rank lattice is either finite or of finite index see e.g. \cite{Margulis-book, Zimmer-book}
for more on the proof.  The proof precedes by a remarkable strategy.  Let $N$ be a normal subgroup of $\G$,
we show $\G/N$ is finite by showing that it is amenable and has property $(T)$.
This strategy has been applied in other contexts and is a major tool in
the construction of simple groups with good geometric properties see
\cite{BaderShalom, BurgerMozes-twotrees2, BurgerMozes-twotrees1, CapraceRemy}.  The proof that $\G/N$ is amenable
already involves one step that might rightly be called a theorem about rigidity
of group actions.  Margulis shows that if $G$ is a semisimple group of higher rank,
$P$ is a minimal parabolic, and $\G$ is a lattice then any measurable $\G$ space
$X$ which is a measurable quotient of the $\G$ action on $G/P$ is necessarily of the form
$G/Q$ where $Q$ is a parabolic subgroup containing $P$.  The proof of this result,
sometimes called the projective factors theorem, plays a fundamental role in
work of Nevo and Zimmer on non-volume preserving actions.  See \S \ref{section:exoticactions}
below for more discussion. It is also worth noting that Dani has proven a topological
analogue of Margulis' result on quotients \cite{Dani-quotient}.  I.e. he has proven
that continuous quotients of the $\Gamma$ action on $G/P$ are all $\Gamma$ actions
on $G/Q$.

The usual use of Margulis' normal subgroup theorem in studying rigidity of group
actions is usually quite straightforward.  If one wants to prove that a group satisfying
the normal subgroups theorem acts finitely, it suffices to find one infinite order
element that acts trivially.

\section{Low dimensional actions: conjectures and results}
\label{section:lowdimensional}

We begin this section by discussing results in particular, very low dimensions,
namely dimensions $1,2$ and $3$.

Before we begin this discussion, we recall a result of Thurston that is often
used to show that low dimensional actions are trivial \cite{Thurston-Reeb}.

\begin{theorem}[Thurston Stability]
\label{theorem:thurstonstability}
Assume $\G$ is a finitely generated group with finite abelianization. Let $\G$
act on a manifold $M$ by $C^1$ diffeomorphisms, fixing a point $p$ and with trivial derivative at $p$.
Then the $\G$ action is trivial in a neighborhood of $p$.  In particular, if
$M$ is connected, the $\G$ action is trivial.
\end{theorem}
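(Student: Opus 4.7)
The plan is to work in a chart around $p$, rescale the generators near the fixed point, and show the rescaled action converges to an action by translations; this produces a homomorphism from $\Gamma$ to $\Ra^n$ that must be trivial since $\Gamma$ has finite abelianization, contradicting the choice of rescaling.

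First, fix a chart sending $p$ to $0 \in \Ra^n$. Since the derivative representation $\G\to GL(T_p M)$ is trivial, each generator satisfies $Dg(0) = \Id$. Choose a finite generating set $g_1,\ldots,g_k$ and write $g_i(x) = x + \phi_i(x)$ where $\phi_i$ is $C^1$ with $\phi_i(0) = 0$ and $D\phi_i(0) = 0$. For $r>0$ set
\[
M(r) \;=\; \max_{1\le i\le k}\;\sup_{\|x\|\le r}\|\phi_i(x)\|.
\]
If the action is nontrivial on every neighborhood of $p$, then $M(r) > 0$ for all small $r$, and $M(r)/r \to 0$.

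Next, pick $r_n \to 0$ and define the rescaled perturbations $\tilde\phi_{i,n}(y) = M(r_n)^{-1}\phi_i(r_n y)$ for $\|y\|\le 1$. By definition $\max_i \sup_{\|y\|\le 1}\|\tilde\phi_{i,n}(y)\| = 1$. The crucial observation is that $D\tilde\phi_{i,n}(y) = (r_n/M(r_n))\cdot M(r_n)^{-1}\cdot r_n D\phi_i(r_ny)$—on recomputation, $D\tilde\phi_{i,n}(y) = (r_n/M(r_n))D\phi_i(r_n y)$, but the useful bound here is $\|D\phi_i(r_n y)\|\le \varepsilon(r_n)\to 0$, which together with $\phi_i(0)=0$ means $\tilde\phi_{i,n}$ is equicontinuous and uniformly bounded. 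Passing to a subsequence, Arzel\`a--Ascoli yields $\tilde\phi_{i,n}\to \psi_i$ uniformly on the unit ball. The key point is that the limit $\psi_i$ must be \emph{constant}: because $D\phi_i(r_n y)\to 0$ uniformly in $y$, the oscillation of $\tilde\phi_{i,n}$ on any segment is $o(1)$, so $\psi_i \equiv v_i$ for some vector $v_i\in\Ra^n$, and $\max_i\|v_i\|=1$.

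Now I compare the rescaled composition law to addition. For $g = g_ig_j$ we compute
\[
g(x) = x + \phi_j(x) + \phi_i\!\bigl(x + \phi_j(x)\bigr),
\]
and the mean value theorem gives $\|\phi_i(x+\phi_j(x)) - \phi_i(x)\| \le \varepsilon(2r)M(r)$ on $\|x\|\le r$, which is $o(M(r))$. Rescaling by $M(r_n)^{-1}$ and taking the limit shows that for any word $w(g_1,\ldots,g_k)$ representing $\gamma\in\Gamma$, the rescaled perturbation converges to a constant equal to the corresponding sum of the $v_i$. Hence $g_i\mapsto v_i$ extends to a well-defined homomorphism $\Gamma \to \Ra^n$. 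Since $\Ra^n$ is abelian and $\Gamma$ has finite abelianization, this homomorphism has finite image in a torsion-free group, hence is trivial, so all $v_i = 0$—contradicting $\max_i\|v_i\| = 1$.

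The main obstacle is the compactness/limit step: one must verify that the rescaled maps genuinely converge to translations and that the group law passes to the limit. This requires two uniform estimates—equicontinuity of $\tilde\phi_{i,n}$ (from $D\phi_i(0)=0$, which forces the limiting functions to be constants) and the composition estimate (which ensures the limit is a homomorphism rather than just a set-theoretic assignment). The $C^1$ hypothesis is essential in both steps; the theorem is known to fail in $C^0$.
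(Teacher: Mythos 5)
The paper does not actually prove this theorem; it is quoted from Thurston's paper on generalizations of the Reeb stability theorem, so I am judging your argument on its own merits. Your overall strategy --- rescale the generators at the fixed point, extract a limiting homomorphism to $(\mathbb{R}^n,+)$, and contradict finite abelianization --- is exactly Thurston's, and both your composition estimate and the final algebraic step are correct. But there is a genuine gap at the step where you pass to the limit.

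The gap: you normalize by $M(r_n)=\max_i\sup_{\|x\|\le r_n}\|\phi_i(x)\|$ and claim the rescaled maps $\tilde\phi_{i,n}(y)=M(r_n)^{-1}\phi_i(r_ny)$ are equicontinuous with constant limits. The only Lipschitz bound available is $\|D\tilde\phi_{i,n}(y)\|\le r_n\varepsilon(r_n)/M(r_n)$, where $\varepsilon(r)=\max_i\sup_{\|x\|\le r}\|D\phi_i(x)\|$. Since $M(r)\le r\,\varepsilon(r)$, this quantity is always at least $1$ and can be unbounded: nothing prevents $\phi_i$ from attaining its sup over the ball of radius $r_n$ by varying on scales much shorter than $r_n$ while $\varepsilon(r_n)$ still tends to $0$. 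So neither the equicontinuity needed for Arzel\`a--Ascoli nor the assertion that ``the oscillation of $\tilde\phi_{i,n}$ on any segment is $o(1)$'' follows from $\varepsilon(r_n)\to 0$; the deduction conflates $o(1)$ with $o(1)$ times an unbounded normalizing factor, and the limit, if it exists at all, need not be constant. The standard way to close this is to abandon the sup-over-a-ball normalization and work pointwise: choose $x_n\to p$ not fixed by all generators, set $c_n=\max_i\|\phi_i(x_n)\|>0$, and let $v_i$ be a subsequential limit of $\phi_i(x_n)/c_n$, so that only vectors, not functions, must converge and $\max_i\|v_i\|=1$ automatically. Your mean-value estimate $\|\phi_i(x_n+u)-\phi_i(x_n)\|\le\varepsilon(2\|x_n\|)\|u\|$, applied inductively over word length to get $\|\phi_w(x_n)\|\le C_{|w|}c_n$, then shows $\gamma\mapsto\lim_n\phi_\gamma(x_n)/c_n$ is a well-defined nontrivial homomorphism to $\mathbb{R}^n$, and the rest of your argument goes through verbatim.
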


The main point of Theorem \ref{theorem:thurstonstability} is that to show an action
is trivial, it often suffices to find a fixed point.  This is because, for the groups
we consider, there are essentially no non-trivial low dimensional linear representations
and therefore the derivative at a fixed point is trivial.  More precisely, the groups
usually only have finite image low dimensional linear representations, which allows
one to see that the action is trivial on a subgroup of finite index.

\subsection{Dimension one}
\label{subsection:thecircle}

The most dramatic results obtained in the Zimmer program concern a question
first brought into focus by Dave Witte Morris in \cite{Witte-circle}: can higher rank
lattices  act on the circle?  In fact, the paper \cite{Witte-circle} is more directly
concerned with actions on the line $\Ra$.  A detailed survey of results in this
direction is contained in the paper by Witte Morris in this volume, so we do
not repeat that discussion here.  We merely state a conjecture and a question.

\begin{conjecture}
\label{conjecture:higherrankcircle}
Let $\Gamma$ be a higher rank lattice.  Then any continuous
$\Gamma$ action on $S^1$ is finite.
\end{conjecture}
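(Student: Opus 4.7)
The plan is to reduce Conjecture \ref{conjecture:higherrankcircle} to the existence of a global fixed point, and then appeal to the non-orderability of higher rank lattices. First I would pass to a finite index subgroup $\Gamma_0<\Gamma$ so that the restricted homomorphism takes values in $\Homeo_+(S^1)$; by Margulis' normal subgroup theorem it suffices to show $\rho|_{\Gamma_0}$ has finite image.

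The first substantive step is to produce a finite orbit on $S^1$. Following the approach of Ghys and Burger--Monod, one attaches to $\rho$ a bounded Euler class $e^b(\rho)\in H^2_b(\Gamma_0;\Za)$. The key input is Burger--Monod's vanishing theorem: the second continuous bounded cohomology of a higher rank lattice with $\Ra$ coefficients is controlled by that of the ambient group $G$, and in particular $e^b(\rho)$ is forced to vanish. Ghys's theorem on dynamical realizations of the Euler class then supplies a continuous monotone degree-one map $h:S^1\to S^1$ and a homomorphism $\rho_0:\Gamma_0\to\mathrm{Rot}(S^1)$ with $h\circ\rho(\gamma)=\rho_0(\gamma)\circ h$ for all $\gamma\in\Gamma_0$. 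Since $\mathrm{Rot}(S^1)\cong S^1$ is abelian and $\Gamma_0$ has property $(T)$, the abelianization of $\Gamma_0$ is finite, whence $\rho_0(\Gamma_0)$ is a finite cyclic group of rotations. The preimage under $h$ of any $\rho_0$-fixed point is a $\rho$-invariant compact subset of $S^1$; extracting a minimal set inside it yields a finite $\rho$-orbit. Passing to a further finite index subgroup $\Gamma_1<\Gamma_0$ we may therefore assume $\rho(\Gamma_1)$ fixes some point $p\in S^1$.

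Now remove $p$: the action of $\Gamma_1$ on $S^1\setminus\{p\}\cong\Ra$ is by orientation-preserving homeomorphisms. At this point I would invoke the theorem of Witte Morris that any higher rank lattice (which is arithmetic by Margulis' arithmeticity theorem) is not left-orderable, and hence admits no faithful action by orientation-preserving homeomorphisms on $\Ra$. The kernel of $\rho|_{\Gamma_1}$ is therefore nontrivial, and Margulis' normal subgroup theorem upgrades this to finite index in $\Gamma_1$. Pulling this back through the chain $\Gamma\supset\Gamma_0\supset\Gamma_1$ shows $\rho(\Gamma)$ is finite.

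The main obstacle is the vanishing of the bounded Euler class in the merely continuous category. For $C^1$ or smoother actions one has additional tools (for example estimates on the derivative cocycle), but in the purely topological setting one must use the full strength of Burger--Monod's continuous bounded cohomology machinery, including double ergodicity of the $G$-action on the Furstenberg boundary and the functorial properties of $L^\infty$-coefficient modules. A secondary obstacle is that the Witte Morris step is proved using arithmetic methods---specifically, commutation relations among unipotent elements inside $SL(n,\Za)$-type groups---so the argument inherits the limitations of that approach; giving a uniform proof that treats all higher rank lattices, including those in $\Qa$-anisotropic groups with few unipotents over $\Za$, without passing through arithmeticity appears to require a genuinely new idea.
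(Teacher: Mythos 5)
This statement is a conjecture, not a theorem: the paper records it as open, and your argument does not close it. The first half of your proposal is correct and is precisely the part the paper already attributes to Ghys and Burger--Monod: vanishing (up to finite order) of the bounded Euler class for a higher rank lattice yields a finite orbit, and after passing to a finite index subgroup one may assume there is a global fixed point, so the conjecture is equivalent to the non-existence of infinite actions of $\Gamma$ by orientation-preserving homeomorphisms of the line, i.e.\ to the non-left-orderability of $\Gamma$.

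The genuine gap is the final step. There is no theorem asserting that \emph{every} higher rank lattice fails to be left-orderable; that assertion is exactly the open content of the conjecture after the bounded-cohomology reduction, so your argument is circular at this point. Witte Morris's theorem in \cite{Witte-circle} covers only arithmetic lattices of $\Qa$-rank at least two, where one has an abundance of unipotent elements in $\Gamma$ itself and can run commutation arguments among integral unipotents. Cocompact higher rank lattices contain no unipotent elements at all, so that method has no purchase on them; the subsequent work of Lifchitz and Witte Morris \cite{LifchitzMorris-bgline} handles more non-cocompact cases and reduces the remaining non-cocompact ones to quasi-split lattices in $SL(3,\Ra)$ and $SL(3,\Ca)$, but the cocompact case remains entirely open. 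You flag this as a ``secondary obstacle'' requiring ``a genuinely new idea,'' which is an accurate description of the state of the problem, but it means the step you invoke is not available and the proposal is a reduction (already known, and stated in the paper) rather than a proof.
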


This conjecture is well known and first appeared in print in \cite{Ghys-circleaction}.
By results in \cite{Witte-circle} and \cite{LifchitzMorris-bgline},
the case of non-cocompact lattices is almost known. The paper \cite{Witte-circle}
does the case of higher $\mathbb Q$ rank.  The latter work of Lifchitz and Witte Morris reduces the general case to the case of quasi-split lattices in $SL(3,\Ra)$ and $SL(3, \Ca)$. It follows from work of Ghys or Burger-Monod that one can assume the action
fixes a point, see e.g. \cite{BurgerMonod,Burger-thisvolume, Ghys-circleboundedcohomology,Ghys-circleaction,Ghys-circlesurvey}, so the question is equivalent to asking if the groups
act on the line.  An interesting approach might be to study the induced action of $\G$ on
the space of left orders on $\G$, for ideas about this approach, we refer the reader to work of Navas and Witte Morris \cite{Morris-amenableline, Navas-orderdynamics}.

Perhaps the following should also be a conjecture, but here we only ask it is a
question.

\begin{question}
\label{question:TonS^1}
Let $\Gamma$ be a discrete group with property $(T)$.  Does $\Gamma$ admit an infinite action by $C^1$ diffeomorphisms
on $S^1$?  Does $\Gamma$ admit an infinite action by homeomorphisms on $S^1$?
\end{question}

By a result of Navas, the answer to the above question is no if $C^1$ diffeomorphisms are replaced by
$C^k$ diffeomorphisms for any $k>\frac{3}{2}$ \cite{Navas-Circlediff}.  A result noticed by the author and
Margulis and contained in a paper of Bader, Furman, Gelander, and Monod allows one to adapt this proof for
 values of $k$ slightly less than $\frac{3}{2}$ \cite{BaderFurmanGelanderMonod, Fisher-Margulis1}. By Thurston's Theorem \ref{theorem:thurstonstability}, in the $C^1$ case it suffices to find a fixed point for the action.

 We add a remark here pointed out to the author by Navas, that perhaps justifies only calling Question \ref{question:TonS^1} a question.
 In \cite{Navas-circleII}, Navas extends his results from \cite{Navas-Circlediff} to groups with relative
 property $(T)$.  This means that no such group acts on the circle by $C^k$ diffeomorphisms where $k>\frac{3}{2}$.
 However, if we let $\Gamma$ be the semi-direct product of a finite index free subgroup of $SL(2,\Za)$ and $\Za^2$,
 this group is left orderable, and so acts on $S^1$, see e.g. \cite{Morris-amenableline}.  This is the prototypical
 example of a group with relative property $(T)$.  This leaves open the possibility that groups with property $(T)$
 would behave in a similar manner and admit continuous actions on the circle and the line.

 Other possible candidate for a group with property $(T)$ acting on the circle by homeomorphisms
 are the more general variants of Thompson's group $F$ constructed by Stein in \cite{Stein-transam}.
 The proofs that $F$ does not have $(T)$ do not apply to these groups \cite{Farley}.

A closely related question is the following, suggested to the author by Andr\'{e}s Navas along with
examples in the last paragraph.

\begin{question}
\label{question:navas}
Is there  a group $\Gamma$ which is bi-orderable and does not admit a proper isometric
action on a Hilbert space?  I.e. does not have the Haagerup property?
\end{question}

For much more information on the fascinating topic of groups of diffeomorphisms of the
circle see both the survey by Ghys \cite{Ghys-circlesurvey}, the more recent book
by Navas \cite{Navas-book} and the article by Witte Morris in this volume \cite{WitteMorris-thisvolume}.

\subsection{Dimension $2$}
Already in dimension $2$ much less is known.  There are some results in the volume preserving setting.
In particular, we have

\begin{theorem}
\label{theorem:frankshandelpolterovich}
Let $\Gamma$ be a non-uniform irreducible higher rank lattice.
Then any volume preserving $\G$ action on a closed orientable surface other than $S^2$ is finite. If $\G$ has $\Qa$-rank at least one, then the same holds for
actions on $S^2$.
\end{theorem}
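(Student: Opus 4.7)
The plan is to follow the approach of Franks--Handel and Polterovich, combining three ingredients: distortion of unipotents in non-uniform higher rank lattices, dynamical constraints on distortion elements in area-preserving diffeomorphism groups, and Margulis' normal subgroup theorem. Since $\G$ is a non-uniform irreducible higher rank lattice it is arithmetic, and in the $\Qa$-rank $\geq 1$ case it contains $\Qa$-defined non-trivial unipotents; in any case non-uniformity produces a non-trivial unipotent element $u\in \G$ of infinite order. The theorem of Lubotzky--Mozes--Raghunathan says that for such $\G$, the word metric on $\G$ is bi-Lipschitz equivalent to the restriction of a left-invariant Riemannian metric on the ambient Lie group $G$. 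Since the Riemannian length of $u^n$ in $G$ grows like $\log n$, one obtains $|u^n|_\G = O(\log n)$, so $u$ is a distortion element of $\G$.

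The technical heart is then the following claim: for $S$ a closed orientable surface, if $\rho:\G\to \Diff^{\infty}_\omega(S)$ is a homomorphism and $u\in \G$ is a distortion element of infinite order, then $\rho(u)$ has finite order. For $S$ of genus $\geq 1$, this is Polterovich's theorem, which uses Hofer's bi-invariant metric on the group of Hamiltonian diffeomorphisms, together with displacement-energy estimates, to show that the Hofer norm of iterates of a distortion element grows too slowly to be compatible with the dynamics of an infinite order symplectomorphism (after passing to finite index to ensure the image lies in the Hamiltonian subgroup). For $S=S^2$, this is the theorem of Franks--Handel, who instead use rotation vectors, Le Calvez's equivariant Brouwer theory, and fixed-point techniques for area-preserving sphere homeomorphisms to extract an obstruction to an infinite-order distortion image.

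Granting this claim, the conclusion follows from Margulis' normal subgroup theorem. If $\rho(u)$ has finite order $k$, then $u^k \in \Ker \rho$, and since $u^k$ has infinite order in $\G$, the kernel $\Ker \rho$ is an infinite normal subgroup of $\G$; by Margulis it has finite index, so $\rho(\G)$ is finite. The principal obstacle is clearly the middle step, whose two proofs draw on essentially disjoint technologies---symplectic topology for genus $\geq 1$ and two-dimensional topological dynamics for $S^2$---and neither is at all elementary. The $\Qa$-rank $\geq 1$ hypothesis in the $S^2$ case enters only to guarantee the existence of the unipotent $u$ used as the distortion element; if one could produce a distortion element of infinite order in every non-uniform irreducible higher rank lattice without the $\Qa$-rank assumption, the $S^2$ statement would extend accordingly.
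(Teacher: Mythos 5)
Your proposal follows essentially the same route the paper sketches: exponential distortion of unipotent elements via the Lubotzky--Mozes--Raghunathan theorem, the Polterovich and Franks--Handel results ruling out infinite-order exponentially distorted elements of $\Diff(S,\omega)$, and Margulis' normal subgroup theorem to finish. The one reduction the paper emphasizes that you gloss over is that one must first know the composite $\G{\rightarrow}MCG(S)$ has finite image, so that after passing to a finite index subgroup the action lands in $\Diff(S)^0$ (and then in the Hamiltonian subgroup); your appeal to Polterovich's Hofer-metric argument implicitly requires this, and it is a nontrivial input (Farb--Masur, Kaimanovich--Masur, Burger--Monod) rather than something obtained by merely ``passing to finite index.''
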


This theorem was proven by Polterovich for all surfaces but the sphere and shortly afterwards proven
by Franks and Handel for all surfaces \cite{FranksHandel-normalformhamiltonian, FranksHandel-distortionvolume, Polterovich}.  It is worth noting that the proofs use entirely different
ideas.  Polterovich's proof belongs very clearly to symplectic geometry and also implies some results
for actions in higher dimensions.  Franks and Handel use a theory of normal forms for $C^1$ surface diffeomorphisms
that they develop in analogy with the Thurston theory of normal forms for surface homeomorphisms with finite
fixed sets.  The proof of Franks and Handel can be adapted to a setting where one assume much less regularity of
the invariant measure \cite{FranksHandel-distortionmeasure}.

There are some major reductions in the proofs that are similar, which we now describe.  The first of these
should be useful for studying actions of cocompact lattices as well. Namely, one can assume that the
homomorphism $\rho:\Gamma{\rightarrow}\Diff(S)$ defining the action takes values in the connected component.
This follows from the fact that any $\rho:\Gamma{\rightarrow}MCG(S)$ is finite, which can now be deduced from a
variety of results \cite{BestvinaFujiwara, BurgerMonod,FarbMasur, KaimanovichMasur}. This result holds not only for the groups considered in Theorem \ref{theorem:frankshandelpolterovich}, but also for cocompact lattices and even for lattices in $SP(1,n)$ by
results of Sai-Kee Yeung \cite{Yeung-MCG}. It may be possible to show something similar for all groups
with property $(T)$ using recent results of Andersen showing that the mapping class group does not have
property $(T)$ \cite{Andersen-notT}. It is unrealistic to expect a simple analogue of this result for homomorphisms
to $\Diff(M)/\Diff(M)^0$ for general manifolds, see section \ref{subsection:farbshalen} below for a discussion
of dimension $3$.

Also, in the setting of Theorem \ref{theorem:frankshandelpolterovich},  Margulis normal subgroup theorem implies that it suffices to show that a single infinite order element of $\G$ acts trivially.  The proofs of Franks-Handel and
Polterovich then use the existence of distortion elements in $\G$, a fact established by Lubotzky, Mozes and Raghunathan in \cite{LubotzkyMozesRaghunathan}.
The main result in both cases shows that $\Diff(S,\omega)$ does not contain exponentially distorted elements and
the proofs of this fact are completely different.
An interesting result of Calegari and Freedman shows that this is not true of $\Diff(S^2)$ and that $\Diff(S^2)$
contains subgroups with elements of arbitrarily large distortion \cite{CalegariFreedman-distortion}.  These examples
are discussed in more detail in section \ref{section:exoticactions}.

Polterovich's methods also allow him to see that there are no exponentially distorted elements in
$\Diff(M,\omega)^0$ for certain symplectic manifolds $(M,\omega)$.  Again, this yields some
partial results towards Zimmer's conjecture.  On the other hand, Franks and Handel are able
to work with a Borel measure $\mu$ with some properties and show that any map $\rho:\G{\rightarrow}\Diff(S,\mu)$
is finite.

We remark here that a variant on this is due to Zimmer, when there is an invariant measure
supported on a finite set.

\begin{theorem}
\label{theorem:Tfixingpointonsurface}
Let $\Gamma$ be a group with property $(T)$ acting on a compact surface $S$
by $C^1$ diffeomorphisms.
Assume $\Gamma$ has a periodic orbit on $S$, then the $\Gamma$ action is finite.
\end{theorem}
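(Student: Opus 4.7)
The plan is to apply Thurston stability (Theorem \ref{theorem:thurstonstability}) after using property $(T)$ to trivialize the derivative at a fixed point. Assume $S$ is connected.

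First, the periodic orbit is a finite set $O \subset S$, and the kernel of the induced permutation representation $\Gamma \to \operatorname{Sym}(O)$ has finite index. Replacing $\Gamma$ by this kernel (which still has property $(T)$ and is finitely generated by Kazhdan's theorem) we may assume $\Gamma$ fixes a point $p \in S$. Differentiation at $p$ then yields a homomorphism
\[
d\rho_p \colon \Gamma \longrightarrow GL(T_pS) \cong GL(2,\mathbb{R}).
\]

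The key step is to show $d\rho_p$ has finite image. The composition $\det \circ\, d\rho_p$ takes values in the amenable group $\mathbb{R}^{*}$, so by property $(T)$ its image is finite; passing to a further finite-index subgroup we may assume $d\rho_p(\Gamma) \subset SL(2,\mathbb{R})$. Let $H$ denote the closure of $d\rho_p(\Gamma)$ in $SL(2,\mathbb{R})$. Since property $(T)$ is inherited by the closure of a continuous homomorphic image (any continuous unitary representation of $H$ with almost invariant vectors restricts to one of $\Gamma$ with almost invariant vectors, and the resulting $\Gamma$-invariant vector is $H$-invariant by density), $H$ has property $(T)$. But no noncompact closed subgroup of $SL(2,\mathbb{R})$ has property $(T)$: the connected proper noncompact subgroups (parabolic, split Cartan, Borel) are amenable, the full group $SL(2,\mathbb{R})$ itself lacks property $(T)$, and infinite discrete subgroups of $SL(2,\mathbb{R})$ are virtually free. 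Hence $H$ is compact, conjugate into $SO(2)$, and in particular abelian. Since property $(T)$ groups have finite abelianization, $d\rho_p(\Gamma)$ is finite.

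Now replace $\Gamma$ by the finite-index subgroup $\Gamma'$ on which $d\rho_p$ is trivial. Then $\Gamma'$ is finitely generated with finite abelianization (both by property $(T)$), fixes $p$, and has trivial derivative there, so Theorem \ref{theorem:thurstonstability} forces $\Gamma'$ to act trivially on the connected surface $S$. Therefore the original $\Gamma$ action is finite.

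The main obstacle is the compactness claim for $H$: this is precisely where $\dim S = 2$ is essential, since $SL(2,\mathbb{R})$ admits no noncompact closed subgroup with property $(T)$. In higher dimensions the same strategy collapses, because $SL(n,\mathbb{R})$ for $n \geq 3$ has property $(T)$, so the derivative representation at a fixed point can be faithful --- as exhibited by the action of $SL(n,\mathbb{Z})$ on $\mathbb{T}^n$ fixing the origin.
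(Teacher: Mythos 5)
Your proof is correct and follows essentially the same route as the paper: pass to a finite-index subgroup fixing a point, use property $(T)$ to show the derivative representation at that point has bounded, hence virtually abelian, hence finite image, and finish with Thurston stability. The only difference is cosmetic: where the paper obtains boundedness of $d\rho_p(\Gamma)$ by citing the Haagerup property of $SL(2,\mathbb{R})$, you reprove the equivalent fact that $SL(2,\mathbb{R})$ has no noncompact closed subgroup with property $(T)$ via a case analysis of its closed subgroups.
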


 The proof
is quite simple.  First, pass to a finite index subgroup which fixes a point $x$. Look at the derivative representation $d\rho_x$ at the fixed point.  Since $\Gamma$ has property $(T)$, and $SL(2,\mathbb R)$
has the Haagerup property, it is easy to prove that the image of $d\rho_x$
is bounded.  Bounded subgroups of $SL(2,\mathbb R)$ are all virtually abelian
and this implies that the image of $d\rho_x$ is finite.  Passing to a subgroup
of finite index, one has a fixed point where the derivative action is trivial
of a group which has no cohomology in the trivial representation.  One now
applies Thurston Theorem \ref{theorem:thurstonstability}.

The difficulty in combining Theorem \ref{theorem:Tfixingpointonsurface} with ideas
from the work of Franks and Handel is that while Franks and Handel can show
that individual surface diffeomorphisms have large sets of periodic orbits,
their techniques do not easily yield periodic orbits for the entire large
group action.

We remark here that there is another approach to showing that lattices have no volume
preserving actions on surfaces.  This approach is similar to the proof that lattices have no $C^1$
actions on $S^1$ via bounded cohomology \cite{BurgerMonod, Burger-thisvolume, Ghys-circleboundedcohomology}.
That $\Diff(S,\omega)^0$ admits many interesting quasi-morphisms, follows from work of Entov-Polterovich, Gambaudo-Ghys and Py \cite{EntovPolterovich,GambaudoGhys,Py-CRAS, Py-thesis, Py-CRAS}.  By results of Burger and Monod, for any higher rank lattice and any homomorphism $\rho:\G{\rightarrow}\Diff(S,\omega)^0$, the image
is in the kernel of all of these quasi-morphisms.  (To make this statement meaningful and the kernel well-defined, one needs to take the homogeneous versions of the quasi-morphisms.)  What remains to be done is to extract useful
dynamical information from this fact, see the article by Py in this volume for more discussion \cite{Py-thisvolume}.

In our context, the work of Entov-Polterovich mentioned above really only constructs a single
quasi-morphism on $\Diff(S^2,\omega)^0$.  However, this particular quasimorphism
is very nice in that it is Lipschitz in metric known as Hofer's metric on
$\Diff(S^2,\omega)^0$.  This construction does apply more generally in higher dimensions and indicates connections between quasimorphisms and the geometry
of $\Diff(M,\omega)^0$, for $\omega$ a symplectic form, that are beyond the scope of this survey.  For an introduction to this fascinating topic, we refer the reader
to \cite{Polterovich-book}.


Motivated by the above discussion, we recall the following conjecture of Zimmer.

\begin{conjecture}
\label{conjecture:surfaces}
Let $\Gamma$ be a group with property $(T)$, then any volume preserving smooth $\Gamma$ action on a
a surface is finite.
\end{conjecture}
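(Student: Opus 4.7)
The plan is to combine a reduction to the identity component $\Diff(S,\omega)^0$ with Theorem~\ref{theorem:Tfixingpointonsurface} by producing a finite orbit for every volume preserving smooth action of a property~$(T)$ group~$\G$ on a closed surface $S$. The overall strategy parallels the proof of Theorem~\ref{theorem:frankshandelpolterovich} for non-uniform higher rank lattices, but each step requires a new input because the distortion mechanism used there is not available for a general property~$(T)$ group.

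The first step is to show that the induced homomorphism $\bar\rho:\G\to MCG(S)$ is finite, so that after passing to a finite index subgroup one may assume $\rho(\G)\subset\Diff(S,\omega)^0$. For this I would combine Andersen's theorem that $MCG(S)$ does not have property~$(T)$ with the action of $MCG(S)$ on the (hyperbolic) curve complex: any property~$(T)$ action on the curve complex should have bounded orbits, and Kaimanovich--Masur's description of stationary measures on Thurston's boundary together with Bestvina--Fujiwara's unboundedness of $H^2_b(MCG(S),\Ra)$ should then force $\bar\rho(\G)$ to be finite.

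The main obstacle is the second step: producing a finite $\G$-orbit on $S$. For higher rank lattices, Franks--Handel and Polterovich exploit exponential distortion of unipotents (Lubotzky--Mozes--Raghunathan) together with the absence of exponentially distorted elements in $\Diff(S,\omega)^0$, but a general property~$(T)$ group need contain no distorted element at all, so a different mechanism is required. I would try instead to use the homogeneous quasi-morphisms on $\Diff(S,\omega)^0$ constructed by Gambaudo--Ghys and Py. Burger--Monod show that every homogeneous quasi-morphism on a higher rank lattice is trivial, and the hardest analytic step in the plan is to extend this, either to all property~$(T)$ groups or at least to the specific pullbacks of the Gambaudo--Ghys/Py quasi-morphisms under $\rho$. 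Granted such vanishing, the Gambaudo--Ghys invariants, which measure averaged linking and rotation of $\rho(\g)$-orbits against the invariant volume, all vanish on $\rho(\G)$, and one would then argue that simultaneous vanishing of the full family forces the existence of a common periodic orbit for a finite index subgroup of $\G$. For $S=S^2$, where only the Entov--Polterovich Calabi-type quasi-morphism is currently available, this step almost certainly needs a separate input, plausibly a Kazhdan-style fixed point theorem (in the spirit of Definition~\ref{definition:T}) for isometric $\G$ actions on the Hofer completion of $\Diff(S^2,\omega)^0$.

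Once a periodic point $p\in S$ is produced, the argument concludes as in the proof of Theorem~\ref{theorem:Tfixingpointonsurface}: passing to a finite index subgroup $\G_0\le\G$ fixing $p$, the derivative representation $d\rho_p:\G_0\to GL(2,\Ra)$ lands in a finite extension of $SL(2,\Ra)$, which has the Haagerup property, so the image is bounded and hence virtually abelian; since $\G_0$ still has property~$(T)$, the image must be finite. Passing to a further finite index subgroup $\G_1$ on which $d\rho_p$ is trivial, and using that $\G_1$ retains property~$(T)$ and in particular finite abelianization, Thurston's stability theorem (Theorem~\ref{theorem:thurstonstability}) yields triviality of the $\G_1$ action in a neighbourhood of $p$, and the connectedness clause of Theorem~\ref{theorem:thurstonstability} then gives triviality on all of $S$, completing the proof.
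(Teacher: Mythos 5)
The statement you are addressing is presented in the paper as an open conjecture (Conjecture \ref{conjecture:surfaces}); the paper offers no proof of it, only the partial results you cite, namely Theorem \ref{theorem:frankshandelpolterovich} for non-uniform higher rank lattices and Theorem \ref{theorem:Tfixingpointonsurface} under the additional hypothesis of a periodic orbit. Your proposal is therefore necessarily a strategy outline rather than a proof, and both of the steps you would need are genuinely open; in places the sketch asserts things that are false for general property $(T)$ groups. For the reduction to $\Diff(S,\omega)^0$: property $(T)$ does not give bounded orbits on Gromov hyperbolic complexes such as the curve complex (cocompact lattices in $Sp(1,n)$ have property $(T)$ and act unboundedly on their own hyperbolic Cayley graphs), and Andersen's theorem that $MCG(S)$ lacks property $(T)$ does not by itself exclude infinite-image homomorphisms from a $(T)$ group into $MCG(S)$. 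Finiteness of $\bar\rho:\G\to MCG(S)$ is known for higher rank lattices via Farb--Masur and Kaimanovich--Masur, but for a general property $(T)$ group the paper only says it ``may be possible'' to prove this; it is not established.

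The more serious gap is the production of a finite orbit. Burger--Monod's vanishing of homogeneous quasi-morphisms is a theorem about higher rank lattices proved through bounded cohomology; property $(T)$ does not imply vanishing of $H^2_b(\G,\Ra)$ or of quasi-morphisms (hyperbolic groups with property $(T)$ carry infinite-dimensional spaces of homogeneous quasi-morphisms by Bestvina--Fujiwara). So the ``hardest analytic step'' you defer---extending the vanishing to all property $(T)$ groups---is not merely hard but false as stated, and would have to be replaced by an argument specific to the pullbacks of the Gambaudo--Ghys/Py quasi-morphisms under $\rho$. Even granting such vanishing, the inference from vanishing of these invariants to the existence of a common periodic orbit for a finite index subgroup is precisely the missing step the paper itself flags (``What remains to be done is to extract useful dynamical information from this fact''), and no argument for it is known; the paper also notes the separate difficulty that Franks--Handel-type techniques yield periodic orbits for individual diffeomorphisms but not for the whole group action. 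Your final step, passing from a periodic orbit to finiteness via the Haagerup property of $SL(2,\Ra)$ and Thurston stability, is correct and coincides with the paper's proof of Theorem \ref{theorem:Tfixingpointonsurface}; the conjecture remains open precisely because no one knows how to produce the periodic orbit.
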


Here the words ``volume preserving" or at least ``measure preserving" are quite necessary.  As $SL(3,\mathbb R)$ acts on a $S^2$, so does any lattice in $SL(3,\mathbb R)$ or any irreducible lattice in $G$ where $G$ has $SL(3,\Ra)$ as
a factor.  The following question seem reasonable, I believe I first learned it from Leonid
Polterovich.

\begin{question}
\label{question:sl3}
Let $\Gamma$ be a higher rank lattice (or even just a group with property $(T)$) and assume
$\Gamma$ acts by diffeomorphisms on a surface $S$.  Is it true that either $(1)$ the action is finite
or $(2)$ the surface is $S^2$ and the action is smoothly conjugate to an action defined
by some embedding $i:\G{\rightarrow}SL(3,\Ra)$ and the projective action of $SL(3,\Ra)$
on $S^2$?
\end{question}

This question seems quite far beyond existing technology.

\subsection{Dimension 3}We now discuss briefly some work of Farb and Shalen that constrains actions
by homeomorphisms in dimension $3$ \cite{FarbShalen-3manifold}.
This work makes strong use of the geometry of $3$ manifolds, but uses very little about
higher rank lattices and is quite soft.  A special case of their results is the following:

\begin{theorem}
\label{theorem:farbshalenprime3manifold}
Let $M$ be an irreducible $3$ manifold and $\G$ be a higher rank lattice. Assume $\G$ acts on $M$ by homeomorphisms so that the action
on homology is non-trivial.  Then $M$ is homeomorphic to $\Ta^3$, $\G<SL(3,\Za)$ with finite index
and the $\G$ action on $H^1(M)$ is the standard $\G$ action on $Z^3$.
\end{theorem}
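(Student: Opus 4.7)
The plan is to combine Margulis' normal subgroup and superrigidity theorems with classical $3$-manifold topology. Write $\rho : \G \to \Aut(H^1(M;\Za)) \subset GL(n,\Za)$ for the cohomology representation, where $n = b_1(M)$. The first step is to show $\Ker \rho$ is finite: by hypothesis $\rho$ is nontrivial, so $\Ker \rho$ is a proper normal subgroup of $\G$, and the normal subgroup theorem (end of \S\ref{section:avantlalettre}) forces it to be finite. In particular the image $\rho(\G)$ is an infinite subgroup of $GL(n,\Za)$.

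I would then apply Margulis superrigidity to the associated real representation $\G \to GL(n,\Ra)$. Superrigidity produces a continuous extension $\tilde \rho : G \to GL(n,\Ra)$ agreeing with $\rho$ on a finite index subgroup up to a bounded twist; the fact that $\rho$ preserves the integer lattice $H^1(M;\Za)$ together with Margulis' arithmeticity theorem forces $\tilde \rho$ to be a rational representation of the $\Qa$-form of $G$ in which $\G$ is arithmetic. A minimal-dimension analysis of rational representations of higher rank simple groups, combined with the requirement that the representation preserve an integral lattice so that the image is commensurable with an arithmetic subgroup, pins down the possibilities: one expects $n \geq 3$, and for the smallest admissible $n$ one expects $G$ locally isomorphic to $SL(3,\Ra)$, $\G$ commensurable with a finite index subgroup of $SL(3,\Za)$, and $\rho$ (up to duality) the standard representation.

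The main obstacle is transferring this algebraic conclusion to the topological statement $M \cong \Ta^3$. Here I would exploit irreducibility of $M$ via the Albanese-type map $A : M \to H^1(M;\Ra)/H^1(M;\Za) \cong \Ta^{b_1}$, built from an integral basis of $H^1(M;\Za)$. By construction $A$ is $\G$-equivariant with respect to the standard action on the target torus. To rule out $b_1 > 3$, observe that any primitive integral cohomology class gives a map to $S^1$ and hence an embedded non-separating orientable surface; the $\G$-action on the resulting combinatorics of decomposing surfaces, combined with the rigidity of mapping class group representations of higher rank lattices (as used in the proof of Theorem \ref{theorem:frankshandelpolterovich}), should force $b_1 \leq 3$. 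For $b_1 = 3$, an equivariant degree argument, together with irreducibility of $M$ to exclude essential $2$-spheres in fibers of $A$ and the impossibility of a large lattice acting faithfully on a $2$-manifold of low genus, should force $A$ to be a homeomorphism. The $\G$-equivariance of $A$ then automatically identifies the action on $H^1(M)$ with the standard $SL(3,\Za)$-action, completing the proof. I expect the combinatorial-topological control of $b_1$ and of fibers of $A$ to be the genuinely delicate step, since everything else is essentially a packaged consequence of Margulis' superrigidity and arithmeticity.
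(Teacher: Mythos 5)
The first thing to say is that this survey does not actually prove Theorem \ref{theorem:farbshalenprime3manifold}: it is quoted from \cite{FarbShalen-3manifold}, and the surrounding text records only the character of the Farb--Shalen argument --- that it is ``quite soft,'' uses ``very little about higher rank lattices,'' and relies instead on the structure theory of Haken $3$-manifolds, including Thurston's geometrization in the Haken case. That description already tells you that your route is inverted relative to theirs. You front-load Margulis superrigidity and arithmeticity to pin down the cohomology representation and then try to recover the topology; the actual proof front-loads $3$-manifold topology (since $b_1(M)>0$ the irreducible manifold $M$ is Haken, so one has the torus/geometric decomposition, Mostow rigidity on hyperbolic pieces, and finiteness of lattice images in surface mapping class groups as in the discussion around Theorem \ref{theorem:frankshandelpolterovich}), and needs from the lattice essentially only the normal subgroup theorem and the absence of infinite image representations in low dimensions. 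Your first step (finiteness of the kernel via the normal subgroup theorem) is fine, modulo the implicit irreducibility of $\G$.

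The two steps you yourself flag as delicate are genuine gaps, not just technicalities. First, superrigidity does not bound $b_1(M)$. There are irreducible $3$-manifolds of arbitrarily large first Betti number (for instance $\Sigma_g\times S^1$, and many closed hyperbolic manifolds), and a higher rank lattice has infinite image integral representations in every sufficiently large dimension (standard plus trivial summands), so a ``minimal-dimension analysis'' of rational representations cannot force $n=3$; something genuinely topological must exclude these manifolds. Mostow rigidity disposes of the hyperbolic ones and the Farb--Masur finiteness of lattice images in mapping class groups disposes of the Seifert-fibered ones; your appeal to the ``combinatorics of decomposing surfaces'' is gesturing at the right objects (Thurston norm, JSJ tori) but is not an argument. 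Second, even granting $b_1=3$ and the standard representation, the Albanese map $A:M\rightarrow\Ta^3$ need not be a homeomorphism, nor even of nonzero degree: its degree is the triple cup product $\langle e_1\cup e_2\cup e_3,[M]\rangle$, and $SL(3,\Za)$-equivariance only says this integer is well defined (because $\Lambda^3$ of the standard representation is trivial), not that it equals $\pm 1$ or is nonzero. To identify $M$ with $\Ta^3$ one really has to show that $\pi_1(M)$ is (virtually) $\Za^3$ and then invoke Waldhausen's theorem or geometrization for closed irreducible aspherical $3$-manifolds --- which is precisely the content your sketch defers to an ``equivariant degree argument.''
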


Farb and Shalen actually prove a variant of Theorem \ref{theorem:farbshalenprime3manifold} for an arbitrary
$3$ manifold admitting a homologically infinite action of a higher rank lattice.  This can be considered
as a significant step towards understanding when, for $\G$ a higher rank lattice and $M^3$ a closed three manifold,  $\rho:\G{\rightarrow}\Diff(M^3)$ must have image in $\Diff(M^3)^0$.
Unlike the results discussed above for dimension two, the answer is not simply ``always".  This three dimensional
result uses a great deal of the known structure of $3$ manifolds, though it does not use the full geometrization
conjecture proven by Perelman, but only the Haken case due to Thurston.  The same sort of result in higher dimensions
seems quite out of reach.  A sample question is the following.  Here we let $\G$ be a higher rank action
and $M$ a compact manifold.

\begin{question}
\label{question:isotopy}
Under what conditions on the topology of $M$ do we know that a homomorphism $\rho:\G{\rightarrow}\Diff(M)$
has image in $\Diff(M)^0$?
\end{question}

\subsection{Analytic actions in low dimensions}
\label{subsection:farbshalen}

We first mention a direction pursued by Ghys that is in a similar spirit to the Zimmer program,
and that has interesting consequences for that program.  Recall that the Zassenhaus lemma shows that
any discrete linear group generated by small enough elements is nilpotent.  The main
point of Ghys's article \cite{Ghys-proches} is to attempt to generalize this result
for subgroups of $\Diff^{\omega}(M)$.  While the result is not actually true in that
context, Ghys does prove some intriguing variants which yield some corollaries
for analytic actions of large groups.  For instance, he proves that $SL(n,\Za)$ for
$n>3$ admits no analytic action on the two sphere.  We remark that the attempt
to prove the Zassenhaus lemma for diffeomorphism groups suggests that one can
attempt to generalize other facts about linear groups to the category of diffeomorphism
groups.  We discuss several questions in this direction, mainly due to Ghys, in Section
\ref{section:ghysprogram}.

For the rest of this subsection we discuss a different approach of Farb and Shalen for showing that real analytic actions of large groups are finite.  This method is pursued in \cite{FarbShalen-analytic, FarbShalen-analyticS^1, FarbShalen-analyticdim4}.

We begin by giving a cartoon of the main idea. Given any action of a group $\Gamma$, an element
$\gamma{\in}\G$ and the centralizer $Z(\g)$, it is immediate that $Z(\g)$ acts on the set of $\g$
fixed points.   If the action is analytic, then the fixed sets are analytic and so have good structure
and are ``reasonably close" to being submanifolds.  If one further assumes that all normal subgroups
of $\Gamma$ have finite index, then this essentially allows one to bootstrap results about $Z(\g)$ not acting on manifolds of dimension at most $n-1$ to facts about $\G$ not having
actions on manifolds of dimension $n$ provided one can show that the fixed set for $\g$ is not
empty.  This is not true, as analytic sets are not actually manifolds, but the idea can be implemented
using the actual structure of analytic sets in a way that yields many results.

For example, we have:

\begin{theorem}
\label{theorem:farbshalen}
Let $M$ be a real analytic four manifold with zero Euler characteristic, then any
real analytic, volume preserving action of any finite index subgroup in $SL(n,\Za)$ for $n\geq7$
is trivial.
\end{theorem}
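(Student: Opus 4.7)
The plan is to implement the Farb--Shalen cartoon sketched just above the theorem, combining Margulis' normal subgroup theorem with a dimension-decreasing induction driven by centralizers acting on real analytic fixed sets. By the normal subgroup theorem it suffices to produce a single infinite-order $\gamma \in \Gamma$ acting trivially on $M$, so that the kernel of the action has finite index; the residual finite-quotient action is ruled out by a separate ancillary argument. Choose $\gamma$ unipotent, conjugate in $SL(n,\Za)$ to the elementary matrix $E_{12}(1)$, passing to a finite-index subgroup to ensure $\gamma \in \Gamma$. The centralizer $\Gamma' := Z_\Gamma(\gamma)$ then contains a finite-index subgroup of $SL(n-2,\Za)$, which is itself a higher rank lattice with property $(T)$ since $n-2 \geq 5$.

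The fixed set $F := \Fix(\gamma) \subset M$ is a real analytic subvariety, hence admits a finite stratification into smooth analytic submanifolds of dimensions between $0$ and $3$. After passing to a further finite-index subgroup, $\Gamma'$ preserves each stratum and acts analytically on it. One now inducts on dimension: $\Gamma'$-actions on closed $0$- and $1$-dimensional analytic manifolds are finite by the property-$(T)$ circle-action results discussed in Section \ref{subsection:thecircle}; actions on surfaces are finite by Theorem \ref{theorem:frankshandelpolterovich} applied to the invariant area form inherited transversely from the ambient volume; and three-dimensional strata are handled by Farb--Shalen's three-manifold theorem \cite{FarbShalen-3manifold}. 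Iterating, $\Gamma'$ acts trivially on every stratum, hence by real analyticity trivially on all of $F$. At a smooth point $p$ of a positive-dimensional stratum the linearized $\Gamma'$-action on the normal bundle $N_pF$ is a low-dimensional linear representation; by Margulis superrigidity it must be finite, so after a final passage to finite index the derivative is trivial at $p$. Thurston's stability theorem (Theorem \ref{theorem:thurstonstability}) then yields a neighborhood of $p$ on which $\Gamma'$ acts trivially, and real analyticity plus connectedness extend this to all of $M$.

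The hypothesis $\chi(M) = 0$ enters to guarantee that $F$ is nonempty and has a positive-dimensional stratum, without which the linearization step is unavailable. Here the plan is to combine Margulis superrigidity applied to the cohomology representations on $H^{i}(M;\Ra)$ --- which have dimension too small to admit any nontrivial action of $\Gamma$ for $n \geq 7$, hence are finite and therefore trivial on a finite-index subgroup --- with the Lefschetz--Hopf formula to obtain $L(\gamma) = \chi(M) = 0$, and then use the nonvanishing vector field on $M$ produced by Poincar\'{e}--Hopf (equivalent to $\chi(M) = 0$) together with volume preservation and analyticity to rule out the degenerate scenario of $F$ being empty or purely zero-dimensional.

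The main obstacle I anticipate is precisely this nonemptiness-and-dimension step for $F$. The subsequent stratum-by-stratum analytic induction and the final Thurston-stability propagation are essentially forced by property $(T)$ and Margulis superrigidity once the base case is secured, but producing a positive-dimensional fixed set for an arbitrary unipotent $\gamma$ requires a genuinely delicate synthesis of the $\chi(M) = 0$ hypothesis, volume preservation, and the analytic category --- none of which alone suffices. The iterative use of centralizers should then chain the triviality information up the dimension hierarchy, but the argument is brittle at each stage to any stratum on which one cannot verify the hypotheses of the lower-dimensional inductive case.
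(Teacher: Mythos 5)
Your overall architecture --- Margulis' normal subgroup theorem to reduce to a single infinite-order element, a unipotent $\gamma$ whose centralizer contains a finite-index subgroup of $SL(n-2,\Za)$, and that centralizer acting on the analytic set $\Fix(\gamma)$ --- is exactly the ``cartoon'' the survey sketches before the theorem, and that part is faithful to Farb--Shalen's strategy. The proof breaks, however, precisely at the step you yourself flag as the main obstacle: producing a nonempty, positive-dimensional fixed set. Your mechanism is the Lefschetz--Hopf formula, but $L(\gamma)=\chi(M)=0$ yields \emph{no} fixed points at all --- Lefschetz produces fixed points only when the Lefschetz number is nonzero, so the hypothesis $\chi(M)=0$ works against you here rather than for you. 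The supporting claims are also false or empty: $H^2(M;\Ra)$ of a closed $4$-manifold with $\chi(M)=0$ can have arbitrarily large dimension (take $M=\Sigma_g\times\Ta^2$), so superrigidity does not kill the cohomology representation on dimension grounds; and ``a nonvanishing vector field plus volume preservation plus analyticity'' is a wish, not an argument --- the Poincar\'{e}--Hopf vector field has no relation to the group action. The actual proof obtains fixed points and finite orbits by a completely different route, the one the survey flags immediately after the theorem statement: the Zassenhaus-type analysis of nilpotent groups of \emph{analytic} diffeomorphisms due to Ghys \cite{Ghys-proches} and its extension by Rebelo to $\Ta^2$ \cite{Rebelo-T2}, applied to unipotent subgroups of $\Gamma$. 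This is where analyticity and the unipotent structure of $SL(n,\Za)$ do real work, and it is entirely absent from your proposal.

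Even granting a nonempty positive-dimensional $\Fix(\gamma)$, the inductive descent as you state it does not go through. The strata of an analytic set are only locally closed, so you cannot directly quote theorems about closed surfaces or closed $3$-manifolds on them (this is exactly the survey's caveat that ``analytic sets are not actually manifolds''); a $2$-dimensional stratum carries no evident invariant smooth area form, so Theorem \ref{theorem:frankshandelpolterovich} does not apply as stated; and there is no finiteness theorem for analytic volume-preserving actions of these lattices on general closed $3$-manifolds --- Theorem \ref{theorem:farbshalenprime3manifold} only constrains actions that are nontrivial on homology and concludes the manifold is $\Ta^3$, so a $3$-dimensional stratum is not ``handled'' by it. The Thurston-stability endgame via Theorem \ref{theorem:thurstonstability} is correct once one reaches a fixed point with trivial derivative, but the path there requires the missing ingredients above.
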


This particular theorem also requires a result of Rebelo \cite{Rebelo-T2} which concerns fixed sets
for actions of nilpotent groups on $\Ta^2$ and uses the ideas of \cite{Ghys-proches}.

The techniques of Farb and Shalen can also be used to prove that certain cocompact higher rank lattices
have no real analytic actions on surfaces of genus at least one.  For this result one needs only that the
lattice contains an element $\g$ whose centralizer already contains a higher rank lattice.  In the paper
\cite{FarbShalen-analytic} a more technical condition is required, but this can be removed using the results
of Ghys and Burger-Monod on actions of lattices on the circle. (This simplification was first pointed out to
the author by Farb.) The point is that $\g$ has fixed points for topological reasons and the set of these fixed points contains either $(1)$ a $Z(\g)$ invariant circle or $(2)$ a $Z(\g)$ invariant point.  Case $(1)$ reduces to case $(2)$ after passing to a finite index subgroup via the results on circle actions.  Case $(2)$ is dealt with by the proof of Theorem \ref{theorem:Tfixingpointonsurface}.

\subsection{Zimmer's full conjecture, partial results}

We now state the full form of Zimmer's conjecture.  In fact we generalize it slightly
to include all lattices with property $(T)$.  Throughout this subsection $G$ will be
a semisimple Lie group with property $(T)$ and $\G$ will be a lattice in $\G$.
We define two numerical invariants of these groups. First, for any group $F$, let $d(F)$ be
 the lowest dimension in which $F$ admits an infinite image linear representation. We note that the superrigidity
theorems imply that $d(\G)=d(G)$ when $\G$ is a lattice in $G$ and either $G$ is a semisimple
group property $(T)$ or $\G$ is irreducible higher rank.
The second number, $n(G)$ is the lowest dimension of a homogeneous space $K/C$ for a
compact group $K$ on which a lattice $\G$ in $G$ can act via a homomorphism $\rho:\Gamma{\rightarrow}K$.
In Zimmer's work, $n(G)$ is defined differently, in a way that makes clear that there is a bound on $n(G)$ that
does not depend on the choice of $\G$.  Namely, using the superrigidity theorems, it can be shown that $n$ satisfies
$$\frac{n(n+1)}{2}\geq\min\{\dim_{\Ca}G' \text{ where } G' \text{ is a simple factor of } G\}.$$

The more fashionable variant of Zimmer's conjecture is the following, first made
explicitly by Farb and Shalen for higher rank lattices in \cite{FarbShalen-analytic}.

\begin{conjecture}
\label{conjecutre:lowdimgen}
Let $\Gamma$ be a lattice as above.  Let $b=\min\{n,d\}$ and let $M$ be a manifold
of dimension less than $b-1$, then any $\Gamma$ action on $M$ is trivial.
\end{conjecture}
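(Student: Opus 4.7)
The plan is to split into the volume-preserving and non-volume-preserving cases, in each of which the bounds on $n(G)$ and $d(\Gamma)$ obstruct the existence of non-trivial behavior, and throughout to invoke Margulis' normal subgroup theorem in the form emphasized in \S\ref{subsection:normalsubgroups}: it suffices to exhibit a single infinite-order element of $\Gamma$ lying in $\ker\rho$, since then $\ker\rho$ has finite index and $\rho$ factors through a finite quotient.

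In the volume-preserving case, suppose $\rho:\Gamma\to\Diff^{\infty}(M,\omega)$. I would apply Zimmer's cocycle superrigidity theorem (see \S\ref{section:csr}) to the derivative cocycle $D\rho:\Gamma\times M\to GL(\dim M,\Ra)$. Since $\dim M<d(\Gamma)-1$, and since Margulis superrigidity identifies $d(\Gamma)$ with the minimum dimension of a non-trivial finite-dimensional representation of the ambient group $G$, the only candidate algebraic hull for $D\rho$ is a compact subgroup of $GL(\dim M,\Ra)$. Thus $D\rho$ is measurably cohomologous to a cocycle taking values in some compact $K\subset O(\dim M)$. The next step is to upgrade this measurable reduction to a genuine $\Gamma$-invariant continuous Riemannian metric on $M$, using property $(T)$-based averaging of measurable metrics together with control of Lyapunov exponents and the subexponential growth of distorted unipotent elements supplied by \cite{LubotzkyMozesRaghunathan}. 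Granting such an upgrade, $\rho$ factors through a homomorphism into a compact Lie group $K$ acting on $M$; but by the very definition of $n(G)$, any non-trivial $\Gamma$-action factoring through a compact group requires $\dim M\geq n(G)$, contradicting $\dim M<b-1\leq n(G)-1$.

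For the general case with no a priori invariant volume, I would use the Nevo--Zimmer projective factors theorem as described at the end of \S\ref{subsection:normalsubgroups}: any $\rho$-stationary probability measure on $M$ either is $\Gamma$-invariant---reducing to the previous paragraph---or admits a $\Gamma$-equivariant measurable factor $M\to G/Q$ for a proper parabolic $Q<G$. The smallest such homogeneous space has dimension controlled by a quantity comparable to $n(G)$, and one then tries to push this dimension lower bound through the measurable factor map, ideally by an entropy or differentiability argument, so that it contradicts the hypothesis $\dim M<b-1$.

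The principal obstacle is the ``measurable-to-smooth'' step in the volume-preserving argument: passing from a compact algebraic hull of the derivative cocycle to an honest smooth $\Gamma$-invariant Riemannian metric. This is precisely where all current progress on Zimmer's conjecture has concentrated, and in full generality it is not known; the known cases all rest on delicate interplay between Lyapunov exponents, distortion estimates, and property $(T)$-averaging that are not yet available uniformly across all higher-rank lattices and all $M$. A parallel obstacle appears in the non-volume-preserving case, where the Nevo--Zimmer factor is only measurable, so the comparison of dimensions requires extra regularity inputs that are currently missing outside of special situations.
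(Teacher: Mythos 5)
The statement you are attempting is presented in the paper as a \emph{conjecture} --- it is the general form of Zimmer's conjecture, attributed in this formulation to Farb and Shalen --- and the paper contains no proof of it; it is open. So the relevant comparison is not with a proof but with the paper's own assessment of where the difficulty lies, and on that score your proposal is a reasonable map of the known reductions together with an honest admission that the essential step is missing. The gap you name at the end is not a technical loose end: it is the entire content of the conjecture. In the volume-preserving case, cocycle superrigidity does yield a measurable invariant Riemannian metric when $\dim(M)<d$ (Proposition \ref{proposition:measmetric}), and the paper says explicitly that ``the conjecture is really about the regularity of the invariant metric''; the upgrade from a measurable to a smooth metric is isolated as Conjecture \ref{conjecture:metricissmooth} and is known only under substantial extra hypotheses (an invariant rigid geometric structure as in Theorem \ref{theorem:rgslowdim}, a periodic orbit, or distality). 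Your proposed mechanisms for closing it do not suffice: the Lubotzky--Mozes--Raghunathan distortion estimates enter the subject through area-preserving surface dynamics and are not a general tool for regularizing metrics, and the property-$(T)$ averaging scheme of \S\ref{subsection:approaches} controls derivatives only in an $L^2$ sense, which the paper identifies as precisely the unresolved difficulty.

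The non-volume-preserving half has further problems beyond missing regularity. The Nevo--Zimmer projective factors theorem (Theorem \ref{theorem:nevozimmer}) is proved for actions of semisimple groups all of whose factors have real rank at least $2$; it does not apply to lattices in $Sp(1,n)$ or $F_4^{-20}$, which the conjecture deliberately includes and for which the paper's stated grounds for plausibility are entirely different (namely Conjecture \ref{conjecture:boundariesofquotients} on boundaries of hyperbolic quotients). Moreover the projective factor is only a measurable quotient with respect to a stationary measure, and, as Example \ref{example:projective} warns, stationary measures may fail to detect most of the dynamics; no known argument converts the existence of a measurable factor onto $G/Q$ into a lower bound on $\dim(M)$. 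Both halves of your proposal therefore terminate exactly at the open problems, and no proof results.
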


It is particularly bold to state this conjecture including lattices in $Sp(1,n)$
and $F_4^{-20}$. This is usually avoided because such lattices have abundant non-volume preserving
actions on certain types of highly regular fractals. I digress briefly to explain why I believe the
 conjecture is plausible in that case.  Namely such a lattice $\Gamma$, being a hyperbolic group, has many infinite proper quotients which are also hyperbolic groups \cite{Gromov-hyp}. If $\Gamma'$ is a quotient of $\Gamma$ by an
 infinite index, infinite normal subgroup $N$ and $\Gamma'$ is hyperbolic, then the boundary $\partial \Gamma'$ is a $\Gamma$ space with interesting dynamics and a good (Ahlfors regular) quasi-invariant measure class.  However, $\partial \Gamma'$ is only a manifold when it is a sphere. It seems highly unlikely that this is ever the case for these groups and even more unlikely that this
is ever the case with a smooth boundary action.  The only known way to build a hyperbolic group
which acts smoothly on its boundary is to have the group be the fundamental group of a compact negatively
curved manifold $M$ with smoothly varying horospheres.  If smooth is taken to mean $C^{\infty}$, this then implies the manifold is locally symmetric \cite{BCG-GAFA} and then superrigidity results make it impossible for this to occur with $\pi_1(M)$ a quotient of a lattice by an infinite index infinite normal subgroup.  If smooth only means $C^1$, then even in this
context, no result known rules out $M$ having fundamental group a quotient of a lattice in $Sp(1,n)$ or $F_4^{-20}$.
All results one can prove using harmonic map techniques in this context only rule out $M$ with
non-positive complexified sectional curvature.  We make the following conjecture, which is stronger than what
is needed for Conjecture \ref{conjecutre:lowdimgen}.

\begin{conjecture}
\label{conjecture:boundariesofquotients}
Let $\G$ be a cocompact lattice in $Sp(1,n)$ or $F_4^{-20}$ and let $\G'$ be a quotient of $\G$ which is
Gromov hyperbolic.
If $\partial \G'$ is a sphere, then the kernel of the quotient map is finite and $\partial \G' = \partial \G$.
\end{conjecture}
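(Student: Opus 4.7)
The plan is to exploit the two structures available on $\G'$: it inherits property $(T)$ from $\G$, and as a Gromov hyperbolic group with sphere boundary it admits a uniform convergence action on $S^k$. Combining these with Pansu's quasi-conformal rigidity for the Carnot-Carath\'eodory boundary of $G$ should force $\partial \G' = \partial \G$, and hence the kernel $N$ of the quotient map to be finite.

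First I would construct a $\G$-equivariant boundary map $\varphi : \partial G \to \partial \G'$, where $\partial G = \partial_\infty(G/K)$ is either the quaternionic sphere $S^{4n-1}$ or the Cayley sphere $S^{15}$. Since $\G$ is a cocompact lattice in a rank-one simple Lie group, it acts doubly ergodically on $\partial G$ with respect to the Patterson-Sullivan measure class; and since $\G'$ acts on $\partial \G'$ as a uniform convergence group, $\partial \G'$ carries a natural quasi-invariant Patterson-Sullivan class with amenable stabilizers. General Furstenberg-Mackey boundary theory then produces a measurable $\G$-equivariant map $\varphi$, which the minimality of both convergence actions forces to be essentially surjective.

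Second, I would attempt to promote $\varphi$ to a quasi-symmetric homeomorphism between $\partial G$ equipped with its Carnot-Carath\'eodory metric and $\partial \G'$ equipped with an Ahlfors-regular metric in its canonical conformal gauge. Once this regularity is obtained, Pansu's theorem that every quasi-conformal self-map of the quaternionic or Cayley sphere is the boundary extension of an isometry of $G$ applies to each conjugate $\varphi \circ \g \circ \varphi^{-1}$, forcing the $\G$-action on $\partial \G'$ to factor through the standard action on $\partial G$ via $\varphi$. Since the $\G$-action on $\partial G$ has finite kernel (intersection of $\G$ with the center of $G$), $N$ must be finite and $\varphi$ descends to an equivariant homeomorphism $\partial G \cong \partial \G'$.

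The main obstacle is precisely the regularity promotion in the previous paragraph. A priori the equivariant boundary map is only measurable, and matching the rich sub-Riemannian structure of $\partial G$ to the a priori only topological sphere $\partial \G'$ is a formidable problem; in the real-hyperbolic analogue, the corresponding statement is essentially Cannon's conjecture for $S^2$ boundaries, which remains open. Two features distinguish the present setting: property $(T)$ of $\G'$ severely restricts its cohomological behavior on the sphere, and Pansu's theorem is dramatically stronger than its real-hyperbolic counterpart. A potentially tractable intermediate goal would be to show that a hyperbolic group with property $(T)$ and sphere boundary has Ahlfors-regular conformal dimension equal to $4n+2$ or $22$, and then to produce a canonical quasi-symmetric structure on $\partial \G'$ via a Bonk-Kleiner type uniformization argument. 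Absent such a promotion, a direct attack on Conjecture \ref{conjecture:boundariesofquotients} appears to require essentially new rigidity techniques beyond those currently available in the Zimmer program.
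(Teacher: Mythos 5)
This statement is an open conjecture in the paper; no proof is given there, only heuristic evidence (namely, that the only known hyperbolic groups acting smoothly on their sphere boundaries are fundamental groups of negatively curved manifolds with smoothly varying horospheres, in which case \cite{BCG-GAFA} forces local symmetry and superrigidity for $Sp(1,n)$ and $F_4^{-20}$ excludes an infinite-index infinite normal kernel). Your proposal is therefore not being measured against an existing argument, and on its own terms it is not a proof: as you yourself concede in the final paragraph, the promotion of the measurable equivariant boundary map to a quasi-symmetric homeomorphism is missing, and that step is not a technical refinement but the entire content of the problem. It is worth being precise about why it cannot be finessed. In Mostow--Pansu rigidity the boundary map is quasi-conformal \emph{because} it is induced by a quasi-isometry of the interiors. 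Here the quotient map $\G\rightarrow\G'$ has infinite kernel by hypothesis (in the nontrivial case), so it is as far from a quasi-isometry as possible, and there is no geometric map of spaces inducing $\varphi$. A map produced by Furstenberg--Mackey theory is merely measurable, and measurable equivariant boundary maps exist in great profusion without carrying any metric information; nothing in that construction even forces $\partial\G'$ to have the same topological dimension as $\partial G$, let alone an Ahlfors-regular conformal gauge quasi-symmetric to the Carnot--Carath\'eodory sphere. So the appeal to Pansu's theorem in your second step has no input to act on.

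Two further cautions. First, your suggested intermediate goal (computing the conformal dimension of a hyperbolic group with property $(T)$ and sphere boundary, then invoking a Bonk--Kleiner type uniformization) is itself of the same order of difficulty as Cannon's conjecture, which you correctly identify as the real-hyperbolic analogue and which is open; quaternionic rigidity does not obviously make the uniformization half of the problem easier, only the endgame after uniformization is achieved. Second, even granting everything, the conclusion you draw --- that the $\G$-action on $\partial\G'$ factors through the standard action on $\partial G$ and hence $N=\ker(\G\rightarrow\G')$ is finite --- requires knowing that $N$ acts trivially on $\partial\G'$, which is automatic since $N$ is the kernel of $\G\rightarrow\G'$ and $\G'$ acts on its own boundary, but then the finiteness of $N$ follows from faithfulness (mod center) of the $\G$-action on $\partial G$ only once $\varphi$ is known to be an equivariant bijection; a merely measurable, essentially surjective $\varphi$ does not give this. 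In short, the proposal is a reasonable map of the landscape and correctly locates the obstruction, but it does not close the gap, and the paper itself offers no route to closing it either.
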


\noindent For background on hyperbolic groups and their boundaries from a point of view relevant to this
conjecture, see \cite{Kleiner-ICM}.  As a cautionary note, we point the reader to subsection \ref{subsection:farrelllafont} where we recall a construction of Farrell and Lafont that shows
that any Gromov hyperbolic group has an action by homeomorphisms on a sphere.

The version of Zimmer's conjecture that Zimmer made in \cite{Zimmer-ICM} and \cite{Zimmer-Mostow}
was only for volume preserving actions.  Here we break it down somewhat explicitly to clarify the
role of $d$ and $c$.

\begin{conjecture}[Zimmer]
\label{conjecture:lowdimvolume}
Let $\Gamma$ be a lattice as above and assume $\G$ acts smoothly on a compact manifold $M$
preserving a volume form.  Then if $\dim(M) < d$, the $\G$ action is isometric.  If, in
addition, $\dim(M)<n$ or if $\G$ is non-uniform, then the $\Gamma$ action is finite.
\end{conjecture}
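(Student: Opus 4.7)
The natural approach is to apply Zimmer's cocycle superrigidity theorem to the derivative cocycle of the action. Fix a smooth action $\rho \colon \Gamma \to \Diff^{\infty}(M,\omega)$ and a measurable trivialization of $TM$, so the derivative gives a cocycle $\alpha \colon \Gamma \times M \to GL(\dim M,\Ra)$ over a finite measure preserving action (the volume form is essential here, as emphasized in the introduction and in the discussion of Stuck's examples). Cocycle superrigidity, to be discussed in Section \ref{section:csr}, should then produce a measurable decomposition $\alpha(\g,x) = \varphi(\g x)^{-1} \sigma(\g) c(\g,x) \varphi(x)$, where $\sigma$ extends to a continuous homomorphism $G \to GL(\dim M,\Ra)$ and $c$ takes values in a compact subgroup $K \subset GL(\dim M,\Ra)$. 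Since $\dim M < d = d(G)$, the homomorphism $\sigma$ must have bounded, hence precompact, image, so after enlarging $K$ we may assume $\sigma$ is trivial and $\alpha$ is measurably cohomologous to a cocycle into $K$.

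The second step is to exploit the fact that $K$ preserves some inner product on $\Ra^{\dim M}$: pulling this inner product back through $\varphi$ produces a measurable $\Gamma$-invariant Riemannian metric $g$ on $M$. If $g$ can be promoted to a \emph{smooth} $\Gamma$-invariant Riemannian metric, then $\rho(\Gamma) \subset \Isom(M,g)$, a compact Lie group, and the first assertion of the conjecture follows.

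The main obstacle is precisely this regularity step: cocycle superrigidity yields only a measurable trivialization, and there is no general mechanism known to upgrade a measurable invariant metric on $M$ to a continuous, let alone smooth, one. Any serious attack will have to combine the cocycle machinery with additional inputs such as property $(T)$ applied to the Sobolev spaces of sections of $\mathrm{Sym}^2(T^\ast M)$, dynamical estimates on the growth of derivatives along orbits, and a harmonic-map or averaging procedure to improve the regularity of $\varphi$. I would try to bootstrap regularity sector by sector: first obtain continuity on a set of full measure, then use the compactness of $M$ and the density of a single orbit (via Moore ergodicity or equidistribution) to push continuity everywhere, and finally use elliptic regularity for an invariant Laplacian to pass from continuous to $C^\infty$.

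For the second conclusion, suppose the action is already isometric, so $\overline{\rho(\Gamma)} = L$ is a compact Lie subgroup of $\Isom(M,g)$; for each $x$, the orbit closure $Lx$ is a homogeneous space $L/C_x$ embedded in $M$, so $\dim(L/C_x) \leq \dim M$. If $\dim M < n(G)$ then the very definition of $n(G)$ forces $\rho(\Gamma)$ to be finite. If instead $\Gamma$ is non-uniform, then $\Gamma$ contains non-trivial unipotent elements; any such element is distorted in $\Gamma$ (indeed virtually nilpotent in its centralizer) and must be sent to a compact-group element all of whose powers remain bounded, forcing it into the kernel of $\rho$. This kernel is therefore infinite, and Margulis' normal subgroup theorem, invoked as in \S\ref{subsection:normalsubgroups}, forces $\rho(\Gamma)$ to be finite. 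The only genuinely hard input in the entire argument is the regularity upgrade in the first step; everything else is a packaging of superrigidity, property $(T)$, and the normal subgroup theorem.
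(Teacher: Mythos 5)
You have not proved anything, and neither does the paper: the statement in question is Conjecture \ref{conjecture:lowdimvolume}, an open conjecture, and the paper offers no proof of it. What your plan does is reproduce, essentially verbatim, the paper's own discussion of where the content of the conjecture lies. Cocycle superrigidity applied to the derivative cocycle of a volume preserving action in dimension below $d(G)$ yields a measurable invariant Riemannian metric --- this is exactly Proposition \ref{proposition:measmetric} --- and once one knows the invariant metric is \emph{smooth}, the finiteness assertions in the second half follow from cases of Margulis' superrigidity theorem (unipotent elements of a non-uniform lattice must die in any compact group, so the kernel is infinite and the normal subgroup theorem applies; the case $\dim(M)<n$ is handled by the definition of $n(G)$). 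The paper says explicitly that ``the conjecture is really about the regularity of the invariant metric'' and isolates the missing step as Conjecture \ref{conjecture:metricissmooth}. So the obstacle you flag is not merely the hard step of your argument; it is the entire open content of the statement, and no mechanism for carrying it out is currently known.

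Two further cautions about the way you propose to fill the gap. First, your suggested bootstrap (continuity on a full measure set, then density of an orbit, then elliptic regularity for an invariant Laplacian) cannot work as stated in full generality: the exotic examples of \S\ref{section:exoticactions} show that the straightening section produced by cocycle superrigidity cannot always be made smooth on all of $M$, so at best one can hope for regularity off a small singular set. Second, the approaches the paper actually records are different from yours: Theorem \ref{theorem:rgslowdim} handles the case of an invariant rigid geometric structure by properness of the isometry group on a frame bundle rather than by improving the metric; Theorem \ref{theorem:Zfixingpointonsurface} uses a periodic orbit together with Thurston stability; Theorem \ref{theorem:discretespectrum} extracts finite dimensional invariant subspaces of $L^2(M)$ \`{a} la Peter--Weyl; and \S\ref{subsection:approaches} proposes a contraction operator $P$ on the $\CAT(0)$ space of $L^2$ metrics. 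Your text should be presented as a reduction of Conjecture \ref{conjecture:lowdimvolume} to Conjecture \ref{conjecture:metricissmooth}, not as a proof.
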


Some first remarks are in order.  The cocycle super-rigidity theorems (discussed below)
imply that, when the conditions of the conjecture hold, there is always a measurable
invariant Riemannian metric.  Also, the finiteness under the conditions in the second
half of the conjecture follow from cases of Margulis' superrigidity theorem as soon
as one knows that the action preserves a smooth Riemannian metric.  So from one
point of view, the conjecture is really about the regularity of the invariant metric.

We should also mention that the conjecture is proven under several additional
hypotheses by Zimmer around the time he made the conjecture.  The first example
is the following.

\begin{theorem}
\label{theorem:rgslowdim}
Conjecture \ref{conjecture:lowdimvolume} holds provided the action also
preserves a rigid geometric structure, e.g. a torsion free affine connection or a
pseudo-Riemannian metric.
\end{theorem}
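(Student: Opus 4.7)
The strategy is to produce a measurable $\Gamma$-invariant Riemannian metric via cocycle superrigidity, then exploit the rigidity of the auxiliary geometric structure to promote this metric to a smooth invariant Riemannian metric. Once the action is isometric, the finiteness statement is extracted from the definitions of $n(G)$ and $d(G)$, together with standard facts about non-uniform lattices.

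In more detail, the first step is to apply Zimmer's cocycle superrigidity theorem (discussed in \S\ref{section:csr}) to the derivative cocycle $\alpha : \Gamma \times M \to GL(\dim M, \Ra)$ associated to a smooth measure-preserving action. Because the acting group has property $(T)$ (with appropriate irreducibility) the theorem applies and yields a measurable cocycle-cohomology between $\alpha$ and a cocycle of the form $\sigma \cdot c$, where $\sigma$ factors through a homomorphism extending to the ambient semisimple group $G$, and $c$ takes values in a compact subgroup of $GL(\dim M, \Ra)$. Now the hypothesis $\dim M < d(G)$ forces $\sigma$ itself to have precompact image, since by the definition of $d(G)$ there are no infinite-image linear representations of $G$ in dimensions below $d$. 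Thus the whole derivative cocycle is measurably cohomologous to a cocycle into the compact group $O(\dim M)$, which is precisely the statement that there exists a measurable $\Gamma$-invariant Riemannian metric $g$ on $M$.

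The second, and technically central, step is the promotion of $g$ from measurable to smooth using the preserved rigid geometric structure $\mathcal{S}$. Here one invokes Gromov's theory: because $\mathcal{S}$ is rigid of some finite order $k$, the group $\Aut(M,\omega,\mathcal{S})$ is a finite-dimensional Lie group, and the $\Gamma$-action on the order-$k$ frame bundle $F^k(M)$ is free on the subbundle $F^k_{\mathcal{S}}(M)$ adapted to $\mathcal{S}$. The measurable invariant metric $g$ corresponds to a measurable $\Gamma$-equivariant section of an associated bundle; equivariance with respect to an action that is free on a smooth submanifold of the frame bundle forces this section to coincide almost everywhere with a smooth section, hence (after modification on a null set) to be smooth. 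Equivalently, the $\Gamma$-action factors through $\rho:\Gamma\to\Aut(M,\omega,\mathcal{S})$, and the measurable invariant metric implies that $\rho(\Gamma)$ is precompact in this Lie group, so averaging over its closure produces a smooth $\Gamma$-invariant Riemannian metric. This is the step I expect to be the main obstacle: making the transition from a measurable invariant object to a smooth one requires a careful application of Gromov's rigidity results and the interplay between the measurable and smooth categories, and is the reason Theorem~\ref{theorem:rgslowdim} is not known without the rigid-structure hypothesis.

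Given a smooth invariant Riemannian metric, the action embeds into the compact Lie group $\Isom(M,g)$. For the finiteness statement, observe that $\Isom(M,g)$ acts smoothly and transitively on each orbit, exhibiting every orbit as a homogeneous space $K/C$ of a compact Lie group. If $\dim M < n(G)$, the definition of $n(G)$ excludes infinite-image homomorphisms of $\Gamma$ into such compact homogeneous spaces' isometry groups, so $\rho(\Gamma)$ must be finite. For the non-uniform case, one uses that non-uniform lattices in semisimple groups with property $(T)$ contain unipotent elements (via Kazhdan--Margulis) which cannot have non-trivial image in any compact Lie group; combined with Margulis' normal subgroup theorem, this forces $\rho(\Gamma)$ to be finite regardless of the dimension bound, completing the argument.
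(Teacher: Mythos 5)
Your proposal follows essentially the same route as the paper: a measurable invariant metric from cocycle superrigidity (Proposition \ref{proposition:measmetric}), then the rigid geometric structure to conclude the action is isometric, then superrigidity to extract finiteness. The one place you go astray is the first of your two ``equivalent'' formulations of the promotion step. It is not true, and it is not what is proven, that the measurable invariant metric coincides almost everywhere with a smooth section because the action is free on an adapted frame bundle; the paper explicitly cautions that in the proof of Theorem \ref{theorem:rgslowdim} the measurable metric from Proposition \ref{proposition:measmetric} is \emph{never} shown to be smooth. What actually carries the argument is your second formulation: the action factors through the finite-dimensional Lie group $\Aut(M,\omega,\mathcal{S})$, which acts properly on some higher-order frame bundle; the existence of the measurable invariant metric, together with property $(T)$ of $\G$, forces $\G$ to have bounded orbits on all frame bundles, hence $\rho(\G)$ lies in a compact subgroup of $\Aut(M,\omega,\mathcal{S})$, and averaging over the closure of $\rho(\G)$ then yields a smooth invariant Riemannian metric. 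So keep the second version and discard the first; they are not equivalent, and only the second is correct. Your treatment of the finiteness step (via $n(G)$ and, for non-uniform lattices, unipotents) is a slightly more explicit version of the paper's appeal to Margulis' superrigidity and is fine, though the claim that a unipotent element of the lattice has trivial image in any compact group itself requires either superrigidity or the standard self-conjugacy-to-powers argument.
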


This is proven in \cite{Zimmer-Finitetype} for structures of finite type in the
sense of Elie Cartan, see also \cite{Zimmer-Mostow}.  The fact that roughly the same
proof applies for rigid structures in the sense of Gromov
was remarked in \cite{FisherZimmer}.  The point is simply that the isometry group
of a rigid structure acts properly on some higher order frame bundle and
that the existence of the measurable metric implies that $\G$ has bounded
orbits on all frame bundles as soon as $\G$ has property $(T)$.  This immediately implies that $\G$ must
be contained in a compact subgroup of the isometry group of the structure.

Another easy version of the conjecture follows from the proof of Theorem
\ref{theorem:Tfixingpointonsurface}.  That is:

\begin{theorem}
\label{theorem:Zfixingpointonsurface}
Let $G$ be a semisimple Lie group with property $(T)$ and $\G<G$ a lattice.  Let $\G$ act on a compact manifold $M$
by $C^1$ diffeomorphisms  where $\dim(M)<d(G)$.
If $\Gamma$ has a periodic orbit on $S$, then the $\Gamma$ action is finite.
\end{theorem}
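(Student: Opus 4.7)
The plan is to follow the same three-step template that established Theorem \ref{theorem:Tfixingpointonsurface}, replacing the ad hoc analysis of $SL(2,\mathbb{R})$ by the defining property of the invariant $d(G)$ together with superrigidity. The hypothesis that $\Gamma$ has a periodic orbit means there is a point $x \in M$ whose $\Gamma$-orbit is finite; replacing $\Gamma$ by the (finite index) stabilizer $\Gamma'$ of $x$, we may assume $x$ is fixed. Since $\Gamma$ has property $(T)$ and property $(T)$ passes to finite-index subgroups, $\Gamma'$ also has property $(T)$, and in particular finite abelianization.

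Next, consider the derivative representation at the fixed point,
\[
d\rho_x : \Gamma' \longrightarrow GL(T_xM) \cong GL(\dim(M),\mathbb{R}).
\]
By hypothesis $\dim(M) < d(G)$. Because $\Gamma$ is a lattice in a semisimple Lie group with property $(T)$, the superrigidity theorems (as recalled in the paragraph defining $d$) give $d(\Gamma) = d(G)$, so $d\rho_x$ cannot have infinite image. Thus $d\rho_x(\Gamma')$ is finite, and after passing to a further finite-index subgroup $\Gamma''\leq\Gamma'$ we may assume $d\rho_x$ is trivial. The subgroup $\Gamma''$ still has property $(T)$ and hence finite abelianization.

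Now apply Thurston's stability theorem (Theorem \ref{theorem:thurstonstability}) to the $\Gamma''$ action at $x$: since $\Gamma''$ is finitely generated with finite abelianization, fixes $x$, and has trivial derivative there, the action of $\Gamma''$ is trivial in a neighborhood of $x$, and hence on the connected component of $M$ containing $x$ (and on all of $M$ if $M$ is connected; in general one treats each orbit of components separately, but the orbit of the component of $x$ under $\Gamma'$ is finite so the argument can be applied componentwise after passing to a further finite-index subgroup). Since $\Gamma''$ has finite index in $\Gamma$, the original $\Gamma$ action is finite.

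The only place where one must be slightly careful is the passage between the finite-index subgroups: one needs property $(T)$ and the conclusion $d(\Gamma)=d(G)$ to survive each reduction, both of which are standard. There is essentially no real obstacle beyond what was already handled in Theorem \ref{theorem:Tfixingpointonsurface}; the surface-specific step (boundedness in $SL(2,\mathbb{R})$ via Haagerup) is exactly what the general invariant $d(G)$ was designed to replace, so the proof goes through verbatim in arbitrary dimension below the threshold $d(G)$.
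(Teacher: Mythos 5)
Your proof is correct and is essentially the argument the paper intends: the paper explicitly derives this theorem "from the proof of Theorem \ref{theorem:Tfixingpointonsurface}," i.e.\ pass to a finite-index subgroup fixing a point, observe the derivative representation has finite image (here because $\dim(M)<d(G)=d(\G)$ by superrigidity, replacing the $SL(2,\Ra)$/Haagerup step), and apply Thurston stability. Nothing further is needed.
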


A more difficult theorem of Zimmer shows that Conjecture \ref{conjecture:lowdimvolume} holds
when the action is distal in a sense defined in \cite{Zimmer-distal}.

\subsection{Some approaches to the conjectures}
\label{subsection:approaches}

\subsubsection{Discrete spectrum of actions}
\label{subsubsection:zimmerdiscretespectrum}

A measure preserving action of a group $D$ on a finite measure
space $(X,\mu)$ is said to have {\em discrete spectrum} if
$L^2(X,\mu)$ splits as sum of finite dimensional $D$ invariant
subspaces.  This is a strong condition that is (quite formally)
the opposite of weak mixing, for a detailed discussion see \cite{Furstenberg-book}.  It is a theorem of Mackey (generalizing
earlier results of Halmos and Von Neumann for $D$ abelian) that
an ergodic discrete spectrum $D$ action is measurably isomorphic
to one described by a dense embedding of $D$ into a compact group $K$
and considering a $K$ action on a homogeneous $K$-space.  The following
remarkable result of Zimmer from \cite{Zimmer-discretespectrum} is perhaps the strongest evidence for
Conjecture \ref{conjecture:lowdimvolume}.  This result is little known and has only
recently been applied by other authors, see \cite{FisherSilberman, FurmanMonod}.

\begin{theorem}
\label{theorem:discretespectrum}
Let $\G$ be a group with property $(T)$ acting by smooth, volume
preserving diffeomorphisms on a compact manifold $M$.  Assume in
addition that $\G$ preserves a measurable invariant metric.  Then
the $\G$ action has discrete spectrum.
\end{theorem}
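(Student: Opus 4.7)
The strategy is to use the measurable invariant metric to construct $\Gamma$-invariant compact (Hilbert--Schmidt) operators on $L^2(M,\mathrm{vol})$ whose eigenspaces are finite-dimensional $\Gamma$-invariant subspaces, and then to invoke property $(T)$ to guarantee that these eigenspaces exhaust $L^2(M)$.

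First I would use the invariant metric to produce a symmetric, $\Gamma$-invariant, measurable distance function $d$ on $M$, and then, for each $\epsilon>0$, form the bounded symmetric kernel $k_\epsilon(x,y)=\phi(d(x,y)/\epsilon)$ for a fixed compactly supported continuous cutoff $\phi$. Because $M$ has finite volume and $k_\epsilon$ is bounded, the associated integral operator $K_\epsilon$ on $L^2(M,\mathrm{vol})$ is Hilbert--Schmidt, hence compact and self-adjoint; and since $\Gamma$ preserves both $d$ and $\mathrm{vol}$, it commutes with the Koopman representation of $\Gamma$. The spectral theorem for compact self-adjoint operators then decomposes $L^2(M)=\ker K_\epsilon\oplus\bigoplus_\lambda E_{\lambda,\epsilon}$, where each nonzero $E_{\lambda,\epsilon}$ is finite-dimensional and $\Gamma$-invariant.

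Let $H_d\subset L^2(M)$ denote the closed linear span of all finite-dimensional $\Gamma$-invariant subspaces. Every eigenspace $E_{\lambda,\epsilon}$ lies in $H_d$, so each $K_\epsilon$ vanishes identically on $H_d^\perp$. The remaining task is to show that $H_d^\perp=\{0\}$, and this is where property $(T)$ enters. A natural route is to lift the $\Gamma$-action to the orthonormal frame bundle $\mathrm{OF}(M)$ furnished by the invariant metric; since the metric is preserved, the derivative cocycle reduces to an $O(n)$-valued cocycle, and Peter--Weyl for $O(n)$ expresses $L^2(\mathrm{OF}(M))$ as a direct sum of twisted $L^2$-sections of associated bundles over $M$. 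A Mackey--Zimmer decomposition of the $\Gamma$-action on $H_d^\perp$ would exhibit it as a sum of weakly mixing pieces; but by shrinking $\epsilon$ and averaging characteristic functions of small $d$-balls one should manufacture almost-$\Gamma$-invariant unit vectors in $H_d^\perp$. Property $(T)$ then forces the existence of an honestly $\Gamma$-invariant vector, which is finite-dimensional invariant and so must already lie in $H_d$, a contradiction.

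The main obstacle is this last step: producing genuine almost-invariant vectors from a purely measurable metric is delicate, since distance-based local arguments can fail without some regularity of $d$ relative to $\mathrm{vol}$. A cleaner but morally equivalent route is to study the subgroup $\mathcal{G}\subset\mathrm{Aut}(M,\mathrm{vol})$ of measure-preserving $d$-isometries; if one can show that $\mathcal{G}$ is compact in the weak topology (by a measurable analogue of Ascoli--Arzel\`a, using that $d$-equicontinuity collapses to compactness on a probability space), then $\rho(\Gamma)\subset\mathcal{G}$ and the discrete-spectrum decomposition of $L^2(M)$ follows at once from Peter--Weyl applied to $\mathcal{G}$. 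In either route, the hard technical point is the same: leveraging property $(T)$ to upgrade a merely measurable invariant geometric structure into genuine compactness of the closure of the acting group.
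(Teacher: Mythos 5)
Your core construction is exactly the one the paper sketches: Zimmer builds $\Gamma$-invariant kernels on $M\times M$ which are functions of the ``distance'' to the diagonal, obtains from them compact self-adjoint operators commuting with the Koopman representation, and takes their eigenspaces as the finite-dimensional $\Gamma$-invariant subspaces. So the architecture of your first two paragraphs matches the paper's proof. However, the two steps you treat most lightly are precisely the ones that carry the weight of the argument, and as written both have genuine gaps. First, the opening sentence ``use the invariant metric to produce a $\Gamma$-invariant measurable distance function $d$'' conceals the main technical difficulty. A merely measurable Riemannian metric does not induce a distance in the usual way: the length $\int \|\dot\gamma(t)\|_{g(\gamma(t))}\,dt$ of a path sees the metric only along a set of measure zero, where a measurable section is simply not defined. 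The paper flags exactly this --- ``for these to be invariant by `distance' we need to mean something defined in terms of the measurable metric instead of a smooth one'' --- and calls the resulting kernel construction ``quite technical,'' deferring to Zimmer's original paper. Your proposal takes this as a one-line preliminary, which it is not.

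Second, your mechanism for showing $H_d^\perp=\{0\}$ fails as stated. Characteristic functions of small $d$-balls are not almost-invariant vectors for the Koopman representation: a generator $\gamma$ carries the ball about $x$ to the ball about $\gamma x$, so for a typical point $\|\gamma\cdot f-f\|$ is of order $\|f\|$ rather than small, and property $(T)$ has nothing to act on. The alternative route you offer --- compactness of the group of measure-preserving $d$-isometries followed by Peter--Weyl --- is essentially a restatement of the conclusion, since by the Mackey/Halmos--von Neumann picture discrete spectrum is equivalent to the action being measurably a compact group action; and no unconditional ``measurable Ascoli--Arzel\`a'' can exist, because the theorem is false without property $(T)$. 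The natural way to finish the kernel argument is an approximate-identity statement ($K_\epsilon f\to f$ after suitable normalization as $\epsilon\to 0$), and making that work requires quantitative control of the measurable metric against the volume form --- which is where the Kazhdan hypothesis must actually be spent, for instance through the boundedness of $\Gamma$-orbits on frame bundles that the paper invokes elsewhere. In short: right skeleton, but the two hard points --- defining the kernels from a measurable metric, and proving completeness of the eigenspace decomposition --- are asserted rather than proved, and the specific almost-invariant-vector device you propose for the second does not work.
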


This immediately implies that no counterexample to Conjecture
\ref{conjecture:lowdimvolume} can be weak mixing or even admit a
weak mixing measurable factor.

The proof of the theorem involves constructing finite dimensional subspaces of $L^2(M,\omega)$
that are $\G$ invariant.  If enough of these subspaces could be shown to be spanned
by smooth functions, one would have a proof of Conjecture \ref{conjecture:lowdimvolume}.
Here by ``enough" we simply mean that it suffices to have a collection of finite dimensional
$D$ invariant subspaces that separate points in $M$.  These functions would then specify
a $D$ equivariant smooth embedding of $M$ into $\Ra^N$ for some large value of $N$.

To construct finite dimensional invariant subspaces of $L^2(M)$, Zimmer uses an approach
similar to the proof of the Peter-Weyl theorem.  Namely, he constructs $\Gamma$ invariant kernels on $L^2(M \times M)$
which are used to define self-adjoint, compact operators on $L^2(M)$.  The eigenspaces of these
operators are then finite dimensional, $\G$ invariant subspaces of $L^2(M)$.  The kernels should be thought of as functions of the distance to the diagonal in $M \times M$.
The main difficulty here is that for these to be invariant by ``distance" we need to mean something defined
in terms of the measurable metric instead of a smooth one.  The construction of the kernels in this setting
is quite technical and we refer readers to the original paper.

It would be interesting to try to combine the information garnered from this theorem
with other approaches to Zimmer's conjectures.

\subsubsection{Effective invariant metrics}

We discuss here briefly another approach to Zimmer's conjecture, due to the author, which
seems promising.

We begin by briefly recalling the construction of the space of ``$L^2$
metrics" on a manifold $M$.  Given a volume form $\omega$ on $M$, we
can consider the space of all (smooth) Riemannian metrics on $M$ whose
associated volume form is $\omega$. This is the space of smooth sections
of a bundle $P{\rightarrow}M$.  The fiber of $P$ is $X=SL(n,\Ra)/SO(n)$.
The bundle $P$ is an associated bundle to the $SL(n,\Ra)$ sub-bundle of the
frame bundle of $M$ defined by $\omega$. The space $X$ carries a natural
$SL(n,\Ra)$-invariant Riemannian metric of non-positive curvature; we denote its associated distance function by $d_X$.
This induces a natural notion of distance on the space of metrics, given by
$d(g_1,g_2)^2=\int_M d_X(g_1(m),g_2(m))^2 d\omega$.
The completion of the sections with respect to the metric $d$ will be denoted
$L^2(M,\omega,X)$;  it is commonly referred to as the {\em
space of $L^2$ metrics on $M$} and its elements will be called $L^2$ \emph{metrics} on $M$.
That this space is $\CAT(0)$ follows easily from the fact that $X$ is $\CAT(0)$.  For more
discussion of $X$ and its structure as a Hilbert
manifold, see e.g. \cite{FisherHitchman-csr}. It is easy to check that a volume preserving
$\Gamma$ action on $M$ defines an isometric $\Gamma$ action on
$L^2(M,\omega,X)$.  Given a generating set $S$ for $\Gamma$ and a metric $g$ in $L^2(M,\omega,X)$,
we write $\disp(g)=max_{\gamma{\in}S} d(\gamma g, g)$.

Given a group $\G$ acting smoothly on $M$ preserving $\omega$, this
gives an isometric $\G$ action on $L^2(M,\omega, X)$ which preserves
the subset of smooth metrics.  Let $S$ be a generating set for $\G$.
We define an operator $P:L^2(M,\omega, X){\rightarrow}L^2(M,\omega, X)$
by taking a metric $g$ to the barycenter of the measure $\sum_S \delta_{(\gamma g)}$.
The first observation is a consequence of the (standard, finite dimensional) implicit
function theorem.

\begin{lemma}
\label{lemma:smoothpreserved}
If $g$ is a smooth metric, then $Pg$ is also smooth.
\end{lemma}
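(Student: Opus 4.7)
The plan is to reduce the claim to the standard fact that the barycenter in a Hadamard manifold depends smoothly on finitely many inputs, and then apply this fiberwise. First note that the barycenter in $L^2(M,\omega,X)$ of the empirical measure $\sum_{\gamma\in S}\delta_{\gamma g}$ is computed pointwise: since
$$\sum_{\gamma\in S} d(h,\gamma g)^2 \;=\; \int_M \sum_{\gamma\in S} d_X\bigl(h(m),(\gamma g)(m)\bigr)^2\, d\omega(m),$$
minimizing the left-hand side over $h$ is equivalent to fiberwise minimization of the integrand. Hence $(Pg)(m)$ is the unique minimizer in the fiber $P_m\cong X$ of $x\mapsto \sum_{\gamma\in S} d_X\bigl(x,(\gamma g)(m)\bigr)^2$.

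Next I show that the barycenter map $b\colon X^k\to X$ sending $(x_1,\dots,x_k)$ to the minimizer of $F(x;x_1,\dots,x_k)=\sum_{i=1}^k d_X(x,x_i)^2$ is smooth, where $k=|S|$. Because $X=SL(n,\mathbb{R})/SO(n)$ is a Hadamard manifold (complete, simply connected, of nonpositive sectional curvature), each squared-distance $x\mapsto d_X(x,x_i)^2$ is smooth and strictly convex on $X$. Consequently $F$ is jointly smooth in $(x,x_1,\dots,x_k)$, strictly convex and coercive in $x$, with everywhere positive-definite Hessian $\nabla_x^2 F$. Applying the implicit function theorem to the critical point equation $\nabla_x F = 0$ yields that $b$ is smooth on all of $X^k$.

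Finally I transfer this to the bundle $P\to M$. Around any point $m_0\in M$, choose a local trivialization $P|_U\cong U\times X$ on a neighborhood $U$. Since $g$ is smooth and each $\gamma\in S$ acts smoothly on both $M$ and $P$, each translate $\gamma g$ is a smooth section of $P$, represented in the trivialization by a smooth map $g_\gamma\colon U\to X$. In this trivialization
$$(Pg)(m) \;=\; b\bigl(g_{\gamma_1}(m),\dots,g_{\gamma_k}(m)\bigr),$$
a composition of smooth maps, hence smooth on $U$. As $m_0$ was arbitrary, $Pg$ is a smooth section of $P$, that is, a smooth metric.

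The main point to verify carefully is the global applicability of the implicit function theorem defining $b$: this reduces to checking that the minimizer of $F$ never escapes to infinity and that $\nabla_x^2 F$ remains invertible along the minimizing sheet. Both are immediate from the Hadamard structure of $X$, so there is no genuine analytic obstruction; the lemma is, as indicated, a direct application of the finite-dimensional implicit function theorem.
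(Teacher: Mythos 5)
Your argument is correct and is exactly the route the paper intends: the paper dispenses with the lemma by remarking that it is ``a consequence of the (standard, finite dimensional) implicit function theorem,'' and your proof is a careful fleshing-out of precisely that -- reduce the barycenter in $L^2(M,\omega,X)$ to a fiberwise barycenter in the Hadamard fiber $X=SL(n,\Ra)/SO(n)$, and apply the implicit function theorem to the critical-point equation there. The pointwise reduction and the verification that the Hessian of the sum of squared distances is positive definite are the right points to check, and you have checked them.
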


Moreover, we have the following two results.

\begin{theorem}
\label{theorem:Tcontracts}
Let $M$ be a surface and $\G$ a group with property $(T)$ and finite generating set $S$. Then there exists $0<C<1$
such that the operator $P$ satisfies:
\begin{enumerate}
\item $\disp(Pg)< C \disp(g)$
\item for any $g$, the $\lim_n(P^ng)$ exists and is $\G$ invariant.
\end{enumerate}
\end{theorem}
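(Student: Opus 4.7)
My plan is to first establish the strict contraction of displacement in (1) and then deduce (2) by a Cauchy sequence argument in the complete $\CAT(0)$ space $L^2(M,\omega,X)$.

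For (1), the basic observation is that since each $s\in S$ acts by isometries on $L^2(M,\omega,X)$ and the barycenter map $b$ is isometry-equivariant, one has $sPg=b(s_{*}\mu)$ where $\mu=\frac{1}{|S|}\sum_{s'\in S}\delta_{s'g}$. Thus $\disp(Pg)=\max_{s\in S}d(b(\mu),b(s_{*}\mu))$ reduces to comparing barycenters of $\mu$ and its left-translates. A soft $\CAT(0)$ Wasserstein estimate gives some upper bound in terms of $\disp(g)$, but obtaining a strict contraction constant $C<1$ is the point at which property $(T)$ must enter.

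In a Hilbert space, property $(T)$ is equivalent to a spectral gap statement: for any isometric action of $\G$ and any vector $v$, the generating-set average $P_Sv=\frac{1}{|S|}\sum_{s\in S}sv$ satisfies $\disp(P_Sv)\leq C\,\disp(v)$ for some $C=C(\G,S)<1$. Here, however, the target is not Hilbert but the $\CAT(0)$ space of $L^2$ sections with fiber the hyperbolic plane $X=SL(2,\Ra)/SO(2)$. The main obstacle, and the step I expect to be hardest, is transferring this Hilbert spectral-gap estimate to the nonlinear $\CAT(0)$ setting. I would attempt this by linearizing at the barycenter: the tangent cone $T_{Pg}L^2(M,\omega,X)$ is the Hilbert space of $L^2$ vector fields along $Pg$, the logarithm map sends the support of $\mu$ into this tangent cone with images summing to zero (the defining property of barycenter), and $\CAT(0)$ comparison inequalities allow both $d(sPg,Pg)$ from above and $\disp(g)$ from below to be controlled by norms of averaged images in the tangent cone. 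Applying the Hilbert-space Kazhdan inequality to the linearized action in the tangent cone then yields a strict contraction constant $C<1$ depending only on $(\G,S)$ and on a comparison constant for the rank-one target; the fact that $X$ is $\CAT(-1)$ and two-dimensional keeps the comparison favorable and is presumably why the statement is restricted to surfaces.

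Given (1), part (2) is immediate. Since $P^{n+1}g$ is the barycenter of a finitely supported measure with support in $\{sP^ng:s\in S\}$ and such a barycenter lies within the closed convex hull of its support,
\[
d(P^{n+1}g,P^ng)\leq\disp(P^ng)\leq C^n\disp(g),
\]
so $\{P^ng\}$ is Cauchy at a geometric rate and converges to a limit $g_{\infty}\in L^2(M,\omega,X)$. Continuity of the isometric $\G$-action together with $\disp(g_{\infty})=\lim_n\disp(P^ng)=0$ then shows that $g_{\infty}$ is $\G$-invariant, as required.
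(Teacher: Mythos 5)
Your reduction of part (2) to part (1) via completeness of $L^2(M,\omega,X)$ is correct and is exactly what the paper does. The gap is in the key step of part (1): ``applying the Hilbert-space Kazhdan inequality to the linearized action in the tangent cone.'' There is no action of $\G$ on the tangent cone $T_{Pg}L^2(M,\omega,X)$ to which property $(T)$ could be applied: $\G$ does not fix $Pg$, the logarithm images of $s'g$ and of $ss'g$ live in tangent cones at the different basepoints $Pg$ and $sPg$, and any identification of these cones introduces holonomy errors that $\CAT(0)$ or $\CAT(-1)$ comparison controls only in the direction useless to you (the log map is $1$-Lipschitz, so tangent-cone norms bound distances from below, not above). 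More fundamentally, no argument of the shape ``property $(T)$ plus soft comparison with Hilbert space'' can work: the fixed-point property for isometric actions on $\CAT(0)$ spaces is known to be strictly stronger than property $(T)$, so some special feature of the fiber $X$ beyond its curvature bound must enter. Carrying out your linearization rigorously leads to Izeki--Nayatani-type distortion invariants, and one then obtains a contraction only when the Kazhdan constant of $(\G,S)$ beats the distortion of the target --- not for an arbitrary group with $(T)$.

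The feature the paper actually uses is that the hyperbolic distance on $\Ha^2$ is a conditionally negative definite kernel (equivalently, $\Isom(\Ha^2)$ has the Haagerup property). Integrating over $M$ gives a $\G$-invariant conditionally negative definite kernel on $L^2(M,\omega,\Ha^2)$, hence, by the Schoenberg--GNS construction, an equivariant embedding of $L^2(M,\omega,\Ha^2)$ into a genuine Hilbert space carrying an affine isometric $\G$-action; the contraction estimate is proved there and then transferred back, and part (2) follows from completeness as you say. This is also the real reason the statement is restricted to surfaces: for $\dim M=n\geq 3$ the fiber is $SL(n,\Ra)/SO(n)$, whose distance function is not conditionally negative definite (its isometry group has property $(T)$), and the corresponding statement, Theorem \ref{theorem:zimmercontracting}, requires the entirely different arguments of \cite{FisherHitchman-IMRN}. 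Your instinct that the two-dimensional fiber is essential is right, but the mechanism is the existence of this kernel, not a favorable $\CAT(-1)$ comparison constant.
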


A proof of this theorem can be given by using the standard construction
of a negative definite kernel on $\Ha^2$ to produce a negative definite
kernel on $L^2(S,\mu,\Ha^2)$.  The theorem is then proved by transferring
the first property from the resulting $\G$ action on a Hilbert space.
The second property is an obvious consequence of the first and completeness
of the space $L^2(M,\omega, X)$.

\begin{theorem}
\label{theorem:zimmercontracting}
Let $G$ be a semisimple Lie group all of whose simple factors have property $(T)$
and $\G<G$ a lattice.  Let $M$ be a compact manifold such that $\dim(M)<d(G)$.
Then the operator $P$ on $L^2(M,\omega, X)$ satisfies the conclusions of Theorem
\ref{theorem:Tcontracts}.
\end{theorem}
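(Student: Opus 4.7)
The plan is to reduce the contraction estimate to the Hilbert-space picture underlying the proof of Theorem \ref{theorem:Tcontracts} by invoking Zimmer's cocycle superrigidity theorem, which applies thanks to the dimension bound $\dim(M)<d(G)$.

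First, I would apply cocycle superrigidity (discussed in Section \ref{section:csr}) to the derivative cocycle $\alpha:\G\times M\to SL^{\pm}(k,\mathbb R)$ of the action, where $k=\dim M$. Because $k<d(G)$, no non-compact semisimple image is available in the Mackey range, so $\alpha$ must be measurably cohomologous to a cocycle $\beta$ taking values in a compact subgroup $K<SO(k)$. Equivalently, there exists a measurable $\G$-invariant section $g_0$ of the bundle $P\to M$ whose fibers are $X=SL(k,\mathbb R)/SO(k)$; this plays the role of a ``measurable $\G$-invariant Riemannian metric''.

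Second, I would use $g_0$ as a measurable basepoint to linearize the problem. Define $\Phi:L^2(M,\omega,X)\to L^2(M,\omega,\mathbb R)$ by $\Phi(g)(m)=d_X(g(m),g_0(m))$. The map $\Phi$ is $1$-Lipschitz by the pointwise triangle inequality in $X$, and because $g_0$ is $\G$-invariant it intertwines the $\G$-action on the source with the isometric $\G$-action on the target coming from measure preservation on $M$. Since $X$ is $\CAT(0)$, the distance function to $g_0$ is convex, so the barycenter operator $P$ on $L^2(M,\omega,X)$ is dominated under $\Phi$ by the honest Hilbert-space barycenter operator on $L^2(M,\omega,\mathbb R)$. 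Property $(T)$ of $\G$ inherited from $G$, together with the standard negative-definite-kernel argument already invoked in Theorem \ref{theorem:Tcontracts}, then supplies a Kazhdan-type spectral gap that yields $\disp(Pg)<C\disp(g)$ for a uniform constant $C<1$, which is conclusion $(1)$; conclusion $(2)$ follows at once from $(1)$ together with completeness of $L^2(M,\omega,X)$ and $\G$-equivariance of $P$.

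The main obstacle is that the invariant section $g_0$ provided by cocycle superrigidity is merely measurable, so $\Phi(g)$ need not lie in $L^2(M,\omega,\mathbb R)$ without further work. One must either truncate $g_0$ by bounded measurable approximations and show that the displacement estimates pass to the limit, or, alternatively, work directly with the differences $d_X(g_1(m),g_2(m))$ between $L^2$ sections and use $g_0$ only as an equivariant reference point in the background in order to harvest a Kazhdan gap. A secondary subtlety is that $X$ has higher rank once $k\geq 3$, so the scalar function $\Phi(g)$ discards directional information on $X$; however, because $g_0$ is a genuine $\G$-fixed point of the measurable action on sections and the spectral gap from property $(T)$ applies to all directions of deformation away from $g_0$, this loss of directional data does not obstruct the contraction of $\disp$.
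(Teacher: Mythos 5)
There is a genuine gap, and it sits at the heart of your argument. The paper does not prove this theorem by transferring the negative-definite-kernel argument of Theorem \ref{theorem:Tcontracts}; it derives it from the Bochner-method/harmonic-map results of \cite{FisherHitchman-IMRN} (or, for cocompact lattices, from \cite{KorevaarSchoen3}) together with convexity of the distance function on the $\CAT(0)$ space $L^2(M,\omega,X)$. The reason a new input is needed is exactly the point your proposal elides: the kernel argument behind Theorem \ref{theorem:Tcontracts} works only because for a surface the fiber is $X=SL(2,\Ra)/SO(2)=\Ha^2$, whose distance function is a conditionally negative definite kernel. For $k=\dim(M)\geq 3$ the fiber $X=SL(k,\Ra)/SO(k)$ is a higher rank symmetric space and $d_X$ is \emph{not} conditionally negative definite --- if it were, $SL(k,\Ra)$ would have the Haagerup property, contradicting property $(T)$. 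So there is no kernel on $X$ to push to $L^2(M,\omega,X)$, and ``the standard negative-definite-kernel argument already invoked in Theorem \ref{theorem:Tcontracts}'' is simply unavailable here.

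Your attempted substitute, the map $\Phi(g)(m)=d_X(g(m),g_0(m))$, cannot repair this. Since $\Phi$ is only $1$-Lipschitz, equivariance gives the inequality $\disp(\Phi(g))\leq\disp(g)$ in $L^2(M,\omega,\Ra)$, which is the wrong direction: a spectral gap downstairs shows at best that the scalar function $d_X(g(\cdot),g_0(\cdot))$ is close to an invariant function, and that is consistent with $g$ wandering along a ``sphere'' about $g_0$ while remaining far from every invariant metric, so no contraction of $\disp$ on $L^2(M,\omega,X)$ can be extracted. (Nor is the image of a barycenter under a $1$-Lipschitz map comparable to the barycenter of the images in any way that would help.) The further difficulty you flag yourself --- that the measurable invariant metric $g_0$ produced by cocycle superrigidity need not lie in $L^2(M,\omega,X)$ --- is also real and unresolved; note that if it could be resolved you would already have conclusion $(2)$ with no operator argument at all, which is a sign that this step is not a technicality. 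To prove the theorem as stated you need a fixed-point/contraction principle for isometric actions of these groups on nonpositively curved Hilbert manifolds that does not factor through an embedding into a Hilbert space; that is precisely the content imported from \cite{FisherHitchman-IMRN}.
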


This theorem is proven from results in \cite{FisherHitchman-IMRN} using convexity of
the distance function on $L^2(M,\omega, X)$.  For cocompact lattices a proof can be
given using results in \cite{KorevaarSchoen3} instead.

The problem now reduces to estimating the behavior of the derivatives of $P^ng$
for some initial smooth $g$.  This expression clearly involves derivatives of
random products of elements of $\G$, i.e derivatives of elements of $\G$ weighted
by measures that are convolution powers of equidistributed measure on $S$. The main
cause for optimism is that the fact that $\disp(P^ng)$ is small immediately implies
that the first derivative of any $\g{\in}S$ must be small when measured at that point in $L^2(M,\omega,X)$.
One can then try to use estimates on compositions of diffeomorphisms and convexity
of derivatives to control derivatives of $P^ng$.  The key difficulty is that the
initial estimate on the first derivative of $\g$ applied to $P^ng$ is only
small in an $L^2$ sense.


\subsection{Some related questions and conjectures on countable subgroups of
$\Diff(M)$}
\label{subsection:relatedquestions}

In this subsection, we discuss related conjectures and results on countable subgroups of $\Diff(M)$.
All of these are motivated by the belief that countable subgroups of $\Diff(M)$ are quite special,
though considerably less special than say linear groups.  We defer positive constructions of non-linear
subgroups of $\Diff(M)$ to Section \ref{section:moreexamples} and a discussion of possible algebraic and geometric
properties shared by all finitely generated subgroups of $\Diff(M)$ to Section \ref{section:ghysprogram}. Here we concentrate on groups which do not
act on manifolds, either by theorems or conjecturally.

\subsubsection{Groups with property $(T)$ and generic groups}
We begin by focusing on actions of groups with property $(T)$.
For a finitely generated group $\Gamma$, we recall that $d(\G)$ be the smallest dimension in
which $\G$ admits an infinite image linear representation.  We then make the following
conjecture:

\begin{conjecture}
\label{conjecture:volume}
Let $\G$ be a group with property $(T)$ acting smoothly on a compact
manifold $M$, preserving volume. Then if $\dim(M)<d(\G)$, the action
preserves a smooth Riemannian metric.
\end{conjecture}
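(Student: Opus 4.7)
The plan is to prove the conjecture in two stages: first produce a measurable $\G$-invariant Riemannian metric on $M$, and then upgrade it to a smooth one using the barycenter operator $P$ on the $\CAT(0)$ space $L^2(M,\omega,X)$ of $L^2$ metrics, where $X = SL(n,\Ra)/SO(n)$ with $n = \dim M$. The second stage is essentially the strategy sketched for Theorems \ref{theorem:Tcontracts} and \ref{theorem:zimmercontracting}; what must be added in the present generality is the input that substitutes for cocycle superrigidity and the analytic bootstrap that extracts smoothness.

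For the first stage, I would study the derivative cocycle $\alpha : \G \times M \to GL(n,\Ra)$. The hypothesis $\dim M < d(\G)$ says that every linear representation of $\G$ into $GL(n,\Ra)$ has finite image. The goal is to show that $\alpha$ is measurably cohomologous to a cocycle with values in a compact subgroup $K \subset GL(n,\Ra)$, which would immediately produce a measurable $\G$-invariant inner product on $TM$. For higher rank lattices this is exactly Zimmer's cocycle superrigidity theorem; for arbitrary property $(T)$ groups, the route is more delicate and must use property $(T)$ directly. One natural approach is to argue by contradiction: if the algebraic hull of $\alpha$ is noncompact, then its identity component has a nontrivial semisimple part or reductive quotient, which after passage to a suitable invariant reduction would yield an infinite image linear representation of $\G$ of dimension $\le n < d(\G)$, contradicting the definition of $d(\G)$. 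Together with the discrete-spectrum conclusion of Theorem \ref{theorem:discretespectrum}, which is available once a measurable invariant metric exists, this confines the dynamics to an isometric model.

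Given such a measurable invariant metric, the second stage applies the barycenter operator $P$ of the paragraph preceding Lemma \ref{lemma:smoothpreserved}. The target is an analogue of Theorem \ref{theorem:Tcontracts} in this setting: there exists $C < 1$ with $\disp(Pg) < C\,\disp(g)$, and for any $g$ the iterates $P^n g$ converge to a $\G$-invariant element of $L^2(M,\omega,X)$. Contraction of $P$ should follow by transferring the isometric $\G$-action on $L^2(M,\omega,X)$ to a (possibly nonlinear) Hilbert-type setting where property $(T)$ forces decay of displacement; the dimension bound is what prevents the $\G$-action on the fibers $X$ from supporting a noncompact reduction, and plays the same role here that it plays in the cocycle reduction above. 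By Lemma \ref{lemma:smoothpreserved}, starting with a smooth $g_0$ each $P^n g_0$ is smooth, and the contraction produces an $L^2$-limit $g_\infty$ that is $\G$-invariant and, by uniqueness of invariant metrics up to the finite-dimensional ambiguity in the compact reduction, agrees with the measurable metric from the first stage.

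The main obstacle is the final regularity step: showing that $g_\infty$ is smooth, or equivalently that the Sobolev norms $\|P^n g_0\|_{H^k}$ stay bounded as $n \to \infty$ for every $k$. The contraction estimate gives only that $\|\g\cdot(P^n g_0) - P^n g_0\|_{L^2}$ is small for each $\g$ in a fixed generating set, which controls first derivatives of the generators evaluated at $P^n g_0$ only in an $L^2$ sense, not pointwise or in higher $C^k$. To bootstrap one would combine convexity of the distance on $L^2(M,\omega,X)$, interpolation between $H^k$ and $L^2$, and precise compositional estimates for diffeomorphisms, controlling $\|D^k(\g_1\cdots\g_m)\|$ in terms of the individual $\|D^k\g_i\|$, so as to promote $L^2$-smallness of the one-step displacement into bounds on higher derivatives of the iterates. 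This regularity bootstrap is exactly the obstruction flagged at the end of the excerpt, and is where the bulk of the analytic difficulty --- and, in my estimation, the real content of the conjecture --- resides.
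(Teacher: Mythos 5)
You are offering a proof of a statement that the paper records only as a conjecture (Conjecture \ref{conjecture:volume}); the paper contains no proof of it, and it is open. Your outline is in fact the very program the author describes: first obtain a measurable invariant metric (Proposition \ref{proposition:measmetric} and the reduction to Conjecture \ref{conjecture:metricissmooth}), then attempt to smooth it with the barycenter operator $P$ on $L^2(M,\omega,X)$. So the only question is whether your sketch closes the known gaps, and it does not; moreover two of your intermediate steps are themselves unjustified in the stated generality.

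First, the measurable metric. For higher rank lattices it comes from cocycle superrigidity, but for a general property $(T)$ group your argument --- that a noncompact algebraic hull of the derivative cocycle would yield an infinite image linear representation of $\G$ in dimension at most $\dim(M)<d(\G)$ --- does not go through. Reducing a cocycle to its algebraic hull produces a cocycle into a smaller algebraic group, not a homomorphism of $\G$; converting a noncompact hull into an actual linear representation of $\G$ is precisely what cocycle superrigidity does for higher rank lattices, and no analogue is known for arbitrary Kazhdan groups. This is why the paper says only that ``for many groups $\G$ with property $(T)$'' one can produce the measurable metric \cite{FisherSilberman}, the closest general statement being an announced dichotomy of Furman \cite{Furman-random}. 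Second, the contraction $\disp(Pg)<C\disp(g)$: Theorem \ref{theorem:Tcontracts} is stated only for surfaces, and its proof transfers a $G$-invariant negative definite kernel on $\Ha^2=SL(2,\Ra)/SO(2)$ to $L^2(M,\omega,X)$; no analogous unbounded invariant kernel exists on $SL(n,\Ra)/SO(n)$ for $n\ge 3$ (its existence would contradict property $(T)$ of $SL(n,\Ra)$), so this transfer argument does not generalize. Theorem \ref{theorem:zimmercontracting} does give the contraction in higher dimensions, but only for lattices in semisimple Lie groups and by entirely different methods (\cite{FisherHitchman-IMRN}, \cite{KorevaarSchoen3}); for a general property $(T)$ group the contraction itself is open. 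Finally, you correctly identify the regularity bootstrap --- upgrading $L^2$-smallness of the displacement of $P^ng_0$ to uniform $C^k$ bounds --- as the central obstruction, but you offer nothing beyond the heuristic already recorded at the end of the subsection on effective invariant metrics. As it stands, the proposal is a faithful restatement of the author's program with the two additional gaps noted above, not a proof.
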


For many groups $\G$ with property $(T)$, one can produce a measurable
invariant metric, see \cite{FisherSilberman}.  In fact, in \cite{FisherSilberman}, Silberman
and the author prove that there are many groups with property $(T)$
with no volume preserving actions on compact manifolds.  Key steps include
finding the invariant measurable metric, applying Zimmer's theorem
on discrete spectrum from \S \ref{subsection:approaches}, and producing
groups with no finite quotients and so no linear representations at all.

A result of Furman announced in \cite{Furman-random} provides some further evidence
for the conjecture.  This result is analogous to Proposition \ref{proposition:furstenberg}
and shows that any action of a group with property $(T)$ either leaves
invariant a measurable metric or has positive {\em random entropy}.  We
refer the reader to \cite{Furman-random} for more discussion.  While the
proof of this result is not contained in \cite{Furman-random}, it is possible
to reconstruct it from results there and others in \cite{FurmanMonod}.

Conjecture \ref{conjecture:volume} and the work in \cite{FisherSilberman} are motivated in part by the
following conjecture of Gromov:

\begin{conjecture}
\label{conjecture:gromovrandom}
There exists a model for random groups in which a ``generic" random group
admits no smooth actions on compact manifolds.
\end{conjecture}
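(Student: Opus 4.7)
The plan is to work in the Gromov density model of random groups at some fixed density $d \in (1/3, 1/2)$, where by work of Gromov, Zuk, and Ollivier a generic random group $\Gamma$ is infinite, torsion-free, non-elementary hyperbolic, and has Kazhdan's property $(T)$. The first step is to arrange, perhaps by combining the density model with an additional random quotienting procedure in the spirit of Ollivier--Wise, that a generic $\Gamma$ also has no proper finite-index subgroups (and more strongly, no non-trivial finite-dimensional unitary representations). For hyperbolic groups with property $(T)$ this is plausible since any finite-dimensional unitary image is precompact, hence relatively compact and amenable, and combining this with simplicity-type conclusions from small cancellation should force such images to be trivial. Together with the absence of finite quotients, this rules out any $\rho : \Gamma \to \Diff^\infty(M)$ with finite image except the trivial one.

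Next I would split the analysis of a hypothetical non-trivial smooth action $\rho : \Gamma \to \Diff^\infty(M)$ according to whether it preserves a volume form. In the volume-preserving case I would follow the strategy of Fisher--Silberman \cite{FisherSilberman}: property $(T)$ of $\Gamma$, applied to the natural isometric action on the $\CAT(0)$ space $L^2(M,\omega,X)$ of $L^2$ metrics from subsection \ref{subsection:approaches}, produces a measurable $\Gamma$-invariant Riemannian metric on $M$. Zimmer's discrete spectrum theorem (Theorem \ref{theorem:discretespectrum}) then decomposes $L^2(M,\omega)$ as a sum of finite-dimensional $\Gamma$-invariant subspaces, each of which yields a non-trivial finite-dimensional unitary representation of $\Gamma$. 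By the first step, every such representation is trivial, so $\Gamma$ acts trivially on a set of functions separating points on $M$, and hence trivially on $M$.

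For the non-volume-preserving case the strategy is to reduce to the previous one by producing an invariant measure, then argue as above. Concretely, I would pick a generating set $S$ for $\Gamma$ and a symmetric probability measure $\mu$ supported on $S$, and study a $\mu$-stationary measure $\nu$ on $M$. Using the Furstenberg--Nevo--Zimmer projective factor theorem and the hyperbolic boundary theory of $\Gamma$, any non-trivial measurable quotient of the $(\Gamma,\nu)$-action should be of algebraic origin, hence yield a linear representation of $\Gamma$; the absence of such representations forces $\nu$ to be $\Gamma$-invariant, reducing to the volume-preserving case after possibly passing to the measurable hull of the support of $\nu$. A parallel line of attack is to exploit that word-hyperbolic groups with property $(T)$ contain many elements whose powers grow geometrically, and compare with the known bounds on distortion in $\Diff^\infty(M)$ coming from the Franks--Handel--Polterovich circle of ideas \cite{FranksHandel-distortionvolume, Polterovich, CalegariFreedman-distortion}.

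The main obstacle, by a wide margin, is the non-volume-preserving case: neither the boundary-theoretic reduction nor the distortion comparison is currently available in the generality needed, and both rely on structural facts about the acting group (higher-rank algebraic structure, explicit cocycle superrigidity) that random groups lack. A secondary but nontrivial obstacle is the first step itself: ensuring that a generic random group simultaneously has property $(T)$ and no non-trivial finite-dimensional unitary representations requires delicate probabilistic small-cancellation estimates, and choosing the model so that both properties hold with probability tending to $1$ is the technical heart of the construction.
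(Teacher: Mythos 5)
This statement is a conjecture (attributed to Gromov) in a survey; the paper offers no proof, and in fact explicitly flags the first step of your plan as a famous open problem. What you have written is a strategy outline with several genuine gaps, some of which you acknowledge, but the gaps are more severe than your closing paragraph suggests. First, the requirement that a generic random group have no proper finite-index subgroups (hence no non-trivial finite-dimensional unitary representations) is not something one can currently ``arrange'' by tweaking the model: as the paper points out in its discussion of Conjecture \ref{conjecture:gromovrandom}, it is a well-known open question whether there exists even a single non-residually-finite hyperbolic group, let alone whether this holds generically. Your suggested route via small-cancellation quotients does not circumvent this. Second, in the volume-preserving case, property $(T)$ alone does not give a fixed point for the isometric action on $L^2(M,\omega,X)$: that space is $\CAT(0)$ but not a Hilbert space, and the contraction statements in the paper (Theorems \ref{theorem:Tcontracts} and \ref{theorem:zimmercontracting}) are proved only for surfaces or for higher rank lattices acting in dimension below $d(G)$. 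The Fisher--Silberman argument you invoke produces a measurable invariant metric only for carefully constructed families of property $(T)$ groups, not for an arbitrary group in the density model; establishing the measurable invariant metric generically is itself essentially Conjecture \ref{conjecture:volume} and is open.

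Third, and most seriously, both of your proposed attacks on the non-volume-preserving case would fail, not merely face technical obstacles. The Nevo--Zimmer projective factor theorem (Theorem \ref{theorem:nevozimmer}) is proved using the parabolic structure of higher rank semisimple Lie groups and Margulis' projective factors theorem; there is no analogue for word-hyperbolic groups, whose Poisson boundaries carry no algebraic structure that could force a stationary measure to be invariant. The distortion comparison is a dead end for a different reason: in any hyperbolic group every infinite-order element is undistorted, so a random group in the density model supplies no distorted elements whatsoever, whereas the Franks--Handel--Polterovich and Lubotzky--Mozes--Raghunathan circle of ideas runs on exhibiting distortion in the acting group and contradicting its absence in $\Diff(M,\omega)$. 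That mechanism is simply unavailable here. In short, your outline correctly identifies the volume-preserving reduction that motivates the conjecture, but each of the three pillars (no finite quotients generically, measurable invariant metric from $(T)$ alone, and the non-measure-preserving case) is an open problem in its own right, and the conjecture remains open.
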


It seems quite likely that the conjecture could be true for random groups
in the density model with density more than $\frac{1}{3}$.  These groups
have property $(T)$, see Ollivier's book \cite{Ollivier-book} for discussion on random groups.
For the conjecture to be literally true would require that a random hyperbolic
group have no finite quotients and it is a well-known question to determine
if there are any hyperbolic groups which are not residually finite, let alone
generic ones. If one is satisfied by saying the generic random group has only
finite smooth actions on compact manifolds, one can avoid this well-known open
question.

\subsubsection{Universal lattices} Recently, Shalom has proven that the groups $SL(n,\Za[X])$ have property $(T)$
when $n>3$.  (As well as some more general results.) Shalom refers to these groups as {\em universal lattices}. An interesting and approachable question is:

\begin{question}
\label{question:yehudagroups}
Let $\G$ be a finite index subgroup in $SL(n,\Za[X])$ and let $\G$ acting by diffeomorphisms
on a compact manifold $M$ preserving volume.  If $\dim(M)<n$ is there a measurable $\G$
invariant metric?
\end{question}

If one gives a positive answer to this question, one is then clearly interested in
whether or not the metric can be chosen to be smooth.  It is possible that this
is easier for these ``larger" groups than for the lattices originally considered by Zimmer.
One can ask a number of variants of this question, including trying to prove a full
cocycle superrigidity theorem for these groups, see below.  The question just asked
is particularly appealing as it can be viewed as a fixed point problem for the $\G$
action on the space of metrics (see \S \ref{subsection:approaches} for a definition).
In fact, one knows one has the invariant metric for the action of any conjugate of
$SL(n,\Za)$ and only needs to show that there is a consistent choice of invariant
metrics over all conjugates.  One might try to mimic the approach from \cite{Shalom-ICM},
though a difficulty clearly arises at the point where Shalom applies a scaling
limit construction.  Scaling limits of $L^2(M,\omega, X)$ can be described using non-standard
analysis, but are quite complicated objects and not usually isomorphic to the original
space.

\subsubsection{Irreducible actions of products}
Another interesting variant on Zimmer's conjecture is introduced in \cite{FurmanMonod}.
In that paper they study obstructions to irreducible actions of product groups.  An measure
preserving action of a product $\G_1 \times \G_2$ on a measure space $(X,\mu)$ is said to be {\em irreducible} if both $\G_1$ and $\G_2$ act ergodically on $X$.  Furman and Monod produce many obstructions to
irreducible actions, e.g. one can prove from their results that:

\begin{theorem}
\label{theorem:furmanmonod}
Let $\G=\G_1 \times \G_2 \times \G_3$.  Assume that $\G_1$ has property $(T)$ and no unbounded linear representations.
Then there are no irreducible, volume preserving $\G$ actions on compact manifolds.  The same is true for $\G=\G_1 \times \G_2$ if $\G_1$ is as above and $\G_2$ is solvable.
\end{theorem}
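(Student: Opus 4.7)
The plan is to use property $(T)$ of $\G_1$ together with the absence of unbounded linear representations to reduce the whole $\G$-action to one factoring through a compact group, and then exploit commutation of the factors to contradict irreducibility.

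First, since the action is irreducible, $\G_1$ acts ergodically on $(M,\omega)$, and I would produce a measurable $\G_1$-invariant Riemannian metric on $M$. The natural tool is the isometric $\G_1$-action on the $\CAT(0)$ space $L^2(M,\omega,X)$ of measurable Riemannian metrics introduced in \S\ref{subsection:approaches}: property $(T)$ gives a $\G_1$-fixed point as soon as the orbit is bounded, and the hypothesis that $\G_1$ has no unbounded linear representations should be used to rule out unbounded orbits of the derivative cocycle via a cocycle-superrigidity-style argument (an unbounded orbit would produce a measurable equivariant reduction of the derivative cocycle to a non-compact subgroup of $GL(\dim M,\mathbb R)$, giving rise to an unbounded linear representation of $\G_1$ after passing to a suitable Furstenberg-type boundary). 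Zimmer's discrete spectrum theorem (Theorem \ref{theorem:discretespectrum}) then implies the $\G_1$-action has discrete spectrum, and Mackey's extension of the Halmos--von Neumann theorem gives a measurable $\G_1$-equivariant isomorphism of $(M,\omega)$ with a compact homogeneous space $K/L$, where $\G_1$ acts through a dense homomorphism $\pi:\G_1\to K$.

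Second, commutation with $\G_1$ transferred to the model $K/L$ forces the remaining factors to commute with the dense subgroup $\pi(\G_1)\subset K$ of left translations, hence with all of $K$; therefore they act by $K$-equivariant automorphisms of $K/L$, which form the compact group $N_K(L)/L$ acting by right translations. So the entire $\G$-action is measurably conjugate to an action through a continuous homomorphism $\G\to K'$ into a compact Lie group. To contradict irreducibility in the three-factor case, note that each $\G_i$ must project to a subgroup of $K'$ whose closure acts ergodically, and the three closures pairwise commute. Two dense commuting closed subgroups of a compact group force it to be abelian, so $K'$ is abelian; a direct Fourier analysis of three dense commuting subgroups acting jointly on the compact abelian homogeneous space $K'/L'$, keeping track of the absence of unbounded linear representations of $\G_1$ (which restricts the characters through which $\G_1$ can act), yields the contradiction. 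In the solvable case, a dense image of a solvable $\G_2$ forces the compact Lie group $N_K(L)/L$ to have abelian identity component, which is then incompatible with the density requirements on $\pi(\G_1)$ coming from ergodicity of $\G_1$ and the no-unbounded-linear-reps hypothesis.

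The main obstacle is the first step: converting the hypothesis that $\G_1$ has no unbounded linear representations into boundedness of the $\G_1$-orbit on $L^2(M,\omega,X)$. Property $(T)$ alone only gives the ``fixed point or divergence'' dichotomy for isometric $\CAT(0)$ actions, and ruling out divergence requires exactly the kind of cocycle-superrigidity content that usually has to be packaged as a separate theorem. Once the reduction to a compact group action is in place, the structural obstruction to three commuting ergodic factors (or to the solvable case) is comparatively soft.
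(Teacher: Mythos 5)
First, a point of comparison: the paper does not actually prove this statement. It is recorded as a consequence of the results of Furman and Monod in \cite{FurmanMonod} (``one can prove from their results that\ldots''), so there is no in-paper argument to measure your proposal against; it has to stand on its own. It does not. The decisive gap is exactly the one you flag as ``the main obstacle'': the hypothesis that $\G_1$ has property $(T)$ and no unbounded linear representations does \emph{not}, by itself, produce a measurable $\G_1$-invariant Riemannian metric for an arbitrary ergodic volume-preserving $\G_1$-action. Your sketch --- that an unbounded orbit on $L^2(M,\omega,X)$ would give a measurable reduction of the derivative cocycle to a non-compact subgroup of $GL(\dim M,\Ra)$ and hence an unbounded linear representation of $\G_1$ --- is precisely the content of a cocycle superrigidity theorem, and no such theorem is available for a general property $(T)$ group: a cocycle with non-compact algebraic hull need not be cohomologous to, or otherwise yield, a homomorphism of the acting group. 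This is why the survey only asserts the existence of the measurable invariant metric ``for many groups with property $(T)$'' (citing \cite{FisherSilberman}), and records Furman's dichotomy ``invariant measurable metric or positive random entropy'' \cite{Furman-random} as a separate, nontrivial result. The actual engine behind Theorem \ref{theorem:furmanmonod} is a cocycle superrigidity statement for \emph{irreducible actions of product groups}: it is the irreducibility of the product action, not any property of $\G_1$ alone, that converts a non-compact hull of the derivative cocycle into an honest unbounded homomorphism of one of the factors. Your step~1 never invokes the other factors at all, so it cannot be repaired within the strategy as written.

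A confirming symptom appears in your endgame. Once the $\G_1$-action is modeled on $K/L$ via Theorem \ref{theorem:discretespectrum} and Mackey's theorem, $\G_1$ acts by \emph{left} translations and the commutant acts by \emph{right} translations through $N_K(L)/L$; left and right translations always commute, so ``commutation with the dense subgroup $\pi(\G_1)$'' yields no constraint whatsoever, and the pair $(\G_1,\G_2)$ gives nothing. The commutation that does work is between $\G_2$ and $\G_3$, which both land in $N_K(L)/L$ on the same side; their closures are commuting subgroups acting ergodically, which forces $K/L$ to be an abelian group, and only then does property $(T)$ of $\G_1$ (finite abelianization) force $K/L$, hence $M$, to be finite. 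This is where the third factor, or the solvability of $\G_2$ in the two-factor case, is genuinely used. As written, your argument would ``prove'' the statement for $\G_1\times\G_2$ with no hypothesis on $\G_2$, which is stronger than what Furman and Monod establish and should have been a warning sign. The identification of the commutant of an ergodic isometric system with $N_K(L)/L$ is fine, but both the opening and the closing of the proof need the product structure in ways the proposal does not supply.
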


This motivates the following conjecture.

\begin{conjecture}
\label{conjecture:furmanmonodpluslubotzky}Let $\G=\G_1 \times \G_2 $.  Assume that $\G_1$ has property $(T)$ and no unbounded linear representations and that $\G_2$ is amenable.
Then there are no irreducible, volume preserving $\G$ actions on compact manifolds.
\end{conjecture}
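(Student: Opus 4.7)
The approach is to mimic and extend the strategy behind Theorem \ref{theorem:furmanmonod}: use the hypotheses on $\G_1$ to convert the $\G_1$-action on $M$ into an algebraic model, and then exploit the fact that $\G_2$ commutes with $\G_1$ to pin down the full product action. The irreducibility hypothesis gives that $\G_1$ acts ergodically on $M$.

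First I would produce a measurable $\G_1$-invariant Riemannian metric on $M$. The hypotheses that $\G_1$ has property $(T)$ and no unbounded linear representations, together with ergodicity of the $\G_1$-action on $M$, should allow a suitable cocycle superrigidity theorem to be applied to the derivative cocycle $\G_1\times M\to GL(n,\Ra)$, showing that it is cohomologous to a cocycle into a compact subgroup; this is the type of statement underlying Furman's dichotomy in \cite{Furman-random} and is compatible with the framework of \cite{FurmanMonod}. Once such a measurable metric is in hand, Zimmer's Theorem \ref{theorem:discretespectrum} applies and gives that the $\G_1$-action has discrete spectrum. Mackey's generalization of the Halmos--von Neumann theorem then yields a measurable isomorphism $M\cong K/L$ of $\G_1$-spaces, with $K$ a compact second-countable group, $L\leq K$ closed, and $\G_1\to K$ a dense homomorphism acting by left translations.

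Next, I would transfer the commuting $\G_2$-action through this isomorphism. Since $\G_1$ is dense in $K$, a measurable transformation that commutes a.e.\ with left translations by $\G_1$ must commute with the full left $K$-action. The centralizer of the left $K$-action in the measurable transformations of $K/L$ consists exactly of right translations by elements of $N_K(L)/L$, so $\G_2$ acts measurably through a homomorphism $\G_2\to N_K(L)/L$ into a compact Lie group. Thus the entire $\G_1\times \G_2$-action on $M$ is measurably conjugate to an algebraic action on the compact homogeneous space $K/L$, factoring through a homomorphism into the compact Lie group $K\times N_K(L)/L$.

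The main obstacle is to convert this algebraic reduction into an actual contradiction. Three pieces of structure must be played against each other: the absence of unbounded linear representations of $\G_1$ restricts the possible image of $\G_1$ in $K$; the amenability of $\G_2$, combined with the classification of closed amenable subgroups of compact Lie groups, forces the closure of the $\G_2$-image in $N_K(L)/L$ to be virtually abelian; and irreducibility requires both factors to act ergodically on $K/L$. The expectation is that these three constraints are mutually incompatible: the $\G_2$-closure, being virtually abelian and commuting with a dense image of $\G_1$, should produce a non-constant measurable $(\G_1\times \G_2)$-invariant function on $K/L$ (for instance via a character-type eigenfunction of the $\G_2$-action that is automatically $\G_1$-invariant by the commutation), contradicting joint ergodicity. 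Establishing this rigorously, and ruling out degenerate configurations, goes beyond the solvable case treated in \cite{FurmanMonod}, and is the hard step; it requires a finer structural analysis of how amenable subgroups can sit inside the centralizer of a dense property-$(T)$ image in a compact Lie group.
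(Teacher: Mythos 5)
First, a point of order: the statement you are proving is stated in the paper as an open \emph{conjecture} (Conjecture \ref{conjecture:furmanmonodpluslubotzky}); the paper offers no proof of it, only the weaker Theorem \ref{theorem:furmanmonod} (where $\G_2$ is assumed solvable, or where there are three factors). So there is no argument in the paper to compare yours against, and what you have written is a strategy with an acknowledged hole at the decisive step, not a proof.

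Beyond that, the gap you flag at the end is exactly where the paper warns the difficulty lies. Your reduction lands you in the situation of a compact group $K$ containing a dense image of the property-$(T)$ group $\G_1$, with an amenable group $\G_2$ mapping into the (compact) centralizer $N_K(L)/L$, and you hope these constraints are ``mutually incompatible.'' But the paper points out, immediately after stating the conjecture, that one might be tempted to argue that no compact group contains both a dense finitely generated amenable group and a dense Kazhdan group, and that this is \emph{false}: the question, raised by Lubotzky in \cite{Lubotzky-book}, was resolved in the negative by Kassabov \cite{Kassabov-personal}. So the purely algebraic incompatibility you are hoping for does not exist; any proof must use more than the compact-model data your reduction produces. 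There are also problems earlier in your argument. (i) Producing the measurable $\G_1$-invariant metric needed to invoke Theorem \ref{theorem:discretespectrum} is itself not a theorem for general property-$(T)$ groups: Proposition \ref{proposition:measmetric} applies to semisimple Lie groups and their lattices, the result of \cite{FisherSilberman} covers only ``many'' $(T)$ groups, and Furman's dichotomy is announced in \cite{Furman-random} without a published proof. (ii) The compact group $K$ supplied by Mackey's theorem is second countable but need not be a Lie group (it can be profinite or an inverse limit of Lie groups), so $N_K(L)/L$ need not be linear; your appeal to the Tits alternative \cite{Tits} to conclude that the closure of the $\G_2$-image is virtually abelian therefore does not go through in general --- and the non-Lie case is precisely where Kassabov-type examples live. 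In short, the skeleton (reduce $\G_1$ to a translation action on $K/L$, push $\G_2$ into the centralizer) is a sensible first move consistent with the framework of \cite{FurmanMonod}, but every load-bearing step after that is either unproven in the required generality or runs directly into a known counterexample to the naive version of the final contradiction.
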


One might be tempted to prove this by showing that no compact group contains both a dense finitely generated
amenable group and a dense Kazhdan group.  This is however not true.  The question was raised by Lubotzky
in \cite{Lubotzky-book} but has recently been resolved in the negative by Kassabov \cite{Kassabov-personal}.

In the context of irreducible actions, asking that groups have property $(T)$ is perhaps too strong.
In fact, there are already relatively few known irreducible actions of $\Za^2=\Za \times \Za$!  If one
element of $\Za^2$ acts as an Anosov diffeomorphism, then it is conjectured by Katok and Spatzier
that all actions are algebraic \cite{KatokSpatzier}.  Even in more general settings where one
assumes non-uniform hyperbolicity, there are now hints that a classification might be possible, though
clearly more complicated than in the Anosov case, see \cite{KalininKatokRodriguezHertz} and references there.  In a sense that paper indicates that the ``exotic examples" that might arise in this context may be no worse than those that arise for actions of higher rank lattices.

\subsubsection{Torsion groups and $\Homeo(M)$}

A completely different and well studied aspect of the theory of transformation groups
of compact manifolds is the study of finite subgroups of $\Homeo(M)$.  It has recently
been noted by many authors that this study has applications to the study of ``large subgroups"
of $\Homeo(M)$.  Namely, one can produce many finitely generated and even finitely presented
groups that have no non-trivial homomorphisms to $\Homeo(M)$ for any compact $M$.  This is
discussed in e.g. \cite{BridsonVogtmann, FisherSilberman, Weinberger-thisvolume}. As far
as I know, this observation was first made by Ghys as a remark in the introduction to \cite{Ghys-proches}.
In all cases, the main trick is to construct infinite, finitely generated groups which contains
infinitely many conjugacy classes of finite subgroups, usually just copies of $(\Za/p\Za)^k$
for all $k>1$.  A new method of constructing such groups was recently introduced by Chatterji and
Kassabov \cite{ChatterjiKassabov}.

As far as the author knows, even if one fixes $M$ in advance, this is the only existing method
for producing groups $\Gamma$ with no non-trivial (or even no infinite image) homomorphisms to $\Homeo(M)$ unless $M=S^1$.  For $M=S^1$ we refer back to subsection \ref{subsection:thecircle} and to \cite{WitteMorris-thisvolume}.

\section{Cocycle superrigidity and immediate consequences}
\label{section:csr}

\subsection{Zimmer's cocycle super-rigidity theorem and generalizations}
\label{subsection:Zcsr}

A main impetus for studying rigidity of group actions on manifolds
came from Zimmer's theorem on superrigidity for cocycles.  This
theorem and its proof were strongly motivated by Margulis' work. In
fact, Margulis' theorem reduces to the special case of Zimmer's theorem for a certain cocycle
$\alpha:G{\times}G/{\Gamma}{\rightarrow}\G$. In
order to avoid technicalities, we describe only a special case of
this result, essentially avoiding boundedness and integrability assumptions on cocycles
that are automatic fulfilled in any context arising from a continuous action on a
compact manifold. Let $M$ be a compact manifold, $H$ a matrix group and
$P$ an $H$ bundle over $M$. For readers not familiar with bundle theory,
the results are interesting even in the case where $P=M \times H$.  Now let a
group $\G$ act on $P$
continuously by bundle automorphisms, i.e. such that there is a $\G$ action on $M$ for which
 the projection from $P$ to $M$ is equivariant.
Further assume that the action on $M$ is measure preserving and
ergodic.  The cocycle superrigidity theorem says that if $\G$ is a lattice in a simply connected, semi-simple Lie
group $G$ all of whose simple factors are noncompact and have property $(T)$
then there is a measurable map $s:M{\rightarrow}H$, a representation
$\pi:G{\rightarrow}H$, a compact subgroup $K<H$ which commutes with
$\pi(G)$ and a measurable map $\G{\times}M{\rightarrow}K$ such that
\begin{equation}
\label{equation:csr} \g{\cdot}s(m)=k(m,\g)\pi(\g)s(\g{\cdot}m).
\end{equation}
\noindent It is easy to check from this equation that the map $K$
satisfies the equation that makes it into a cocycle over the
action of $\G$.  One should view $s$ as providing coordinates on $P$
in which the $\G$ action is {\em almost a product}. For more
discussion of this theorem, particularly in the case where all simple
factors of $G$ have higher rank, the reader should see any of
\cite{Feres-book,Feres-csr,FisherMargulis-lrc,Furstenberg-csr,Zimmer-book}. (The version stated here is only proven in
\cite{FisherMargulis-lrc}, previous proofs all yielded somewhat more complicated
statements that require passing to finite ergodic extensions of the action.)
For the case of $G$ with simple factors of the form $Sp(1,n)$ and $F_4^{-20}$,
the results follows from work of the author and Hitchman \cite{FisherHitchman-IMRN},
building on earlier results of Korevaar-Schoen and Corlette-Zimmer \cite{CorletteZimmer, KorevaarSchoen3,  KorevaarSchoen2, KorevaarSchoen1}.

As a sample application, let $M=\Ta^n$ and let $P$ be
the frame bundle of $M$, i.e. the space of frames in the tangent
bundle of $M$. Since $\Ta^n$ is parallelizable, we have
$P=\Ta^n{\times}GL(n,\Ra)$. The cocycle super-rigidity theorem
then says that ``up to compact noise" the derivative of any measure
preserving $\G$ action on $\Ta^n$ looks measurably like a constant
linear map.  In fact, the cocycle superrigidity theorems apply more
generally to continuous actions on any principal bundle $P$ over $M$
with fiber $H$, an algebraic group, and in this context produces a
measurable section $s:M{\rightarrow}P$ satisfying equation
$(\ref{equation:csr})$. So in fact, cocycle superrigidity implies
that for any action preserving a finite measure on any manifold the
derivative cocycle looks measurably like a constant cocycle, up to
compact noise. That cocycle superrigidity provides information about
actions of groups on manifolds through the derivative cocycle  was
first observed by Furstenberg in \cite{Furstenberg-csr}.  Zimmer originally proved cocycle
superrigidity in order to study orbit equivalence of group actions.
For recent surveys of subsequent developments concerning orbit
equivalence rigidity and other forms of superrigidity for cocycles,
see \cite{Furman-thisvolume, Popa-ICM, Shalom-ECM}.

\subsection{First applications to group actions and the problem of regularity}
\label{subsection:csrfirstapps}

The following result, first observed by Furstenberg, is an immediate consequence
of cocycle superrigidity.

\begin{proposition}
\label{proposition:furstenberg}
Let $G$ be semisimple Lie group with no compact factors and with
property $(T)$ of Kazhdan and let $\G<G$ be a lattice.  Assume
$G$ or $\Gamma$ acts by volume preserving diffeomorphisms on a
compact manifold $M$.  Then there is a linear representation $\pi:G{\rightarrow}GL(\dim(M),\Ra)$
such that the Lyapunov exponents of $g{\in}G$ or $\G$ are exactly the
absolute values of the eigenvalues of $\pi(g)$.
\end{proposition}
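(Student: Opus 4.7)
The plan is to apply Zimmer's cocycle superrigidity to the derivative cocycle, and then invoke the fact that Lyapunov exponents are invariants of the measurable cohomology class of a cocycle and are unaffected by multiplication by compact-valued cocycles. The proposition is stated so as to be essentially a corollary of cocycle superrigidity, so most of the work is conceptual bookkeeping.

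First, I would set up the derivative cocycle. Let $P \to M$ be the $GL(n,\mathbb{R})$-bundle of frames of $TM$, where $n = \dim M$. The $\Gamma$-action (or $G$-action) lifts canonically to $P$ by bundle automorphisms covering the given action on $M$. Trivializing $P$ measurably produces a cocycle $\alpha : \Gamma \times M \to GL(n,\mathbb{R})$ whose value at $(\gamma,x)$ is the matrix of $D_x\gamma$ in the chosen measurable frame. Compactness of $M$ and smoothness of the action make $\alpha$ bounded, so all integrability hypotheses needed for cocycle superrigidity and for Oseledec's theorem are automatic.

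Next, I would invoke the version of cocycle superrigidity stated just before the proposition. Ergodicity of the volume-preserving action may be arranged on each ergodic component; the hypotheses on $G$ (semisimple, no compact factors, property $(T)$) are exactly those under which the theorem produces a representation $\pi : G \to GL(n,\mathbb{R})$, a compact subgroup $K < GL(n,\mathbb{R})$ centralizing $\pi(G)$, a measurable $s : M \to GL(n,\mathbb{R})$, and a measurable cocycle $k : \Gamma \times M \to K$ with
\begin{equation*}
\alpha(\gamma,x) \;=\; s(\gamma x)\,\pi(\gamma)\,k(\gamma,x)\,s(x)^{-1}.
\end{equation*}
Thus the derivative cocycle is measurably cohomologous to the product of the constant cocycle $\pi$ with a $K$-valued cocycle.

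The final step is to read off the Lyapunov exponents. For almost every $x$, the Oseledec exponents are defined by limits of $\tfrac{1}{N}\log\|\alpha(\gamma^N,x)v\|$. Since $s$ is a measurable function on the finite-measure ergodic space $(M,\mathrm{vol})$, the quantities $\log\|s(\gamma^N x)\|$ and $\log\|s(x)^{-1}\|$ grow subexponentially along almost every orbit and hence contribute zero after dividing by $N$. The compact factor $k(\gamma^N,x) \in K$ is norm-bounded and so also contributes no exponential growth in any direction. What remains is the constant cocycle $\pi$, whose Lyapunov spectrum at $\gamma$ is manifestly $\{\log|\lambda| : \lambda \text{ an eigenvalue of } \pi(\gamma)\}$. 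Hence the Lyapunov exponents of $\gamma$ acting on $M$ coincide with those of $\pi(\gamma)$, as claimed. The same argument handles the $G$-action case verbatim; for lattices one uses the version of cocycle superrigidity directly applicable to $\Gamma$, which is precisely the form quoted in the excerpt.

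The main obstacle, modest given the heavy machinery already in place, is the justification that the measurable coboundary $s$ and the compact noise $k$ do not distort the asymptotic growth rates. The compact piece is harmless because it lies in a norm-bounded set; the coboundary piece requires only the observation that any finite-valued measurable function on an ergodic probability space grows subexponentially along almost every orbit, an application of the Birkhoff ergodic theorem to $\log^+\|s\|$ and $\log^+\|s^{-1}\|$.
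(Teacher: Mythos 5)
Your approach is exactly the paper's: the text offers no proof beyond calling the proposition ``an immediate consequence of cocycle superrigidity,'' having already explained in the preceding paragraphs that superrigidity applied to the derivative cocycle on the frame bundle shows that cocycle is measurably a constant linear map up to compact noise. Your write-up supplies precisely the bookkeeping the paper leaves implicit, and the structure --- derivative cocycle, superrigidity, compact factor harmless, coboundary harmless --- is the intended one.

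One step in your final paragraph is misstated, though the conclusion survives. It is not true that an arbitrary finite-valued measurable function $f$ on an ergodic probability space satisfies $f(T^N x)/N \to 0$ almost everywhere; this follows from Birkhoff when $f \in L^1$, but the straightening map $s$ produced by cocycle superrigidity is only measurable, and $\log^+\|s\|$ and $\log^+\|s^{-1}\|$ need not be integrable, so the Birkhoff argument you cite does not apply as written. The standard repair: the derivative cocycle itself is integrable (the action is smooth and $M$ compact), so Oseledec guarantees that the Lyapunov limits exist; it therefore suffices to evaluate them along a subsequence of times of positive density. Choosing by Lusin a set $A$ with $\mu(A) > 1 - \epsilon$ on which $\|s\|$ and $\|s^{-1}\|$ are bounded, Birkhoff gives return times to $A$ of positive lower density, and along those times the coboundary terms contribute nothing after dividing by $N$. (Alternatively, one invokes the tempering lemma to replace $s$ by a tempered transfer function.) With that substitution your argument is complete.
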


This Proposition is most striking to those familiar with classical dynamics,
where the problem of estimating, let alone computing, Lyapunov exponents,
is quite difficult.

Another immediate consequence of cocycle super-rigidity is the following:

\begin{proposition}
\label{proposition:measmetric}
Let $G$ or $\G$ be as above, acting by smooth diffeomorphisms
on a compact manifold $M$ preserving volume.  If every element of $G$ or $\G$ acts
with zero entropy, then there is  a measurable invariant
metric on $M$.  In particular, if $G$ admits no non-trivial
representations of $\dim(M)$, then there is a measurable
invariant metric on $M$.
\end{proposition}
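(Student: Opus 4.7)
The plan is to apply the cocycle superrigidity theorem to the derivative cocycle and then push the resulting algebraic information back through the Pesin entropy formula. Concretely, let $n=\dim(M)$, trivialize the frame bundle $P$ of $M$ fiberwise as $P \to M$ with fiber $GL(n,\mathbb{R})$, and let $\alpha(g,x)=Dg_x$ be the derivative cocycle over the $G$- (or $\Gamma$-) action. Since the action preserves a smooth volume, $\alpha$ is a cocycle over a finite measure preserving ergodic (after passing to an ergodic component) action, so the hypotheses of Zimmer's cocycle superrigidity theorem are satisfied. It gives us a measurable section $s:M\to GL(n,\mathbb{R})$, a continuous representation $\pi:G\to GL(n,\mathbb{R})$, and a compact subgroup $K<GL(n,\mathbb{R})$ commuting with $\pi(G)$ together with a measurable cocycle $k$ into $K$ such that, $\mu$-almost everywhere,
\begin{equation*}
Dg_x = s(gx)\,k(x,g)\,\pi(g)\,s(x)^{-1}.
\end{equation*}

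Next I would use the zero entropy hypothesis to control $\pi$. Since the action is smooth and preserves the volume $\omega$, Pesin's entropy formula applies and gives that the measure-theoretic entropy $h_\omega(g)$ equals the integral of the sum of positive Lyapunov exponents of $g$. By hypothesis $h_\omega(g)=0$ for every $g$, hence $g$ has no positive Lyapunov exponents; applying the same to $g^{-1}$ (whose Lyapunov exponents are the negatives of those of $g$) forces all Lyapunov exponents of $g$ to vanish. Proposition \ref{proposition:furstenberg} then identifies these exponents with $\log|\lambda|$ for $\lambda$ an eigenvalue of $\pi(g)$, so every eigenvalue of every $\pi(g)$ has modulus one. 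For a semisimple Lie group with no compact factors such a representation must have precompact image: on each non-compact simple factor the image of a hyperbolic one-parameter subgroup would otherwise contain eigenvalues of modulus different from one. Thus $\pi(G)$ is contained in a compact subgroup $K'<GL(n,\mathbb{R})$.

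Once this is in hand, the invariant measurable metric is straightforward to construct. The compact group $K\cdot K'$ (well-defined since $K$ commutes with $\pi(G)$ and one can enlarge if needed to a joint compact group) preserves some inner product $\langle\cdot,\cdot\rangle_0$ on $\mathbb{R}^n$. Define a measurable Riemannian metric on $M$ by
\begin{equation*}
g_x(v,w):=\langle s(x)^{-1}v,\,s(x)^{-1}w\rangle_0,\qquad v,w\in T_xM.
\end{equation*}
Using the cocycle identity above, a direct computation gives $g_{gx}(Dg_xv,Dg_xw)=\langle k(x,g)\pi(g)s(x)^{-1}v,k(x,g)\pi(g)s(x)^{-1}w\rangle_0=g_x(v,w)$, so $g$ is the desired $G$-invariant measurable metric. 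For the lattice case one applies the same argument using cocycle superrigidity for $\Gamma$. The ``in particular'' clause follows because if $G$ admits no non-trivial $n$-dimensional linear representation then $\pi$ is trivial automatically, and no appeal to entropy or Pesin is needed; the pull-back construction above immediately produces an invariant measurable metric.

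The main obstacle in this plan is the legitimate use of Pesin's formula: it requires at least $C^{1+\varepsilon}$ regularity of the diffeomorphisms, which is given here since we assume the action is smooth, but is the sole reason one cannot weaken the regularity of the action in the statement. A secondary technical point is verifying that $\pi(G)$ being precompact really follows from every eigenvalue having modulus one; this uses the structure theory of semisimple groups (Cartan decomposition, so that any unbounded element has a hyperbolic piece whose image under a non-trivial representation would have an eigenvalue off the unit circle). Modulo these standard inputs, the proof is essentially a direct combination of cocycle superrigidity with the entropy/Lyapunov dictionary of Proposition \ref{proposition:furstenberg}.
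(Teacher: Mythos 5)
Your argument is correct and is exactly the route the paper intends: it states this proposition as an immediate consequence of cocycle superrigidity, and your writeup simply makes explicit the standard steps (superrigidity applied to the derivative cocycle, the entropy/Lyapunov dictionary of Proposition \ref{proposition:furstenberg} via Pesin's formula to force $\pi$ to have bounded--in fact trivial--image, and the pull-back of a $K$-invariant inner product through the straightening section $s$). No gaps; the only caveat worth recording is the one you already note, namely that Pesin's equality (not merely Ruelle's inequality) is what lets zero entropy kill the positive exponents, and this is available because the action is smooth and the measure is a smooth volume.
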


As mentioned above in subsection \ref{section:lowdimensional}, this
reduces Conjecture \ref{conjecture:lowdimvolume} to the following
technical conjecture.

\begin{conjecture}
\label{conjecture:metricissmooth}
Let $G$ or $\G$ acting on $M$ be as above.  If $G$ or $\G$ preserves
a measurable Riemannian metric, then they preserve a smooth invariant Riemannian metric.
\end{conjecture}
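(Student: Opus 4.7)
The plan is to combine Zimmer's discrete spectrum theorem (Theorem \ref{theorem:discretespectrum}) with the averaging operator $P$ from Section \ref{subsection:approaches} in an attempt to promote the measurable invariant Riemannian metric to a smooth one. First I would apply Theorem \ref{theorem:discretespectrum}, whose hypotheses are satisfied by assumption, to conclude that the action on $(M,\omega)$ has discrete spectrum, so $L^2(M,\omega) = \bigoplus_i V_i$ as an orthogonal direct sum of finite-dimensional $\Gamma$-invariant subspaces. By Mackey's theorem, the ergodic components of the measure-preserving action are then measurably isomorphic to a translation action of $G$ or $\Gamma$ on a homogeneous space $K/L$ of a compact group $K$ via a homomorphism $\rho:\Gamma\to K$ with dense image. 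The Haar-induced $K$-invariant Riemannian metric on $K/L$ is automatically smooth, so the entire problem reduces to showing that the measurable factor map $\phi:M\to K/L$ can be chosen to be a smooth diffeomorphism.

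For the regularity, I would attack the problem from the other direction, via the averaging operator. Let $g_0$ be any smooth Riemannian metric on $M$ with volume form $\omega$, and form the sequence $g_n = P^n g_0$ in $L^2(M,\omega,X)$. Lemma \ref{lemma:smoothpreserved} ensures each $g_n$ is smooth, and Theorem \ref{theorem:zimmercontracting} gives $g_n \to g_\infty$ in $L^2$, with $g_\infty$ a $\Gamma$-invariant metric. The goal would be to show that the convergence actually takes place in $C^\infty$, which would yield a smooth invariant metric. Uniqueness of the measurable invariant metric (up to scale on ergodic components), which follows from ergodicity together with the hypothesis, would then identify $g_\infty$ with the given measurable metric and complete the proof.

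The mechanism for controlling higher derivatives of $g_n$ would combine two inputs. First, the cocycle superrigidity theorem applied to the derivative cocycle on the frame bundle, together with the existence of a measurable invariant metric, forces the cocycle to be measurably cohomologous to one taking values in the compact orthogonal group $O(n)$; this gives measurable a.e.\ bounds on $\|D\gamma\|_{g_\infty}$ for $\gamma\in\Gamma$. Second, the convexity of the distance function on $L^2(M,\omega,X)$ together with the effective displacement contraction of Theorem \ref{theorem:zimmercontracting} converts $L^2$-smallness of displacement into pointwise almost-everywhere smallness of the first derivative of $\gamma$ measured in $g_n$. One would iterate, hoping to bootstrap from $C^0$ to $C^k$ bounds by differentiating the averaging identity and applying convexity in jet bundles.

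The main obstacle, and the reason this remains a conjecture, is exactly this regularity step. The contraction properties of $P$ on $L^2(M,\omega,X)$ offer no a priori control of higher Sobolev norms, and the bounds on derivatives of elements of $\Gamma$ extracted from cocycle superrigidity are only measurable and reference the measurable metric itself; the averaging used to define $P$ involves a barycenter in the $\CAT(0)$ space $X$, whose regularity as an operation on jets is delicate and can destroy uniform higher-derivative estimates. Bridging this gap likely requires either a new elliptic regularity argument that exploits the large symmetry group $\Gamma$ (for instance, constructing a $\Gamma$-invariant elliptic operator on tensor bundles and analyzing its eigenspaces inside the $V_i$ of the discrete spectrum decomposition), or a fundamentally different approach based on identifying the measurable model $K/L$ with $M$ smoothly via rigidity of smooth structures on compact homogeneous spaces, perhaps along the lines of Zimmer's geometric structure arguments in Theorem \ref{theorem:rgslowdim}.
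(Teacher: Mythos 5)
There is no proof to compare against: the statement is Conjecture \ref{conjecture:metricissmooth}, and the paper leaves it open. What the paper offers instead are exactly the two partial approaches you describe --- Zimmer's discrete spectrum theorem (Theorem \ref{theorem:discretespectrum}) and the author's averaging operator $P$ on $L^2(M,\omega,X)$ (Lemma \ref{lemma:smoothpreserved}, Theorems \ref{theorem:Tcontracts} and \ref{theorem:zimmercontracting}) --- plus Theorem \ref{theorem:rgslowdim}, which settles the question only under the additional hypothesis of an invariant rigid geometric structure, and there by showing $\G$ lands in a compact subgroup rather than by smoothing the measurable metric. Your outline is a faithful synthesis of this program, and you correctly locate the open gap in the final paragraph: the contraction of $P$ is only in the $L^2$ metric on the space of metrics, and nothing controls higher (or even first) derivatives of $P^n g$ uniformly, so the limit $g_\infty$ is a priori only an $L^2$ metric. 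This is precisely the difficulty the paper itself flags (``the initial estimate on the first derivative of $\g$ applied to $P^ng$ is only small in an $L^2$ sense''), so your proposal is a restatement of the problem, not a solution to it.

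Two smaller points. First, the reduction via Mackey's theorem only gives a \emph{measure-theoretic} isomorphism with a compact homogeneous model $K/L$; promoting that to a smooth identification is as hard as the original conjecture, so the first paragraph does not actually reduce anything. Second, the uniqueness claim for the measurable invariant metric is both unnecessary and false in general: after straightening, the derivative cocycle takes values in a compact subgroup $K'\leq O(n)$, and the fixed set of $K'$ in $X=SL(n,\Ra)/SO(n)$ is typically a positive-dimensional totally geodesic submanifold, so there may be many invariant measurable metrics even on an ergodic component. Since the conjecture only asks for \emph{some} smooth invariant metric, you can drop that step entirely; but the essential regularity bootstrap remains missing, as it does in the literature.
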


We recall that  first evidence towards this conjecture is Theorem \ref{theorem:rgslowdim}.
We remark that in the proof of that theorem, it is not the case that the measurable
metric from Proposition \ref{proposition:measmetric} is shown to be smooth, but instead
it is shown that the image of $\G$ in $\Diff(M)$ lies in a compact subgroup. In other
work, Zimmer explicitly improves regularity of the invariant metric, see particularly \cite{Zimmer-distal}.

In general the problem that arises immediately from cocycle superrigidity in any context
is understanding the regularity of the straightening section $\sigma$.  This question
has been studied from many points of view, but still relatively little is known in general.
For certain examples of actions of higher rank lattices on compact manifolds, discussed below
in section \ref{section:exoticactions}, the $\sigma$ that straightens the derivative cocycle
cannot be made smooth on all of $M$.  It is possible that for volume preserving actions on manifolds and the
derivative cocycle, $\sigma$ can always be chosen to be smooth on a dense open set of full measure.

We will return to theme of regularity of the straightening section in Section \ref{section:hyperbolic}.
First we turn to more geometric contexts in which the output of cocycle superrigidity is also often used but
usually more
indirectly.

\section{Geometric actions and rigid geometric structures}
\label{section:rgs}

In this section, we discuss the role of rigid geometric structures in the study
of actions of large groups.  The notion of rigid geometric structure was introduced
by Gromov, partially in reaction to Zimmer's work on large group actions.

The first subsection of this section recalls the definition of rigid geometric structure,
gives some examples and explains the relation of Gromov's rigid geometric structures to
other notions introduced by Cartan.  Subsection \ref{subsection:rigidreps} recalls Gromov's
initial results relating actions of simple groups preserving rigid geometric structures on $M$ to representations
of the fundamental group of $M$, and extensions of these results to lattice actions due to Zimmer
and the author.  The third section concerns a different topic, namely the rigidity of connection
preserving actions of a lattice $\G$, particularly on manifolds of dimension not much larger than
$d(G)$ as defined in section \ref{section:lowdimensional}.  The fourth subsection recalls some
obstructions to actions preserving geometric structures, particularly a result known as Zimmer's
geometric Borel density theorem and some recent related results of Bader, Frances and Melnick.
We discuss actions preserving a complex structure in subsection \ref{subsection:complex} and
end with a subsection on questions concerning geometric actions.

\subsection{Rigid structures and structures of finite type}
\label{subsection:rgsdefs}

In this subsection we recall the formal definition of a rigid
geometric structure.  Since the definition is somewhat technical,
some readers may prefer to skip it, and read on keeping in mind
examples rather than the general notion.  The basic examples of
rigid geometric structures are Riemannian metrics, pseudo-Riemannian
metrics, conformal structures defined by either type of metric and
affine or projective connections.  Basic examples of geometric structures
which are not rigid are a volume form or symplectic structure.  An
intermediate type of structure which exhibits some rigidity but which is not
literally rigid in the sense discussed here is a complex structure.

If $N$ is a manifold, we denote the $k$-th order frame bundle of
$N$ by $F^k(N)$, and by $J^{s,k}(N)$ the bundle of $k$-jets at $0$
of maps from ${\mathbb R}^s$ to $N$. If $N$ and $N^{\prime}$ are
two manifolds, and $f:N \to N^\prime$ is a map between them, then
the $k$-jet $j^k (f)$ induces a map $J^{s,k} N \to J^{s,k} N
^\prime$ for all $s$.  We let $D^k (N)$ be the bundle whose fiber
$D^k _p$ at a point $p$ consists of the set of $k$-jets at $p$ of
germs of diffeomorphisms of $N$ fixing $p$. We abbreviate $D^k _0
({\mathbb R}^n)$ by $D_n^k$ or simply $D^k$; this is a real
algebraic group. For concreteness, one can represent
each element uniquely, in terms of standard coordinates
$(\xi_1,...,\xi_n)$ on ${\mathbb R}^n$, in the form
$$
(P_1(\xi_1,...,\xi_n),...,P_n (\xi_1,..,\xi_n))
$$
where $P_1$, $P_2$,...,$P_n$ are polynomials of degree $\leq k$. We
denote the vector space of such polynomial maps of degree $\leq k$ by ${\mathcal
P}_{n,k}$.


The group $D_n^k$ has a natural action on $F^k (N)$, where $n$ is
the dimension of $N$. Suppose we are given an algebraic action of
$D_n^k$ on a smooth algebraic variety $Z$. Then following Gromov
(\cite {Gromov-RigidStructures}), we make the following definition:
\begin{defn}
\begin{enumerate}
\item An {\it A -structure} on $N$ (of order $k$, of type $Z$) is
a smooth map $\phi : F^k (N) \to Z$ equivariant for the $D^k_n$
actions. \item With notation as above, the {\it $r$-th
prolongation} of $\phi$, denoted $\phi^r$, is the map $\phi^r :
F^{k+r}(N) \to J ^{n,r}(Z)$ defined by $\phi^r = j^r(\phi) \circ
\iota _k ^{r+k}$ where $\iota _k^{r+k} : F^{k+r} (N) \to J^{n,r}
(F^k (N))$ is the natural inclusion and $j^k(h) : J^{n,r} (F^k
(N)) \to J^{n,r}(Z)$ is as before; this is an A-structure of type
$J^{n,r}(Z)$ and order $k+r$.
\end{enumerate}
\end{defn}
Equivalently, an A-structure of type $Z$ and order $k$ is a smooth
section of the associated bundle $F^k(N) \times _{D^k}Z$ over N.
Note that an A-structure on $N$ defines by restriction an
A-structure $\phi|_U$ on any open set $U \subset N$.
\begin{remark}
A-structures were introduced in \cite {Gromov-RigidStructures}; a good introduction to
the subject, with many examples, can be found in \cite {Benoist}.  A
comprehensive and accessible discussion the results of \cite{Gromov-RigidStructures} concerning actions
of simple Lie groups can be found in \cite{Feres-Gromov}.
\end{remark}

\noindent Note that if $N$ and $N^\prime$ are $n$-manifolds, and
$h: N \to N^{\prime}$ is a diffeomorphism, then $h$ induces a
bundle map $j^k(h): F^k(N) \to F^k (N^\prime)$.

\begin{defn}
\begin{enumerate}
\item If $\phi: F^k (N) \to Z$, $\phi^\prime : F^k (N^\prime) \to
Z$ are A-structures, a diffeomorphism $h: N \to N^\prime$ is an
{\it isometry} from $\phi$ to $\phi^\prime$ if $\phi ^\prime \circ
j^k(h) = \phi$. \item A {\it local isometry} of $\phi$ is a
diffeomorphism $h: U_1 \to U_2$, for open sets $U_1, U_2 \subset
N$, which is an isometry from $\phi|_{U_1}$ to $\phi|_{U_2}$.
\end{enumerate}
\end{defn}

\noindent For $p \in M$ denote by $Is^{loc} _p (\phi)$ the
pseudogroup of local isometries of $\phi$ fixing $p$, and, for $l
\geq k$, we denote by $Is ^l _p (\phi)$ the set of elements $j^l_p
(h) \in D^l _p$ such that $j_p ^l (\phi \circ j^k _p(h)) =
\phi^{l-k}$, where both sides are considered as maps $F^{k+l}(N)
\to J^{l-k}(Z)$. Note that $Is _p ^l (\phi)$ is a group, and there is a
natural homomorphism $r_p ^{l;m}: Is _p ^l (\phi) \to Is_p^m
(\phi)$ for $m < l$; in general, it is neither injective nor
surjective.

\begin{defn}
The structure $\phi$ is called $k$-rigid if for every point $p$, the
map $r_p ^{k+1;k}$ is injective.
\end{defn}

 A first remark worth making is that an affine connection is a rigid geometric structure and that any generalized
quasi-affine action on a manifold of the form $K \backslash H/ \Lambda {\times} M$
with $\Lambda$ discrete preserves an affine connection and therefore also a torsion
free affine connection.
In order to provide some examples, we recall the following lemma of Gromov.

\begin{lemma}
\label{lemma:algebraicrigid}
Let $V$ be an algebraic variety and $G$ a group acting algebraically on $V$.
For every $k$, there is a tautological $G$ invariant geometric structure
of order $k$ on $V$, given by  $\omega:P^k(V){\rightarrow}P^k(V)/G$.
This structure is rigid if and only if the action of
$G$ on $P^k(V)$ is free and proper.
\end{lemma}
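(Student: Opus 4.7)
My plan is to prove the lemma in three steps: verifying that $\omega$ is a $G$-invariant A-structure, identifying its local isometry pseudogroup with (germs of) the $G$-action, and then translating rigidity into freeness and properness of the $G$-action on $P^k(V)$.

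For the first step, since $G$ acts algebraically on $V$ its action lifts canonically to $P^k(V)$ and commutes with the natural right $D^k_n$-action on frames. Therefore $D^k_n$ descends to $P^k(V)/G$ and the quotient map $\omega$ is $D^k_n$-equivariant, giving an A-structure in the sense of the definitions above. $G$-invariance of $\omega$ is tautological since each $g\in G$ permutes frames within a single $G$-orbit.

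For the second step, I would show that a local diffeomorphism $h$ of $V$ fixing $p$ is a local isometry of $\omega$ precisely when $j^k(h)$ sends every frame $f$ in a neighborhood of the fiber over $p$ to another frame in the same $G$-orbit. When $G$ acts properly and freely on $P^k(V)$, the rule $f \mapsto g(f)\in G$ defined by $j^k(h)(f)=g(f)\cdot f$ is single-valued and continuous; the $D^k_n$-equivariance of $j^k(h)$, together with the fact that the $D^k_n$- and $G$-actions commute, forces $g(f\cdot d)=g(f)$, so $g$ descends to a continuous $G$-valued function on an open set in $V$, and a connectivity argument makes $g$ locally constant. Thus every local isometry of $\omega$ coincides near $p$ with the action of a single element of $G$, and $\mathrm{Is}^l_p(\omega)$ is identified with the image of the stabilizer $G_p$ under the jet homomorphism $G_p \to D^l_p$.

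For the third step, under the above identification $r^{k+1;k}_p$ is injective exactly when $\ker(G_p \to D^k_p) = \ker(G_p \to D^{k+1}_p)$. If $G$ acts freely on $P^k(V)$ then the kernel of $G_p \to D^k_p$ is already trivial (an element with trivial $k$-jet at $p$ fixes every frame over $p$), so rigidity follows immediately. Conversely, if some $g\ne e$ lies in $G_p$ with trivial $k$-jet at $p$, then algebraicity of the $G$-action prevents $g$ from having trivial jet of every finite order at $p$ (otherwise $g$ would be the identity on the irreducible component of $p$); passing to the smallest order at which the jet is non-trivial and to the corresponding prolongation of $\omega$ (which is again a tautological structure of the same form) produces a non-trivial element in the kernel of the appropriate restriction map and hence a failure of rigidity. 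Properness enters throughout to ensure that $P^k(V)/G$ is a reasonable algebraic variety serving as a target for the A-structure and to keep the assignment $f\mapsto g(f)$ well behaved.

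The main obstacle is the second step: identifying local isometries of $\omega$ with local $G$-translations requires both freeness and properness in an essential way. Without properness the orbits of $G$ can have complicated closures, the local sections of $P^k(V)\to P^k(V)/G$ break down, and continuity of $g(f)$ is lost; without freeness the assignment $f\mapsto g(f)$ is no longer single-valued. These are precisely the hypotheses the lemma imposes, so the proof must deploy each of them exactly at this point in the identification of the isometry pseudogroup.
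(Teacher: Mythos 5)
The paper itself offers no argument here: it declares the first sentence obvious and refers to Section 0.4, pages 69--70, of Gromov's \emph{Rigid transformation groups} for the rigidity criterion, so there is nothing in the text to compare your proof against step by step. Your first step is fine. The genuine gap is in your second step, at the phrase ``a connectivity argument makes $g$ locally constant.'' The function $g$ takes values in $G$, which is in general a positive-dimensional group, so continuity on a connected domain gives you nothing; a continuous $G$-valued function need not be locally constant, and in the relevant examples it is not. Since your identification of every local isometry with a single element of $G$, and hence of $Is^l_p(\omega)$ with the image of the stabilizer $G_p$ in $D^l_p$, rests entirely on that step, your proof of the direction ``free and proper implies rigid'' collapses. (There is a second conflation in the same step: $Is^l_p(\omega)$ is defined by a finite-order jet condition at $p$ and may contain jets that extend to no genuine local isometry, so even a correct description of the isometry pseudogroup would not by itself compute $Is^l_p$.)

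The failure is not cosmetic. Take $G=SL(2,\mathbb{R})$ acting linearly on $V=\mathbb{R}^2\setminus\{0\}$ and $k=1$. The action on $P^1(V)=V\times GL(2,\mathbb{R})$, $g\cdot(x,A)=(gx,gA)$, is free and proper, and the tautological structure $\omega(x,A)=(\det A,\,A^{-1}x)$ records exactly the standard area form together with the Euler vector field $x\mapsto x$. A local diffeomorphism $h$ is an isometry of $\omega$ if and only if $\det Dh\equiv 1$ and $Dh(x)x=h(x)$. Every positively homogeneous degree-one area-preserving map, in polar coordinates $(r,\theta)\mapsto(r\rho(\theta),\psi(\theta))$ with $\rho^2\psi'\equiv 1$, satisfies both conditions, and for such a map your $g(x)$ equals $Dh(x)$, which is continuous but not constant. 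This isometry pseudogroup is infinite dimensional, and choosing $\rho$ with $\rho(0)=1$, $\rho'(0)=0$, $\rho''(0)\neq 0$ produces an isometry whose $1$-jet at a point of the ray $\theta=0$ is the identity while its $2$-jet is not, so $r^{2;1}_p$ fails to be injective. Besides refuting your second step, this shows the order bookkeeping in the statement is delicate (note that Example (1) following the lemma passes to $P^2$), which is presumably why the survey defers to Gromov rather than sketching an argument; any correct proof must work with jets of the quotient map and track how freeness at one order interacts with the prolongation at the next, rather than identifying local isometries with elements of $G$.
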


\noindent The conclusion in the first sentence is obvious.  The
second sentence is proven in section $0.4$, pages 69-70, of
\cite{Gromov-RigidStructures}.

\noindent {\bf Examples}

\noindent
\begin{enumerate}

\item The action of $G=SL_n(\mathbb R)$ on ${\mathbb R}^n$ is algebraic.
So is the action of $G$ on the manifold $N_1$ obtained by blowing up the
origin.  The reader can easily verify that the action of $G$ on $P^2(N_1)$
is free and proper.

\item We can compactify $N_1$ by $N_2$ by viewing the complement of the blow up as
a subset of the projective space $P^n$.  Another description of the same
action, which may make the rigid structure more visible to the naked eye, is
as follows.  $SL_{n+1}(\mathbb R)$ acts on $P^n$. Let $G$ be
$SL_n(\mathbb R)<SL_{n+1}(\mathbb R)$ as block diagonal matrices with
blocks of size $n$ and $1$ and $1{\times}1$ block equal to $1$.  Then $G$
acts on $P^n$ fixing a point $p$.  We can obtain $N_2$ by blowing up the fixed point $p$.
The $G$ actions on both $P^n$ and $N_2$ are algebraic and again the reader
can verify that the action is free and proper on $P^2(N_2)$.

\item In the construction from $2$ above, there is an action of a
group $H$ where $H=SL_n(\mathbb R){\ltimes}{\mathbb R}^n$ and
$G=SL_n(\mathbb R)<SL_n(\mathbb R){\ltimes}{\mathbb
R}^n<SL_{n+1}(\mathbb R)$. The $H$ action fixes the point $p$ and
so also acts on $N_2$ algebraically.  However, over the
exceptional divisor, the action is never free on any frame bundle,
since the subgroup ${\mathbb R}^n$ acts trivially to all orders at
the exceptional divisor.

\end{enumerate}

The behavior in example $3$ above illustrates the fact that existence
of invariant rigid structures is more complicated for algebraic groups which are
not semisimple, see the discussion in section $0.4.C.$ of \cite{Gromov-RigidStructures}.

\begin{defn}
\label{definition:finitetype}
A rigid geometric structure is called a {\em finite type} geometric structure
if $V$ is a homogeneous $D_n^k$ space.
\end{defn}

This is by no means the original definition which is due to Cartan
and predates Gromov's notion of a rigid geometric structure by
several decades. It is equivalent to Cartan's definition of a finite
type structure by work of Candel and Quiroga
\cite{CandelQuiroga-one, CandelQuiroga-two, CandelQuiroga-three}. Candel
and Quiroga also give a development of rigid geometric structures
that more closely parallels the older notion of a structure of finite type as
presented in e.g. \cite{Kobayashi-book}.

All standard examples of rigid geometric structures are structures of finite type.
However, example $(2)$ above is not a structure of finite type.  The notion
of structure of finite type was first given in terms of prolongations of Lie
algebras and so yields a criterion that is, in principle, computable.  The work
of Candel and Quiroga extends this computable nature to general rigid geometric
structures.

\subsection{Rigid structures and representations of fundamental groups}
\label{subsection:rigidreps}

In this subsection, we restrict our attention to actions of simple Lie groups $G$
and lattices $\G<G$.  Many of the results of this section extend to semisimple
$G$, though the formulations become more complicated.

A major impetus for Gromov's introduction of rigid geometric structures is the
following theorem from \cite{Gromov-RigidStructures}.

\begin{theorem}
\label{theorem:rigid}
Let $G$ be a simple noncompact Lie group and $M$ a compact real analytic
manifold.  Assume $G$ acts on $M$ preserving an analytic rigid geometric structure
$\omega$ and a volume form $\nu$. Further assume the action is ergodic. Then there
is a linear representation $\rho:\pi_1(M){\rightarrow}GL(n,\Ra)$ such that the Zariski
closure of $\rho(\pi_1(M))$ contains a group locally isomorphic to $G$.
\end{theorem}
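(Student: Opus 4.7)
The plan is to produce $\rho$ as the holonomy of a locally homogeneous model for $(M,\omega)$ that the hypotheses force to exist, and then to apply Borel density to the resulting image to obtain the Zariski-closure conclusion.

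First I would exhibit $(M,\omega)$, on a full-measure open subset, as locally modeled on a homogeneous space $X=\tilde G/F$, where $\tilde G$ is a cover of $G$ (to which the action lifts) and $F\subset\tilde G$ is the discrete stabilizer of a typical $G$-orbit. The point is that each $G$-orbit in $M$ inherits a $G$-invariant analytic rigid structure and hence is itself locally homogeneous; rigidity of $\omega$ together with the invariant volume form blocks positive-dimensional stabilizer algebras along orbits, so $F$ is discrete. Combined with the transverse rigid structure (which provides the missing codirections when $G$-orbits have positive codimension), this yields local $(G,X)$-coordinates near generic points, and ergodicity promotes ``generic'' to ``open dense'' by forcing the degenerate set to be a closed nowhere-dense analytic subvariety. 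Passing to the universal cover $\tilde M$, analyticity, rigidity, and simple connectedness then let one paste the local coordinates together by analytic continuation into a developing map
\[D:\tilde M\longrightarrow X\]
which is a $\tilde G$-equivariant local diffeomorphism; here one uses that ambiguities in analytic continuation of $\Isom$-valued data belong to the finite-dimensional Lie algebra of local Killing fields by rigidity, and simple connectedness of $\tilde M$ eliminates them.

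Second, the deck action of $\pi_1(M)$ on $\tilde M$ preserves $\tilde\omega$ and commutes with the lifted $\tilde G$-action, so through the $\tilde G$-equivariant $D$ each $\gamma\in\pi_1(M)$ descends to a $\tilde G$-equivariant self-map of $X=\tilde G/F$, that is, to right translation by some $\rho(\gamma)\in N_{\tilde G}(F)/F$. This defines the holonomy homomorphism $\rho:\pi_1(M)\to N_{\tilde G}(F)/F$; composition with any faithful finite-dimensional linear representation of $\tilde G$ yields the required $\rho:\pi_1(M)\to GL(n,\mathbb R)$. For the Zariski-closure statement, since $M$ is compact with finite $G$-invariant volume $\nu$ and $D$ is a local diffeomorphism, the pushforward of $\nu$ descends to a finite $\tilde G$-invariant measure on the quotient of $X$ by $\rho(\pi_1(M))$; hence $\rho(\pi_1(M))$ has finite covolume in $\tilde G/F$ and is virtually a lattice in $\tilde G$, and the classical Borel density theorem for the simple non-compact group $\tilde G$ forces its Zariski closure to be all of $\tilde G$, a group locally isomorphic to $G$.

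The main obstacle is the first step: exhibiting the $(G,X)$-structure on a dense open set and extending the developing map through the degenerate set to a single-valued $D$ on $\tilde M$. This is the substance of Gromov's ``open-dense orbit'' theorem and requires a careful analysis of the $G$-action on a sufficiently high-order frame bundle $F^k(M)$, showing that the isotropy of the $D^k_n$-equivariant map $\omega:F^k(M)\to Z$ in the $G$-direction is generically discrete and that the transverse structure can be normalized consistently. A secondary subtlety is the passage from $G$ to a cover $\tilde G$ and the choice of $F$ in the presence of nontrivial center or positive-codimension orbits; the latter may force $X$ to be strictly larger than $\tilde G/F$ (a bundle over it whose fibers carry the transverse structure), in which case one must work with the appropriate $(\tilde G, X)$-structure throughout. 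Finally, the analytic continuation of $D$ across the degenerate set relies on that set being a proper analytic subvariety rather than merely a meager closed set, so that its complement in $\tilde M$ is still path-connected. Ergodicity is used in two distinct places: to ensure the local model is uniquely determined on the good set, and in the Borel-density step to rule out invariant proper algebraic subgroups containing $\rho(\pi_1(M))$.
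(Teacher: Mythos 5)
The paper is a survey and does not reprove this theorem of Gromov; it only indicates the key point, namely that $\rho$ is defined by the action of $\pi_1(M)$ on the finite-dimensional Lie algebra of global Killing fields of the lift $\tilde\omega$ to $\tilde M$ (such global Killing fields exist because, on a simply connected analytic manifold with an analytic rigid structure, local Killing fields extend along paths). Your proposal takes a genuinely different route, via a $(G,X)$-structure and a developing map, and it has a real gap at exactly the step you flag as "the main obstacle." Gromov's open-dense orbit theorem gives local homogeneity only on an open dense subset of $M$; whether this locally homogeneous structure extends across the degenerate set is precisely Question \ref{question:homogeneity} of this survey and is open even when $\omega$ is homogeneous, and false when it is not (see the examples following Lemma \ref{lemma:algebraicrigid}, where a blow-up produces an invariant rigid structure on a manifold that is not locally homogeneous). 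So you cannot analytically continue the local charts to a single-valued developing map $D:\tilde M\to X$; the degenerate set is not known to be a proper analytic subvariety that the continuation can cross, and no argument you give supplies this. The correct substitute is to work with the Killing algebra itself: rigidity plus analyticity plus simple connectedness of $\tilde M$ give a finite-dimensional Lie algebra of globally defined Killing fields, $\pi_1(M)$ acts on it by the deck transformations, and Gromov's centralizer theorem places a copy of $\mathfrak g$ inside it. This yields $\rho$ with no developing map and no global local-homogeneity.

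Your endgame is also off. Even where the structure is locally homogeneous, the model is $H/L$ for the full local isometry pseudogroup $H$, which is in general strictly larger than $\tilde G/F$ because the $G$-orbits can have positive codimension; you acknowledge this but then still run Borel density "for the simple non-compact group $\tilde G$" to conclude the Zariski closure is all of $\tilde G$. That is both unjustified and too strong: for the left-translation action of $G$ on $H/\Lambda$ the relevant representation has image $\Lambda$ with Zariski closure $H\supsetneq G$, and the theorem only asserts containment of a group locally isomorphic to $G$. The actual argument obtains the Zariski-closure statement from the invariant finite measure together with an algebraic/Borel-density argument applied to the $\pi_1(M)$-action on the Killing algebra, not from exhibiting $\rho(\pi_1(M))$ as a lattice in $\tilde G$.
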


We remark that, by the Tits' alternative, the theorem immediately implies that actions of the type described cannot
occur on manifolds with amenable fundamental group or even on manifolds whose fundamental groups
do not contain a free group on two generators \cite{Tits}.  Expository accounts of the proof can
be found in \cite{Feres-Gromov,Zimmer-Gromov, Zimmer-CBMS}.  The representation $\rho$
is actually defined on Killing fields of the lift $\tilde \omega$ of $\omega$ to $\tilde M$.

This is a worthwhile moment to indicate the weakness of Lemma
\ref{lemma:induction}. If we start with a cocompact lattice $\G<G$
and an action of $\G$ on a compact manifold $M$ satisfying the
assumptions of Theorem \ref{theorem:rigid}, we can induce the action
to a $G$ action on the compact manifold $N=(G \times M)/\G$.
However, in this setting, there is an obvious representation of
$\pi_1(N)$ satisfying the conclusion of Theorem \ref{theorem:rigid}.
This is because there is a surjection $\pi_1(N){\rightarrow}\G$.  In
fact for most cocompact lattices $\G<G$ there are homomorphisms
$\sigma:\G{\rightarrow}K$ where $K$ is compact and simply connected
so that the we can have $\G$ act on $K$ by left translation and
obtain examples where $\pi_1(N)=\G$. For $G$ of higher rank, the
following theorem of Zimmer and the author shows that this is the
only obstruction to a variant of Theorem \ref{theorem:rigid} for
lattices.

\begin{theorem}
\label{theorem:pireplattice}
Let $\G<G$ be a lattice, where $G$ is a simple group and $\Ra-\text{rank}(G)\ge 2$. Suppose $\G$ acts analytically and ergodically on a compact manifold $M$ preserving a unimodular rigid geometric structure. Then either

\begin{enumerate}

\item the action is isometric and $M=K/C$ where $K$ is a compact Lie group and the action is by right translation via $\rho:\G\rightarrow K$, a dense image homomorphism, or

\item there exists an infinite image linear representation $\sigma:\pi_1(M)\to GL_n\Ra$, such that the algebraic automorphism group of the Zariski closure of $\sigma\bigl(\pi_1(M)\bigr)$ contains a group locally isomorphic to $G$.

\end{enumerate}
\end{theorem}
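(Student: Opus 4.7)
The natural approach, following the observation in the excerpt about the weakness of Lemma \ref{lemma:induction} in this setting, is to induce the $\G$ action to a $G$ action on the analytic manifold $N = (G \times M)/\G$, apply the already-established Theorem \ref{theorem:rigid}, and then use the bundle structure of $N$ to pull information about $\pi_1(N)$ back to $\pi_1(M)$ itself. The induced $G$ action preserves a finite volume and an analytic rigid geometric structure obtained by combining the pullback of $\omega$ from $M$ with canonical $G$-invariant data on $G/\G$; ergodicity can be arranged by passing to an ergodic component. Theorem \ref{theorem:rigid} then produces $\rho : \pi_1(N) \to GL(n,\Ra)$ whose Zariski closure contains a group locally isomorphic to $G$.

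The fiber bundle $M \hookrightarrow N \to G/\G$ gives an exact sequence $\pi_1(M) \to \pi_1(N) \to \pi_1(G/\G) \to 1$, and $\pi_1(G/\G)$ is a finite extension of $\G$. The case split is on whether $\rho|_{\pi_1(M)}$ has infinite image. If it does, let $H$ be the Zariski closure of $\rho(\pi_1(M))$. Normality of $\pi_1(M)$ in $\pi_1(N)$ gives a conjugation homomorphism of a finite-index subgroup of $\G$ into $\Aut(H)$; Margulis' superrigidity extends this (up to bounded error) to a homomorphism $G \to \Aut(H)$, and comparing with the fact that the Zariski closure of $\rho(\pi_1(N))$ already contains a group locally isomorphic to $G$ yields conclusion (2) with $\sigma = \rho|_{\pi_1(M)}$.

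If instead $\rho|_{\pi_1(M)}$ has finite image, then, after passing to a finite cover of $M$, $\rho$ factors through $\G$. To reach conclusion (1), I would revisit the construction of $\rho$ in the proof of Theorem \ref{theorem:rigid}: it arises from the $\pi_1$-action on the finite-dimensional space of Killing fields of the lifted structure $\tilde\omega$ on $\tilde N$. Triviality of $\pi_1(M)$ on these Killing fields (modulo a finite-index subgroup) allows them to descend to $M$, and by analyticity and rigidity they act locally transitively, exhibiting $\tilde M$ as a homogeneous space of a Lie group. Combining this with the measurable invariant metric produced by Proposition \ref{proposition:measmetric} and the smoothing argument of Theorem \ref{theorem:rgslowdim} identifies $M = K/C$ with $\G$ acting through a homomorphism into a compact $K$; density of the image follows from ergodicity of the $\G$ action.

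The main obstacle is the finite-image case. The delicate point is converting the algebraic statement that $\pi_1(M)$ fixes the Killing algebra of $\tilde\omega$ into the geometric conclusion that $M = K/C$ with $K$ compact: one must show that the full isometry group of the lifted structure is a compact-by-transitive extension, which amounts to upgrading the measurable invariant metric supplied by cocycle superrigidity to a smooth invariant metric. Analyticity of $\omega$ is precisely what makes this feasible here; in the smooth category this upgrade is the content of Conjecture \ref{conjecture:metricissmooth}.
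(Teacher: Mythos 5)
Your overall architecture (induce to $N=(G\times M)/\G$, apply Gromov's Theorem \ref{theorem:rigid} to the induced action, then separate cases) matches the paper's, but your case division is not the one the paper uses, and the half of your argument that is supposed to yield conclusion (1) has a genuine gap. The paper splits on entropy: it uses the formula $h_{(G\times M)/\G}(\g)=h_{G/\G}(\g)+h_M(\g)$ together with the fact (Propositions \ref{proposition:furstenberg} and \ref{proposition:measmetric}, i.e.\ cocycle superrigidity) that $h_M(\g)=0$ for all $\g$ if and only if there is a measurable invariant Riemannian metric. In the zero-entropy case Theorem \ref{theorem:rgslowdim} applies --- the measurable metric plus the rigid structure plus property $(T)$ forces $\G$ into a compact subgroup of the isometry group of the structure, since that isometry group acts properly on a higher-order frame bundle --- and ergodicity gives $M=K/C$. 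In the positive-entropy case the paper invokes Zimmer's relations between entropy and the Gromov representation \cite{Zimmer-entropyquotient} to produce the infinite-image $\sigma$ on $\pi_1(M)$ with the required automorphism property.

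You instead split on whether $\rho|_{\pi_1(M)}$ has finite image, and in the finite-image case you combine the descended Killing fields ``with the measurable invariant metric produced by Proposition \ref{proposition:measmetric}.'' But that proposition only produces a measurable invariant metric under the hypothesis that every element acts with zero entropy (or that $G$ has no nontrivial representation in dimension $\dim M$, which is unavailable here); you have not established zero entropy, and finiteness of the image of the Killing-field representation does not visibly imply it. The implication ``Gromov representation finite on $\pi_1(M)$ implies zero fiber entropy'' is precisely the content of Zimmer's entropy--arithmetic-quotient relation that the paper leans on, so your argument silently assumes the hardest input. A smaller point: you locate the residual difficulty in upgrading a measurable metric to a smooth one (Conjecture \ref{conjecture:metricissmooth}), but the paper explicitly avoids that upgrade --- in Theorem \ref{theorem:rgslowdim} the metric is never shown to be smooth; rather properness of the action of the isometry group of the rigid structure on a frame bundle is used to conclude that $\G$ has compact closure in $\Diff(M)$. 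Your treatment of the infinite-image case (normality of $\pi_1(M)$ in $\pi_1(N)$, conjugation into $\Aut(H)$, Margulis superrigidity) is in the right spirit, though you still owe an argument that the resulting homomorphism $\G\to\Aut(H)$ is unbounded, which is again where entropy enters the actual proof.
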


The proof of this theorem makes fundamental use of a notion that does not occur in it's statement: the entropy
of the action.  A key observation is the following formula for the entropy of the restriction of the induced
action to $\G$:

$$h_{(G{\times}M)/\G}(\g)=h_{G/\Gamma}(\g)+h_M(\g)$$

We then use the fact that the last term on the right hand side is
zero if and only if the action preserves a measurable Riemannian
metric as already explained in Proposition
\ref{proposition:furstenberg}. In this setting, it then follows from
Theorem \ref{theorem:rgslowdim} that the action is
isometric and the other conclusions in $(1)$ are simple consequences
of ergodicity of the action.

When $h_M(\g)>0$ for some $\G$, we use relations between the entropy of the action
and the Gromov representation discovered by Zimmer \cite{Zimmer-entropyquotient}.  We discuss some of these
ideas below in Section \ref{section:arithmeticquotients}.

\subsection{Affine actions close to the critical dimension}
\label{subsection:affine}

In this section we briefly describe a direction pursued by Feres, Goetze, Zeghib and Zimmer that
classify connection preserving actions of lattices in dimensions close to,
but not less than, the critical dimension $d$.   A representative result is the
following:

\begin{theorem}
\label{theorem:affine}
Let $n>2$, $G=SL(n,\Ra)$ and $\G<G$ a lattice.  Let
$M$ be a compact manifold with $\dim(M)\leq n+1 =d(G)+1$.
Assume that $\G$ acts on $M$ preserving a volume form and an
affine connection.  Then either
\begin{enumerate}
\item $\dim(M)<n$ and the action is isometric
\item $\dim(M)=n$ and either the action is isometric or, upon passing to finite
covers and finite index subgroups, smoothly conjugate to the
standard $SL(n,\Za)$ action on $\Ta^n$
\item $\dim(M)=n+1$ and either the action is isometric or , upon passing to finite
covers and finite index subgroups, smoothly conjugate to the
action of $SL(n,\Za)$ on $\Ta^{n+1}=\Ta^n {\times} \Ta$ where
the action on the second factor is trivial.
\end{enumerate}
\end{theorem}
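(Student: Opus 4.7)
The plan is to combine Zimmer's cocycle superrigidity theorem with the rigidity of the invariant affine connection, using the representation theory of $SL(n,\Ra)$ in low dimensions to control the Lyapunov data and then integrate this data to a smooth conjugacy. First I would apply cocycle superrigidity (\S\ref{subsection:Zcsr}) to the derivative cocycle of the $\G$-action on the frame bundle of $M$. Up to compact noise and a measurable straightening section, this cocycle is described by a continuous representation $\pi:SL(n,\Ra)\to GL(\dim(M),\Ra)$. For $n>2$ and $\dim(M)\leq n+1$ the only possibilities are: the trivial representation; the standard $n$-dimensional representation or its dual $\Lambda^{n-1}$ (when $\dim(M)\geq n$); and the standard plus a trivial one-dimensional summand (when $\dim(M)=n+1$). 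The next-smallest non-trivial irreducible representation has dimension $\binom{n}{2}$, which exceeds $n+1$ for $n\geq 3$.

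If $\pi$ is trivial, all Lyapunov exponents of all elements of $\G$ vanish, so Proposition \ref{proposition:measmetric} produces a measurable $\G$-invariant Riemannian metric. Since $\G$ has property $(T)$ and the affine connection is a rigid geometric structure, Theorem \ref{theorem:rgslowdim} upgrades this to a smooth $\G$-invariant Riemannian metric, so the $\G$-action is isometric. This already gives conclusion (1) when $\dim(M)<n$ (no other $\pi$ is available) and supplies the isometric alternatives in (2) and (3).

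For the non-isometric cases, $\pi$ is non-trivial, so some $\g\in\G$ has positive Lyapunov exponents matching those of a regular element of $SL(n,\Ra)$ under $\pi$. In case (2) such a $\g$ is measurably Anosov; in case (3) it is measurably partially hyperbolic with a one-dimensional central direction corresponding to the trivial summand. I would then pass from the measurable straightening to a smooth one: since the affine connection is a rigid structure of finite type, its local automorphism pseudogroup is a finite-dimensional Lie pseudogroup, and combining this with Pesin theory applied to $\g$ forces the measurable straightening to agree with a genuinely smooth trivialization of $TM$ determined by $\pi$ and the Lyapunov decomposition. Smooth rigidity of Anosov diffeomorphisms preserving an affine connection then forces the connection to be flat and geodesically complete. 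A Bieberbach-type argument identifies a finite cover of $M$ with $\Ta^n$ in case (2) and with $\Ta^{n+1}$ in case (3). The induced $\G$-action lifts to affine maps on $\Ra^{\dim(M)}$ whose linear part is $\pi$, and preservation of the integer lattice forces this linear part, on a finite index subgroup of $\G$, to be the standard embedding into $SL(n,\Za)$. In case (3) the trivial direction integrates, via the $\G$-invariant flat connection, to a $\G$-invariant smooth foliation by circles, splitting $M$ as $\Ta^n\times\Ta$ with trivial action on the second factor.

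The main obstacle is the step from measurable to smooth: cocycle superrigidity produces only a measurable trivialization, and in general this trivialization can fail to be smooth (cf.\ the exotic examples of \S\ref{section:exoticactions}). The saving grace here is the invariant affine connection, whose automorphism pseudogroup is a finite-dimensional Lie pseudogroup; this is what allows one to anchor the measurable straightening to a genuinely smooth object and then argue that the connection is flat. A secondary subtlety, specific to case (3), is excluding the possibility that $M$ is a non-trivial circle bundle over $\Ta^n$: here one must use the $\G$-invariant flat connection together with the triviality of $\pi$ on the central direction to produce a $\G$-invariant smooth splitting of $TM$ and hence a smooth product structure.
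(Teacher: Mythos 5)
The paper does not actually prove this theorem: it attributes it to Zeghib, with earlier variants by Feres, Goetze and Zimmer, and records only the strategy, namely that cocycle superrigidity is used ``mainly to force vanishing of curvature and torsion tensors of the associated connection,'' after which one uses the fact that flat manifolds have finite covers which are tori. Your opening moves match this outline: applying cocycle superrigidity to the derivative cocycle, listing the possible representations $\pi$ of $SL(n,\Ra)$ in dimension at most $n+1$, and disposing of the isometric case via Proposition \ref{proposition:measmetric} and Theorem \ref{theorem:rgslowdim} is exactly the intended route for conclusion $(1)$ and for the isometric alternatives in $(2)$ and $(3)$; the Bieberbach endgame is also the right one. (A small quibble: your dimension count ``the next-smallest irreducible has dimension $\binom{n}{2}$'' fails for $n=3$, where $\Lambda^2$ of the standard representation is the dual standard; the conclusion of the count is still correct.)

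The genuine gap is in the middle of the non-isometric case. You propose to show a suitable $\g$ is ``measurably Anosov,'' upgrade the measurable straightening section to a smooth trivialization ``by Pesin theory,'' and then invoke smooth rigidity of Anosov diffeomorphisms to get flatness. Two steps here are unjustified. First, cocycle superrigidity controls only Lyapunov exponents, i.e. gives nonuniform hyperbolicity; passing from positive exponents to a genuinely uniformly (partially) hyperbolic diffeomorphism, even in the presence of an invariant rigid structure, is posed in this survey as an open question (Question \ref{question:dynamical}), so you cannot assume it. Second, the regularity of the straightening section is precisely the hard problem discussed in \S\ref{subsection:csrfirstapps} and illustrated by the exotic examples of \S\ref{section:exoticactions}; the finite-dimensionality of the automorphism pseudogroup of the connection does not by itself convert a measurable section into a smooth one. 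The proofs you are trying to reconstruct bypass both difficulties: the curvature and torsion of the invariant connection are smooth, invariantly defined tensors, and when read in the measurable superrigidity framing they become measurable maps into fixed tensor representations of $SL(n,\Ra)$ twisted by compact noise; for $\dim(M)\leq n+1$ a representation-theoretic computation forces these to vanish almost everywhere, hence everywhere by continuity. Flatness is thus obtained directly, with no Anosov element and no smoothing of the straightening section, and only afterwards does one apply the Bieberbach-type argument to a finite cover and identify the linear part of the action.
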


Variants of this theorem under more restrictive hypotheses were obtained
by Feres, Goetze and Zimmer \cite{Feres-IJMconnection,Goetze-affine, Zimmer-affine}.
The theorem as stated is due to Zeghib \cite{Zeghib-affine}.  The cocycle
superrigidity theorem is used in all of the proofs, mainly to force vanishing
of curvature and torsion tensors of the associated connection.

One would expect similar results for a more general class of acting groups
and also for a wider range of dimensions.  Namely if we let $d_2(G)$ be the
dimension of the second non-trivial representation of $G$, we would expect
a similar result to $(3)$ for any affine, volume preserving action on a
manifold $M$ with $d(G)<d_2(G)$.  With the further assumption that the
connection is Riemannian (and still only for lattices in $SL(n,\Ra)$) this
is proven by Zeghib in \cite{Zeghib-CAG}.

Further related results are contained in other papers of Feres \cite{Feres-affine, Feres-Zimmerconjecture}.
No results of this kind are known for $\dim(M)\geq d_2(G)$. A major difficulty arises as soon as one has $\dim(M) \geq d_2(G)$, namely that more complicated examples
can arise, including affine actions on nilmanifolds and left translation actions on spaces
of the form $G/\Lambda$.  All the results mentioned in this subsection depend on the particular
fact that flat Riemannian manifolds have finite covers which are tori.

\subsection{Zimmer's Borel density theorem and generalizations}
\label{subsection:zbdt}

An obvious first question concerning $G$ actions is the structure of the
stabilizer subgroups.  In this direction we have:

\begin{theorem}
\label{theorem:zimmerbdt}
Let $G$ be a simple Lie group acting essentially faithfully on a compact manifold
$M$ preserving a volume form, then the stabilizer of almost every
point is discrete.
\end{theorem}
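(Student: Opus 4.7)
The plan is to implement a geometric Borel density argument by transferring the problem to an invariant-measure question on a Grassmannian, which we can then attack using the classical algebraic Borel density theorem (or equivalently, Furstenberg's lemma on invariant measures).

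First I would define, for each $x \in M$, the Lie algebra $\mathfrak{g}_x \subseteq \mathfrak{g}$ of the stabilizer $G_x$. Because the $G$-action is smooth, $G_x$ is a closed (hence Lie) subgroup, and $\mathfrak{g}_x$ is exactly the kernel of the evaluation-of-fundamental-vector-fields map $\mathfrak{g} \to T_x M$, $X \mapsto \tilde X(x)$. Since this linear map varies smoothly in $x$, its kernel is a Borel function of $x$ with values in the disjoint union $\bigsqcup_k \mathrm{Gr}_k(\mathfrak{g})$, yielding a Borel map
\[
\phi: M \longrightarrow \mathrm{Gr}(\mathfrak{g}), \qquad \phi(x) = \mathfrak{g}_x.
\]
The relation $G_{gx} = g G_x g^{-1}$ implies $\mathfrak{g}_{gx} = \mathrm{Ad}(g)\mathfrak{g}_x$, so $\phi$ is $G$-equivariant for the adjoint action of $G$ on $\mathrm{Gr}(\mathfrak{g})$.

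Next, push forward the (normalized) $G$-invariant volume on $M$ to obtain a $G$-invariant Borel probability measure $\mu = \phi_*\nu$ on $\mathrm{Gr}(\mathfrak{g})$. Each component $\mathrm{Gr}_k(\mathfrak{g})$ is a real projective variety on which $G$ acts algebraically via $\mathrm{Ad}$. The key input is the classical Furstenberg-Borel density fact: if $G$ is a noncompact simple Lie group acting algebraically on a real projective variety $V$, then every $G$-invariant Borel probability measure on $V$ is supported on the $G$-fixed point set. Applying this to each component of $\mathrm{Gr}(\mathfrak{g})$, the measure $\mu$ is supported on $\mathrm{Ad}(G)$-fixed subspaces of $\mathfrak{g}$. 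An $\mathrm{Ad}(G)$-invariant linear subspace is precisely an ideal of $\mathfrak{g}$, and since $G$ is simple the only ideals are $0$ and $\mathfrak{g}$.

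Finally I would rule out the possibility that $\mathfrak{g}_x = \mathfrak{g}$ on a positive-measure set $A \subseteq M$. For any such $x$, the identity component of $G_x$ is open in $G$, hence equals $G^\circ$; since $G$ is simple and connected (or at most a discrete central extension of one), $G_x$ contains $G^\circ$. Thus every $g$ in $G^\circ$ would fix every point of $A$, contradicting the hypothesis that the action is essentially faithful. Hence $\mu$ is concentrated on $\{0\}$, so $\mathfrak{g}_x = 0$ for $\nu$-a.e.\ $x$, which is exactly the statement that $G_x$ is discrete almost everywhere.

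The main obstacle is the invariant-measure step: establishing (or citing) the assertion that a $G$-invariant probability measure on an algebraic $G$-variety, with $G$ simple and noncompact, must sit on the fixed-point set. This is standard — it follows from the Furstenberg lemma combined with the fact that orbits of $G$ on a projective variety are either closed of dimension $>0$ (whose stabilizer is a proper algebraic subgroup, on which no $G$-invariant finite measure can be placed by noncompactness of $G/H$ for $H$ proper algebraic) or single fixed points — but it is the nontrivial ingredient that makes the whole argument go through. Everything else (measurability of $\phi$, equivariance, and the ruling-out of $\mathfrak{g}_x = \mathfrak{g}$) is routine once set up carefully.
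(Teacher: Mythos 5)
Your argument is correct, and it is in substance the standard proof: the survey itself does not prove this theorem but only states it and refers to \cite{Zimmer-CBMS}, and the argument given there (and in Zimmer's original papers) is exactly your scheme --- form the Gauss map $x \mapsto \mathfrak{g}_x$ into the Grassmannian of $\mathfrak{g}$, push the invariant volume forward to an $\mathrm{Ad}(G)$-invariant probability measure, apply the Borel density/Furstenberg argument to conclude the measure lives on $\mathrm{Ad}(G)$-fixed subspaces, and use simplicity to reduce to the two ideals $0$ and $\mathfrak{g}$. Two points deserve attention. First, the Borel density step genuinely requires $G$ noncompact: the statement fails for $SO(3)$ acting on $S^2$, where every stabilizer is a circle, so noncompactness is an implicit hypothesis that your proof correctly invokes even though the survey's statement suppresses it. Second, in your last step the set $B = \{x : \mathfrak{g}_x = \mathfrak{g}\}$ is a $G$-invariant set on which the connected group $G$ acts trivially, and this contradicts the hypothesis only if ``essentially faithful'' is read measure-theoretically (no nontrivial connected normal subgroup acts trivially on a set of positive measure) rather than merely as ``the kernel of $G \rightarrow \Diff(M)$ is discrete''; a smooth faithful volume-preserving action could a priori fix, say, a fat Cantor set pointwise without having any kernel. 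With that (intended) reading of the hypothesis your proof is complete; without it, this last step is the one place where the argument as written does not close.
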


As an application of this result, one can prove the following:

\begin{theorem}
\label{theorem:embeddingstructuregroup}
Let $G$ be a simple Lie group acting essentially faithfully on a compact
manifold, preserving a volume form and a homogeneous geometric structure
with structure group $H$.  Then there is an inclusion $\mathfrak g {\rightarrow} \mathfrak h$.
\end{theorem}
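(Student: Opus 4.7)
The plan is to apply Zimmer's cocycle superrigidity theorem (Section \ref{subsection:Zcsr}) to the $H$-valued cocycle produced by the preserved $H$-structure. A homogeneous geometric structure with structure group $H$ corresponds to a principal $H$-subbundle $P \subseteq F^k(M)$ of the $k$th order frame bundle, and the hypothesis that $G$ preserves the structure means $G$ acts on $P$ by bundle automorphisms covering its action on $M$. Fixing any measurable section $\phi \colon M \to P$ defines a measurable cocycle $\alpha \colon G \times M \to H$ through $g \cdot \phi(x) = \phi(gx) \alpha(g,x)$, and the invariant volume form on $M$ supplies the finite invariant measure the theorem requires.

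Cocycle superrigidity then produces a continuous homomorphism $\pi \colon G \to H$, a compact subgroup $K \subseteq H$ centralizing $\pi(G)$, and a cocycle $k \colon G \times M \to K$ such that $\alpha$ is measurably cohomologous to the cocycle $(g,x) \mapsto \pi(g) k(g,x)$. The induced Lie algebra map $d\pi \colon \mathfrak{g} \to \mathfrak{h}$ is our candidate embedding, provided it is injective. Since $G$ is simple, $\ker d\pi$ is either all of $\mathfrak{g}$ or zero, so the only remaining task is to exclude the case $d\pi = 0$.

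Ruling out this trivial case is the main obstacle. If $\pi$ were trivial, then $\alpha$ would be measurably cohomologous to a cocycle taking values in the compact group $K$, yielding a $G$-invariant measurable reduction of $P$ to a $K$-subbundle. Averaging an arbitrary inner product on $\Ra^n$ over $K$ would then produce a $G$-invariant measurable Riemannian metric on $M$. Under the property $(T)$ hypothesis on $G$ implicit in this form of cocycle superrigidity, Theorem \ref{theorem:discretespectrum} would force the $G$-action to have discrete spectrum, so $L^2(M)$ would decompose as a Hilbert sum of finite dimensional unitary $G$-representations. A simple noncompact Lie group with property $(T)$ admits no nontrivial finite dimensional unitary representations, so $G$ would act trivially on $L^2(M)$ and hence trivially on $M$, contradicting essential faithfulness. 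Thus $d\pi$ is injective and $\mathfrak{g} \hookrightarrow \mathfrak{h}$ as required.
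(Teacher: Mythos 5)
Your strategy proves a weaker theorem than the one stated. Cocycle superrigidity, in the form quoted in Section \ref{subsection:Zcsr}, requires all simple factors of $G$ to be noncompact with property $(T)$; you acknowledge this yourself when you invoke ``the property $(T)$ hypothesis on $G$ implicit in this form of cocycle superrigidity.'' But Theorem \ref{theorem:embeddingstructuregroup} has no such hypothesis: it is stated for an arbitrary simple Lie group, and in particular is meant to apply to $SL(2,\Ra)$, $SO(1,n)$ and $SU(1,n)$, none of which has property $(T)$. For those groups your argument never gets off the ground, since there is no superrigidity theorem to apply to the $H$-valued cocycle. (There are also smaller issues in the property $(T)$ case --- the quoted superrigidity statement assumes ergodicity, so you would need to work one ergodic component at a time, and the detour through Theorem \ref{theorem:discretespectrum} to rule out $d\pi=0$ is much heavier than necessary --- but the restriction to Kazhdan groups is the essential gap.)

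The paper obtains the theorem as a consequence of Theorem \ref{theorem:zimmerbdt} (the geometric Borel density theorem), which needs only that $G$ is simple and noncompact and that the action preserves a finite volume. Since almost every stabilizer is discrete, for a.e.\ $x$ the evaluation map $\mathfrak g \to T_xM$ sending $X$ to the value at $x$ of the vector field it generates is injective. Composing with a frame $p$ belonging to the $H$-reduction gives a map $\Phi$ from the reduced frame bundle to $\Hom(\mathfrak g,\Ra^n)$ which is equivariant for the commuting $G$ and $H$ actions ($G$ acting by precomposition with $\Ad$, $H$ by postcomposition). Borel density forces the image of $\Phi$ in $\Hom(\mathfrak g,\Ra^n)/H$ to be essentially $G$-fixed, so for a.e.\ $p$ and every $g$ there is $h(g)\in H$ with $\Phi(p)\circ\Ad(g)=h(g)\circ\Phi(p)$; injectivity of $\Phi(p)$ then turns $h$ into a local homomorphism and yields the embedding $\mathfrak g\rightarrow\mathfrak h$. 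No property $(T)$, no cocycle superrigidity, and no rigidity of the structure is needed --- which is exactly why the paper can remark that the theorem does not require the homogeneous structure to be rigid. If you want to salvage your approach, you should either add the Kazhdan hypothesis explicitly or replace the superrigidity input with the Borel density argument above.
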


In particular, the theorem provides an obstruction to a higher rank simple Lie group
having a volume preserving action on a compact Lorentz manifold.  For more discussion
of these theorems, see \cite{Zimmer-CBMS}.  This observation lead to a major series
of works studying automorphism groups of Lorentz manifolds, see e.g. \cite{Adams-book, Kowalsky-thesis,
Zeghib-lorentz1, Zeghib-lorentz2}. We remark here that Theorem \ref{theorem:embeddingstructuregroup}
does not require that the homogeneous structure be rigid.

More recently some closely related phenomena have been discovered in a joint work
of Bader, Frances and Melnick \cite{BaderFrancesMelnick}.  Their work uses yet another notion of a geometric
structure, that of a {\em Cartan geometry}.  We will not define this notion
rigorously here, it suffices to note that any rigid homogeneous geometric
structure defines a Cartan geometry, as discussed in \cite[Introduction]{BaderFrancesMelnick}.
The converse is not completely clear in general, but most classical examples of rigid geometric
structures can also be realized as Cartan geometries.  A Cartan geometry is essentially a way
of saying that a manifold is {\em infinitesimally} modeled on some homogeneous space $G/P$.
To recapture the notion of a Riemannian connection, $G=O(n){\ltimes}\Ra^n$ and $P=O(n)$, to recapture the
 notion of an affine connection $G=Gl(n,\Ra){\ltimes}\Ra^n$ and $P=Gl(n,\Ra)$.  Cartan connections
 come with naturally defined curvatures which vanish if and only if the manifold is {\em locally} modeled
 on $G/P$.  We refer the reader to the book \cite{Sharpe-book} for a general discussion of Cartan geometries
 and their use in differential geometry.

 Given a connected linear group $L$, we define its real rank, $rk(L)$ as before to be the dimension of a maximal $\Ra$-diagonalizable subgroup of $L$, and let $n(L)$ denote the maximal nilpotence degree of a connected nilpotent subgroup. One of the main results of \cite{BaderFrancesMelnick} is:

 \begin{theorem}
 \label{theorem:baderfrancesmelnickembedding}
 If a group $L$ acts by automorphisms of a compact Cartan geometry modeled on $G/P$, then
\begin{enumerate}
\item  $rk(Ad L^0) \leq rk(Ad_{\mathfrak g} P)$
\item  $n(Ad L^0)\leq n(Ad_{\mathfrak g} P)$
 \end{enumerate}
 \end{theorem}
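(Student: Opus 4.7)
The plan is to lift the $L$-action on $M$ to the principal $P$-bundle $\pi:\hat M\to M$ underlying the Cartan geometry. A standard fact about Cartan geometries is that automorphisms of the geometry lift canonically to bundle automorphisms preserving the Cartan connection $\omega$, and such lifts act freely on $\hat M$ (since a connection-preserving diffeomorphism is determined by its $1$-jet at a point, and $\omega$ trivializes the tangent bundle of $\hat M$ to $\mathfrak g$). Thus I obtain a free action of $L$ on $\hat M$ that commutes with the right $P$-action, where $R_p^*\omega=\operatorname{Ad}(p^{-1})\omega$. In effect, $\omega$ provides a ``rigid framing'' in which the lifted $L$-action has trivial derivative cocycle.

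Next I would extract algebraic information from recurrence. Given any connected solvable subgroup $S\subset L^0$ (and in particular the abelian and nilpotent subgroups relevant to $(1)$ and $(2)$), amenability of $S$ and compactness of $M$ produce an $S$-invariant probability measure by Krylov--Bogolyubov, so Poincar\'e recurrence applies. At a recurrent point $x\in M$ with lift $\hat x\in\hat M$, sequences $s_n\in S$ with $s_n\to\infty$ and $s_n x\to x$ give $p_n\in P$ with $s_n\hat x\to\hat x\cdot p_n$. Differentiating and using $\omega$-preservation identifies, asymptotically, the adjoint action of $s_n$ on the subspace of $\mathfrak g$ corresponding to $\mathfrak l$ via $\omega|_{\hat x}$ composed with the orbit map, with the adjoint action of $p_n$ on $\mathfrak g$. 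Thus the derivative cocycle of $L$ on $M$, once transported through $\omega$, takes values in $\operatorname{Ad}_{\mathfrak g}(P)$.

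To finish, for part $(1)$ I would take a connected subgroup $A\subset L^0$ whose adjoint image realizes the maximal $\Ra$-split rank of $\operatorname{Ad} L^0$, apply the above recurrence construction, and argue that the algebraic hull of the resulting $P$-valued cocycle contains an $\Ra$-split subgroup of the same rank; since this hull lies in $P$, its adjoint image lies in $\operatorname{Ad}_{\mathfrak g}P$, yielding $\mathrm{rk}(\operatorname{Ad} L^0)\leq \mathrm{rk}(\operatorname{Ad}_{\mathfrak g}P)$. Part $(2)$ would proceed identically, replacing $A$ by a maximal connected nilpotent subgroup of $L^0$ and using that nilpotence degree is preserved by continuous homomorphisms and Zariski closure.

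The main obstacle will be converting the asymptotic, measure-theoretic data produced by recurrence into an honest algebraic inclusion. The elements $p_n$ produced by Poincar\'e recurrence merely approximate the dynamics at the recurrent point, so one needs either Zimmer's theory of algebraic hulls of cocycles applied to the derivative cocycle on $\hat M$ to produce a genuine algebraic reduction, or a careful limiting argument along one-parameter subgroups of $A$ (or of a suitable nilpotent subgroup) to pass from the asymptotic matching $\operatorname{Ad}(s_n)\sim \operatorname{Ad}(p_n)$ to a Lie-algebraic embedding. A secondary technical nuisance is that $P$ is typically non-reductive (e.g.\ parabolic, or of affine type $GL(n,\Ra)\ltimes\Ra^n$), so one cannot invoke semisimple structure theory directly; instead one must track $\Ra$-split tori and nilpotent subgroups inside general real algebraic groups, and in particular rule out that the recurrence sends $s_n$ into the unipotent radical of $P$ in a way that collapses the rank bound.
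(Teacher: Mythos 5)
The paper does not actually prove this theorem: it is quoted from Bader--Frances--Melnick \cite{BaderFrancesMelnick}, and the only indication of the argument the survey gives is the remark that the result ``is deduced from an embedding theorem similar in flavor to Theorem \ref{theorem:zimmerbdt}.'' So there is no in-paper proof to compare yours against, and your attempt has to be judged against the strategy of the cited source.

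Measured that way, your outline is pointed in the right direction but stops short of the theorem. The opening moves match the actual argument: lift the action to the Cartan bundle $\hat M$, use the parallelism $\omega$ to get a free lifted action commuting with $P$, and exploit finite invariant measures for the amenable (abelian, resp.\ nilpotent) subgroups $S\subset L^0$ relevant to each part. Where you diverge is the mechanism for turning dynamics into algebra. You propose Poincar\'e recurrence at a single point, producing return elements $p_n\in P$ with $\mathrm{Ad}(s_n)\sim\mathrm{Ad}(p_n)$, followed by an appeal to algebraic hulls of the derivative cocycle. Bader--Frances--Melnick instead package the map $\hat x\mapsto \omega_{\hat x}\circ(\text{orbit map of }\mathfrak s)$ as an $(L\times P)$-equivariant map from $\hat M$ into the variety of linear monomorphisms $\mathfrak s\to\mathfrak g$, push forward the invariant measure, and run a Borel-density-type argument (this is the ``embedding theorem'' the survey alludes to, the analogue of Theorem \ref{theorem:zimmerbdt}) to show that the discompact radical of the Zariski closure of $\mathrm{Ad}\,S$ fixes a point of that variety lying over almost every point of the support. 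That fixed point is an honest Lie algebra embedding intertwining $\mathrm{Ad}_{\mathfrak s}$ with $\mathrm{Ad}_{\mathfrak g}$ restricted to a subalgebra of $\mathfrak p$, and both the rank and the nilpotence-degree bounds then follow immediately, with split tori going to split tori and no possibility of collapse into the unipotent radical. The step you yourself flag as ``the main obstacle'' --- upgrading the asymptotic matching along $s_n$ to an algebraic inclusion, and ruling out degeneration into the unipotent radical of $P$ --- is precisely the content of that embedding theorem; since your sketch names the obstacle but does not resolve it, what you have is a credible plan of attack rather than a proof.
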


The main point is that $L$ is not assumed either simple or connected.  This theorem is
deduced from an embedding theorem similar in flavor to Theorem \ref{theorem:zimmerbdt}.

In addition, when $G$ is a simple group and $P$ is a maximal parabolic subgroup, Bader, Frances
and Melnick prove rigidity results classifying all possible actions when the rank bound in Theorem
\ref{theorem:baderfrancesmelnickembedding} is achieved.  This classification essentially says that
all examples are algebraic, see \cite{BaderFrancesMelnick} for detailed discussion.

An earlier paper by Feres and Lampe also explored applications of Cartan geometries to rigidity
and dynamical conditions for flatness of Cartan geometries  \cite{FeresLampe}.

\subsection{Actions preserving a complex structure}
\label{subsection:complex}

We mention here one recent result by Cantat and a few active related directions
of research.  In \cite{Zimmer-Mostow}, Zimmer asked whether one had restrictions
on low dimensional holomorphic actions of lattices.  The answer was obtained in
\cite{Cantat-zimmer} and is:

\begin{theorem}
\label{theorem:cantat}
Let $G$ be a connected simple Lie group of real rank at least $2$ and suppose $\G<G$ is a lattice.
If $\G$ admits an action by automorphisms on a compact K\"ahler manifold
$M$, then the rank of $G$ is at least the complex dimension of $M$.
\end{theorem}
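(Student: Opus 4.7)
The plan is to analyze the induced action of $\G$ on $H^{*}(M,\Ra)$, reduce to a linear problem via Margulis superrigidity, and then exploit the structural constraints imposed by the Hodge decomposition and the K\"ahler cone. First I would invoke the Lieberman--Fujiki theorem: for a compact K\"ahler $M$, the natural map $\Aut(M) \to GL(H^{2}(M,\Ra))$ has kernel commensurable with $\Aut^{0}(M)$, a finite-dimensional complex Lie group. Composing $\rho$ with this representation yields $\rho_{*}: \G \to GL(H^{1,1}(M,\Ra))$, a homomorphism preserving both the integral lattice $H^{1,1}(M,\Za)$ and the K\"ahler cone $\mathcal{K} \subset H^{1,1}(M,\Ra)$.

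Apply Margulis superrigidity to $\rho_{*}$ to split into two cases. In Case 1, $\rho_{*}(\G)$ is precompact; since it preserves an integral lattice it is virtually trivial, so after passing to a finite-index $\G_{0} \leq \G$ one has $\rho(\G_{0}) \subset \Aut^{0}(M)$. Because $G$ has property $(T)$, the isometric $\G$-action on the $\CAT(0)$ space of K\"ahler metrics representing a $\G_{0}$-invariant class has a fixed point, producing a $\G_{0}$-invariant K\"ahler metric $g$. Then $\rho(\G_{0})$ lies in the compact real Lie group $K := \Isom(M,g) \cap \Aut^{0}(M)$. A faithful holomorphic action of a compact torus $T^{k}$ on a connected compact complex manifold forces $k \leq \dim_{\Ca} M$ (the complexified torus acts with generic $k$-dimensional complex orbits), so the real rank of $K$ is at most $\dim_{\Ca} M$. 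A second application of Margulis superrigidity to the inclusion $\G_{0} \hookrightarrow K$, combined with the rank inequality for representations of higher-rank lattices into compact groups, then gives $\text{rank}(G) \leq \text{rank}(K) \leq \dim_{\Ca} M$.

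In Case 2, $\rho_{*}$ virtually extends to a continuous representation $\tilde{\rho}_{*}: G \to GL(H^{1,1}(M,\Ra))$ preserving $\mathcal{K}$. The Hodge--Riemann pairing attached to any $[\omega] \in \mathcal{K}$ has signature $(1,h^{1,1}-1)$, and a Perron--Frobenius argument applied to a Cartan subgroup $A \subset G$ shows that every Cartan generator $\tilde{\rho}_{*}(a)$ has a simple dominant eigenvalue on $\mathcal{K}$, fixing a class in $\partial \mathcal{K}$. Converting such commuting hyperbolic dynamics on cohomology into a bound on $\dim_{\Ca} M$ uses the $n$-fold cup product $(\alpha_{1},\ldots,\alpha_{n}) \mapsto \int_{M} \alpha_{1} \wedge \cdots \wedge \alpha_{n}$, with $n = \dim_{\Ca} M$, which is a non-degenerate $G$-invariant multilinear form on $H^{1,1}$; commuting hyperbolic action in $\text{rank}(G)$ independent directions forces at least $\text{rank}(G)$ complex dimensions on $M$.

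The principal obstacle will be Case 2: making rigorous the final counting argument that passes from a $\text{rank}(G)$-dimensional abelian subgroup of cone-preserving hyperbolic transformations of $H^{1,1}$ to the sharp bound $\dim_{\Ca} M \geq \text{rank}(G)$, rather than a bound involving the much larger number $h^{1,1}(M)$. This is where the theory of dynamical degrees of Dinh--Sibony naturally enters: a commuting family of holomorphic automorphisms of $M$ with multiplicatively independent first dynamical degrees can exist only if $\dim_{\Ca} M$ is at least the cardinality of the family, because the degrees $\lambda_{1},\ldots,\lambda_{n}$ of any single automorphism satisfy a log-concavity relation forced by the mixed Hodge--Riemann inequalities on $H^{1,1}$.
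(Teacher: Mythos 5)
The survey records no proof of this theorem: it only attributes the result to Cantat and names the ingredients --- structure theory of $\Aut(M)$, the Dinh--Sibony results on commuting holomorphic automorphisms, and Margulis superrigidity, with no use of cocycle superrigidity. Your outline deploys exactly those ingredients, and your dichotomy along the Lieberman--Fujiki kernel of $\Aut(M)\to GL(H^2(M,\Ra))$ is the same organizing step as in Cantat's paper, so in terms of approach you are on the intended route. (One thing worth flagging: you prove $\mathrm{rank}(G)\le\dim_{\Ca}M$, whereas the statement as printed asserts the reverse inequality; the printed inequality must be a slip --- $SL(3,\Za)$ acts faithfully and holomorphically on $\Pa^2(\Ca)\times X$ for $X$ of arbitrary dimension --- and your reading is the one that matches Cantat's theorem.)

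There is, however, one step that is false as stated. In Case 1 you claim that a faithful holomorphic action of a compact torus $T^k$ on a connected compact complex manifold forces $k\le\dim_{\Ca}M$. This fails: a complex torus $X=\Ca^m/\Lambda$ acts on itself by translations, a faithful holomorphic action of $T^{2m}$ with $2m>\dim_{\Ca}X$. So ``$\mathrm{rank}(K)\le\dim_{\Ca}M$'' does not follow, and the chain $\mathrm{rank}(G)\le\mathrm{rank}(K)\le\dim_{\Ca}M$ breaks. The repair is to use the semisimplicity that superrigidity actually gives you: a dense-image homomorphism $\G_0\to K$ forces $\mathfrak g\otimes\Ca$ to embed in $\mathfrak k\otimes\Ca$, and since $\mathfrak k$ is a compact semisimple subalgebra of the complex Lie algebra $\mathrm{Lie}(\Aut^0(M))$, the full complexification $\mathfrak k\otimes\Ca$ embeds there too; one then bounds $\dim_{\Ca}M$ from below by the rank of the resulting complex \emph{semisimple} subgroup $S\le\Aut^0(M)$, using that a faithful meromorphic (Fujiki--Lieberman) action of $(\Ca^*)^r$ has a point with finite stabilizer and hence an $r$-dimensional orbit. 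Semisimplicity is what rules out the translation counterexample, so it cannot be omitted. Your Case 2 is the correct strategy but remains an outline at exactly the two hard points: producing inside $\G$ a free abelian subgroup of rank $\mathrm{rank}(G)$ all of whose nontrivial elements have spectral radius greater than $1$ on $H^{1,1}$ (note $\G\cap A$ need not be a lattice in the Cartan subgroup, so one needs Prasad--Raghunathan-type $\Ra$-regular abelian subgroups, or must work with the extended $G$-representation and the weight decomposition), and the Dinh--Sibony bound itself, which caps the rank of such a group by $\dim_{\Ca}M-1$. As written, Case 2 should be regarded as a plan rather than a proof.
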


The proof of the theorem depends primarily on results concerning $\Aut(M)$, particularly
recent results on holomorphic actions of abelian groups by Dinh and Sibony \cite{DinhSibony}.
It is worth noting that while the theorem does depend on Margulis' superrigidity theorem, it
does not depend on Zimmer's cocycle superrigidity theorem.

More recently Cantat, Deserti and others have begun a program of studying large subgroups
of automorphism groups of complex manifolds, see e.g. \cite{Cantat-birat, Deserti}. In
particular, Cantat has proven an analogue of conjecture \ref{conjecture:surfaces} in the
context of birational actions on complex surfaces.

\begin{theorem}
\label{theorem:cantatsurface} Let S be a compact K\"{a}hler surface and G an infinite, countable group
of birational transformations of S. If G has property $(T)$, then there is a birational map $j : S \rightarrow \Pa^2(\Ca)$ which conjugates
$G$ to a subgroup of $\Aut (\Pa^2(\Ca))$.
\end{theorem}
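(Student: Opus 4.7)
The plan is to have $G$ act by isometries on an infinite dimensional real hyperbolic space built from the Picard-Manin space of $S$, use property $(T)$ to produce a common fixed point, use this point to regularize the $G$-action on a birational model of $S$, and finally identify that model with $\Pa^2(\Ca)$ via the Enriques-Kodaira classification.

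First I would set up the Picard-Manin space $\mathcal{Z}(S)$: the direct limit of $\mathrm{NS}(S')\otimes\Ra$ as $S'$ ranges over smooth projective surfaces dominating $S$ by birational morphisms, together with the symmetric bilinear form induced from intersection pairings. This form has signature $(1,\infty)$; the projectivized positive light cone $\mathbb{H}^{\infty}_S$ is a model of infinite dimensional real hyperbolic space, and $\mathrm{Bir}(S)$ acts on $\mathcal{Z}(S)$, and hence on $\mathbb{H}^{\infty}_S$, by isometries. Second, I would apply property $(T)$. The space $\mathbb{H}^{\infty}_S$ is $\CAT(-1)$ and carries a natural conditionally negative definite kernel (for instance $\log\cosh d$, or kernels built from Busemann cocycles), so an unbounded orbit of $G$ on $\mathbb{H}^{\infty}_S$ would yield an unbounded isometric affine action of $G$ on a real Hilbert space, contradicting property $(T)$ via Delorme-Guichardet. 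Every $G$-orbit is therefore bounded, and the circumcenter of a bounded orbit in the complete $\CAT(0)$ space $\mathbb{H}^{\infty}_S$ yields a $G$-fixed point $v_0 \in \mathbb{H}^{\infty}_S$. In particular every element of $G$ acts elliptically on $\mathbb{H}^{\infty}_S$.

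Third, I would use $v_0$ to regularize. For a single elliptic birational transformation, Diller and Favre produce a model on which it becomes biregular; the task is to do this simultaneously for all of $G$. If $v_0$ is represented on some finite blowup $S''$ of $S$, then $G$ preserves this big class on $S''$, and standard Mori-theoretic arguments (after passing to a finite index subgroup) produce a smooth projective surface $S'$ birational to $S$ on which $G$ acts by biregular automorphisms. The hardest part of the proof, and the main obstacle, is the \emph{irrational} case: $v_0$ may not come from any finite blowup. There one uses that the $\mathrm{Bir}(S)$-stabilizer of an irrational point of $\mathbb{H}^{\infty}_S$ is severely constrained --- it must respect the exhaustion of $\mathcal{Z}(S)$ by the finite dimensional subspaces $\mathrm{NS}(S'')\otimes\Ra$, and an unbounded stabilizer of an irrational class would again yield an unbounded affine Hilbert action --- which together with property $(T)$ forces $G$ to factor through a regularizable quotient.

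Finally, with $G$ a Kazhdan subgroup of $\Aut(S')$, I would invoke the Enriques-Kodaira classification. If $S'$ has nonnegative Kodaira dimension, then for each class (abelian, K3, Enriques, bielliptic, properly elliptic, general type) $\Aut(S')$ maps to an arithmetic group of rank one (acting on $\mathrm{NS}(S')$ preserving a form of signature $(1,\rho(S')-1)$), or to the automorphism group of a positive genus curve, with amenable kernel; all of these target groups have the Haagerup property, so an infinite Kazhdan subgroup cannot embed, a contradiction. Hence $S'$ is rational. After passing to a finite index subgroup, $G$-equivariant contractions of $(-1)$-curves (a Kazhdan group acts on the finite set of $(-1)$-curves through a finite quotient) bring one to a minimal rational surface among $\Pa^2$, $\Pa^1\times\Pa^1$, and $\mathbb{F}_n$ with $n\geq 2$. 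In the last two cases $\Aut(S')^0$ maps to a product of copies of $\mathrm{PGL}_2(\Ca)$ with amenable kernel, and $\mathrm{PGL}_2(\Ca)$ is Haagerup, so no infinite Kazhdan subgroup fits; hence $S'=\Pa^2$, and the resulting birational map $j:S\dashrightarrow\Pa^2(\Ca)$ conjugates $G$ into $\Aut(\Pa^2(\Ca))=\mathrm{PGL}_3(\Ca)$, as desired.
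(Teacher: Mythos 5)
The survey states Theorem \ref{theorem:cantatsurface} without proof, simply quoting it from Cantat's paper \cite{Cantat-birat}, so there is no in-paper argument to measure your proposal against. What you have written is, in outline, the strategy of Cantat's original proof: the isometric action of $\mathrm{Bir}(S)$ on the infinite-dimensional hyperbolic space $\mathbb{H}^{\infty}_S$ sitting inside the Picard--Manin space, property $(T)$ forcing bounded orbits (the hyperbolic distance is a kernel of conditionally negative type, so an unbounded orbit would give a fixed-point-free affine isometric action on a Hilbert space) and hence a circumcenter fixed point, regularization on a birational model, and the Enriques--Kodaira classification to pin down $\Pa^2(\Ca)$. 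So the overall architecture is sound and is the ``right'' proof.

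Three points in your sketch deserve tightening. First, the regularization step is cleaner than you make it: you do not need the fixed point $v_0$ itself to live at a finite level of the direct limit. Since the class of an ample divisor on $S$ is a genuine (rational) point of $\mathbb{H}^{\infty}_S$ and its $G$-orbit stays within bounded distance of $v_0$, the degrees $\deg(g)=g^*[H]\cdot[H]$ are uniformly bounded over $g\in G$; a group of birational transformations of uniformly bounded degree is regularizable by Weil's theorem on birational group actions. This sidesteps the ``irrational fixed point'' difficulty entirely, and the Mori-theoretic language is not needed. Second, the set of $(-1)$-curves on a rational surface need not be finite; what you actually use is that $G$ fixes a class of positive self-intersection in $\mathrm{NS}(S')\otimes\Ra$, so its image in $O(\mathrm{NS}(S'))$ is contained in (compact)$\cap$(discrete) and is therefore finite, after which a finite-index subgroup fixes every $(-1)$-curve class and one contracts $G$-orbits of disjoint $(-1)$-curves equivariantly. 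Third, be careful with ``amenable kernel plus Haagerup quotient rules out infinite Kazhdan subgroups'': amenable-by-Haagerup groups need not be Haagerup (e.g.\ $\Za^2\rtimes SL(2,\Za)$ has relative property $(T)$). The correct deduction is that the image of a Kazhdan group in a Haagerup locally compact group is compact, hence finite here, so a finite-index subgroup of $G$ lands in the amenable kernel and, being an amenable Kazhdan group, is finite. With these repairs your argument goes through as a faithful reconstruction of Cantat's proof.
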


\subsection{Conjectures and questions}
\label{subsection:conjectures}

A major open problem, generalizing Conjecture \ref{conjecture:rgsslactions} is the following:

\begin{conjecture}[Gromov-Zimmer]
\label{conjecture:gromovzimmer}
Let $D$ be a semisimple Lie group with all factors having property $(T)$ or a lattice in such a Lie group.
Then any $D$ action on a compact manifold preserving a rigid geometric structure and a volume form is generalized quasi-affine.
\end{conjecture}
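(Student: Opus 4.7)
The plan is to combine cocycle superrigidity with Gromov's theory of rigid structures, a Killing-algebra integration argument, and a regularity bootstrap. By Principle~\ref{lemma:induction}, I would first reduce to the case where $D=G$ is a semisimple Lie group: the induction construction promotes both the volume form (using the $G$-invariant measure on $G/\Gamma$) and the rigid geometric structure (which combines with an affine connection on $G/\Gamma$) to an invariant rigid structure on the induced manifold, and a generalized quasi-affine model downstairs produces one upstairs. The lattice conclusion would then be extracted from the classification of the induced $G$-action together with control of the way lattice-actions sit inside induced actions.

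First I would pass to an ergodic component and invoke Zimmer's Borel density theorem (Theorem~\ref{theorem:zimmerbdt}) to see that stabilizers are discrete almost everywhere. Applying Theorem~\ref{theorem:rigid}, or its lattice version Theorem~\ref{theorem:pireplattice}, produces a linear representation of $\pi_1(M)$ whose Zariski closure $H$ contains a group locally isomorphic to $G$; this $H$, together with the local Killing algebra of the lifted structure $\tilde\omega$ on $\tilde M$, provides the candidate algebraic model $C\backslash H/\Lambda$. In parallel, I would apply cocycle superrigidity to the derivative cocycle on the $\omega$-reduction of the frame bundle, obtaining a measurable section $s$ that conjugates the derivative cocycle to a representation $\pi:G\to H$ modulo a compact cocycle into a subgroup $C<H$ commuting with $\pi(G)$. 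Feres--Lampe-type interpretations of Cartan geometries together with the Bader--Frances--Melnick embedding results should pin down the algebraic type of $H$.

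The central step is to integrate the measurable Killing data into a genuine local $H$-action on $M$ and to identify $\tilde M$ (or a compact quotient of it) with $H$. Rigidity of $\omega$ is critical here: any local diffeomorphism preserving $\omega$ whose jet is trivial to sufficient order is itself trivial, so if one shows that the measurable straightening $s$ agrees with a smooth section on an open dense set of full measure, the formal Killing fields supplied by cocycle superrigidity coincide with honest smooth Killing fields of $\omega$. Combining Gromov's open-dense principle (that the local isometry pseudogroup of a rigid structure acts transitively on a dense open invariant set) with a developing-map/monodromy argument would then identify this open set with a piece of $C\backslash H/\Lambda$, and a continuity/density argument would globalize the identification. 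The compact cocycle into $C$ together with any residual isometric factor yield exactly the skew-product appearing in Definition~\ref{definition:affine}.

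The main obstacle is the regularity problem for $s$. Exotic examples recorded in \S\ref{section:exoticactions} show that $s$ cannot be everywhere smooth in general, so one must simultaneously pin down the geometry of the ``bad set'' and prove the measurable Killing algebra is smooth on its complement; this is precisely where existing techniques fail. Closely related is the obstacle that Theorem~\ref{theorem:rigid} and Theorem~\ref{theorem:pireplattice} give data naturally living on $\tilde M$, and bridging the gap between local Killing fields on the universal cover and a global algebraic model of $M$ is the reason the conjecture remains open. Finally, even the target of the classification is not fully understood, since Question~\ref{question:quasiaffine} leaves open whether the class of generalized quasi-affine actions of higher rank groups is genuinely broader than the explicit fiber-bundle constructions described after it.
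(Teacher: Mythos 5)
The statement you are trying to prove is a conjecture, and the paper offers no proof of it; it is stated explicitly as a major open problem generalizing Conjecture \ref{conjecture:rgsslactions}. So there is no ``paper's proof'' to compare against, and your text should be read as a strategy outline rather than a proof. As an outline it assembles the right ingredients, and in fact tracks the paper's own discussion fairly closely: the paper suggests approaching the ergodic, homogeneous-structure case by first answering Question \ref{question:homogeneity} positively (local homogeneity of $M$ from a dense orbit of $\Aut(M,\omega)$), then using induction to reduce to $G$-actions on locally homogeneous manifolds modeled on $H/L$ with $G\hookrightarrow H$ centralizing $L$, which lands in the territory of \cite{Labourie-homogeneoussurvey, LabourieMozesZimmer, LabourieZimmer, Zimmer-homogeneous}. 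Your route through Theorem \ref{theorem:rigid}, cocycle superrigidity applied to the frame bundle, and a developing-map argument is the same circle of ideas.

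The genuine gap is the one you name yourself: the entire argument hinges on upgrading the measurable straightening section $s$ (equivalently, the measurable Killing data) to something smooth on an open dense set of full measure, and then globalizing the resulting local identification with $C\backslash H/\Lambda$. You assert this as ``the central step'' and then concede that ``this is precisely where existing techniques fail.'' A proof cannot contain as its pivot a step that is admitted to be beyond current technique; until that regularity statement is established, nothing downstream of it (the developing map, the monodromy argument, the identification of the skew-product structure) is available. Two further points deserve care. First, the exotic examples of \S\ref{section:exoticactions} are not merely evidence that $s$ can fail to be smooth; by \cite{BenvenisteFisher} they do not preserve any rigid geometric structure at all (only an almost rigid one), which is exactly why the conjecture as stated is plausible --- so they constrain the hypotheses rather than the method. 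Second, your reduction via Principle \ref{lemma:induction} is only clean for cocompact lattices, and the paper repeatedly warns (e.g.\ after Theorem \ref{theorem:rigid} and Theorem \ref{theorem:fereslabourieactions}) that hypotheses and conclusions on the induced action do not transparently translate back to the original lattice action; the representation of $\pi_1((G\times M)/\Gamma)$ always contains the tautological surjection onto $\Gamma$, so extracting the content of conclusion $(2)$ of Theorem \ref{theorem:pireplattice} for the lattice action is itself nontrivial and cannot be waved through.
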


A word is required on the attribution. Both Gromov and Zimmer made various less precise conjectures
concerning the classification of actions as in the conjecture \cite{Gromov-RigidStructures,Zimmer-ICM}.  The exact statement
of the correct conjecture was muddy for several years while it was not known if the Katok-Lewis and Benveniste
type examples admitted invariant rigid geometric structures. See Section \ref{section:exoticactions} for more discussion of these examples.  In the context of the results of \cite{BenvenisteFisher}, where Benveniste and the author prove that those actions do not preserve rigdi geometric structures, this version of the classification seems quite plausible. It is perhaps more plausible if one assumes
the action is ergodic or that the geometric structure is homogeneous.

A possibly easier question that is relevant is the following:

\begin{question}
\label{question:homogeneity}
Let $M$ be a compact manifold equipped with a homogeneous rigid geometric structure $\omega$.  Assume $\Aut(M,\omega)$
is ergodic or has a dense orbit.  Is $M$ locally homogeneous?
\end{question}

Gromov's theorem on the open-dense implies that a dense open set in $M$ is locally homogeneous
even if $\omega$ is not homogeneous.  The question is whether this homogeneous structure extends
to all of $M$ if $\omega$ is homogeneous.  If $\omega$ is not homogeneous, the examples following
Lemma \ref{lemma:algebraicrigid} show that $M$ need not be homogeneous.

It seems possible to approach the case of Conjecture \ref{conjecture:gromovzimmer} where $\omega$ is homogeneous
and $D$ acts ergodically by answering Question \ref{question:homogeneity} positively.  At this point,
one is left with the problem of classifying actions of higher rank groups and lattices on locally homogeneous
manifolds. Induction easily reduces one to considering $G$ actions. The techniques Gromov uses to produce the locally homogeneous structure gives slightly more precise information in this setting: one is left trying to classify homogeneous manifolds modeled on $H/L$ where $G$ acts via an inclusion in $H$ centralizing $L$.  In fact
this reduces one
to problems about locally homogeneous manifolds studied by Zimmer and collaborators, see particularly
\cite{Labourie-homogeneoussurvey, LabourieMozesZimmer, LabourieZimmer, Zimmer-homogeneous}.  For a more general survey of locally homogeneous
spaces, see also \cite{Kobayashi-survey} and \cite{KobayashiYoshino}.




The following question concerns a possible connection between preserving a rigid
geometric structure and having uniformly partially hyperbolic dynamics.

\begin{question}
\label{question:dynamical}
Let $D$ be any group acting on a compact manifold $M$ preserving a volume form and
a rigid geometric structure.  Is it true that if some element $d$ of $D$ has positive
Lyapunov exponents, then $d$ is uniformly partially hyperbolic?
\end{question}

This question is of particular interest for us when $D$ is a semisimple Lie group or a lattice,
but would be interesting to resolve in general.  The main reason to believe that the answer might
be yes is that the action of $D$ on the space of frames of $M$ is proper.  Having a proper
action on tangent bundle minus the zero section implies that an action is Anosov,
see Ma\~{n}\'{e}'s article for a proof \cite{Mane}.

\section{Topological super-rigidity: regularity results from hyperbolic dynamics and
applications} \label{section:hyperbolic}

A major area of research in the Zimmer program has been the
application of hyperbolic dynamics.  This area might be described
by the maxim:  in the presence of hyperbolic dynamics, the straightening
section from cocycle superrigidity is often more regular.  Some major successes in this direction
are the work of Goetze-Spatzier and Feres-Labourie. In the context of hyperbolic dynamical approaches, there are two main settings.
In the first one considers actions of higher rank lattices on a particular
class of compact manifolds and a second in which one makes no assumption on
the topology of the manifold acted upon.

In this section we
discuss a few such results, after first recalling some facts about the stability of
hyperbolic dynamical systems in subsection \ref{subsection:hyperbolic}.  We then discuss
best known rigidity results for actions on tori in subsection \ref{subsection:hyperbolicontori}
and after this discuss work of Goetze-Spatzier and Feres-Labourie in the more general
context in subsection \ref{subsection:topsuperrigid}.

The term ``topological superrigidity" for this area of research was coined by
Zimmer, whose early unpublished notes on the topic dramatically influenced research
in the area \cite{Zimmer-tcsr}.

This aspect of the Zimmer program has also given rise to a study of rigidity properties
for other uniformly hyperbolic actions of large groups, most particularly higher
rank abelian groups.  See e.g \cite{KalininSpatzier, KatokSpatzier, RodriguezHertz}.

The following subsection recalls basic notions from hyperbolic dynamics that are needed
in this section.

\subsection{Stability in hyperbolic dynamics.} \label{subsection:hyperbolic}

A diffeomorphism $f$ of a manifold $X$ is said to be {\em Anosov}
if there exists a continuous $f$ invariant splitting of the
tangent bundle $TX=E_{f}^u{\oplus}E_{f}^s$ and constants $a>1$ and
$C,C'>0$ such that for every $x{\in}X$,
\begin{enumerate}
\item $\|Df^n(v^u)\|{\geq}Ca^n\|v^u\|$ for all
$v^u{\in}E_{f}^u(x)$ and,

\item $\|Df^n(v^s)\|{\leq}C'a^{-n}\|v^s\|$ for all
$v^s{\in}E_{f}^s(x)$.
\end{enumerate}

\noindent We note that the constants $C$ and $C'$ depend on the
choice of metric, and that a metric can always be chosen so that
$C=C'=1$.  There is an analogous notion for a flow $f_t$, where
$TX=T\mathcal{O}{\oplus}E_{f_t}^u{\oplus}E_{f_t}^s$ where
$T{\mathcal{O}}$ is the tangent space to the flow direction and
vectors in $E_{f_t}^u$ (resp. $E_{f_t}^s$) are uniformly expanded
(resp. uniformly contracted) by the flow.  This notion was
introduced by Anosov and named after Anosov by Smale, who
popularized the notion in the United States \cite{Anosov,Smale}. One of
the earliest results in the subject is Anosov's proof that Anosov
diffeomorphisms are {\em structurally stable}, i.e that any $C^1$ perturbation
of an Anosov diffeomorphism is conjugate back to the original diffeomorphism by
a homeomorphism.  There is an analogous result for
flows, though this requires that one introduce a notion of time
change that we will not consider here.   Since Anosov also showed
that $C^2$ volume preserving Anosov flows and diffeomorphisms are ergodic,
structural stability implies that the existence of an open set of
``chaotic" dynamical systems.

The notion of an Anosov diffeomorphism has had many interesting
generalizations, for example: Axiom A diffeomorphisms,
non-uniformly hyperbolic diffeomorphisms, and diffeomorphisms
admitting a dominated splitting.  A notion that has been particularly
useful in the study of rigidity of group actions is the notion of a partially
hyperbolic diffeomorphism as introduced by Hirsch, Pugh and Shub.
Under strong enough hypotheses, these diffeomorphisms have a
weaker stability property similar to structural stability. More or
less, the diffeomorphisms are hyperbolic relative to some
foliation, and any nearby action is hyperbolic to some nearby
foliation. To describe more precisely the class of diffeomorphisms
we consider and the stability property they enjoy, we require some
definitions.

The use of the word {\em foliation} varies with context. Here a
{\em foliation by $C^k$ leaves} will be a continuous foliation
whose leaves are $C^k$ injectively immersed submanifolds that vary
continuously in the $C^k$ topology in the transverse direction. To
specify transverse regularity we will say that a foliation is
transversely $C^r$.  A foliation by $C^k$ leaves which is
tranversely $C^k$ is called simply a $C^k$ foliation. (Note our
language does not agree with that in the reference \cite{HirschPughShub}.)

Given an automorphism $f$ of a vector bundle $E{\rightarrow}X$ and
constants $a>b{\geq}1$, we say $f$ is {\em $(a,b)$-partially
hyperbolic} or simply {\em partially hyperbolic} if there is a
metric on $E$, a constant and $C{\geq}1$ and a continuous $f$
invariant, non-trivial splitting
$E=E_{f}^u{\oplus}E_{f}^c{\oplus}E_{f}^s$
 such that for every $x$ in $X$:
\begin{enumerate}

\item $\|f^n(v^u)\|{\geq}Ca^n\|v^u\|$ for all
$v^u{\in}E_{f}^u(x)$,

\item $\|f^n(v^s)\|{\leq}C{\inv}a^{-n}\|v^s\|$ for all
$v^s{\in}E_{f}^s(x)$ and

\item  $C{\inv}b^{-n}\|v^0\|<\|f^n(v^0)\|{\leq}C{b^n}\|v^0\|$ for
all $v^0{\in}E_{f}^c(x)$ and all integers $n$.

\end{enumerate}

\noindent A $C^1$ diffeomorphism $f$ of a manifold $X$ is {\em
$(a,b)$-partially hyperbolic} if the derivative action $Df$ is
$(a,b)$-partially hyperbolic on $TX$. We remark that for any
partially hyperbolic diffeomorphism, there always exists an {\it
adapted metric} for which $C=1$. Note that $E_{f}^c$ is called the
{\em central distribution} of $f$, $E_{f}^u$ is called the {\em
unstable distribution} of $f$ and $E_{f}^s$ the {\em stable
distribution} of $f$.

Integrability of various distributions for partially hyperbolic
dynamical systems is the subject of much research.  The stable and
unstable distributions are always tangent to invariant foliations
which we call the stable and unstable foliations and denote by
$\sw_f^s$ and $\sw_f^u$.  If the central distribution is tangent
to an $f$ invariant foliation, we call that foliation a {\em
central foliation} and denote it by $\sw^c_f$. If there is a
unique foliation tangent to the central distribution we call the
central distribution {\em uniquely integrable}. For smooth
distributions unique integrability is a consequence of
integrability, but the central distribution is usually not smooth.
If the central distribution of an $(a,b)$-partially hyperbolic
diffeomorphism $f$ is tangent to an invariant foliation $\sw^c_f$,
then we say $f$ is {\em $r$-normally hyperbolic to $\sw^c_f$} for
any $r$ such that $a>b^r$.  This is a special case of the
definition of $r$-normally hyperbolic given in \cite{HirschPughShub}.


Before stating a version of one of the main results of \cite{HirschPughShub},
we need one more definition.  Given a group $G$, a manifold $X$,
two foliations $\ff$ and $\ff'$ of $X$, and two actions $\rho$ and
$\rho'$ of $G$ on $X$, such that $\rho$ preserves $\ff$ and
$\rho'$ preserves $\ff'$, following \cite{HirschPughShub} we call $\rho$ and
$\rho'$ {\em leaf conjugate} if there is a homeomorphism $h$ of
$X$ such that:
\begin{enumerate}
\item $h(\ff)=\ff'$ and \item for every leaf $\fL$ of $\ff$ and
every $g{\in}G$, we have $h(\rho(g)\fL)=\rho'(g)h(\fL)$.
\end{enumerate}

\indent\indent
The map $h$ is then referred to as a {\em leaf conjugacy} between
$(X,\ff,\rho)$ and $(X,\ff',\rho')$.  This essentially means that
the actions are conjugate modulo the central foliations.

We state a special case of some the results of Hirsch-Pugh-Shub on
perturbations of partially hyperbolic actions of $\mathbb Z$, see
\cite{HirschPughShub}.  There are also analogous definitions and results for
flows.  As these are less important in the study of
rigidity, we do not discuss them here.

\begin{theorem}
\label{theorem:hps} Let $f$ be an  $(a,b)$-partially hyperbolic
$C^k$ diffeomorphism of a compact manifold $M$ which is
$k$-normally hyperbolic to a $C^k$ central foliation $\sw^c_f$.
Then for any $\delta>0$, if $f'$ is a $C^k$ diffeomorphism of $M$
which is sufficiently $C^1$ close to $f$ we have the following:
\begin{enumerate}
\item $f'$ is $(a',b')$-partially hyperbolic, where
$|a-a'|<\delta$ and $|b-b'|<\delta$,  and  the splitting
$TM=E_{f'}^u{\oplus}E_{f'}^c{\oplus}E_{f'}^s$ for $f'$ is $C^0$
close to the splitting for $f$;

\item there exist $f'$ invariant foliations by $C^k$ leaves
$\sw^c_{f'}$ tangent to $E^c_{f'}$, which is close in the natural
topology on foliations by $C^k$ leaves to $\sw^c_f$,

\item there exists a (non-unique) homeomorphism $h$ of $M$ with
$h(\sw^c_f)=\sw^c_{f'}$, and $h$ is $C^k$ along leaves of
$\sw^c_f$, furthermore $h$ can be chosen to be $C^0$ small and
$C^k$ small along leaves of $\sw^c_{f}$

\item the homeomorphism $h$ is a leaf conjugacy between the
actions $(M,\sw^c_f,f)$ and $(M,\sw^c_{f'},f')$.
\end{enumerate}
\end{theorem}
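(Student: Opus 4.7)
The plan is to establish the four conclusions in order, using the graph transform / invariant section method that is standard in hyperbolic dynamics. Throughout, I would fix adapted metrics so that the hyperbolicity constants have $C=1$, work in a small $C^1$-neighborhood of $f$, and view foliations locally as graphs of sections of complementary bundles.

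First I would prove conclusion (1) by a cone-field argument. The continuous splitting $TM=E^u_f\oplus E^c_f\oplus E^s_f$ defines open cones around $E^u_f$ and around $E^u_f\oplus E^c_f$ in the Grassmannian bundle over $M$. The domination inequalities $\|Df v^u\|\geq a\|v^u\|$, $\|Df v^s\|\leq a^{-1}\|v^s\|$, and $\|Df v^c\|\in[b^{-1},b]\|v^c\|$ with $a>b$ imply that $Df$ sends each such cone strictly inside itself (or its complement), with a definite margin. This margin survives any sufficiently small $C^1$ perturbation, so the corresponding cone fields are $Df'$-invariant. Intersecting forward (respectively backward) images of these cone fields yields a $Df'$-invariant continuous splitting $E^u_{f'}\oplus E^c_{f'}\oplus E^s_{f'}$, with constants $(a',b')$ within $\delta$ of $(a,b)$ and distributions $C^0$-close to those of $f$; this is conclusion (1).

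Next, for conclusion (2), I would obtain the central foliation $\sw^c_{f'}$ as the fixed point of a graph transform on a space of sections. Using the $C^k$ foliation $\sw^c_f$, pick local charts in which each leaf of $\sw^c_f$ is a graph over $E^c_f$ in the complementary bundle $E^u_f\oplus E^s_f$; candidate nearby foliations are parametrized by $C^k$ sections (along leaves) of this complementary bundle, of small $C^1$ norm. The pullback operation induced by $f'$ defines a graph transform $\mathcal{T}_{f'}$ on this space of sections. The hypothesis that $f$ is $k$-normally hyperbolic, i.e.\ $a>b^k$, combined with the fiber contraction principle of Hirsch--Pugh--Shub, implies that $\mathcal{T}_{f'}$ is a contraction in the $C^0$ norm and, after an inductive bootstrap on jets up to order $k$, also contracts the $C^k$ norm along leaves. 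Its unique fixed point is a $C^k$ section whose graphs assemble into an $f'$-invariant foliation $\sw^c_{f'}$ tangent to $E^c_{f'}$ and close to $\sw^c_f$ in the foliations-by-$C^k$-leaves topology.

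For conclusions (3) and (4) I would construct the leaf conjugacy by another graph transform, this time in the space of continuous maps $h:M\to M$ that send each leaf $\fL$ of $\sw^c_f$ into a leaf of $\sw^c_{f'}$ and are $C^0$-small. The equation $h\circ f=f'\circ h$, projected transversely to the central direction, determines $h$ uniquely along stable and unstable directions; the partial hyperbolicity makes the associated operator a contraction (expansion in $E^u$ and contraction in $E^s$ both strictly dominate the central behavior), so it has a unique fixed point $h$. This $h$ is a homeomorphism because the stable and unstable holonomies of both foliations are homeomorphisms and $h$ is built from them; along each leaf of $\sw^c_f$, the fixed-point equation reduces to a problem whose solutions inherit $C^k$ regularity from the $C^k$ leaves of $\sw^c_f$ and $\sw^c_{f'}$ via the same fiber contraction, yielding the $C^k$-smallness along leaves. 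The identity $h(\rho(g)\fL)=\rho'(g)h(\fL)$ for $g=f^n$ (with $\rho=f,\rho'=f'$) is then immediate from $h\circ f=f'\circ h$, giving the leaf conjugacy.

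The main obstacle is conclusion (2): one needs to produce an invariant foliation for $f'$ without any a priori smoothness of the central distribution $E^c_{f'}$ (so Frobenius is unavailable), and then upgrade its leaves to $C^k$. Both steps depend delicately on the $k$-normal hyperbolicity inequality $a>b^k$, which is precisely the spectral gap required for the graph-transform contraction to survive $k$-fold differentiation along the central leaves. Once the foliation and its regularity are in hand, conclusions (1), (3), and (4) are comparatively routine consequences of the same contraction machinery.
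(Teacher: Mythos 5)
The paper offers no proof of this statement: it is quoted, as a special case, from Hirsch--Pugh--Shub \cite{HirschPughShub}, with only the remark that conclusion $(1)$ is easy and probably older. Your sketch is therefore not competing with an argument in the survey but with the original one, and it does follow the Hirsch--Pugh--Shub strategy faithfully: invariant cone fields for persistence of the dominated splitting, a graph transform on sections of the complement of $E^c_f$ over the leaves of $\sw^c_f$ combined with the fiber contraction theorem (this is where $a>b^k$ enters) to produce the perturbed center foliation and the $C^k$ regularity of its leaves, and a further contraction argument for the leaf conjugacy. Note also that the uniqueness issues HPS must confront (plaque expansivity) are harmless here because the theorem assumes $\sw^c_f$ is a genuine $C^k$ foliation.

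One step in your write-up is wrong as stated, and it is precisely the subtlety that forces the notion of leaf conjugacy in the first place. You cannot solve $h\circ f=f'\circ h$ pointwise: that equation asserts that $f$ and $f'$ are topologically conjugate, which fails for general partially hyperbolic maps because the dynamics inside the center leaves is not structurally stable --- conclusion $(4)$ is deliberately weaker than conjugacy. The correct fixed-point problem determines only the transverse part of $h$, i.e.\ the stable and unstable coordinates relative to $\sw^c_{f'}$, and leaves the center coordinate essentially free; this is why $h$ is non-unique and why the intertwining relation holds only at the level of leaves, $h(f\fL)=f'h(\fL)$, not of points. Your own phrase ``projected transversely to the central direction'' shows you see this, but the displayed equation $h\circ f=f'\circ h$ and the deduction of $(4)$ from it must be replaced by the leafwise version. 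With that correction your outline matches the published proof.
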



\noindent Conclusion $(1)$ is easy and probably older than
\cite{HirschPughShub}. One motivation for Theorem \ref{theorem:hps} is to
study stability of dynamical properties of partially hyperbolic
diffeomorphisms. See the survey, \cite{BurnsPughShubWilkinson}, by Burns, Pugh, Shub and Wilkinson for more discussion of that and related issues.

\subsection{Uniformly hyperbolic actions on tori}
\label{subsection:hyperbolicontori}

Many works have been written considering local rigidity of actions
with some affine, quasi-affine and generalized quasi-affine actions with
hyperbolic behavior.  For a discussion of this, we refer to
\cite{Fisher-survey}.  Here we only discuss results which prove
some sort of global rigidity of groups acting on manifolds.  The first
such results were contained in papers of Katok-Lewis and Katok-Lewis-Zimmer
\cite{KatokLewis-global,KatokLewisZimmer}. As these are now special cases
of later more general results, we do not discuss them in detail here.

In this section we discuss results which only provide continuous
conjugacies to standard actions.  This is primarily because these
results are less technical and easier to state.  In this context, one
can improve regularity of the conjugacy given certain technical dynamical
hypotheses on certain dynamical foliations.

\begin{defn}
\label{definition:weaklyhyperbolic}
An action of a group $\G$ on a manifold $M$ is weakly hyperbolic if there exist
elements $\g_1, \ldots, \g_k$ each of which is partially hyperbolic such that the
sum of the stable sub-bundles of the $\g_i$ spans the tangent bundle to $M$ at every
point, i.e. $\sum_i E_{\g_i} = TM$.
\end{defn}

To discuss the relevant results we need a related topological notion that captures hyperbolicity
at the level of fundamental group.  This was introduced in \cite{FisherWhyte-quotient}
by Whyte and the author.
If a group $\G$ acts on a manifold with torsion free nilpotent fundamental group and the action lifts
to the universal cover, then the action of $\G$ on $\pi_1(M)$ gives rise to an action of $\G$ on the
Malcev  completion $N$ of $\pi_1(M)$ which is a nilpotent Lie group.  This yields a representation of $\G$ on
the Lie algebra $\mathfrak n$.

\begin{defn}
\label{definition:pi1hyperbolic}
We say an action of $\G$ on a manifold $M$ with nilpotent fundamental group is $\pi_1$-hyperbolic
if for the resulting $\G$ representation on $\mathfrak n$, we have finitely many elements
$\g_1, \ldots, \g_k$ such that the sum of their eigenspaces with eigenvalue of modulus less
than one is all of $\mathfrak n$.
\end{defn}

One can make this definition more general by considering $M$ where $\pi_1(M)$ has a $\G$ equivariant nilpotent
quotient, see \cite{FisherWhyte-quotient}. We now discuss results that follow
by combining work of Margulis-Qian with later work of Schmidt and Fisher-Hitchman
\cite{FisherHitchman-csr, MargulisQian, Schmidt-thesis}.

\begin{theorem}
\label{theorem:pi1hyperbolicnil}
Let $M=N/\Lambda$ be a compact nilmanifold and let $\Gamma$ be a lattice
in a semisimple Lie group with property $(T)$.  Assume $\Gamma$ acts on $M$ such that
the action lifts to the universal cover and is $\pi_1(M)$ hyperbolic, then the
action is continuously semi-conjugate to an affine action.
\end{theorem}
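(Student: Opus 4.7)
The plan is to produce the semi-conjugacy by the Franks--Manning template for a single Anosov diffeomorphism of a nilmanifold, promoted to a group-equivariant statement using the rich supply of hyperbolic elements guaranteed by the $\pi_1$-hyperbolicity hypothesis. The target affine action $a\colon \Gamma\to\Aff(N/\Lambda)$ is the canonical one: the lifting hypothesis identifies the $\Gamma$-action on $\pi_1(M)=\Lambda$ which, by uniqueness of the Malcev completion, extends to an action on $N$ by Lie algebra automorphisms whose tangential representation on $\mathfrak{n}$ is exactly the one appearing in Definition \ref{definition:pi1hyperbolic}. Using the hypothesized lift, choose $\tilde\rho\colon\Gamma\to\Diff(N)$. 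Both $\tilde\rho(\gamma)$ and $a(\gamma)$ intertwine left translation by $\lambda\in\Lambda$ with left translation by $A(\gamma)\lambda$, where $A(\gamma)\in\Aut(N)$ denotes the linearization; consequently the ``displacement'' $\psi_\gamma := a(\gamma)^{-1}\tilde\rho(\gamma)$ commutes with the $\Lambda$-action, descends to a continuous self-map of $M$ homotopic to the identity, and (after choosing log coordinates on a tubular neighbourhood of $\Lambda$) is encoded by a $\Lambda$-invariant bounded continuous map $u_\gamma\colon N\to\mathfrak{n}$.

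I would then seek $\tilde h(x)=x\cdot \exp(v(x))$ with $v$ a bounded continuous $\Lambda$-invariant $\mathfrak n$-valued function on $N$ satisfying the semi-conjugacy equation $\tilde h\circ\tilde\rho(\gamma)=a(\gamma)\circ\tilde h$. Expanding in log coordinates and using the Baker--Campbell--Hausdorff formula turns this into a cocycle-type equation
\[
v\circ\tilde\rho(\gamma)^{-1} - A(\gamma)\cdot v \;=\; u_\gamma \pmod{\text{lower-order BCH terms}},
\]
to be solved simultaneously in $\gamma$. The key is that this equation is affine in $v$, and its linear part on the Banach space $\mathcal B$ of bounded continuous $\Lambda$-invariant $\mathfrak n$-valued maps acts through the representation of $\Gamma$ on $\mathfrak n$ composed with pull-back by $\tilde\rho(\gamma)^{-1}$. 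For each $\gamma_i$ furnished by the $\pi_1$-hyperbolicity hypothesis the restriction of this linear action to the corresponding contracting eigenspace is a strict contraction of $\mathcal B$, so the Banach fixed-point theorem, applied in the style of Franks, produces a candidate $v_i$ along each $\gamma_i$-contracting direction via the telescoping series
\[
v_i(x)\;=\;\sum_{n\ge 0} A(\gamma_i)^{n+1}\,u_{\gamma_i}^{s}(\tilde\rho(\gamma_i)^{-n-1}x) \;-\;\sum_{n\ge 0} A(\gamma_i)^{-n}\,u_{\gamma_i}^{u}(\tilde\rho(\gamma_i)^{n}x),
\]
and the spanning hypothesis $\sum_i E^s_{A(\gamma_i)}=\mathfrak n$ lets us assemble these into a single $v$. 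Consistency across different $\gamma\in\Gamma$ is forced by the cocycle identity $u_{\gamma\delta}=u_\gamma+A(\gamma)\,u_\delta\circ\tilde\rho(\gamma)^{-1}$ (modulo BCH corrections), together with the uniqueness of bounded solutions to each individual contracting equation; alternatively, one can package the whole construction as a fixed-point problem on $\mathcal B$ for the affine isometric $\Gamma$-action defined by the cocycle $\{\psi_\gamma\}$ and invoke property $(T)$ to obtain the fixed point directly, which is the viewpoint closest to Margulis--Qian and Fisher--Hitchman. Once $v$ is found, $\tilde h$ descends to a continuous $h\colon M\to M$ because $v$ is $\Lambda$-invariant, $h$ is homotopic to the identity by construction, and it is a semi-conjugacy by the choice of equation.

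The main obstacle is executing the contraction argument on a \emph{nilmanifold} rather than a torus. On $\mathfrak n$ the representation $A$ preserves the descending central series $\mathfrak n\supset[\mathfrak n,\mathfrak n]\supset\cdots$ but is not a direct sum along the graded pieces, so one has to solve the cocycle equation inductively along this filtration: at stage $k$ the equation reduces modulo $\mathfrak n^{k+1}$ to a linear cocycle problem with coefficients in the graded piece $\mathfrak n^{k}/\mathfrak n^{k+1}$, and one must check that the $\pi_1$-hyperbolicity hypothesis forces the induced representation on each such graded piece to still have enough contracting elements to run the Franks series. The BCH correction terms coming from the lift from graded to filtered only involve strictly lower-order data already constructed, so the induction closes, but continuity (as opposed to mere measurability or boundedness) of the solution at each stage requires uniform control of the contraction rates and of the pull-back of continuous functions by the homeomorphism $\tilde\rho(\gamma_i)$, which is where the technical heart of the argument—and the appeal to cocycle superrigidity in the Fisher--Hitchman form for lattices in $Sp(1,n)$ and $F_4^{-20}$—comes in.
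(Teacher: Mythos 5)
You should first be aware that the survey does not actually prove Theorem \ref{theorem:pi1hyperbolicnil}: it is quoted from Margulis--Qian, with the extension to all property $(T)$ lattices credited to the Fisher--Hitchman version of cocycle superrigidity, so any comparison is really against that source. Measured against it, your skeleton is the right one: identifying the affine model from the $\pi_1$-action via Malcev rigidity, forming the displacement $a(\gamma)^{-1}\tilde\rho(\gamma)$, reducing to a twisted coboundary equation for a bounded continuous $\Lambda$-invariant $\mathfrak n$-valued function, and inducting along the lower central series with Baker--Campbell--Hausdorff corrections is exactly the Franks--Manning template that Margulis and Qian follow.

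The genuine gap is the globalization step, and neither of the two mechanisms you offer closes it. The telescoping series for $\gamma_i$ only solves the components of the coboundary equation along $E^s_{A(\gamma_i)}\oplus E^u_{A(\gamma_i)}$; since the $\gamma_i$ are not assumed Anosov, the components along the modulus-one eigendirections of each individual element are never addressed, and the hypothesis $\sum_i E^s_{A(\gamma_i)}=\mathfrak n$ does not make those components vanish --- so even for the finite set $\{\gamma_i\}$ the equation is not fully verified, and ``uniqueness of bounded solutions'' is simply unavailable off the hyperbolic part of a single element. Moreover the subspaces $E^s_{A(\gamma_i)}$ are neither $\Gamma$-invariant nor in direct sum, so the partial solutions do not assemble formally. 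Your alternative route fails for a structural reason: the affine action of $\Gamma$ on $C(M,\mathfrak n)$ has linear part $f\mapsto A(\gamma)\,(f\circ\rho(\gamma)^{-1})$, which is not isometric (the $A(\gamma)$ are hyperbolic) and acts on a Banach rather than a Hilbert space, so property $(T)$ cannot be invoked as a black box to produce the fixed point. Bridging exactly this --- the neutral directions and the mutual compatibility of the partial solutions --- is the actual content of Margulis--Qian's vanishing argument for these twisted modules, which combines property $(T)$ with the superrigidity theorems to control the linear part $A$; the Fisher--Hitchman input enters there, in enlarging the class of groups for which those rigidity statements hold, not at the continuity step where you place it.
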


This theorem was proven by Margulis and Qian, who noted that if the $\Gamma$ action
contained an Anosov element, then the conjugacy could be taken to be a homeomorphism.
In \cite{FisherWhyte-quotient}, the author and Whyte point out that this theorem extends
easily to the case of any compact manifold with torsion free nilpotent fundamental group.
In \cite{FisherWhyte-quotient}, we also discuss extensions to manifolds with fundamental
group with a quotient which is nilpotent.

Margulis and Qian asked whether the assumption of $\pi_1$ hyperbolicity could be replaced
by the assumption that the action on $M$ was weakly hyperbolic.  In the case of actions
of Kazhdan groups on tori, Schmidt proved that weak hyperbolicity implies $\pi_1$ hyperbolicity,
yielding:

\begin{theorem}
\label{theorem:weaklyhyperbolictorus}
Let $M=\Ta^n$ be a compact torus and let $\Gamma$ be a lattice
in a semisimple Lie group with property $(T)$.  Any weakly hyperbolic
$\Gamma$ action $M$ and that lifts to the universal cover
is continuously semi-conjugate to an affine action.
\end{theorem}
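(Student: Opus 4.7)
The plan is to deduce the theorem by a two–step reduction: first promote the dynamical hypothesis of weak hyperbolicity to the algebraic hypothesis of $\pi_1$-hyperbolicity (Schmidt's step), then apply Theorem \ref{theorem:pi1hyperbolicnil} of Margulis--Qian directly.

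First, because the $\Gamma$-action lifts to $\Ra^n$, it commutes with the deck group $\Za^n$, so each $\gamma \in \Gamma$ descends to a linear map on $\pi_1(\Ta^n) = \Za^n$, giving a representation $\rho \colon \Gamma \to GL(n,\Za)$. Since $\Gamma$ is a lattice in a semisimple Lie group all of whose simple factors have property $(T)$, Margulis' superrigidity theorem applies to $\rho$: after passing to a finite index subgroup, $\rho$ factors as the product of a representation $\tilde\rho \colon G \to GL(n,\Ra)$ restricted to $\Gamma$ and a bounded cocycle. The goal is to show that the images $\rho(\gamma_1),\dots,\rho(\gamma_k)$ of the elements witnessing weak hyperbolicity have stable subspaces (in the sense of Definition \ref{definition:pi1hyperbolic}) which together span $\Ra^n = \mathfrak n$.

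The heart of the matter is to match dynamical stable directions of each $\gamma_i$ acting on $T\Ta^n$ with genuine contracting eigenspaces of $\rho(\gamma_i) \in GL(n,\Ra)$. The approach I would follow is to compare, for each such $\gamma_i$, the derivative cocycle of the $\gamma_i$-action with its linearization $\rho(\gamma_i)$. Since $\gamma_i$ is partially hyperbolic, one can apply Pesin theory to any $\gamma_i$-invariant measure (obtained via Krylov--Bogoliubov) to extract $\dim E^s_{\gamma_i}$ negative Lyapunov exponents. On the other hand, any diffeomorphism of $\Ta^n$ is homotopic to its linearization, so classical entropy/volume-growth inequalities of Manning type allow one to bound the spectrum of $\rho(\gamma_i)$ from outside in terms of the exponential volume growth of $\gamma_i$ and $\gamma_i^{-1}$. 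Combined with property $(T)$, which via cocycle superrigidity pins down the derivative cocycle on a set of positive measure to its linear model $\tilde\rho$ up to compact noise, one concludes that the stable subspace of $\rho(\gamma_i)$ in $\Ra^n$ has dimension at least $\dim E^s_{\gamma_i}$. Summing over $i$ and using the weak hyperbolicity spanning hypothesis, one obtains $\pi_1$-hyperbolicity.

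With $\pi_1$-hyperbolicity in hand, Theorem \ref{theorem:pi1hyperbolicnil} applies (torus is a nilmanifold with $N = \Ra^n$), producing the desired continuous semi-conjugacy to the affine action generated by $\rho \colon \Gamma \to GL(n,\Za) \subset \Aff(\Ta^n)$. The main obstacle is step two: without a globally invariant volume (which is not hypothesized here), cocycle superrigidity and the identification of Lyapunov exponents with eigenvalues (Proposition \ref{proposition:furstenberg}) are not immediately available, and one must either construct useful $\gamma_i$-invariant (not $\Gamma$-invariant) measures and carefully patch the resulting spectral information across the generating set $\{\gamma_i\}$, or else exploit property $(T)$ to force enough rigidity of the derivative cocycle without an ambient invariant measure. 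This is exactly the content of Schmidt's argument in \cite{Schmidt-thesis}, refined in \cite{FisherHitchman-csr} to cover the full property $(T)$ case.
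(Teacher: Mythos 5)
Your proposal follows exactly the route the paper takes: the survey derives this theorem by citing Schmidt's result that, for actions of Kazhdan groups on tori, weak hyperbolicity implies $\pi_1$-hyperbolicity, and then invoking Theorem \ref{theorem:pi1hyperbolicnil} (with the Fisher--Hitchman extension of cocycle superrigidity supplying the general property $(T)$ case). Your middle paragraph sketching how Schmidt's reduction might go is additional detail the paper does not supply, but the overall two-step structure is the same, so the proposal is consistent with the paper's argument.
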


\noindent{\bf Remarks:}

\begin{enumerate}

\item  The contribution of Fisher-Hitchman in both theorems is just in extending cocycle superrigidity
to a wider class of groups, as discussed above.

\item The assumption that the action lifts to the universal cover of $M$ is often
vacuous because of results concerning cohomology of higher rank lattices.  In
particular, it is vacuous for cocompact lattices in simple Lie groups of real
rank at least $3$.

\end{enumerate}

It remains an interesting, open question to take this result and prove that
the semiconjugacy is always a conjugacy and is also always a smooth diffeomorphism.

\subsection{Rigidity results for uniformly hyperbolic actions}
\label{subsection:topsuperrigid}

We begin by discussing some work of Goetze and Spatzier.  To avoid technicalities
we only discuss some of their results.  We begin with the following definition.

\begin{defn}
\label{definition:Cartanaction}
Let $\rho:\Za^k{\times}M{\rightarrow}M$ be an action and $\gamma_1, \ldots, \gamma_l$
be a collection of elements which generate for $\Za^k$.  We call $\rho$ a Cartan action if

\begin{enumerate}
\item each $\rho(\gamma_i)$ is an Anosov diffeomorphism,

\item each $\rho(\gamma_i)$ has one dimensional strongest stable foliation,

\item the strongest stable foliations of the $\rho(\gamma_i)$ are pairwise transverse
and span the tangent space to the manifold.

\end{enumerate}
\end{defn}

It is worth noting that Cartan actions are very special in three ways.  First we assume
that a large number of elements in the acting group are Anosov diffeomorphisms, second
we assume that each of these has one dimensional strongest stable foliation and lastly
we assume these one dimensional directions span the tangent space.  All aspects of these
assumptions are used in the following theorem.  Reproving it even assuming two dimensional
strongest stable foliations would require new ideas.

\begin{theorem}
\label{theorem:goetzespatzier}
Let $G$ be a semisimple Lie group with all simple factors of real rank at least two and
$\Gamma$ in $G$ a lattice.  Then any volume preserving Cartan action of $\Gamma$ is smoothly
conjugate to an affine action on an infranilmanifold.
\end{theorem}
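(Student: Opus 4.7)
The plan is to derive the smooth conjugacy in two movements: first, use Zimmer's cocycle superrigidity to produce a measurable straightening of the derivative cocycle, and second, use the particular hyperbolic dynamics coming from the Cartan hypothesis to bootstrap this measurable straightening into a smooth one. Throughout we write $\gamma_1,\dots,\gamma_l$ for the generating Anosov elements with one-dimensional strongest stable foliations $\sw^{ss}_i$, and we let $\rho$ denote the $\Gamma$-action. Since $G$ has all simple factors of higher rank, $G$ and $\Gamma$ satisfy the cocycle superrigidity theorem of Section \ref{section:csr}.

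First, I would apply cocycle superrigidity to the derivative cocycle $d\rho\colon \Gamma\times M\to GL(n,\Ra)$ acting on a trivialization of $TM$. This yields a measurable section $\sigma\colon M\to GL(n,\Ra)$, a representation $\pi\colon G\to GL(n,\Ra)$, and a compact-valued cocycle $K(\gamma,x)$ such that $d\rho(\gamma)_x = \sigma(\rho(\gamma)x)^{-1}\pi(\gamma)K(\gamma,x)\sigma(x)$ almost everywhere, as in equation (\ref{equation:csr}). Because each $\rho(\gamma_i)$ is Anosov with one-dimensional strongest stable distribution, the Lyapunov spectrum on the left side has a simple bottom exponent in each direction corresponding to $\sw^{ss}_i$. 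Combined with Proposition \ref{proposition:furstenberg}, this forces the image $\pi(G)$ to have enough Lyapunov data to account for all the contracting one-dimensional directions of the $\gamma_i$, and in particular the representation $\pi$ is non-trivial and tracks the hyperbolic structure.

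The main work is to upgrade $\sigma$ from a measurable section to a smooth one. The leaves of each strongest stable foliation $\sw^{ss}_i$ are smooth one-dimensional immersed submanifolds on which $\rho(\gamma_i)$ contracts exponentially with a definite rate gap (this is precisely why ``strongest stable'' is used). A standard Livshitz/Journ\'e type argument then shows that any measurable section which intertwines a uniformly contracting dynamics with a constant dynamics must in fact be smooth along the contracting leaves; one runs this for each $\gamma_i$ to conclude that $\sigma$ is smooth along each $\sw^{ss}_i$. The Cartan hypothesis that the distributions $E^{ss}_{\gamma_i}$ span $TM$ at every point, together with the classical Journ\'e lemma on joint regularity along transverse foliations, then promotes $\sigma$ to a genuinely smooth map on $M$. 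This is the step I expect to be the technical heart of the argument: one must verify the non-resonance and H\"older regularity conditions under which Journ\'e's lemma applies, using the transversality and one-dimensionality in Definition \ref{definition:Cartanaction}.

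Once $\sigma$ is smooth, it provides a smooth $\Gamma$-equivariant framing of $TM$ in which the derivative action of $\Gamma$ is constant (up to the compact cocycle $K$, which one disposes of by averaging and by the fact that volume preservation forces $\pi(G)$ into $SL(n,\Ra)$). The parallelism defined by this framing is preserved by $\Gamma$, and standard Lie-theoretic arguments (the framing is invariant under a transitive nilpotent Lie algebra of vector fields, compare with the discussion after Definition \ref{definition:finitetype}) identify $M$ with a compact quotient of a simply connected nilpotent Lie group $N$, i.e.\ with an infranilmanifold. In these coordinates, each $\rho(\gamma)$ has constant derivative, hence is an affine diffeomorphism of $M$, which is the conclusion of the theorem. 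A final remark: the ``infra'' is needed because the framing and the $\Gamma$-action may only descend to $N/\Lambda$ after passing to a finite cover, exactly the usual ambiguity in Franks--Manning type classifications.
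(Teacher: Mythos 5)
Your overall architecture---cocycle superrigidity applied to the derivative cocycle, then a regularity upgrade exploiting the one-dimensional strongest stable foliations and their spanning property, ending with a smooth invariant framing that identifies $M$ with an infranilmanifold---is the same route taken by Goetze and Spatzier, and matches the two-stage structure described in the text after the theorem. However, there is a genuine gap at exactly the step you flag as the technical heart. You assert that a ``standard Livshitz/Journ\'e type argument'' shows that any \emph{measurable} section intertwining a uniformly contracting dynamics with a constant one must be smooth along the contracting leaves. No such standard argument applies to merely measurable data: Livshitz-type and leafwise bootstrap arguments require the section to be at least H\"older continuous before they can be run, since a measurable section can be altered arbitrarily on a null set and carries no pointwise information along individual leaves (each of which has measure zero). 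The passage from the measurable section produced by cocycle superrigidity to a H\"older continuous one is precisely the content of the first of the two Goetze--Spatzier papers \cite{GoetzeSpatzier-Duke}, and it is a substantial argument in its own right, not a corollary of contraction along leaves; it compares the section at nearby points by pushing forward under suitably chosen words in the $\gamma_i$ and exploits the structure of the Lyapunov data. Only once H\"older continuity is in hand does the second paper \cite{GoetzeSpatzier-Annals} carry out the program you describe: smoothness along the one-dimensional strongest stable leaves, Journ\'e-type joint regularity across the transverse web of foliations, and the construction of a homogeneous structure from the resulting framing, using ideas from the Katok--Spatzier work on higher rank abelian actions.

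A smaller but real issue is your final paragraph, where the compact-valued cocycle $K$ is ``disposed of by averaging.'' Averaging produces invariant metrics; it does not trivialize a compact-valued cocycle. Controlling the compact noise requires an argument of its own (here the one-dimensionality of the strongest stable subbundles is what makes the residual compact ambiguity finite), and this residual finite ambiguity is one of the sources of the ``infra'' and of the passage to finite covers and finite index subgroups mentioned in the discussion following the theorem.
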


This is slightly different than the statement in Goetze-Spatzier, where they pass to a finite
cover and a finite index subgroup.  It is not too hard to prove this statement from theirs.
The proof spans the two papers \cite{GoetzeSpatzier-Duke} and \cite{GoetzeSpatzier-Annals}.
The first paper \cite{GoetzeSpatzier-Duke} proves, in a somewhat more general context, that
the $\pi$-simple section arising in the cocycle superrigidity theorem is in fact H\"older continuous.
The second paper makes use of the resulting H\"older Riemannian metric in conjunction with ideas
arising in other work of Katok and Spatzier to produce a smooth homogeneous structure on the
manifold.

The work of Feres-Labourie differs from other work on rigidity of actions with hyperbolic properties
in that does not make any assumptions concerning existence of invariant measures.  Here we state
only some consequences of their results, without giving the exact form of cocycle superrigidity that
is their main result.

\begin{theorem}
\label{theorem:fereslabourieactions}
Let $\G$ be a lattice in $SL(n,\Ra)$ for $n\geq 3$ and assume $\G$ acts
smoothly on a compact manifold $M$ of dimension $n$.  Further assume
that for the induced action $N=(G\times M)/\G$ we have
\begin{enumerate}
\item every $\Ra$-semisimple $1$-parameter subgroup of $G$ acts
transitively on $N$ and
\item some element $g$ in $G$ is uniformly partially hyperbolic with
$E_s{\oplus}E_w$ containing the tangent space to $M$ at any point,
\end{enumerate}
then $M$ is a torus and the action on $M$ is a standard affine action.
\end{theorem}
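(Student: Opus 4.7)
The plan is to deduce this theorem from a topological (non-measure-theoretic) analogue of cocycle superrigidity in which the invariant-measure hypothesis of Section~\ref{section:csr} is replaced by the dynamical hypotheses (1) and (2). The argument has three stages: produce a continuous straightening of the derivative cocycle on $N$; extract from it a $\Gamma$-invariant flat affine structure on $M$; and identify the resulting action with the standard affine action on $\Ta^n$.

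For the first stage, I would work with the derivative cocycle of the induced $G$-action on $TN$ and try to build a continuous $G$-equivariant trivialization $s : TN \to N \times \Ra^{\dim N}$ conjugating the derivative action to a linear representation $\pi : G \to GL(\dim N, \Ra)$. The construction begins by straightening $s$ along the strong stable foliation $\sw^s_g$ of the partially hyperbolic element $g$, where uniform contraction lets one define $s$ by an iterative averaging procedure convergent in the $C^0$ topology. One then propagates $s$ off this single family of leaves to all of $N$ by invoking hypothesis (1): the transitive action of $\Ra$-semisimple one-parameter subgroups means every point of $N$ is reached from a base stable leaf along flow lines of such subgroups, and $G$-equivariance then determines $s$ along those orbits. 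Hypothesis (2), that $E^s \oplus E^w \supseteq TM$ pointwise, ensures that the $M$-directions lie in the contracting-or-neutral part of $TN$ and so genuinely participate in the propagation scheme rather than being obstructed by unstable behavior along $M$.

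For the second stage, I analyze the representation $\pi$. Since $\dim M = n$ and the minimal nontrivial linear representation of $SL(n, \Ra)$ has dimension exactly $n$, the restriction of $\pi$ to the $TM$-summand of the framing is forced to be the standard representation on $\Ra^n$, up to finite-order twists. The continuous framing thus endows $M$ with a $\Gamma$-invariant continuous flat affine structure whose linear holonomy is the inclusion $\Gamma \hookrightarrow SL(n, \Ra)$. A $\Gamma$-element acting with hyperbolic derivative (extracted from (2)) together with standard developing-map arguments for flat affine manifolds then forces $\pi_1(M) \cong \Za^n$ and $M$ homeomorphic to $\Ta^n$.

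For the final stage, I apply Theorem~\ref{theorem:weaklyhyperbolictorus} to obtain a continuous semiconjugacy from the $\Gamma$-action on $\Ta^n$ to a standard affine action by $SL(n, \Za)$; the continuous framing from stage one plus the partial hyperbolicity upgrades this to a smooth conjugacy. The decisive difficulty is stage one: constructing the continuous straightening section in the absence of an invariant measure. This is where the Feres--Labourie topological superrigidity machinery does its real work, via a delicate argument interleaving the H\"older holonomy of the stable foliation of $g$ with the transitive action of the $\Ra$-semisimple one-parameter subgroups. Once that continuous framing is in hand, the remaining stages are comparatively routine applications of the structure theory of flat affine manifolds and the classification results already available for lattice actions on tori.
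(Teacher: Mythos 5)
Your stage one is in the right spirit---the paper's cited proof does rest on the topological (measure-free) version of cocycle superrigidity from \cite{FeresLabourie}, with hypotheses (1) and (2) serving exactly to make that machinery applicable---but your stages two and three contain genuine gaps, and they are not the ``comparatively routine'' steps you claim. First, passing from a continuous flat affine structure with holonomy in $SL(n,\Ra)$ to the conclusion $\pi_1(M)\cong\Za^n$ and $M\cong\Ta^n$ is not a standard developing-map argument: compact flat affine manifolds are not classified (completeness is the content of the Markus conjecture), and even the weaker assertion that a manifold carrying suitable hyperbolic dynamics must be an infranilmanifold is Franks' conjecture, which is open; the discussion surrounding Conjecture \ref{conjecture:hyperbolicclassificationspec} in this survey makes exactly this point. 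Second, your final step---upgrading the continuous semiconjugacy of Theorem \ref{theorem:weaklyhyperbolictorus} to a smooth conjugacy using ``the continuous framing plus partial hyperbolicity''---is precisely the step flagged immediately after that theorem as an interesting open question. A continuous framing does not bootstrap to smoothness, and nothing in your outline supplies the missing regularity.

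The actual argument circumvents both problems with an ingredient you omit entirely: the theorem of Benoist and Labourie classifying Anosov diffeomorphisms whose stable and unstable foliations are smooth \cite{BenoistLabourie}. The point of the Feres--Labourie topological superrigidity is that, under hypotheses (1) and (2), it yields a geometric structure of genuinely better than $C^0$ regularity, which puts the dynamics into the setting where the Benoist--Labourie classification applies; that classification simultaneously identifies $M$ as a torus and produces the smooth conjugacy to a standard affine action, so one never needs to solve the open regularity problem for semiconjugacies on $\Ta^n$ nor the classification problem for compact affine manifolds. Without this input, your stages two and three do not close.
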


These hypotheses are somewhat technical and essentially ensure that one
can apply the topological version of cocycle superrigidity proven in
\cite{FeresLabourie}.  The proof also uses a deep result of Benoist and Labourie
classifying Anosov diffeomorphisms with smooth stable and unstable foliations
\cite{BenoistLabourie}.

The nature of the hypotheses of Theorem \ref{theorem:fereslabourieactions} make
an earlier remark clear.  Ideally one would only have hypotheses on the $\G$
action, but here we require hypotheses on the induced action instead.  It
is not clear how to reformulate these hypotheses on the induced action as
hypotheses on the original $\G$ action.

Another consequence of the work of Feres and Labourie is a criterion for
promoting invariance of rigid geometric structures.  More precisely they
give a criterion for a $G$ action to preserve a rigid geometric structure
on a dense open subset of a manifold $M$ provided a certain type of subgroup preserves a rigid
geometric structure on $M$.

\subsection{Conjectures and questions on uniformly hyperbolic actions}
\label{subsection:hyperbolicquestions}

We begin with a very general variant of Conjecture \ref{conjecture:hyperbolicclassificationspec}.

\begin{conjecture}
\label{conjecture:hyperbolicclassification}
Let $G$ be a semisimple Lie group all of whose simple factors have property $(T)$, let $\Gamma<G$ be a lattice.  Assume
$G$ or $\Gamma$ acts smoothly on a compact manifold $M$ preserving volume such that some element
$g$ in the acting group is non-trivially uniformly partially hyperbolic.  Then
the action is generalized quasi-affine.
\end{conjecture}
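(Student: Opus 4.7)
The plan is to combine cocycle superrigidity with hyperbolic dynamics in roughly the manner pioneered by Goetze--Spatzier and Feres--Labourie, then extract an invariant rigid geometric structure and appeal to (a special case of) the Gromov--Zimmer conjecture. First I would reduce to the $G$-action case using Principle \ref{lemma:induction}, at least for cocompact $\G$; the non-uniform case needs extra care because the induced manifold is no longer compact but has finite invariant volume. To the derivative cocycle on the frame bundle of $M$ I would then apply Zimmer's cocycle superrigidity, obtaining a measurable straightening section $\sigma:M\to \mathrm{Frames}(M)$, a linear representation $\pi:G\to GL(\dim M,\Ra)$ and a compact centralizer $K$ so that in the frame $\sigma(m)$ the derivative of $g$ is $\pi(g)$ up to $K$-noise. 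In particular, the Lyapunov spectrum of the distinguished element $g_0$ is prescribed by $\pi(g_0)$, as in Proposition \ref{proposition:furstenberg}.

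The second step is to upgrade $\sigma$ from measurable to smooth along the strong stable and strong unstable manifolds of $g_0$. The standard tool is a non-stationary normal form: since the exponents are prescribed by $\pi$, one iterates $g_0$ and $g_0^{-1}$ to linearize the action on each strong stable and unstable leaf and show that the measurable conjugacy to the linear model must agree almost everywhere with the smooth leafwise conjugacy built from the normal form. Enlarging $g_0$ to a full $\Ra$-split torus $A<G$ and repeating the argument for enough elements of $A$, I would assemble a family of $G$-invariant foliations along which $\sigma$ is smooth and equivariant for $\pi$. Feres--Labourie's topological cocycle superrigidity provides the framework for propagating this leafwise regularity from $A$ to all of $G$, and in particular for promoting invariance of a rigid geometric structure from $A$ to $G$.

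The third step is to convert the leafwise smooth trivialization into a globally defined smooth $G$-invariant rigid geometric structure on $M$, most naturally an affine connection of the kind appearing in Theorem \ref{theorem:affine}. Once such a structure exists, Conjecture \ref{conjecture:gromovzimmer} predicts that the action is generalized quasi-affine, and in low-dimensional or weakly hyperbolic targets one can close the argument by invoking Theorems \ref{theorem:affine}, \ref{theorem:pi1hyperbolicnil} or \ref{theorem:weaklyhyperbolictorus}.

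The hardest step, and the reason the conjecture remains open, is the central bundle $E^c_{g_0}$: a single partially hyperbolic element carries no regularity information in its centre direction, and unless one assumes the action is weakly hyperbolic in the sense of Definition \ref{definition:weaklyhyperbolic} (so that stable bundles of finitely many group elements span $TM$, as in the Cartan setting of Theorem \ref{theorem:goetzespatzier}) there is no mechanism in the current technology to straighten $\sigma$ along $E^c_{g_0}$. A natural intermediate target is therefore to prove Conjecture \ref{conjecture:hyperbolicclassification} first under the weakly hyperbolic hypothesis. Even granting a smooth invariant rigid structure, the final classification step reduces to the Gromov--Zimmer conjecture itself, so any complete proof will either have to establish a substantial case of Conjecture \ref{conjecture:gromovzimmer} along the way or else bypass the rigid-structure step with a direct classification of the invariant stable and unstable foliations along the lines of Theorem \ref{theorem:fereslabourieactions}.
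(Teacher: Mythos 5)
This statement is a conjecture; the paper offers no proof of it and explicitly describes even much weaker variants (such as Conjecture \ref{conjecture:torusweakhyperbolic}) as difficult and open. Your proposal is therefore not comparable to a proof in the paper --- and, to your credit, you say as much yourself when you identify the central bundle as the obstruction. What you have written is a plausible research outline, not a proof, and it is worth being precise about where it breaks down beyond the point you flag.

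First, your second step --- upgrading the straightening section $\sigma$ to be smooth along stable and unstable leaves and then globally --- is contradicted in its strongest form by the exotic examples of Section \ref{section:exoticactions}: for the blown-up and glued actions, the section straightening the derivative cocycle cannot be made smooth on all of $M$, only (at best, and conjecturally) on an open dense set of full measure. Any argument along your lines must therefore work with a structure that degenerates on a small set, which is exactly why the paper introduces almost rigid structures and why Question \ref{question:almostrigid} is open. Second, the hypothesis gives you only a single non-trivially partially hyperbolic element $g$; your plan to enlarge $g_0$ to a full $\Ra$-split torus $A$ and assemble spanning families of stable foliations is precisely the extra weak hyperbolicity or Cartan-type assumption under which the known results (Theorems \ref{theorem:goetzespatzier}, \ref{theorem:weaklyhyperbolictorus}, \ref{theorem:fereslabourieactions}) operate; nothing in the conjecture's hypotheses supplies it, and for a lattice action there is no ambient torus at all. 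Third, the reduction of lattice actions to $G$-actions via Principle \ref{lemma:induction} runs into the difficulty the paper itself emphasizes: partial hyperbolicity of an element of $\G$ on $M$ does not obviously translate into the hypotheses one needs on the induced action on $(G\times M)/\G$, and conversely. Finally, your endgame appeals to Conjecture \ref{conjecture:gromovzimmer}, which is itself open, so even if every earlier step were carried out you would have reduced one open conjecture to another rather than proved anything. As a map of the known partial results and of where the genuine difficulties lie, your outline is accurate; as a proof it does not get off the ground.
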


There are several weaker variants on this conjecture, where e.g. one assumes
the action is volume weakly hyperbolic. Even the following much
weaker variant seems difficult:

\begin{conjecture}
\label{conjecture:torusweakhyperbolic}
Let $M=\Ta^n$ and $\G$ as in Conjecture \ref{conjecture:hyperbolicclassification}.
Assume $\G$ acts on $\Ta^n$ weakly hyperbolicly and preserving a smooth measure.
Then the action is affine.
\end{conjecture}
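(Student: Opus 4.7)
The plan is a three-phase strategy: first, produce a continuous semiconjugacy to an affine model; second, upgrade it to a homeomorphism using the smooth invariant measure; third, promote the resulting topological conjugacy to a smooth one via cocycle superrigidity and regularity theory for invariant foliations. For phase one, I would invoke Theorem \ref{theorem:weaklyhyperbolictorus} directly, which yields a continuous semiconjugacy $h:\Ta^n\to\Ta^n$ from the given action $\rho$ to an affine action $\rho_0$, provided $\rho$ lifts to the universal cover. For higher rank $\G$ this lifting assumption is automatic, since the obstruction lies in a cohomology group which vanishes by property $(T)$, as noted in the remarks following that theorem. The linear part $\pi:\G\to GL(n,\Za)$ of $\rho_0$ is determined by the induced $\G$-action on $\pi_1(\Ta^n)\cong\Za^n$, and inherits the weak hyperbolicity of $\rho$.

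For phase two I would show that the fibers $h^{-1}(p)$ are singletons. These fibers form a $\G$-invariant partition of $\Ta^n$ into compact sets, and $h_*\mu$ is a $\rho_0$-invariant Borel probability measure. Since the linear action $\rho_0$ is weakly hyperbolic, a uniqueness-of-measure argument (for instance, for a suitably chosen hyperbolic element of $\rho_0$) forces $h_*\mu$ to be Haar after verifying full support, which itself follows from topological transitivity inherited from weak hyperbolicity. To conclude fibers are trivial, I would adapt the Franks--Manning strategy for Anosov toral diffeomorphisms: $h$ carries each stable manifold of $\rho(\g_i)$ continuously onto the corresponding stable manifold of $\rho_0(\g_i)$, and volume preservation combined with the spanning condition $\sum_i E^s_{\g_i}=T\Ta^n$ precludes collapsing along any direction. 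Diagonalizing fiberwise injectivity over the spanning collection should then yield global injectivity of $h$.

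Phase three, promoting the continuous conjugacy to a smooth one, is the principal obstacle. By cocycle superrigidity (Section \ref{section:csr}), the derivative cocycle $D\rho$ is measurably $\G$-equivariantly cohomologous to $\pi$ modulo a compact correction, producing a measurable $\G$-equivariant framing of $T\Ta^n$ that conjugates $\rho$ to $\rho_0$ infinitesimally. The goal is then to establish H\"older continuity of this framing along the stable and unstable foliations of each $\rho(\g_i)$, by exploiting the H\"older regularity of hyperbolic holonomies and the Goetze--Spatzier construction of H\"older Riemannian metrics used in Theorem \ref{theorem:goetzespatzier}. Iterated application of Journ\'e's lemma along a transverse spanning family of these foliations should then yield the required global smoothness. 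The essential new difficulty, beyond the Cartan action setting, is that we do not assume one-dimensional strongest stable foliations, so the Benoist--Labourie classification of Anosov diffeomorphisms with smooth invariant foliations is not directly applicable; controlling higher-dimensional strongest stable bundles is the step where genuinely new input would be required.
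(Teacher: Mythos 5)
The first thing to note is that the statement you are proving is labeled as a conjecture, and the paper offers no proof of it: immediately after stating Conjecture \ref{conjecture:torusweakhyperbolic}, the author remarks that it ``amounts to conjecturing that the semiconjugacy in Theorem \ref{theorem:weaklyhyperbolictorus} is a diffeomorphism,'' that injectivity of that semiconjugacy is known only in the special case where some element of $\G$ acts as an Anosov diffeomorphism, and that smoothness is currently accessible (via Rodriguez-Hertz) only when enough dynamical foliations are one- and two-dimensional. Your three-phase outline correctly identifies this structure, and phase one (invoking Theorem \ref{theorem:weaklyhyperbolictorus} and disposing of the lifting hypothesis) is sound. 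But phases two and three are exactly the open parts of the problem, and as written they contain genuine gaps rather than proofs.

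In phase two, the claim that a ``uniqueness-of-measure argument for a suitably chosen hyperbolic element'' forces $h_*\mu$ to be Haar is simply false: a hyperbolic or partially hyperbolic toral automorphism is very far from uniquely ergodic --- it has, for instance, invariant measures supported on periodic orbits --- so no single element pins down Haar measure. More seriously, the Franks--Manning injectivity argument depends on expansivity of a single Anosov element and on the fact that stable and unstable leaves of its linearization meet in single points; under mere weak hyperbolicity no individual element need be Anosov, the fibers of $h$ could a priori be positive-dimensional sets lying in the center direction of every $\g_i$ simultaneously, and the spanning condition $\sum_i E^s_{\g_i}=T\Ta^n$ does not by itself rule this out, since fibers of a semiconjugacy need not be saturated by stable leaves. ``Volume preservation precludes collapsing'' is an assertion, not an argument: semiconjugacies of volume-preserving systems routinely collapse sets of measure zero. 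In phase three you candidly flag that new input is required, and that is correct: the regularity of the straightening section from cocycle superrigidity is precisely the open problem the paper dwells on, and the exotic examples of Section \ref{section:exoticactions} show that in closely related settings the section cannot be made globally smooth. In short, your proposal is a reasonable map of the known strategy and of where it breaks down, but it is not a proof, and the paper does not claim to contain one either.
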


This conjecture amounts to conjecturing that the semiconjugacy in Theorem \ref{theorem:weaklyhyperbolictorus}
is a diffeomorphism.  In the special case where some element of $\G$ is Anosov, the semiconjugacy
is at least a homeomorphism.  If this is true and enough dynamical foliations are one and two dimensional and $\G$
has higher rank, one can then deduce smoothness of the conjugacy from work of Rodriguez-Hertz on
rigidity of actions of abelian groups \cite{RodriguezHertz}.  Work in progress by the author, Kalinin
and Spatzier seems likely to provide a similar result when $\G$ contains many commuting Anosov diffeomorphisms
without any assumptions on dimensions of foliations.

Finally, we recall an intriguing question from \cite{FisherWhyte-quotient} which arises in this context.

\begin{question}
\label{question:fisherwhyte} Let $M$ be a compact manifold with $\pi_1(M)=\Za^n$ and assume $\G<SL(n,\Za)$ has finite
index.  Let $\Gamma$ act on $M$ fixing a point so that the resulting $\G$ action on $\pi_1(M)$ is
given by the standard representation of $SL(n,\Za)$ on $\Za^n$.  Is it true that $\dim(M) \geq n$?
\end{question}

The question is open even if the action on $M$ is assumed to be smooth.  The results of \cite{FisherWhyte-quotient}
imply that there is a continuous map from $M$ to $\Ta^n$ that is equivariant for the standard $\G$ action
on $M$.  Since the image of $M$ is closed and invariant, it is easy to check that it is all of $\Ta^n$.
So the question amounts to one about the existence of equivariant ``space filling curves", where curve
is taken in the generalized sense of continuous map from a lower dimensional manifold.  In another
contexts there are equivariant space filling curves, but they seem quite special.  They arise as
surface group equivariant maps from the circle to $S^2$ and come from three manifolds which fiber over
the circle, see \cite{CannonThurston}.

\section{Representations of fundamental groups and arithmetic quotients}
\label{section:arithmeticquotients}

In this section we discuss some results and questions related to topological approaches
to classifying actions particularly some related to Conjecture
\ref{conjecture:fundamentalgroup}.  In the second subsection, we also discuss related
results and questions concerning maximal generalized affine quotients of actions.

\subsection{Linear images of fundamental groups}
\label{subsection:linearimages}

This section is fundamentally concerned with the question:

\begin{question}
\label{question:fundamentalgroup}
Let $G$ be a semisimple Lie group all of whose factors are not compact
and have property $(T)$.  Assume $G$ acts by homeomorphisms on a manifold
$M$ preserving a measure.  Can we classify linear representations of
$\pi_1(M)$?  Similarly for actions of $\G<G$ a lattice on a manifold $M$.
\end{question}

We remark that in this context, it is possible that $\pi_1(M)$ has
no infinite image linear representations, see discussion in \cite{FisherWhyte-quotient}
and Section \ref{section:exoticactions} below. In all known examples where
 this occurs there is an ``obvious" infinite image linear representation on some finite index subgroup.  Also as first observed
by Zimmer \cite{Zimmer-fundamentalgroup1} for actions of Lie groups, under mild conditions on the
action, representations of the fundamental group become severely restricted.
Further work in this direction was done by Zimmer in conjunction with Spatzier and later
Lubotzky \cite{LubotzkyZimmer-canonical, LubotzkyZimmer-nontrivial,SpatzierZimmer}.
Analogous results for lattices are surprisingly difficult in this context and
constitute the authors dissertation \cite{Fisher-IJM, Fisher-ETDS}.

We recall a definition from \cite{Zimmer-fundamentalgroup1}:

\begin{defn}
\label{definition:engaging}
Let $D$ be a Lie group and assume that $D$ acts on a compact
manifold $M$ preserving a finite measure $\mu$ and ergodically.  We call the action {\em engaging}
if the action of $\tilde D$ on any finite cover of $M$ is ergodic.
\end{defn}

There is a slightly more technical definition of engaging for non-ergodic actions which
says there is no loss of ergodicity on passing to finite covers.  I.e. that the ergodic
decomposition of $\mu$ and its lifts to finite covers are canonically identified by
the covering map.  There are also two variants of this notion {\em totally engaging}
and {\em topologically engaging}, see e.g. \cite{Zimmer-CBMS} for more discussion.

It is worth noting that for ergodic $D$ actions on $M$, the action on any finite
cover has at most finitely many ergodic components, in fact at most the degree
of the cover many ergodic components.  We remark here that any generalized affine
action of a Lie group is engaging if it is ergodic.  The actions constructed below
in Section \ref{section:exoticactions} are not in general engaging.

\begin{theorem}
\label{theorem:lubotzkyzimmer}
Let $G$ be a simple Lie group of real rank at least $2$.  Assume $G$ acts by homeomorphisms
on a compact manifold $M$, preserving a finite measure $\mu$ and engaging.  Assume
$\sigma:\pi_1(M){\rightarrow}GL(n,\Ra)$ is an infinite image linear representation.  Then $\sigma(\pi_1(M))$
contains an arithmetic group $H_{\Za}$ where $H_{\Ra}$ contains a group locally isomorphic
to $G$.
\end{theorem}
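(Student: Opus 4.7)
The plan is to realize $\sigma$ as the holonomy of a flat principal bundle over $M$, lift the $G$-action to this bundle, and apply Zimmer's cocycle superrigidity theorem.

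First I would set $H$ equal to the Zariski closure of $\sigma(\pi_1(M))$ in $GL(n,\Ra)$, so that $H$ is a positive dimensional algebraic group because $\sigma$ has infinite image. The associated flat principal $H$-bundle is $P = (\tilde M \times H)/\pi_1(M) \to M$, with $\pi_1(M)$ acting on $\tilde M$ by deck transformations and on $H$ by left multiplication through $\sigma$. After possibly passing to a finite cover of $M$ (to which the action extends ergodically by the engaging hypothesis), the $G$-action on $M$ lifts to $\tilde M$, and hence to $P$ by $H$-bundle automorphisms. A measurable section of $P \to M$ then produces a measurable cocycle $\alpha : G \times M \to H$, and by cocycle superrigidity (see \S\ref{subsection:Zcsr}) there exist a continuous homomorphism $\pi : G \to H$, a compact subgroup $K < H$ commuting with $\pi(G)$, a measurable section $s : M \to H$, and a $K$-valued cocycle $k$ such that $\alpha(g,m) = s(gm)^{-1} \pi(g) k(g,m) s(m)$ almost everywhere.

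The crucial step is then to show that $\pi$ is non-trivial. If $\pi$ were trivial, the cohomology relation would express $\alpha$ as cohomologous to a $K$-valued cocycle, that is, $P$ would admit a measurable $G$-equivariant reduction of its structure group to $K$. Because $P$ is flat with holonomy $\sigma(\pi_1(M))$, the engaging hypothesis (applied to finite covers, on which no new ergodic components can appear) would force the holonomy itself, after conjugation, to be contained in $K$. But then $H$, being the Zariski closure of that holonomy in $GL(n,\Ra)$, would be compact, so $\sigma(\pi_1(M))$ would lie in a compact linear group; combined with Zimmer's results on actions preserving a measurable Riemannian metric and another application of engaging, this would force $\sigma$ to have finite image, contrary to hypothesis.

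With $\pi$ non-trivial and $G$ simple, $\pi$ is locally injective, so $\pi(G)$ is a subgroup of $H$ locally isomorphic to $G$. To extract the arithmetic subgroup $H_\Za \subset \sigma(\pi_1(M))$, I would consider the algebraic hull of $\alpha$ with respect to a $\Qa$-structure on $GL(n,\Ra)$ in which $\sigma(\pi_1(M))$ is integral; this hull is a $\Qa$-group containing $\pi(G)$. Zariski density of $\sigma(\pi_1(M))$ in $H$ places $\sigma(\pi_1(M))$ inside this $\Qa$-group as an integral subgroup, and Margulis' arithmeticity theorem (applicable because $\pi(G)$ has real rank at least two) delivers an arithmetic lattice $H_\Za \subset \sigma(\pi_1(M))$ whose ambient real group $H_\Ra$ contains $\pi(G)$ and hence locally contains $G$. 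The main obstacle is the non-triviality of $\pi$: one must translate the measurable straightening data from cocycle superrigidity into an algebraic statement about $\sigma(\pi_1(M))$, and the engaging hypothesis is precisely the mechanism that prevents a measurable reduction on $M$ from being an artifact of a finite cover, thereby allowing information to be transferred from $\alpha$ back to the monodromy $\sigma$.
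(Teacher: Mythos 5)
The paper does not actually prove Theorem \ref{theorem:lubotzkyzimmer}; being a survey, it states the result and refers to \cite{Zimmer-CBMS} and the original papers of Zimmer, Spatzier--Zimmer and Lubotzky--Zimmer for the proof. So I am comparing your proposal against the known argument rather than against a proof in this text. Your first half does reproduce the correct skeleton: lift the action to $\tilde M$ (this works for any connected $G$ after passing to its universal cover, no finite cover of $M$ is needed at that stage), form the cocycle $G\times M\rightarrow \pi_1(M)$ and push it forward by $\sigma$, apply cocycle superrigidity, and use the engaging hypothesis to rule out degeneration. One caution: the passage from ``the cocycle admits a measurable $G$-equivariant reduction to $K$'' to ``the holonomy is conjugate into $K$'' is precisely where engaging does all the work, and it is not automatic. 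The precise statement one needs is that engaging forces the algebraic hull of $\sigma\circ\beta$ to coincide (up to finite index and conjugacy) with the Zariski closure of $\sigma(\pi_1(M))$; proving this requires converting the measurable reduction into a $\pi_1(M)$-equivariant map from $\tilde M$ to probability measures on $H/K$ and invoking ergodicity on finite covers together with the structure of measures on homogeneous varieties. You correctly identify this as the crucial step, but the sketch as written conflates the conclusion with the mechanism.

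The genuine gap is in the last paragraph. Margulis' arithmeticity theorem applies to lattices in semisimple groups, and nothing in the preceding argument shows that $\sigma(\pi_1(M))$ is a lattice in anything; Zariski density plus integrality of a finitely generated subgroup does not yield an arithmetic subgroup (a Zariski-dense free subgroup of $SL(3,\Za)$ is Zariski dense and integral yet contains no arithmetic group). Moreover, the ``$\Qa$-structure on $GL(n,\Ra)$ in which $\sigma(\pi_1(M))$ is integral'' is not given by hypothesis: $\sigma$ is an arbitrary real representation, and establishing that the image is, up to finite index, contained in the integer points of a $\Qa$-group is itself a substantial step, carried out in the original proofs by running cocycle superrigidity over all relevant completions (archimedean and $p$-adic) to bound denominators, using engaging at each place. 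Even granting integrality and Zariski density, the containment $H_\Za\subseteq\sigma(\pi_1(M))$ in the conclusion is the hard direction and is the main content of the Lubotzky--Zimmer papers; it requires a further argument exploiting the specific structure of the straightened cocycle, not a citation of arithmeticity of lattices. As written, this step would fail.
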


For an expository account of the proof of this theorem and a more detailed discussion of
engaging conditions for Lie groups, we refer the reader to \cite{Zimmer-CBMS}.

The extension of this theorem to lattice actions is non-trivial and is in fact the author's
dissertation.  Even the definition of engaging requires modification, since it is not at all
clear that a discrete group action lifts to the universal cover.

\begin{defn}
\label{definition:engagingdiscrete}
Let $D$ be a discrete group and assume that $D$ acts on a compact
manifold $M$ preserving a finite measure $\mu$ and ergodically.  We call the action {\em engaging}
if for every
\begin{enumerate}
\item finite index subgroup $D'$ in $D$ ,

\item finite cover $M'$ of $M$, and

\item lift of the $D'$ action to $M'$,

\end{enumerate}
the action of $D'$ of $M'$ is ergodic.
\end{defn}

The definition does immediately imply that every finite index subgroup $D'$ of $D$ acts
ergodically on $M$.  In \cite{Fisher-ETDS}, a definition is given which does not require
the $D$ action to be ergodic, but even in that context the ergodic decomposition for $D'$
is assumed to be the same as that for $D$.  Definition \ref{definition:engagingdiscrete} is rigged
to guarantee the following lemma:

\begin{lemma}
\label{lemma:inductionengaging}
Let $G$ be a Lie group and $\Gamma<G$ a lattice.  Assume $\Gamma$ acts on a manifold $M$
preserving a finite measure and engaging, then the induced $G$ action on $(G \times M)/\Gamma$
is engaging.
\end{lemma}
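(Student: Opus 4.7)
The plan is to verify Definition \ref{definition:engaging} for the induced action directly: given any finite cover $p\colon N'\to N$, I must show that the lift of the $G$-action to a $\tilde G$-action on $N'$ is ergodic. Since $\tilde G$ is connected, I may pass to a connected component and assume $N'$ is connected. Suppose toward a contradiction that $A\subset N'$ is a $\tilde G$-invariant measurable set with $0<c:=\mu(A)/\mu(N')<1$. The strategy is to exploit transitivity of $\tilde G$ on the base $G/\Gamma$ of the bundle $N\to G/\Gamma$ in order to reduce ergodicity on $N'$ to ergodicity on a single fiber, where the engaging hypothesis on $\Gamma$ can be applied.

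Concretely, the composition $N'\to N\to G/\Gamma$ is a $\tilde G$-equivariant fiber bundle whose fiber over $[\Gamma]$ is $M'':=p^{-1}(M)$, a finite (in general disconnected) cover of the identity-coset fiber $M\hookrightarrow N$. Because $\tilde G$ acts transitively on $G/\Gamma$ and preserves the measure on $N'$, the ratio $\mu(A\cap F_x)/\mu(F_x)$ is constant in $x\in G/\Gamma$, so by Fubini equals $c$; in particular, $\mu(A\cap M'')=c\cdot\mu(M'')$. Pick a connected component $M_0\subset M''$, and let $\tilde\Gamma_0$ be the stabilizer of $M_0$ in $\tilde\Gamma:=\pi_G^{-1}(\Gamma)\subset\tilde G$ (where $\pi_G\colon\tilde G\to G$). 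Then $\tilde\Gamma_0$ has finite index in $\tilde\Gamma$, and the kernel $K:=\ker(\tilde\Gamma_0\to\Gamma)\subset\pi_1(G)$ acts on $M_0$ by a finite group $K'$ of deck transformations of the covering $M_0\to M$. The quotient $M_0/K'$ is then itself a finite connected cover of $M$, and the $\tilde\Gamma_0$-action on $M_0$ descends to a genuine lift of the $\Gamma_0$-action on $M$, where $\Gamma_0$ is the finite-index image of $\tilde\Gamma_0$ in $\Gamma$. By the engaging hypothesis (Definition \ref{definition:engagingdiscrete}), the descended action on $M_0/K'$ is ergodic; since any $\tilde\Gamma_0$-invariant subset of $M_0$ is $K'$-invariant and hence the pullback of a $\Gamma_0$-invariant subset of $M_0/K'$, this ergodicity lifts back to $M_0$, so $A\cap M_0$ is null or conull in $M_0$.

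To conclude, observe that since $N'$ is connected and fibers over $G/\Gamma$ with disconnected fiber $M''$, the bundle monodromy must act transitively on the components of $M''$; the monodromy factors through $\tilde\Gamma$, so $\tilde\Gamma$ permutes the components transitively. By $\tilde G$-invariance of $A$ together with measure-preservation, $\mu(A\cap M_i)=\mu(A\cap M_0)$ for every component $M_i$ of $M''$, and summing gives $c=\mu(A\cap M'')/\mu(M'')=\mu(A\cap M_0)/\mu(M_0)\in\{0,1\}$ by the previous paragraph, contradicting $c\in(0,1)$. I expect the main obstacle to be the careful bookkeeping of the extension $1\to K\to\tilde\Gamma_0\to\Gamma_0\to 1$ in the case where $G$ is not simply connected: Definition \ref{definition:engagingdiscrete} applies only to honest lifts of finite-index subgroups of $\Gamma$, whereas the action naturally produced on $M_0$ lives on the central extension $\tilde\Gamma_0$, and one must quotient by $K'$ to invoke the engaging hypothesis in the form in which it was stated.
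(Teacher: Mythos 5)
Your argument is correct, and it is precisely the unwinding that the paper leaves implicit: the paper offers no proof beyond the remark that Definition \ref{definition:engagingdiscrete} is ``rigged to guarantee'' the lemma, and your fiberwise reduction---constancy of the conditional measure over $G/\Gamma$ by transitivity, transitivity of $\tilde\Gamma$ on the components of $p^{-1}(M)$ by connectedness of $N'$, and ergodicity on each component via the engaging hypothesis---is exactly the intended route. Your careful handling of the central extension $1\to K\to\tilde\Gamma_0\to\Gamma_0\to 1$ by passing to the intermediate cover $M_0/K'$ is the very bookkeeping the quantification over all finite-index subgroups and all lifts in Definition \ref{definition:engagingdiscrete} was designed to absorb.
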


We remark that with our definitions here, the lemma only makes sense for $\Gamma$ cocompact,
but this is not an essential difficulty. We can now state a first result for lattice actions.  We let $\Lambda=\pi_1((G \times M)/\Gamma)$.

\begin{theorem}
\label{theorem:fisherthesis}
Let $G$ be a simple Lie group of real rank at least $2$ and $\Gamma<G$ a lattice.  Assume $\Gamma$ acts by homeomorphisms
on a compact manifold $M$, preserving a finite measure $\mu$ and engaging.  Assume
$\sigma:\Lambda{\rightarrow}GL(n,\Ra)$ is a linear representation whose restriction to $\pi_1(M)$ has infinite image.  Then $\sigma(\pi_1(M))$
contains an arithmetic group $H_{\Za}$ where $\Aut(H_{\Ra})$ contains a group locally isomorphic
to $G$.
\end{theorem}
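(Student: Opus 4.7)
The plan is to reduce the lattice case to the Lie group case (Theorem \ref{theorem:lubotzkyzimmer}) via the induction construction. First, I would form the induced action: since $\Gamma$ acts on $M$ preserving a finite measure and engagingly, the induced $G$-action on $N=(G\times M)/\Gamma$ preserves a finite measure and, by Lemma \ref{lemma:inductionengaging}, is engaging. The fibration $M\to N\to G/\Gamma$ yields an exact sequence $\pi_1(M)\to\Lambda\to\pi_1(G/\Gamma)\to 1$, and since $G$ has finite fundamental group, the image of $\pi_1(M)$ in $\Lambda$ is a finite-index supergroup of a normal subgroup with quotient (virtually) $\Gamma$. The hypothesis that $\sigma$ restricted to $\pi_1(M)$ has infinite image then immediately implies $\sigma:\Lambda\to GL(n,\Ra)$ has infinite image, so applying Theorem \ref{theorem:lubotzkyzimmer} to the induced $G$-action on $N$ produces an arithmetic subgroup $H_\Za\subseteq\sigma(\Lambda)$ whose real points $H_\Ra$ contain a subgroup locally isomorphic to $G$.

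The core of the work is then to transfer this conclusion from $\sigma(\Lambda)$ to the smaller group $\sigma(\pi_1(M))$. Let $L$ denote the Zariski closure of $\sigma(\pi_1(M))$ and $K$ the Zariski closure of $\sigma(\Lambda)$. Because $\pi_1(M)$ is normal in $\Lambda$ (up to finite index), $L$ is a normal algebraic subgroup of $K$, and by Borel density $K\supseteq H_\Ra$. The hypothesis that $\sigma|_{\pi_1(M)}$ has infinite image forces $L$ to be positive-dimensional, so $L\cap H_\Ra$ is a positive-dimensional normal subgroup of $H_\Ra$. Structure theory of the semisimple group $H_\Ra$ then gives two possibilities for the distinguished factor locally isomorphic to $G$: either it is contained in $L$ (in which case we recover a conclusion in the spirit of Theorem \ref{theorem:lubotzkyzimmer}) or it lies transversely to $L$, in which case the conjugation action of this $G$ on $L$ gives an inclusion of a group locally isomorphic to $G$ into $\Aut(L)$. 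Taking $H$ to be the Zariski closure $L$ and producing the arithmetic subgroup as $H_\Za\cap\sigma(\pi_1(M))$ (using that $\sigma(\pi_1(M))$ is normal and hence the intersection is an arithmetic lattice in $L$ by Margulis-style arguments) yields the stated conclusion.

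The principal obstacle is the Zariski-closure/normality analysis in the second step: one must argue carefully that $\sigma(\pi_1(M))\cap H_\Za$ is genuinely arithmetic in the right algebraic group, rather than merely a Zariski-dense subgroup of some possibly smaller group. This requires combining (i) the normality of $\pi_1(M)$ in $\Lambda$, (ii) the Margulis-style rigidity of the conjugation action of $\sigma(\Lambda/\pi_1(M))$ on $\sigma(\pi_1(M))$ extended to automorphisms of Zariski closures, and (iii) the fact that the outer action of $\Gamma$ on $\pi_1(M)$ induced by the fibration $M\to N\to G/\Gamma$ is precisely what produces the $G$-action by automorphisms in the conclusion, explaining why the statement involves $\Aut(H_\Ra)$ rather than $H_\Ra$ itself. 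A technical subtlety worth noting is that when $\Gamma$ is non-cocompact, the induction construction yields actions on non-compact manifolds, so a separate argument (or a refined definition of engaging) is needed; the cocompact case is cleanest and should be handled first.
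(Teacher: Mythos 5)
Your proposal follows exactly the route the paper takes: induce the $\Gamma$-action to a $G$-action on $(G\times M)/\Gamma$, check that the induced action is engaging via Lemma \ref{lemma:inductionengaging}, apply Theorem \ref{theorem:lubotzkyzimmer} to $\sigma:\Lambda\rightarrow GL(n,\Ra)$, and then transfer the conclusion to the normal subgroup $\sigma(\pi_1(M))$ --- which is precisely why the statement involves $\Aut(H_{\Ra})$ rather than $H_{\Ra}$. The paper itself only sketches this final ``careful analysis of the output,'' deferring the details to the author's dissertation, and your normality/Zariski-closure discussion and the caveat about the non-cocompact case are consistent with that sketch.
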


This theorem is proven by inducing actions, applying Theorem \ref{theorem:lubotzkyzimmer}, and
analyzing the resulting output carefully.  One would like to assume $\sigma$ a priori only
defined on $\pi_1(M)$, but there seems no obvious way to extend such a representation to the linear
representation of $\Lambda$ required by Theorem \ref{theorem:lubotzkyzimmer} without a priori information
on $\Lambda$.  This is yet another example of the difficulties in using induction to study lattice
actions.  As a consequence of Theorem \ref{theorem:fisherthesis}, we discover that at least $\sigma(\Lambda)$
splits as a semidirect product of $\Gamma$ and $\sigma(\pi_1(M))$.  But this is not clear a priori and not
clear a posteriori for $\Lambda$.

\subsection{Arithmetic quotients}
\label{subsection:quotient}

In the context of Theorems \ref{theorem:lubotzkyzimmer} and Theorem \ref{theorem:fisherthesis}, one can obtain
much greater dynamical information concerning the relation of $H_{\Za}$ and the dynamics of the action on $M$.
In particular, there is a compact subgroup $C<H_{\mathbb R}$ and a measurable equivariant map
$\phi:M{\rightarrow}C \backslash H_{\Ra} /H_{\Za}$, which we refer to as a {\em measurable arithmetic
quotient}.   The papers  \cite{Fisher-IJM} and \cite{LubotzkyZimmer-canonical} prove that there is always
a canonical maximal quotient of this kind for any action of $G$ or $\Gamma$ on any compact manifold, essentially by using Ratner's theorem to prove that every pair of arithmetic quotients are dominated by a common, larger arithmetic quotient. Earlier results of Zimmer also obtained arithmetic quotients, but only under the assumption that there
 was an infinite image linear representation with discrete image \cite{Zimmer-ratner}. The results we mention from \cite{Fisher-ETDS} and \cite{LubotzkyZimmer-nontrivial} then show that
there are ``lower bounds" on the size of this arithmetic quotient, provided that the action is engaging, in terms
of the linear representations of the fundamental group of the manifold.  In particular, one obtains arithmetic
quotients where $H_{\Ra}$ and $H_\Za$ are essentially determined by $\sigma(\pi_1(M))$.  In fact $\sigma(\pi_1(M))$
contains $H_{\Za}$ and is contained in $H_{\Qa}$.

The book \cite{Zimmer-CBMS} provides a good description of how to produce arithmetic quotients for $G$ actions and the article \cite{Fisher-Cambridge} provides an exposition of the relevant constructions for $\Gamma$ actions.

Under even stronger hypotheses, the papers \cite{FisherZimmer} and \cite{Zimmer-entropyquotient} imply that
the arithmetic quotient related to the ``Gromov representation" discussed above in subsection \ref{subsection:rigidreps} has the same entropy as the original action.  This means that, in a sense, the arithmetic quotient captures
most of the dynamics of the original action.

\subsection{Open questions}

As promised in the introduction, we have the following analogue of Conjecture \ref{conjecture:fundamentalgroup}
for lattice actions.

\begin{conjecture}
\label{conjecture:fundamentalgrouplattice}
Let $\G$ be a lattice in a semisimple group $G$ with all simple factors of real rank at least $2$.  Assume $\Gamma$ acts faithfully, preserving
volume on a compact manifold $M$. Further assume the action is not isometric. Then $\pi_1(M)$ has a finite index subgroup $\Lambda$ such
that $\Lambda$ surjects onto an arithmetic lattice in a Lie group $H$ where $\Aut(H)$ locally contains $G$.
\end{conjecture}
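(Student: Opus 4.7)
The plan is to reduce to an inductive $G$-action and then apply Theorem \ref{theorem:fisherthesis}. Via Principle \ref{lemma:induction}, form the induced $G$-action on $N=(G\times M)/\Gamma$; passing to a finite cover if necessary, we obtain a short exact sequence
\[
1\longrightarrow \pi_1(M)\longrightarrow \pi_1(N)\longrightarrow \Gamma\longrightarrow 1.
\]
The induced action preserves a finite volume and, by (a suitable variant of) Lemma \ref{lemma:inductionengaging}, is engaging, so Theorem \ref{theorem:fisherthesis} applies to any linear representation $\sigma$ of $\pi_1(N)$ whose restriction to $\pi_1(M)$ has infinite image: such a $\sigma$ forces $\sigma(\pi_1(M))$ to contain an arithmetic lattice of exactly the form demanded by the conjecture, and passing to the preimage in $\pi_1(M)$ of a finite index subgroup of $\sigma(\pi_1(M))$ produces the desired $\Lambda$.

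The problem therefore reduces to manufacturing such a $\sigma$. The tautological map $\pi_1(N)\twoheadrightarrow \Gamma\hookrightarrow G$ composed with a faithful linear representation of $G$ is an infinite image representation of $\pi_1(N)$, but it is trivial on $\pi_1(M)$ and so yields nothing. The hypothesis that the $\Gamma$-action is not isometric is the lever that is supposed to produce something new: the derivative cocycle of the $G$-action on $N$ is not cohomologous to a cocycle into a compact group, so Zimmer's cocycle superrigidity theorem supplies a non-trivial representation $\pi:G\to GL(\dim M,\Ra)$ together with a measurable straightening section for the derivative cocycle, as in Proposition \ref{proposition:furstenberg}. The task of the next, and hardest, step is to promote this measurable tangential datum to a bona fide linear representation $\sigma:\pi_1(N)\to GL(k,\Ra)$ whose restriction to $\pi_1(M)$ has infinite image.

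When the action preserves a rigid geometric structure, this promotion is precisely the content of the Gromov--Zimmer construction behind Theorem \ref{theorem:rigid} and Theorem \ref{theorem:pireplattice}: the representation arises from the action on the finite dimensional space of Killing fields of the lifted structure on the universal cover. Without such a structure, one must instead work directly with the straightening section $s$, viewed as a measurable reduction of the frame bundle of $M$, and extract enough regularity of $s$ on a dense open $G$-invariant set to read off a holonomy-type representation of $\pi_1$. A natural intermediate target is to upgrade the measurable invariant structure to a continuous (or smooth) invariant structure on a full-measure open set, i.e.\ a local version of Conjecture \ref{conjecture:metricissmooth}; alternatively, under auxiliary dynamical hypotheses such as the presence of a uniformly partially hyperbolic element, H\"older regularity of $s$ is available through the topological superrigidity circle of ideas in Section \ref{section:hyperbolic} and the work of Goetze--Spatzier.

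The main obstacle is exactly this regularity problem. Producing a continuous linear representation of $\pi_1(M)$ out of the measurable output of cocycle superrigidity under nothing more than non-isometricity is the fundamental technical bottleneck, and it is closely parallel to Conjecture \ref{conjecture:metricissmooth}. Once such a $\sigma$ is in hand, the rest is comparatively routine: Theorem \ref{theorem:fisherthesis} delivers the arithmeticity of $\sigma(\pi_1(N))$, and a short analysis of how this arithmeticity distributes between the $\Gamma$-quotient and the $\pi_1(M)$-kernel — using that $\sigma$ is non-trivial on $\pi_1(M)$ by construction — yields the arithmetic lattice quotient of a finite index subgroup of $\pi_1(M)$ asserted in the conjecture.
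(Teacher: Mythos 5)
You have attempted to prove a statement that the paper itself records only as a conjecture (it is introduced as ``the following analogue of Conjecture \ref{conjecture:fundamentalgroup} for lattice actions''), so there is no proof in the paper to compare against, and your proposal does not close the gap: it is a strategy outline whose central step is explicitly left open. The honest flag you raise at the end --- promoting the measurable output of cocycle superrigidity to a genuine infinite-image linear representation of $\pi_1(M)$ under nothing more than non-isometricity --- is not a technical bottleneck within an otherwise complete argument; it \emph{is} the conjecture. Theorems \ref{theorem:lubotzkyzimmer} and \ref{theorem:fisherthesis} run in the opposite direction from what you need: they take an infinite-image linear representation of the fundamental group as a hypothesis and conclude arithmeticity of its image; they manufacture nothing. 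The Gromov--Zimmer route to such a representation (Theorems \ref{theorem:rigid} and \ref{theorem:pireplattice}) requires an invariant analytic rigid geometric structure, which the conjecture does not assume and which the exotic examples of Section \ref{section:exoticactions} provably do not admit by \cite{BenvenisteFisher}. Even your preliminary claim that non-isometricity forces the superrigidity representation $\pi$ to be nontrivial is not available: a non-isometric volume-preserving action can still have zero entropy and a merely measurable invariant metric, and ruling that out is precisely Conjecture \ref{conjecture:metricissmooth}, also open.

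There are two further concrete errors in the setup. First, Lemma \ref{lemma:inductionengaging} transfers the engaging property from a $\Gamma$-action that already has it to the induced $G$-action; it does not supply it, and the paper states explicitly that the blow-up constructions of Section \ref{section:exoticactions} are not in general engaging. So the hypothesis of Theorem \ref{theorem:fisherthesis} can simply fail for actions satisfying the conjecture's hypotheses. Second, the paper notes (after Question \ref{question:fundamentalgroup}, with reference to \cite{FisherWhyte-quotient}) that there are volume-preserving actions for which $\pi_1(M)$ itself admits \emph{no} infinite-image linear representation, the desired representation appearing only on a finite-index subgroup after passing to a finite cover; your argument applies the machinery to $\pi_1(M)$ directly and never controls this passage, even though the finite-index subgroup $\Lambda$ in the conjecture's conclusion exists precisely to accommodate these examples. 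In short: the reduction to Theorem \ref{theorem:fisherthesis} is a reasonable framing of why the conjecture is plausible, but every hypothesis that theorem needs is exactly what remains to be established.
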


The following questions about arithmetic quotients are natural.

\begin{question}
Let $G$ be a simple Lie group of real rank at least $2$ and $\Gamma<G$ a lattice.  Assume
that $G$ or $\Gamma$ act faithfully on a compact manifold $M$, preserving a smooth volume.
\begin{enumerate}
\item Is there a non-trivial measurable arithmetic quotient?
\item Can we take the quotient map $\phi$ smooth on an open dense set?
\end{enumerate}
\end{question}

Due to a construction in \cite{FisherWhyte-quotient}, one cannot expect that every $M$ admitting a volume
preserving $G$
or $\Gamma$ action has an arithmetic quotient that is even globally continuous or has $\pi_1(M)$ admitting
an infinite image linear representation.  However, the difficulties created in those examples all vanish on passage to a finite cover.

\begin{question}
Let $G$ be a simple Lie group of real rank at least $2$ and $\Gamma<G$ a lattice.  Assume
that $G$ or $\Gamma$ act smoothly on a compact manifold $M$, preserving a smooth volume.
\begin{enumerate}
\item Is there a finite cover of $M'$ of $M$ such that $\pi_1(M')$ admits an infinite image
linear representation?
\item Can we find a finite cover $M'$ of $M$, a lift of the action to $M'$ (on a subgroup of finite index) and an arithmetic quotient where  the quotient map $\phi$ is continuous and smooth on an open dense set?
\end{enumerate}
\end{question}

The examples discussed in the next subsection imply that $\phi$ is at best H\"{o}lder continuous globally.  It is not clear whether the questions and conjectures we have
just discussed are any less reasonable for $SP(1,n)$, $F_4^{-20}$ and their lattices.

\section{Exotic volume preserving actions}
\label{section:exoticactions}

In this subsection, I discuss what is known about what are typically
called ``exotic actions".  These are the only known smooth volume preserving
actions of higher rank lattices and Lie groups which are not generalized
affine algebraic.  These actions make it clear that a clean classification
of volume preserving actions is out of reach. In particular, these actions
have continuous moduli and provide counter-examples to any naive conjectures of the
form ``the moduli space of actions of some lattice $\G$ on any
compact manifold $M$ are countable." In particular, I explain
examples of actions of either $\G$ or $G$ which have large continuous
moduli of deformations as well as manifolds where these moduli have
multiple connected components.

Essentially all of the examples given here derive from the simple
construction of ``blowing up" a point or a closed orbit, which was
introduced to this subject in \cite{KatokLewis-global}.  The further developments
after that result are all further elaborations on one basic
construction.  The idea is to use the ``blow up" construction to
introduce distinguished closed invariant sets which can be varied in
some manner to produce deformations of the action.  The ``blow up"
construction is a classical tool from algebraic geometry which takes
a manifold $N$ and a point $p$ and constructs from it a new manifold
$N'$ by replacing $p$ by the space of directions at $p$. Let
${\Ra}P^l$ be the $l$ dimensional projective space.  To blow up a
point, we take the product of $N{\times}{\Ra}P^{\dim(N)}$ and then
find a submanifold where the projection to $N$ is a diffeomorphism
off of $p$ and the fiber of the projection over $p$ is
${\Ra}P^{\dim(N)}$.   For detailed discussion of this construction
we refer the reader to any reasonable book on algebraic geometry.

The easiest example to consider is to take the action of
$SL(n,\Za)$, or any subgroup  $\G<SL(n,\Za)$ on the torus $\Ta^n$
and blow up the fixed point, in this case the equivalence class of
the origin in $\Ra^n$. Call the resulting manifold $M$. Provided
$\G$ is large enough, e.g. Zariski dense in $SL(n,\Ra)$,  this
action of $\G$ does not preserve the measure defined by any volume
form on $M$. A clever construction introduced in \cite{KatokLewis-global} shows
that one can alter the standard blowing up procedure in order to
produce a one parameter family of $SL(n,\Za)$ actions on $M$, only
one of which preserves a volume form. This immediately shows that
this action on $M$ admits perturbations, since it cannot be
conjugate to the nearby, non-volume preserving actions. Essentially,
one constructs different differentiable structures on $M$ which are
diffeomorphic but not equivariantly diffeomorphic.

After noticing this construction, one can proceed to build more
complicated examples by passing to a subgroup of finite index, and
then blowing up several fixed points.  One can also glue together
the ``blown up" fixed points to obtain an action on a manifold with
more complicated topology.  In particular, one can achieve a fundamental
group which is an essentially arbitrary free product with amalgamation or
HNN extension of the fundamental group of the original manifold over
the fundamental group of (blown-up) orbits. In the context described in more
 detail below, of blowing up along closed orbits instead of points, it is not 
 hard to do the blowing up and gluing in way that guarantees that there are no linear representations of the
fundamental group of the ``exotic example".  To prove non-existence of
linear representations, one chooses examples where all groups involved are
higher rank lattices and have very constrained linear representation theory.
See \cite{FisherWhyte-quotient, KatokLewis-global} for discussion of the
topological complications one can introduce.

While these actions do not preserve a rigid geometric structure, they do preserve
a slightly more general object, an {\em almost} rigid structure introduced by Benveniste and the author in \cite{BenvenisteFisher} and described below.

In \cite{Benveniste-thesis} it is observed that a similar construction can be used
for the action of a simple group $G$ by left translations on a
homogeneous space $H/\Lambda$ where $H$ is a Lie group containing
$G$ and $\Lambda<H$ is a cocompact lattice.  Here we use a slightly
more involved construction from algebraic geometry, and ``blow up"
the directions normal to a closed submanifold. I.e. we replace some
closed submanifold $N$ in $H/{\Lambda}$ by the projective normal
bundle to $N$.  In all cases we consider here, this normal bundle is
trivial and so is just $N{\times}{\Ra}P^{l}$ where
$l=\dim(H)-\dim(N)$.

Benveniste used his construction to produce more interesting
perturbations of actions of higher rank simple Lie group $G$ or a
lattice $\G$ in $G$.  In particular, he produced volume preserving
actions which admit volume preserving perturbations.  He does this
by choosing $G<H$ such that not only are there closed $G$ orbits but
so that the centralizer $Z=Z_H(G)$ of $G$ in $H$  has no-trivial
connected component.  If we take a closed $G$ orbit $N$, then any
translate $zN$ for $z$ in $Z$ is also closed and so we have a
continuum of closed $G$ orbits.  Benveniste shows that if we choose
two closed orbits $N$ and $zN$ to blow up and glue, and then vary
$z$ in a small open set, the resulting actions can only be conjugate
for a countable set of choices of $z$.

This construction is further elaborated in
\cite{Fisher-deformations}. Benveniste's construction is not optimal
in several senses, nor is his proof of rigidity.  In
\cite{Fisher-deformations}, I give a modification of the
construction that produces non-conjugate actions for every choice of
$z$ in a small enough neighborhood.  By blowing up and gluing more
pairs of closed orbits, this allows me to produce actions where the
space of deformations contains a submanifold of arbitrarily high,
finite dimension. Further, Benveniste's proof that the deformation
are non-trivial is quite involved and applies only to higher rank
groups. In \cite{Fisher-deformations}, I give a different proof of
non-triviality of the deformations, using consequences of Ratner's
theorem due to Shah and Witte Morris
\cite{Ratner,Shah,Witte-measquotients}. This shows that the
construction produces non-trivial perturbations for any semisimple
$G$ and any lattice $\G$ in $G$.

As mentioned above, in \cite{BenvenisteFisher} we show that none of these actions preserve any rigid
geometric structure in the sense of Gromov but that they do preserve a slightly more complicated
object which we call an {\em almost rigid structure}.  Both rigid and almost rigid structures are
easiest to define in dimension $1$.  In this context a rigid structure is a non-vanishing vector
field and an almost rigid structure is a vector field vanishing at isolated points and to finite
degree.

 We continue to use the notation of section
\ref{subsection:rgsdefs} in order to give the precise definition of almost rigid geometric structure.

\begin{defn}
An A-structure $\phi$ is called $(j,k)$-almost rigid (or just almost
rigid) if for every point $p$, $r^{k,k-1}_p$ is injective on the
subgroup $r^{k+j, k}(Is^{k+j}) \subset Is^{k}$.
\end{defn}

\noindent
Thus $k$-rigid structures are the $(0,k)$-almost rigid structures.

\noindent
{\bf Basic Example:} Let $V$ be an $n$-dimensional
manifold. Let $X_1, \ldots X_n$ be a collection of vector fields on
$M$. This defines an A-structure $\psi$ of type ${\mathbb R}^{n^2}$ on
$M$. If $X_1,{\ldots},X_n$ span the tangent space of $V$ at every point, then the
structure is rigid in the sense of Gromov. Suppose instead that there exists a
point $p$ in $V$ and
$X_1 \wedge \ldots \wedge X_n$ vanishes to order
$\leq j$ at $p$ in $V$.
Then $\psi$ is  a $(j,1)$-almost rigid structure. Indeed,
let $p \in M$, and let $(x_1, \dots, x_n)$ be coordinates around
$p$. Suppose that in terms of these coordinates, $X_l =a_l ^m
\frac{\partial}{\partial x_j}$. Suppose that $f \in Is^{j+1}_p$.
We must show that $r_p ^{j+1,1}(f)$
is trivial. Let $(f^1, \ldots , f^n)$ be the coordinate functions of
$f$. Then $f \in Is^{j+1} _p$ implies that
\begin{equation}
\label{1}
a_k ^l - a_k ^m \frac{\partial
f^l}{\partial x^m }
\end{equation}
 vanishes to order $j+1$ at $p$ for all $k$ and
$l$. Let $(b_k ^l)$ be the matrix so that $b_k ^m a_m ^l = \det(a_r
^s) \delta_k ^l$. Multiplying expression (\ref{1}) by $(b_k ^l)$, we see that
$\det(a_r ^s) (\delta_k ^l - \frac{\partial f^l}{\partial x^k})$
vanishes to order $j+1$. But since by assumption $\det(a_r ^s)$
vanishes to order $\leq j$, this implies that $(\partial f^l /
\partial x^k)(p) = \delta_k ^l$, so $r_p ^{j+1,1}(f)$ is the
identity, as required.

If confused by the notation, the interested reader may find it enlightening to work out
the basic example in the trivial case $n=1$.  Similar arguments can be given to show that
frames that degenerate to subframes are also almost rigid, provided the order of vanishing
of the form defining the frame is always finite.

\begin{question}
\label{question:almostrigid} Does any smooth (or analytic) action of
a higher rank lattice $\Gamma$ admit a smooth (analytic) almost
rigid structure in the sense of \cite{BenvenisteFisher}?  More generally
does such an action admit a smooth (analytic) rigid geometric structure
on an open dense set of full measure?
\end{question}

This question is, in a sense, related to the discussion above about
regularity of the straightening section in cocycle superrigidity. In
essence, cocycle superrigidity provides one with a measurable
invariant connection and what one wants to know is whether one can
improve the measurable connection to a smooth geometric structure
with some degeneracy on a small set.  The examples described in this
section show, among other things, that one cannot expect the
straightening section to be smooth in general, though one might hope
it is smooth in the complement of a closed submanifold of positive
codimension or at least on an open dense set of full measure.

We remark that there are other possible notions of almost rigid structures.
See the article in this volume by Dumitrescu for a detailed discussion
of a different useful notion in the context of complex analytic manifolds \cite{Dumitrescu-thisvolume}.
Dumitrescu's notion is strictly weaker than the one presented here.

\section{Non-volume preserving actions}
\label{section:novolume}

This section describes what is known for non-volume preserving actions.
The first two subsections describe examples which show that a classification in this
setting is not in any sense possible.  The last subsection describes some recent
work of Nevo and Zimmer that proves surprisingly strong rigidity results in special
settings.

\subsection{Stuck's examples}

 The following observations are from Stuck's paper \cite{Stuck}.
Let $G$ be any semisimple group.  Let $P$ be a minimal parabolic
subgroup.  Then there is a homomorphism $P{\rightarrow}\Ra$.  As in
the proof of Theorem \ref{theorem:rankoneactions}, one can take
any $\Ra$ action, view it is a $P$ action and induce to a $G$ action.
If we take an $\Ra$ action on a manifold $M$, then the induced
action takes place on $(G \times M)/P$.  We remark that the
$G$ action here is not volume preserving, simply because the $G$
action on $G/P$ is proximal.  The same is true of the restriction
to any $\G$ action when $\G$ is a lattice in $G$.  This
implies that classifying $G$ actions on all compact manifolds implicitly
involves classifying all vector fields on compact manifolds.  It is relatively
easy to convince oneself that there is no reasonable sense in which the moduli
space of vector fields can be understood up to smooth conjugacy.

\subsection{Weinberger's examples}

This is a variant on the Katok-Lewis examples obtained by blowing
up a point and is similar to a blowing up construction common
in foliation theory.  The idea is that one takes an action of a subgroup
$\G$ of $SL(n,\Za)$ on $M=\Ta^n$,
removes a fixed or periodic point $p$, retracts onto a manifold with
boundary $\bar M$ and then glues in a copy of the $\Ra^n$ compactified
at infinity by the projective space of rays.  It is relatively easy to
check that the resulting space admits a continuous $\G$ action and even that
there are many invariant measures for the action, but no invariant volume.
One can also modify this construction by doing the same construction at multiple fixed
or periodic points simultaneously and by doing more complicated gluings on the
resulting $\bar M$.

This construction is discussed in \cite{FarbShalen-3manifold} and a variant for abelian
group actions is discussed
in \cite{KalininKatokRodriguezHertz}.  In the abelian case it is possible to smooth
the action, but this does not seem to be the case for actions of higher rank lattices.

As far as I can tell, there is no obstruction to repeating this construction for closed
orbits as in the case of the algebro-geometric blow-up, but this does not seem
to be written formally anywhere in the literature.

Also, a recent construction of Hurder shows that one can iterate this construction
infinitely many times, taking retracts in smaller and smaller neighborhoods
of periodic points of higher and higher orders.  The resulting object is
a kind of fractal admitting an $SL(n,\Za)$ action \cite{Hurder-personalcommunication}.

\subsection{Work of Nevo-Zimmer}

In this subsection, we describe some work of Nevo and Zimmer from the
sequence of papers \cite{NevoZimmer-firstquotients, NevoZimmer-entropy, NevoZimmer-annals,
NevoZimmer-okalready}.  For more detailed discussion see the survey by Nevo and Zimmer
\cite{NevoZimmer-survey} as well as the following two articles for related results \cite{NevoZimmer-intermediate,NevoZimmer-deRham}.

Given a group $G$ acting on a space $X$ and a measure $\mu$ on $G$,
we call a measure $\nu$ on $X$ {\em stationary} if $\mu * \nu = \nu$.
We will only consider the case where the group generated by the support
$\mu$ is $G$, such measures are often called {\em admissible}.
This is a natural generalization of the notion of an invariant measure.
If $G$ is an amenable group, any action of $G$ on a compact metric space
admits an invariant measure.  If $G$ is not amenable, invariant measures
need not exist, but stationary measures always do.  We begin with the following
cautionary example:

\begin{example}
\label{example:projective}
Let $G=SL(n,\Ra)$ acting on $\Ra^{n+1}$ by the standard linear action on the first
$n$ coordinates and the trivial action on the last.  Then the corresponding $G$
action on  $\Pa(\Ra^{n+1})$ has the property that any stationary measure is supported
either on the subspace $\Pa(\Ra^n)$ given by the first $n$ coordinates or on the subspace
$\Pa(\Ra)$ given by zeroing the first $n$ coordinates.
\end{example}

The proof of this assertion is an easy exercise. The set $\Pa(\Ra)$ is a collection of
  fixed points, so clearly admits invariant measures.  The orbit of any point in
  $\Pa(\Ra^{n+1})\backslash(\Pa(\Ra^n)\bigsqcup\Pa(\Ra))$ is $SL(n, \Ra)/(SL(n,\Ra)\ltimes \Ra^n)$.
  It is straightforward to check that no stationary measures can be supported on unions
  of sets of this kind.
This fact should not be a surprise as it generalizes the fact that invariant measures
for amenable group actions are often supported on minimal sets.

To state the results of Nevo and Zimmer, we need a slightly stronger notion of admissibility. We say a measure $\mu$ on a locally compact group $G$ is {\em strongly admissible} if the support
of $\mu$ generates $G$ and $\mu^{*k}$ is absolutely continuous with respect to
Haar measure on $G$ for some positive $k$. In the papers of Nevo and Zimmer, this stronger notion is called
admissible.

\begin{theorem}
\label{theorem:nevozimmer}
Let $X$ be a compact $G$ space where $G$ is a semisimple Lie group with all
factors of real rank at least $2$.  Then for any admissible measure $\mu$ on $G$
and any $\mu$-stationary measure $\nu$ on $X$, we have either
\begin{enumerate}
\item $\nu$ is $G$ invariant or
\item there is a non-trivial measurable quotient of the $G$ space $(X,\nu)$
which is of the form $(G/Q, Lebesgue)$ where $Q \subset G$ is a parabolic subgroup.
\end{enumerate}
\end{theorem}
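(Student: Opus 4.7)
The plan is to run Furstenberg's boundary theory on $(X,\nu)$ and then exploit the higher-rank structure of $G$ to force a dichotomy between $G$-invariance of $\nu$ and the existence of a parabolic quotient. Let $P<G$ be a minimal parabolic subgroup and let $\nu_0$ be the unique $\mu$-stationary probability on $G/P$; by Furstenberg's work $(G/P,\nu_0)$ is the Poisson boundary of $(G,\mu)$. For the $\mu$-stationary system $(X,\nu)$ the martingale convergence theorem produces a measurable $G$-equivariant map $\beta : G/P \to \mathcal{P}(X)$, described heuristically by $\beta(b) = \lim_n g_1\cdots g_n\cdot\nu$ along $\mu$-random walks converging to $b$, and satisfying $\nu = \int_{G/P}\beta(b)\,d\nu_0(b)$. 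The measure $\nu$ is $G$-invariant precisely when $\beta$ is essentially constant, so the interesting case is when $\beta$ is non-constant.

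Assuming $\beta$ is non-constant, I would analyze the stabilizer $Q_b := \mathrm{Stab}_G(\beta(b)) \subseteq G$. By $G$-equivariance $Q_{gb} = g Q_b g^{-1}$, so all the $Q_b$ are conjugate. A Mautner/contraction argument along the unipotent radical $N_b$ of the minimal parabolic fixing $b$, combined with the martingale-limit description of $\beta(b)$, shows $N_b \subseteq Q_b$, forcing $Q_b$ to be a (not-necessarily minimal) parabolic subgroup. If $Q_b = G$ almost surely then $\beta$ is constant and $\nu$ is invariant; otherwise $Q_b = Q$ is a proper parabolic, and $b \mapsto Q_b$ realizes a $G$-equivariant measurable factor map $(G/P,\nu_0) \to (G/Q,\eta)$ through which $\beta$ descends. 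Since $G/Q$ is a compact homogeneous space carrying a unique smooth $\mu$-stationary probability (the $K$-invariant one for $K$ a maximal compact subgroup, via Furstenberg uniqueness on the full boundary pushed down), $\eta$ lies in the Lebesgue class, yielding the desired quotient $(X,\nu) \to (G/Q,\text{Lebesgue})$.

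The main obstacle is the step just described: showing that non-constancy of $\beta$ upgrades to parabolic invariance of its values, and simultaneously identifying the quotient measure as Lebesgue. The analogous statement is false in real rank one, where Stuck's induced-action examples exhibit proximal $G$-systems admitting non-invariant stationary measures with no intermediate boundary factor beyond $G/P$ itself, so the argument must use rank at least two essentially. I expect the correct implementation to combine two ingredients: an entropy computation comparing the Furstenberg entropy $h_\mu(X,\nu)$ to $h_\mu(G/P,\nu_0)$, with intermediate values of the former corresponding to intermediate parabolics, together with Ratner-type rigidity for unipotent subgroups acting on $\mathcal{P}(X)$, promoting stationarity along unipotent flows to genuine invariance. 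The abundance of non-conjugate parabolics available in higher rank is what allows the dichotomy to be sharp, whereas in rank one the single minimal parabolic leaves no room to maneuver.
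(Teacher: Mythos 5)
The survey you are working from does not actually prove this theorem: it is quoted from the Nevo--Zimmer papers, so there is no in-paper argument to compare against, and your proposal has to be judged on its own terms. The soft part of your setup is fine: martingale convergence gives the boundary map $\beta:G/P\rightarrow\mathcal{P}(X)$ with $\nu=\int\beta(b)\,d\nu_0(b)$; each limit measure $\beta(b)$ is invariant under the minimal parabolic $P_b$ fixing $b$ (this is immediate from equivariance --- no Mautner or contraction argument is needed, and containing the unipotent radical $N_b$ alone would not force $Q_b$ to be parabolic, whereas containing all of $P_b$ does); and ergodicity of $\nu_0$ makes the conjugacy type of $Q_b=\mathrm{Stab}_G(\beta(b))$ essentially constant. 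The gap is the final inference. When $Q_b=Q\subsetneq G$, what you have produced is a $G$-equivariant map $G/Q\rightarrow\mathcal{P}(X)$, $[b]\mapsto\beta(b)$, equivalently a factor $G/P\rightarrow G/Q$ of the boundary. The theorem requires a map in the opposite direction: a measurable $G$-factor $(X,\nu)\rightarrow(G/Q,\mathrm{Leb})$. These are not equivalent, and nothing in your argument converts one into the other. Such a conversion would amount to showing that the decomposition $\nu=\int\beta(b)\,d\nu_0(b)$ descends to a decomposition over $G/Q$ into mutually \emph{singular} components, so that almost every $x$ selects a unique coset of $Q$; establishing that singularity is the entire content of the theorem.

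The clearest evidence that a step is missing is that everything you wrote goes through verbatim in real rank one, where the conclusion is false: Nevo and Zimmer themselves construct rank-one actions carrying a non-invariant stationary measure but admitting no projective factor. (Stuck's induced actions are not the relevant counterexamples here --- $(G\times M)/P$ visibly factors onto $G/P$.) Your only appeals to the rank hypothesis are the closing remarks about the ``abundance of non-conjugate parabolics'' and ``Ratner-type rigidity,'' but neither is deployed anywhere in the body of the argument. In the actual Nevo--Zimmer proof the higher-rank assumption enters through a detailed study of the invariance of the limit measures $\beta(b)$ under the various root subgroups of $G$: the interplay between distinct simple roots, available only when the rank is at least two, is what forces suitable averages of the $\beta(b)$ over $Q$-cosets to become mutually singular and hence yields the factor map onto $G/Q$ with its Lebesgue measure class. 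Your entropy heuristic points in a reasonable direction, but as written the proposal is a plan for a proof rather than a proof.
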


The quotient space $G/Q$ is called a {\em projective quotient} in the work of
Nevo and Zimmer.  Theorem \ref{theorem:nevozimmer} is most interesting for us
for minimal actions, where the measure $\nu$ is necessarily supported on all of $X$ and
the quotient therefore reflects the $\Gamma$ action on $X$ and not some smaller set.
Example \ref{example:projective} indicates the reason to be concerned, since there
the action on the larger projective space is not detected by any stationary measure.

We remark here that Feres and Ronshausen have introduced some interesting ideas for studying group
actions on sets not contained in the support of any stationary measure \cite{Feres-thisvolume}.  Similar ideas are developed in a somewhat different
context by Deroin, Kleptsyn and Navas \cite{DeroinKleptsynNavas}. 

In \cite{NevoZimmer-okalready}, Nevo and Zimmer show that for smooth non-measure preserving
actions on compact manifolds which also preserve a rigid geometric structure, one can sometimes prove the existence of a projective factor where the factor map is smooth on an open dense set.

We also want to mention the main result of Stuck's paper \cite{Stuck}, which we
have already cited repeatedly for the fact that non-volume preserving actions of
higher rank groups on compact manifolds cannot be classified.  

\begin{theorem}
\label{theorem:stuck}
Let $G$ be a semisimple Lie group with finite center.  Assume $G$ acts minimally by
homeomorphisms on a compact manifold $M$. Then either the action is locally free
or the action is induced from a minimal action by homeomorphisms of a proper parabolic subgroup of $G$ on a manifold $N$.
\end{theorem}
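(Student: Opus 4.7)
Suppose the action is not locally free, so some $x_0 \in M$ has non-discrete stabilizer, i.e.\ $\dim \mathfrak{g}_{x_0} \geq 1$. The function $x \mapsto \dim \mathfrak{g}_x$ is upper semi-continuous on $M$, so $\{x : \dim \mathfrak{g}_x \geq 1\}$ is closed and $G$-invariant, and its complement is open and $G$-invariant. Minimality (every non-empty closed $G$-invariant subset of $M$ equals $M$) forces every stabilizer to be non-discrete. Repeating the argument with the minimum value $d := \min_x \dim \mathfrak{g}_x \geq 1$, the set $\{x : \dim \mathfrak{g}_x = d\}$ is open, non-empty, and $G$-invariant, hence equals $M$. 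So every stabilizer Lie algebra has the same dimension $d \geq 1$.

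Define $\phi : M \to \mathrm{Gr}_d(\mathfrak{g})$ by $\phi(x) = \mathfrak{g}_x$. Then $\phi$ is $G$-equivariant because $\mathfrak{g}_{gx} = \Ad(g)\mathfrak{g}_x$, and continuous: if $x_n \to x$ and, along a subsequence, $\mathfrak{g}_{x_n} \to \mathfrak{h}'$ in the Grassmannian, then any $\xi \in \mathfrak{h}'$ is a limit $\xi = \lim \xi_n$ with $\xi_n \in \mathfrak{g}_{x_n}$, and continuity of the action yields $\xi \cdot x = 0$, so $\mathfrak{h}' \subseteq \mathfrak{g}_x$; equality of dimensions forces $\mathfrak{h}' = \mathfrak{g}_x$. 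The image $\phi(M)$ is compact and $G$-invariant. Moreover it is a minimal $G$-set, since for any closed $G$-invariant $K \subseteq \phi(M)$ the preimage $\phi^{-1}(K) \subseteq M$ is closed and $G$-invariant, hence empty or all of $M$.

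Because the adjoint action on the Grassmannian is algebraic, $G$-orbits there are locally closed; so the minimal compact $G$-invariant set $\phi(M)$ is a single closed $G$-orbit, naturally identified with $G/Q$, where $Q := N_G(\mathfrak{h})$ for any $\mathfrak{h} \in \phi(M)$. The main technical step, and where the semisimplicity of $G$ is essential, is to identify $Q$ with a proper parabolic subgroup $P$. Since $G/Q$ is compact and $Q$ is self-normalizing (it is the normalizer of its own Lie algebra), standard structure theory of semisimple Lie groups forces $Q$ to be either all of $G$ or a proper parabolic: the only reductive closed subgroup of a non-compact semisimple $G$ with compact quotient is $G$ itself, so in the non-degenerate case $Q$ must contain the unipotent radical of a parabolic, and self-normalization identifies $Q$ with that parabolic. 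I expect this classification step to be the main obstacle to implement cleanly.

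Granted $Q = P$ is a proper parabolic, set $N := \phi^{-1}(eP)$, a closed $P$-invariant subset of $M$. The map $G \times N \to M$, $(g,n) \mapsto g \cdot n$, is $G$-equivariant and descends to $G \times_P N \to M$: it is surjective (because $\phi$ is) and injective (if $g_1 n_1 = g_2 n_2$, applying $\phi$ gives $g_1^{-1} g_2 \in P$, so the two classes coincide in $G \times_P N$). As a continuous bijection between compact Hausdorff spaces it is a homeomorphism, realizing $M$ as the $G$-space induced from the $P$-action on $N$. Minimality of the $P$-action on $N$ is immediate: a proper closed $P$-invariant $N' \subsetneq N$ would give a proper closed $G$-invariant $G \times_P N' \subsetneq M$, contradicting minimality of $M$. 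The borderline case $Q = G$ occurs only when $\mathfrak{h}$ is a sum of simple ideals whose corresponding factors act trivially on $M$; one then passes to the effective quotient $\bar G$, still semisimple with finite center, and iterates the argument inside $\bar G$.
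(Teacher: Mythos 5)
Your proposal follows exactly the route the paper indicates: the survey offers no proof of this theorem, only the one-line remark that Stuck's argument "studies the Gauss map from $M$ to the Grassmann variety of subspaces of $\mathfrak{g}$ defined by taking a point to the Lie algebra of its stabilizer," and your map $\phi(x)=\mathfrak{g}_x$ is precisely that Gauss map, with the semicontinuity/minimality bootstrap, the identification of $\phi(M)$ with a single compact orbit $G/Q$, and the induced-action structure all filled in correctly. The step you flag is indeed the crux, and it is a standard fact (a Zariski-closed cocompact subgroup of a semisimple group contains a conjugate of a minimal parabolic and is therefore parabolic), though your stated reason is slightly off: $Q=N_G(\mathfrak{h})$ normalizes $\mathfrak{h}$, not a priori its own Lie algebra $\mathfrak{q}\supseteq\mathfrak{h}$, so self-normalization of $Q$ is not immediate and the argument should instead invoke cocompactness plus algebraicity directly.
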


Stuck's theorem is proven by studying the Gauss map from $M$ to the Grassman variety 
of subspaces of $\mathfrak g$ defined by taking a point to the Lie algebra of it's
stabilizer.  This technique also plays an important role in the work of Nevo and Zimmer.

\section{Groups which do act on manifolds}
\label{section:moreexamples}

Much of the work discussed so far begs an obvious question: ``are there
many interesting subgroups of $\Diff(M)$ for a general $M$?"  So far
we have only seen ``large" subgroups of $\Diff(M)$ that arise in a
geometric fashion, from the presence of a connected Lie group in either
$\Diff(M)$ or $\Diff(\tilde M)$.  In this section, we describe two
classes of examples which make it clear that other phenomena exist.
The following problem, however, seems open:

\begin{problem}
\label{problem:nonlinear}
For a compact manifold $M$ with a volume form $\omega$ construct
a subgroup $\Gamma<\Diff(M,\omega)$ such that $\Gamma$ has no
linear representations.
\end{problem}

An example that is not often considered in this context is that
$\Aut(F_n)$ acts on the space $\Hom(F_n,K)$ for $K$ any compact
Lie group.  Since  $\Hom(F_n,K){\simeq}K^n$ this defines an action
of $\Aut(F_n)$ on a manifold. This action clearly preserves the
Haar measure on $K^n$, see\cite{Goldman-Out}.  This action is not
very well studied, we only know of \cite{Fisher-Out,Gelander-Out}.
Similar constructions are possible, and better known, with mapping
class groups, although in that case on obtains a representation variety
which is not usually a manifold.  Since $\Aut(F_n)$ has no faithful
linear representations, this yields an example of truly ``non-linear"
action of a large group.  This action is still very special and one
expects many other examples of non-linear actions.  We now
describe two constructions which yield many examples if we drop
the assumption that the action preserves volume.

\subsection{Thompson's groups}
\label{subsection:thompson}

Richard Thompson introduced a remarkable family of groups, now referred
to as Thompson's groups.  These come in various flavors and have been
studied from several points of view, see e.g. \cite{CannonFloydParry, Higman-Thompson}
For our purposes, the most important of these groups are the one's
typically denoted $T$.  One description of this group is the collection
of piecewise linear diffeomorphisms of a the circle where the break points
and slopes are all dyadic rationals.  (One can replace the implicit $2$ here
with other primes and obtain similar, but different, groups.)  We record
here two important facts about this group  $T$ of piecewise linear homeomorphisms of $S^1$.

\begin{theorem}[Thompson, see \cite{CannonFloydParry}]
\label{theorem:thompsonsimple}
The group $T$ is simple.
\end{theorem}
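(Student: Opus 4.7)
The plan is the classical two-step approach used for many transformation-group simplicity theorems. First I would show that $T$ is perfect, i.e.\ $T = [T,T]$; second I would show that any nontrivial normal subgroup $N \triangleleft T$ already contains $[T,T]$. Together these give simplicity.

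For perfection, the cleanest route is to exhibit a finite generating set for $T$ and write each generator explicitly as a commutator. Using the standard Cannon--Floyd--Parry generators of $T$, one verifies by a direct (and fairly short) dyadic PL computation that each generator lies in $[T,T]$; equivalently, one computes the abelianization of $T$ directly from the presentation and sees it is trivial.

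For the second step I would use the Higman--Epstein commutator trick that underlies simplicity proofs for many homeomorphism and PL-homeomorphism groups. Let $g \in N \setminus \{1\}$. The key point is that $T$ acts on dyadic rationals in $S^1$ with enough transitivity (it is transitive on cyclically ordered $n$-tuples of dyadic points for every $n$) to let us conjugate $g$ into a form where some dyadic arc $I \subset S^1$ satisfies $g(I) \cap I = \varnothing$. Then for any $f \in T$ supported in $I$, the commutator $[g,f] = (gfg^{-1}) \cdot f^{-1}$ is a product of two commuting factors with disjoint supports in $g(I)$ and $I$ respectively, and lies in $N$. Taking a further commutator with an element $h \in T$ supported in $I$ kills the $g(I)$-piece and leaves the commutator $[f^{-1},h]$ inside $N$, supported in $I$. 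Varying $f$ and $h$ produces inside $N$ the full commutator subgroup of the pointwise stabilizer of $S^1 \setminus I$ in $T$. Combining this with transitivity of $T$ on small dyadic arcs, and the fact that every element of $[T,T]$ can be written as a product of dyadic PL elements each supported in some small dyadic arc, one concludes $[T,T] \subseteq N$, hence $N = T$ by Step~1.

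The main obstacle is the combinatorial/PL bookkeeping in the second step: one must verify at each stage that the auxiliary elements (the $h$'s and the ``transport'' maps between dyadic arcs) can themselves be chosen inside $T$, i.e.\ as dyadic PL homeomorphisms with dyadic slopes and breakpoints. This is exactly where the specific structure of $T$ --- its rich transitivity on dyadic configurations together with the availability of compactly supported dyadic PL perturbations --- is used repeatedly, and is the reason the argument works for $T$ while failing for generic subgroups of $\Homeo(S^1)$.
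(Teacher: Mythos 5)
First, a point of calibration: the survey does not actually prove this statement; it quotes it with a citation to Cannon--Floyd--Parry, so your proposal can only be compared with the standard proof in that reference. Your outline is essentially that proof: perfectness of $T$ together with the Epstein--Higman commutator trick. Steps one and two are fine as written (note that no conjugation of $g$ is needed --- any $g\neq 1$ already moves some small dyadic arc $I$ off itself --- and your identity $[[g,f],h]=[f^{-1},h]$ for $f,h$ supported in $I$ is correct). They do place $[T_I,T_I]$ inside $N$, where $T_I$ denotes the elements supported in $I$, and hence $[T_J,T_J]\subseteq N$ for every proper dyadic arc $J$, by transitivity of $T$ on such arcs and normality of $N$.

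The one genuine gap is the closing sentence of your second step. The claim that every element of $[T,T]=T$ is a product of elements each supported in a \emph{small} dyadic arc cannot be taken for granted: $T$ contains elements with no fixed points at all (for instance the finite-order ``rotations''), and for such elements there is no a priori fragmentation into pieces of small support; the subgroup generated by all elements supported in proper arcs is itself normal, so asserting that it is everything is uncomfortably close to assuming the theorem. What you actually need, and what is true, is weaker: $T$ is generated by elements that are the identity on \emph{some} nonempty open arc. This follows from an interpolation argument: given $g\in T$ and a dyadic point $p$, construct $h\in T$ agreeing with $g$ on a small dyadic arc $A$ around $p$ and equal to the identity on a dyadic arc $B$ disjoint from $A\cup g(A)$; then $h$ and $h^{-1}g$ are each trivial on an open arc, hence each is supported in a proper closed dyadic arc $J$ with trivial germs at $\partial J$, so each maps to $0$ in $T_J^{\mathrm{ab}}\cong\mathbb{Z}^{2}$ (the germ homomorphisms at the two endpoints) and therefore lies in $[T_J,T_J]\subseteq N$. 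This last step also makes explicit the identification of $[F,F]$ with the kernel of the germ map, which your argument uses implicitly. With that repair the proof closes up and yields $N=T$.
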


\begin{theorem}[Ghys-Sergiescu \cite{GhysSergiescu}]
\label{theorem:ghyssergesciu}
The defining piecewise linear action of the group $T$ on $S^1$ is
conjugate by a homeomorphism to a smooth action.
\end{theorem}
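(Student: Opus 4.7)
The plan is to construct an explicit homeomorphism $h : S^1 \to S^1$ such that $h T h^{-1} \subset \Diff^{\infty}(S^1)$. The obstruction is purely local: every element $\gamma \in T$ is, by definition, smooth (in fact real analytic) on the complement of finitely many dyadic rationals, so the only issue is to smooth the corners at the countable set $D \subset S^1$ of dyadic rationals. At each such corner $p$, the left and right derivatives of $\gamma$ are powers of two, $2^a$ and $2^b$, so a conjugating homeomorphism must desingularize arbitrary slope ratios $2^{a-b}$ at every point of $D$, simultaneously for all $\gamma \in T$.

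First I would exploit the combinatorial structure of $T$. An element of $T$ is determined by a pair of dyadic partitions $(P_1, P_2)$ of $S^1$ of the same cardinality together with a cyclic matching, where $\gamma$ sends each subinterval of $P_1$ affinely onto the corresponding one in $P_2$. In particular, $T$ acts transitively on $D$ and this action reflects the action of $T$ on the infinite rooted binary tree $\mathcal T$ whose vertices are standard dyadic intervals. The homeomorphism $h$ must therefore be adapted to the tree structure: it should be constant on no interval but should be very flat at every vertex of $\mathcal T$, with the order of flatness calibrated so that slope ratios which are powers of two become trivial after conjugation.

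Concretely, I would build $h$ inductively down the tree $\mathcal T$, defining it on each dyadic interval $I$ of level $n$ so that its derivatives of all orders vanish at the endpoints of $I$, in a pattern that is invariant under the natural self-similarities of $\mathcal T$ (rescaling an interval of level $n+1$ to one of level $n$). A convenient template is to let $h|_I$ be, up to affine normalization, a fixed ``bump profile'' $\varphi : [0,1] \to [0,1]$ that is smooth on $(0,1)$, satisfies $\varphi^{(k)}(0) = \varphi^{(k)}(1) = 0$ for all $k \geq 1$, and is compatible with the two-to-one subdivision, e.g.\ $\varphi(x/2) = \frac{1}{2}\varphi(x)$ on a neighborhood of $0$ up to flat error. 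With this choice, on either side of any $p \in D$ the map $h$ is infinitely flat, so that for any piecewise linear map $\gamma$ with a corner at $p$, the conjugate $h \gamma h^{-1}$ has all derivatives equal to $0$ from both sides at $h(p)$, and hence is $C^\infty$ there.

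The main obstacle is uniformity: the smoothing must work simultaneously for \emph{every} $\gamma \in T$ and \emph{every} $p \in D$, and the derivatives of $h \gamma h^{-1}$ must be genuinely continuous across $h(p)$ (not merely zero on each side in some weak sense). This requires sharp estimates on how $\varphi$ and its inverse behave near their flat endpoints, controlled uniformly as we descend $\mathcal T$ to arbitrarily deep levels. The delicate step is verifying that the self-similarity condition imposed on $\varphi$ can be chosen compatibly with the flatness requirement, and then passing from the pointwise smoothness of $h \gamma h^{-1}$ at each $h(p)$ to genuine $C^\infty$ smoothness on a neighborhood, by carefully bounding the growth of derivatives of iterated compositions of affine maps composed with $h$ and $h^{-1}$.
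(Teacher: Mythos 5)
Note first that the survey offers no proof of this statement---it is quoted from \cite{GhysSergiescu}---so your proposal must be measured against the original Ghys--Sergiescu argument. Your overall strategy (a conjugating homeomorphism adapted to the binary tree of standard dyadic intervals, built from a self-similar profile that resolves the corners) is in the right spirit, but it contains a concrete error at its central step. You assert that after conjugation ``$h\gamma h^{-1}$ has all derivatives equal to $0$ from both sides at $h(p)$, and hence is $C^\infty$ there.'' If the first derivative of $h\gamma h^{-1}$ vanished at $h(p)$, the conjugated map could not be a diffeomorphism. What is actually required is that the left and right germs of $h\gamma h^{-1}$ at $h(p)$ agree to infinite order, and the standard way to arrange this is to conjugate each affine germ $x\mapsto 2^{a}(x-p)+p$ into a germ \emph{infinitely tangent to the identity}, i.e.\ with first derivative $1$ and all higher-order data matching. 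A model computation is instructive: conjugating $x\mapsto 2^{a}x$ near $0$ by $w(x)=-1/\log x$ gives $t\mapsto t/\bigl(1-a(\log 2)t\bigr)$, a smooth M\"obius germ with derivative $1$ and nonzero second derivative $2a\log 2$. This shows both that the correct target derivative is $1$, not $0$, and that the relevant conjugating germ is not ``flat'' in your sense: $w^{-1}(t)=e^{-1/t}$ is infinitely flat, but $w$ itself has infinite derivative at $0$. It also shows that a first-order computation is not enough---the left and right slopes $2^{a}$ and $2^{b}$ produce different quadratic terms, so one needs a more degenerate conjugating germ making all these affine germs simultaneously infinitely tangent to the identity.

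The second problem is that the step you defer---reconciling self-similarity with flatness---is the entire content of the theorem, and as stated your two requirements are incompatible: if $\varphi(x/2)=\tfrac12\varphi(x)$ holds near $0$, then $\varphi(2^{-n}x_{0})=2^{-n}\varphi(x_{0})$, so $\varphi(x)/x$ is bounded away from $0$ along dyadic scales, contradicting even $\varphi'(0)=0$. The phrase ``up to flat error'' therefore cannot carry the argument; one must specify the endpoint germ exactly and verify that the conjugates of \emph{all} dyadic affine transition maps are smooth on closed intervals with matching one-sided jets, uniformly over the groupoid generated by $T$. Ghys and Sergiescu proceed in the opposite order: they first build a smooth action directly, choosing for each standard dyadic interval $I$ a chart $\psi_{I}\colon[0,1]\to I$ so that every transition map $\psi_{J}\circ\psi_{I}^{-1}$ is smooth up to the boundary with prescribed identity-tangent jets at the endpoints, define the smooth realization of an element with tree-pair $(I_{1},\dots,I_{m})\mapsto(J_{1},\dots,J_{m})$ to be $\psi_{J_{i}}\circ\psi_{I_{i}}^{-1}$ on $I_{i}$, check the relations of $T$, and only afterwards obtain the topological conjugacy by matching the dyadic orbit of the piecewise linear action with the corresponding orbit of the smooth one. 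Your proposal would become a proof only after specifying the endpoint germ explicitly and carrying out this jet-matching across the whole groupoid; as written it identifies the difficulty without resolving it.
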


These two facts together provide us with a rather remarkable class of examples
of groups which act on manifolds.  As finitely generated linear groups are
always residually finite, the group $T$ has no linear representations whatsoever.
A simpler variant of Problem \ref{problem:nonlinear} is:

\begin{problem}
\label{problem:thompsonvolume}
Does $T$ admit a volume preserving action on a compact manifold?
\end{problem}

It is easy to see that compactness is essential in this question.  We can
construct a smooth action of $T$ on $S^1{\times}\Ra$ simply by taking the Ghys-Sergiescu
action on $S^1$ and acting on $\Ra$ by the inverse of the derivative cocycle.  Replacing
the derivative cocycle with the Jacobian cocycle, this procedure
quite generally converts non-volume preserving actions on compact manifolds to volume
preserving one's on non-compact manifolds, but we know of no real application of this
in the present context. Another variant of Problem \ref{problem:thompsonvolume} is

\begin{problem}
\label{problem:simplediff}
Given a compact manifold $M$ and a volume form $\omega$ does $\Diff(M,\omega)$ contains a finitely generated, infinite
discrete simple group?
\end{problem}

This question is reasonable for any degree of regularity on the
diffeomorphisms.

In Ghys survey on groups acting on the circle, he points out that Thompson's group
can be realized as piecewise $SL(2,\Za)$ homeomorphisms of the circle \cite{Ghys-circlesurvey}.
In \cite{FisherWhyte-oe}, Whyte and the author point out that the group of piecewise
$SL(n, \Za)$ maps on either the torus or the real projective space is quite large.
The following are natural questions, see \cite{FisherWhyte-oe} for more discussion.

\begin{question}
\label{question:piecewise}
Are there interesting finitely generated or finitely presented subgroups of piecewise $SL(n, \Za)$ maps on
$\Ta^n$ or $\Pa^{n-1}$?  Can any such group which is not a subgroup of $SL(n,\Za)$ be made to act smoothly?
\end{question}

\subsection{Highly distorted subgroups of $\Diff(S^2)$}
\label{subsection:calegarifriedman}

In \cite{CalegariFreedman-distortion}, Calegari and Freedman construct a very interesting
class of subgroups of $\Diff^{\infty}(S^2)$. Very roughly, they prove:

\begin{theorem}
\label{theorem:calegarifriedman}
There is a finitely generated subgroup $G$ of $\Diff^{\infty}(S^2)$ which contains a
rotation $r$ as an arbitrarily distorted element.
\end{theorem}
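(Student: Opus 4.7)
The plan is to follow the strategy behind Baumslag--Solitar--type distortion and realize it inside $\Diff^\infty(S^2)$ via two separate mechanisms: distortion of compactly supported diffeomorphisms, and a fragmentation of the rotation $r$ that transfers this distortion to $r$ itself. The output is parametrized by an arbitrary growth function, so what is actually proved is: for every unbounded $\delta:\Na\to\Na$ there is a finitely generated $G\subset\Diff^\infty(S^2)$ with $r\in G$ and word length $|r^n|_G\leq n/\delta(n)$ for infinitely many $n$.

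The first step is the distortion lemma for compactly supported diffeomorphisms. Choose a diffeomorphism $\phi$ of $S^2$ with an attracting fixed point $p$ that is $C^\infty$-linearizable to a genuine contraction $x\mapsto\lambda x$ ($0<\lambda<1$) in some chart around $p$. Fix a fundamental annulus $A_0$ for $\phi$ near $p$ and a compactly supported diffeomorphism $a_0$ supported in $A_0$. Because $\phi^i(A_0)$, $i\geq 0$, are pairwise disjoint annuli shrinking to $p$, the conjugates $\phi^i a_0\phi^{-i}$ have disjoint supports, hence mutually commute. Using this, one can build, as a convergent infinite product of commuting compactly supported pieces, diffeomorphisms $A$ supported near $p$ satisfying a Baumslag--Solitar-style relation $\phi A\phi^{-1}=A^k$ for any prescribed integer $k\geq 2$ (the exponents of the building blocks $a_0^{\,m_i}$ in the product are solved so that the $\phi$-shift of the product equals its $k$-th power). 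Such an $A$ is then exponentially distorted in $\langle A,\phi\rangle$, since $A^{k^n}=\phi^n A\phi^{-n}$ has word length $2n+1$, and by varying $a_0$ and $k$ one can replace the exponential by any prescribed distortion function $\delta$.

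The second step transfers this to the rotation $r$. By the fragmentation lemma for $\Diff^\infty(S^2)$, the isotopy from $\Id$ to $r$ can be cut into a product $r=c_1\cdots c_m$ of compactly supported diffeomorphisms, with each $c_j$ supported in a disk $D_j$. I would choose the disks to lie near the attracting fixed point $p$ of $\phi$ (using that conjugation by a suitable ambient diffeomorphism brings any chosen small disk near $p$), and arrange each $c_j$ to be one of the distorted building blocks $A$ constructed above, relative to its own contracting diffeomorphism $\phi_j$. The group $G$ is then generated by $r$ together with the finitely many auxiliary diffeomorphisms $\phi_j$ (and, if needed, the conjugators moving the disks into position). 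Any power $r^n$ is then written using the factorization $r=c_1\cdots c_m$, and the distortion of each $c_j$ in $G$ forces $|r^n|_G$ to grow strictly sub-linearly in $n$, with a rate that can be tuned via the construction in the first step.

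The hardest part is the combination step: the naïve factorization $r^n=(c_1\cdots c_m)^n$ has word length $\sim mn$, not $\sim n/\delta(n)$, because the $c_j$ do not commute and one cannot simply apply the distortion to each factor separately. The real fix, and the main technical heart of the argument, is to produce the fragmentation of $r$ not as a finite product but as a \emph{single} infinite product of commuting compactly supported pieces adapted to one contraction $\phi$, $r=\prod_{i\geq 0}\phi^i b_i\phi^{-i}$, so that the same $\phi$-shift trick acts on $r$ as a whole and telescopes $r^n$ into a short word. Making this infinite product converge in $C^\infty$ to an honest rotation (and not merely to some compactly supported diffeomorphism) is precisely the step that forces one to settle for arbitrary rather than uniform distortion: the $C^\infty$ convergence estimates dictate how fast the distortion function can be realized, and an arbitrary unbounded $\delta$ is obtainable by carefully slowing down the building blocks $b_i$.
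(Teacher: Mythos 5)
First, a point of reference: the survey does not prove this statement at all --- it is quoted (``very roughly'') from Calegari and Freedman \cite{CalegariFreedman-distortion}, so I am comparing your proposal against their argument rather than against anything in this paper.

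Your proposal has a genuine gap at exactly the place you identify as ``the main technical heart,'' and the fix you propose cannot work. You want to write the rotation $r$ as a $C^\infty$-convergent infinite product $r=\prod_{i}\phi^{i}b_{i}\phi^{-i}$ of commuting pieces supported on annuli shrinking to a fixed point $p$ of a contraction $\phi$. But the partial products of such an expression are equal to the identity on a whole neighborhood of $p$, while $r$ rotates that neighborhood by a fixed nonzero angle $\theta$; hence the $C^1$ distance from any partial product to $r$ is bounded below by roughly $|\theta|$, and the product can converge to $r$ only in $\Homeo(S^2)$, never in $\Diff^1(S^2)$. A rotation is not $C^1$-tangent to the identity at its fixed points, so it simply is not an infinite product of compactly supported pieces whose supports accumulate at those points. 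No amount of ``slowing down the building blocks'' repairs this, because the obstruction is a uniform lower bound, not a convergence rate. Relatedly, your first step overreaches: a relation $\phi A\phi^{-1}=A^{k}$ yields exactly exponential distortion of $A$, and the assertion that varying $a_0$ and $k$ upgrades this to an arbitrary distortion function is unsupported (and the shift computation for $\prod_i\phi^{i}a_0^{m_i}\phi^{-i}$ produces a boundary term at $i=0$, so the clean Baumslag--Solitar relation does not actually hold).

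The idea you are missing is the one that makes rotations special: recurrence in the compact group of rotations. Since the angles $n\theta$ are recurrent mod $2\pi$, one can choose a sequence $n_i$ growing faster than any prescribed function with $r^{n_i}\rightarrow\mathrm{id}$ in the $C^\infty$ topology, as fast as one likes. It is these $C^\infty$-small powers $r^{n_i}$ --- not $r$ itself --- that Calegari and Freedman fragment into pieces supported in disks and pack into convergent infinite products conjugated by contractions (chosen sufficiently flat that conjugation by $\phi^{i}$ does not destroy $C^\infty$ convergence). The resulting finitely generated group satisfies $|r^{n_i}|_{G}=O(i)$ along that subsequence, which is what ``arbitrarily distorted'' means; one never controls $|r^{n}|_{G}$ for all $n$, and aiming to telescope $(c_1\cdots c_m)^n$ for every $n$ is aiming at the wrong target. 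I would encourage you to keep the shrinking-supports/infinite-product machinery from your first step, discard the Baumslag--Solitar framing and the attempted fragmentation of $r$ itself, and rebuild the argument around the recurrent powers $r^{n_i}$.
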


Here by {\em arbitrarily distorted}, we mean that we can choose the group $G$ so that
the function $f(n)=\|r^n\|_G$ grows more slowly than any function we choose.  It is
well-known that for linear groups, the function $f(n)$ is at worst a logarithm, so this
theorem immediately implies that we can find $G$ with no faithful linear representations.
This also answered a question raised by Franks and Handel in \cite{FranksHandel-distortionmeasure}.

More recently, Avila has constructed similar examples in $\Diff^{\infty}(S^1)$ \cite{Avila}.
This answers a question raised in \cite{CalegariFreedman-distortion} where such subgroups
were constructed in $\Diff^1(S^1)$.

We are naturally led to the following questions.  We say a diffeomorphism has {\em full support}
if the complement of the fixed set is dense.

\begin{question}
\label{question:distortionvolume}
For which compact manifolds $M$ does $\Diff^{\infty}(M)$ contain arbitrarily distorted elements of full support?
The same question for $\Diff^{\omega}(M)$?  The same question for $\Diff^{\infty}(M,\nu)$
where $\nu$ is a volume form on $M$? For the second two questions, we can drop the hypothesis of full support.
\end{question}

The second and third questions here seem quite difficult and the answer could conceivably be ``none".
The only examples where anything is known in the volume preserving setting are compact surfaces of genus at least one, where no element
is more than quadratically distorted by a result of Polterovich \cite{Polterovich}.  However this result depends heavily on the fact that in dimension two, preserving a volume form is the same as preserving a symplectic structure.

\subsection{Topological construction of actions on high dimensional spheres}
\label{subsection:farrelllafont}

In this subsection, we recall a construction due to Farrell and Lafont which allows one to construct
actions of a large class of groups on closed disks, and so by doubling, to construct actions on spheres
\cite{FarrellLafont}.
The construction yields actions on very high dimensional disks and spheres and is only known to produce
actions by homeomorphisms.

The class of groups involved is the set of groups which admit an $EZ$-structure.  This notion is a modification
of an earlier notion due to Bestvina of a $Z$-structure on a group \cite{Bestvina}.  We do not recall the precise definition here, but refer the reader to the introduction to \cite{FarrellLafont} and remark here that
both torsion free Gromov hyperbolic groups and $\CAT(0)$-groups admit $EZ$-structures.

The result that concerns us is:

\begin{theorem}
\label{theorem:farrelllafont}
Given a group $\Gamma$ with an $EZ$-structure, there is an action of $\Gamma$ by homeomorphisms
on a closed disk.
\end{theorem}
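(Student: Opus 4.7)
An $EZ$-structure on $\Gamma$, by definition (extending Bestvina's notion), supplies a pair $(\tilde X, Z)$ where $\tilde X$ is a compact contractible ANR, $Z \subset \tilde X$ is a $Z$-set, the complement $X = \tilde X \setminus Z$ carries a free, proper, cocompact action of $\Gamma$, and the whole action extends continuously to $\tilde X$ with the null-sequence property: for any compact $K \subset X$, the translates $\gamma K$ have diameter tending to zero as $\gamma$ leaves finite subsets. The strategy is to equivariantly thicken $\tilde X$ inside some Euclidean space and then recognize the thickening as a disk.

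\textbf{Step 1: Equivariant thickening on the free part.} On the compact quotient $X/\Gamma$, which is a finite-dimensional compact ANR, choose a proper embedding into some $\mathbb{R}^N$ and take a compact codimension-zero PL regular neighborhood $W_0 \supset X/\Gamma$. Pulling back to the universal-type cover associated with the $\Gamma$-action on $X$, one obtains a $\Gamma$-equivariant thickening $W$ of $X$, on which $\Gamma$ acts freely, properly, and cocompactly by homeomorphisms, with $W/\Gamma = W_0$.

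\textbf{Step 2: Compactification.} Compactify $W$ by attaching the $Z$-set $Z$ along the ``ends'' of $W$ that limit to $Z$ in $\tilde X$, to produce a compactification $D$. The null-sequence axiom in the $EZ$-structure is exactly what is needed to make this union a Hausdorff compact space and to make the $\Gamma$-action on $W$ extend continuously to $D$ by the given action on $Z$: translates $\gamma \cdot (\text{neighborhood of a point in } Z)$ shrink uniformly, so continuity at $Z$ follows from continuity of the $\Gamma$-action on $\tilde X$.

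\textbf{Step 3: Recognition as a disk.} For $N$ sufficiently large, $D$ is a compact contractible topological manifold with boundary. Its boundary $\partial D$ is simply connected (by general position and contractibility of $D$), and of dimension at least $5$. Excising a small open disk from $\operatorname{int}(D)$ produces an h-cobordism between $S^{N-1}$ and $\partial D$; by the topological h-cobordism theorem $\partial D \cong S^{N-1}$ and $D \cong D^N$. Transporting the $\Gamma$-action across this homeomorphism yields the desired action by homeomorphisms on a closed disk.

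\textbf{Main obstacle.} The principal difficulty is Step 2, namely producing the $\Gamma$-equivariant compactification and verifying that the action extends continuously across $Z$ while keeping the resulting space a manifold. Equivariant regular neighborhood theory handles the free part cleanly, but near $Z$ the action can have arbitrarily complicated isotropy and accumulation behavior; the null-sequence condition of the $EZ$-structure is precisely the hypothesis that prevents pathological accumulation and forces the extension to be continuous. A secondary technical point is justifying the ``manifold near $Z$'' claim uniformly, which typically requires choosing the thickening dimension $N$ large enough that all local obstructions (e.g., local $1$-LCC issues for the $Z$-set) disappear.
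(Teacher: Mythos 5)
First, a caveat about the comparison: the survey does not prove this theorem. It is quoted from Farrell--Lafont \cite{FarrellLafont}, and the text only records qualitative features of the resulting action (it is properly discontinuous off a closed subset $\Delta$ of $\partial D^n$, in analogy with Kleinian groups). So your proposal can only be measured against the original construction, and in outline it does follow the same route: thicken the cocompact $E\Gamma$-space $X=\tilde X\setminus Z$ to a manifold with boundary, compactify by attaching $Z$, and identify the result with a disk. One small correction to your framing of Step 2: in the Farrell--Lafont definition the continuous extension of the $\Gamma$-action from $X$ to $\tilde X$ is part of the \emph{definition} of an $EZ$-structure (that is what the ``E'' adds to Bestvina's notion), so extending the action over $Z$ is not where the difficulty concentrates; what needs an argument there is only transporting the compactification and the extended action from $\tilde X$ to the thickening $W$, which the null condition does handle.

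The genuine gap is in Step 3, which you state as if it were soft but which is the actual content of the theorem. The assertion that $D=W\cup Z$ is ``a compact contractible topological manifold with boundary'' for $N$ large does not follow from general position or from increasing the codimension of the thickening: local Euclidean-ness (equivalently, local flatness / local $1$-connectedness data) at the points of $Z$ is exactly what can fail for a $Z$-set compactification of an open manifold by an arbitrary compactum. Note that $Z$ itself may be, say, a Menger curve or a non-manifold boundary of a hyperbolic group, so the pair $(D,Z)$ is necessarily wild and no amount of stabilization makes the manifold structure at $Z$ automatic; one must invoke a recognition or resolution theorem, and this is where the real work in \cite{FarrellLafont} lies. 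A second, independent gap in the same step: even granting that $D$ is a compact contractible $N$-manifold with boundary, $\partial D$ is only a homology $(N-1)$-sphere; it need not be simply connected (every high-dimensional homology sphere bounds a contractible manifold), so ``simply connected by general position and contractibility of $D$'' is not a proof, and without it the h-cobordism endgame does not start. The h-cobordism argument itself is fine once one has a compact contractible manifold with simply connected boundary in dimension at least six, but both hypotheses are precisely what your Step 3 assumes rather than establishes.
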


In fact, Farrell and Lafont give a fair amount of information concerning their actions which
are quite different from the actions we are usually concerned with here.  In particular,
the action is properly discontinuous off a closed subset $\Delta$ of the boundary of the disk.
So from a dynamical viewpoint $\Delta$ carries the ``interesting part" of the action, e.g.
is the support of any $\Gamma$ stationary measure.  Farrell and Lafont point out an analogy
between their construction and the action of a Kleinian group on the boundary of hyperbolic space.
An interesting general question is:

\begin{question}
\label{question:farrelllafont}
When can the Farrell-Lafont action on a disk or sphere be chosen smooth?
\end{question}

\section{Rigidity for other classes of acting groups}
\label{section:othergroups}

In this section, we collect some results and questions concerning actions of other
classes of groups.  In almost all cases, little is known.

\subsection{Lattices in semi-direct products}
\label{subsection:semidirect}
While it is not reasonable to expect classification results for arbitrary actions of
{\em all} lattices in {\em all} Lie groups, there are natural broader classes to consider.
To pick a reasonable class, a first guess is to try to exclude Lie groups whose lattices
have homomorphisms onto $\Za$ or larger free groups.  There are many such groups.  For
example, the groups $Sl(n,\Za)\ltimes \Za^n$ which are lattices in $Sl(n,\Ra)\ltimes \Ra^n$
have property $(T)$ as soon as $n>2$.  As it turns out, a reasonable setting is to consider
perfect Lie groups with no compact factors or factors locally isomorphic to $SO(1, n)$ or
$SU(1,n)$.  Any such Lie group will have property $(T)$ and therefore so will its lattices.
Many examples of such lattices are described in \cite{Valette-PairPaper}.  Some first
rigidity results for these groups are contained in Zimmer's paper \cite{Zimmer-algebraichull}.
The relevant full generalization of the cocycle superrigidity theorem is contained in
\cite{Witte-csr}.  In this context it seems that there are probably many rigidity theorems
concerning actions of these groups already implicit in the literature, following from
the results in \cite{Witte-csr} and existing arguments.

\subsection{Universal lattices}
\label{subsection:universallattices}

As mentioned above in subsection \ref{subsection:relatedquestions}, the groups $SL(n,\Za[X])$
for $n>2$ have property $(T)$ by a result of Shalom \cite{Shalom-ICM}.  His proof also works
with larger collections of variables and some other arithmetic groups.  The following is an
interesting problem.

\begin{problem}
\label{problem:universal}
Prove cocycle superrigidity for universal lattices.
\end{problem}

\noindent For clarity, we indicate that we expect that all linear representations and cocycles (up to ``compact noise")
will be described in terms of representations of the ambient groups, e.g. $SL(n,\Ra)$, and
will be determined by specifying a numerical value of $X$.

Some partial results towards superrigidity are known by Farb  \cite{Farb-helly} and Shenfield \cite{Shenfield}.  The
problem of completely classifying linear representations in this context does seem to be open.

\subsection{$SO(1,n)$ and $SU(1,n)$ and their lattices}
\label{subsection:rank1}

It is conjectured that all lattices in $SO(1,n)$ and $SU(1,n)$ admit finite index subgroups with surjections to $\Za$, see \cite{Borel-Wallach}.  The conjecture is usually attributed to Thurston and sometimes to Borel for
the case of $SU(1,n)$.
If this is true, it immediately implies that actions of those lattices can never be classified.

There are still some interesting results concerning actions of these lattices.  In \cite{Shalom-Annals},
Shalom places restrictions on the possible actions of $SO(1,n)$ and $SU(1,n)$ in terms of the fundamental group of the manifold
acted upon. This work is similar in spirit to work of Lubotzky and Zimmer described in Section \ref{section:arithmeticquotients}, but requires more restrictive hypotheses.

In \cite{Fisher-deformations}, the author exhibits large moduli spaces
of ergodic affine algebraic actions are constructed for certain lattices in $SO(1,n)$.  These
moduli spaces are, however, all finite dimensional. In \cite{Fisher-bending}, I construct
an infinite dimensional moduli of deformations of an isometric action of $SO(1,n)$.  Both
of these constructions rely on a notion of bending introduce by Johnson and Millson in the
finite dimensional setting \cite{JohnsonMillson}.

I do not formulate any precise questions or conjectures in this direction
as I am not sure what phenomenon to expect.

\subsection{Lattices in other locally compact groups}
\label{subsection:otherlattices}

Much recent work has focused on developing a theory of lattices in locally compact
group other than Lie groups.  This theory is fully developed for algebraic groups
over other local fields.  Though we did not mention it here, some of Zimmer's own
conjectures were made in the context of $S$-arithmetic groups, i.e. lattices in
products of real and p-adic Lie groups.  The following conjecture is natural in
this context and does not seem to be stated in the literature.

\begin{conjecture}
\label{conjecture:positivecharacteristic}
Let $G$ be semisimple algebraic group defined over a field $k$ of positive characteristic
and $\Gamma<G$ a lattice.  Further assume that all simple factors of $G$ have $k$-rank at least $2$.
Then any $\Gamma$ action on a compact manifold factors through a finite quotient.
\end{conjecture}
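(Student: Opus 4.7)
The plan is to exploit three features of higher rank lattices $\G$ in positive characteristic: Kazhdan's property $(T)$, a cocycle superrigidity theorem, and — most crucially — that $\G$ is generated by finitely many torsion elements of bounded exponent. This last feature arises because in characteristic $p$ every unipotent element of $G$ has order a power of $p$; for $S$-arithmetic $\G$ this directly gives generators of exponent dividing some $p^k$, while for uniform lattices it will require separate (and potentially subtle) justification.

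First I would reduce to actions preserving a probability measure. Given $\rho:\G\to\Diff(M)$, any stationary measure $\mu$ coming from Furstenberg averaging should in fact be $\G$-invariant: the standard Nevo--Zimmer alternative would produce a projective factor of the form $G/Q$, but $G$ is a totally disconnected positive characteristic group and cannot act smoothly on a real manifold as such a factor. Making this rigorous requires developing a positive-characteristic analogue of the Nevo--Zimmer projective factor theorem, and I expect this to be the main obstacle to the approach.

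Once $\mu$ is $\G$-invariant, apply cocycle superrigidity to the derivative cocycle $\alpha:\G\times M \to GL(\dim M,\Ra)$. Superrigidity would produce an extending representation of $G$ into a real Lie group, but $G$ is totally disconnected and, being essentially simple as an abstract group, admits no continuous homomorphism to $GL(n,\Ra)$ with infinite image. Hence $\alpha$ is cohomologous to a cocycle valued in a compact subgroup, producing a measurable $\G$-invariant Riemannian metric on $M$. Combined with property $(T)$, Zimmer's discrete spectrum theorem (Theorem~\ref{theorem:discretespectrum}) measurably conjugates the action to an isometric action on a compact homogeneous space $K/C$, via a dense image homomorphism $\bar\rho:\G\to K$ into a compact Lie group.

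The torsion generation of $\G$ finishes the argument. The image $\bar\rho(\G)$ is a finitely generated group of exponent dividing $p^k$, embedded in the compact Lie group $K$. Since compact Lie groups are linear, Malcev's theorem implies $\bar\rho(\G)$ is residually finite, and Zelmanov's positive solution of the restricted Burnside problem then forces $\bar\rho(\G)$ to be finite. Thus the measurable action factors through a finite quotient of $\G$. To upgrade to the smooth conclusion: any $\g\in\ker(\bar\rho)$ acts trivially $\mu$-a.e., so if $\mu$ has full support, continuity gives triviality everywhere. In any case Margulis' normal subgroup theorem, valid for $\G$ here, reduces the problem to exhibiting one infinite-order element acting trivially, which is supplied by $\ker(\bar\rho)\cap\G$ since $\bar\rho(\G)$ is finite while $\G$ is infinite.
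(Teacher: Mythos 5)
The first thing to say is that the paper does not prove this statement: it appears only as Conjecture \ref{conjecture:positivecharacteristic} and is explicitly presented as open. The author even flags the very tool your argument leans on, remarking that the existence of a measurable invariant metric in this setting ``should be something one can deduce from the cocycle superrigidity theorems, though it is not clear that the correct form of these theorems is known or in the literature.'' So there is no proof in the paper to compare against, and your proposal has to be judged as an attack on an open problem.

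Judged on those terms, the outline is sensible (reduce to an invariant measure, extract a measurable invariant metric, pass to a discrete spectrum isometric model, kill the image in a compact Lie group), but beyond the two gaps you acknowledge there is one outright error and one further unacknowledged gap. The error is the Burnside step: the fact that $\Gamma$ is generated by unipotents of order dividing $p^k$ does \emph{not} make $\Gamma$, or $\bar\rho(\Gamma)$, a group of exponent $p^k$. A group generated by bounded-torsion elements routinely contains elements of infinite order --- the infinite dihedral group already does, and $SL(n,\mathbb{F}_q[t])$ certainly does, since an element of finite order has trace algebraic over $\mathbb{F}_q$ while short products of elementary matrices have traces involving $t$. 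Likewise two rotations of order $p$ in $SO(3)$ typically generate an infinite dense subgroup, so finitely many $p^k$-torsion elements of a compact Lie group can generate an infinite group. Zelmanov's theorem applies to groups of bounded exponent, not to groups generated by bounded-exponent elements, so this step collapses. The natural repair is to drop Burnside entirely and use Margulis superrigidity for higher rank lattices in positive characteristic: $\bar\rho:\Gamma\rightarrow K\subset GL(N,\mathbb{C})$ is a characteristic zero linear representation, and since there is no field embedding of $k$ into $\mathbb{C}$, superrigidity forces the image to be finite. Finally, even with a finite image in hand, your upgrade to the smooth conclusion is incomplete: an element of $\ker\bar\rho$ acts trivially only $\mu$-almost everywhere in a measurable model, which does not make it trivial in $\Diff(M)$ unless $\mu$ has full support, so the normal subgroup theorem cannot yet be invoked. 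This measurable-to-smooth regularity issue is exactly the central difficulty the survey identifies throughout the program, and your sketch does not resolve it here.
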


The existence of a measurable invariant metric in this context should be something
one can deduce from the cocycle superrigidity theorems, though it is not clear that
the correct form of these theorems is known or in the literature.

There is also a growing interest in lattices in locally compact groups that are
not algebraic.  We remark here that Kac-Moody lattices typically admit no
non-trivial homomorphisms even to $\Homeo(M)$, see \cite{FisherSilberman} for
a discussion.

The only other interesting class of lattices known to the author is the
lattices in the isometry group of a product of two trees constructed by
Burger and Mozes \cite{BurgerMozes-twotrees1,BurgerMozes-twotrees2}.  These groups
are infinite simple finitely presented groups. 

\begin{problem}
\label{problem:burgermozes}  Do the Burger-Mozes lattices admit any non-trivial
homomorphisms to $\Diff(M)$ when $M$ is a compact manifold.
\end{problem}

That these groups do act on high dimensional spheres by the construction discussed
in subsection \ref{subsection:farrelllafont} was pointed out to the author by Lafont.

\subsection{Automorphism groups of free and surface groups}
We briefly mention a last set of natural questions.  There is a longstanding
analogy between higher rank lattices and two other classes of groups.  These
are mapping class groups of surfaces and the outer automorphism group of the
free group.  See \cite{BridsonVogtmann-survey} for a detailed discussion of
this analogy.  In the context of this article, this raises the following
question.  Here we denote the mapping class group of a surface by $MCG(\Sigma)$
and the outer automorphism group of $F_n$ by $\Out(F_n)$.

\begin{question}
\label{question:mcgout}
Assume $\Sigma$ has genus at least two or that $n>2$.  Does $MCG(\Sigma)$ or $\Out(F_n)$
admit a faithful action on a compact manifold? A faithful action by smooth, volume
preserving diffeomorphisms?
\end{question}

By not assuming that $M$ is connected, we are implicitly asking the same question about
all finite index subgroups of $MCG(\Sigma)$ and $Out(F_n)$.  We recall from section \ref{section:moreexamples}
that $\Aut(F_n)$ does admit a volume preserving action on a compact manifold.  This makes
the question above particularly intriguing.

\section{Properties of subgroups of $\Diff(M)$}
\label{section:ghysprogram}

As remarked in subsection \ref{subsection:farbshalen}, in the paper \cite{Ghys-proches}, Ghys
attempts to reprove the classical Zassenhaus lemma for linear groups for groups of analytic diffeomorphisms.
While the full strength of the Zassenhaus lemma does not hold in this setting, many interesting
results do follow.   This immediately leaves one wondering to what extent other properties
of linear groups might hold for diffeomorphism groups or at least what analogues of many theorems
might be true.  This direction of research was initiated by Ghys and most of the questions below are due to him.

It is worth noting that some properties of finitely generated linear groups, like residual finiteness,
do not appear to have reasonable analogues in the setting of diffeomorphisms groups.  For residual
finiteness, the obvious example is the Thompson group discussed above, which is simple.

\subsection{Jordan's theorem}
\label{subsection:jordan}

For linear groups, there is a classical (and not too difficult) result
known as Jordan's theorem.  This says that for any finite subgroup of $GL(n, \Ca)$,
there is a subgroup of index at most $c(n)$ that is abelian.  One cannot expect
better than this, as $S^1$ has finite subgroups of arbitrarily large order and
is a subgroup of $GL(n,\Ca)$.  For proofs of Jordan's
theorem as well as the theorems on linear groups mentioned in the next subsection,
we refer the reader to e.g. \cite{Robinson-book}.

\begin{question}
\label{question:jordan}
Given a compact manifold $M$ and a finite subgroup $F$ of $\Diff(M)$, is
there a constant $c(M)$ such that $F$ has an abelian subgroup of index
$c(M)$.
\end{question}

As above, one cannot expect more than this, simply because one can
construct actions of $S^1$ on $M$ and therefore finite abelian
subgroups of $\Diff(M)$ of arbitrarily large order.

For this question, it may be more natural to ask about finite groups of homeomorphisms
and not assume any differentiability of the maps. Using the
results in e.g. \cite{MannSu}, one can show that at most finitely many simple
finite groups act on a given compact manifold.  To be clear, one can show
this using the classification of finite simple groups.  It would be most
interesting to resolve Question \ref{question:jordan} without reference
to the classification.

A recent preprint of Mundet i Rieri provides some evidence for a positive 
answer to the question \cite{Rieri}.

\subsection{Burnside problem}
\label{subsection:burnside}

A group is called {\em periodic} if all of its elements have finite order.
We say a periodic group $G$ has {\em bounded exponent} if every element has order at most $m$.
In 1905 Burnside proved that finitely generated linear groups of bounded exponent are finite and in 1911 Schur proved that finitely generated periodic linear groups are finite.  For a general finitely generated group, this is not
true, counterexamples to the {\em Burnside conjecture} were constructed
in a sequence of works by many authors, including Novikov, Golod, Shafarevich and
Ol'shanskii.  We refer the reader to the website: $http://www-groups.dcs.st-and.ac.uk/~history/HistTopics/Burnside\_problem.html$
for a detailed discussion of the history.  This page also discusses the {\em
restricted Burnside conjecture} resolved by Zelmanov.

In our context, the following questions seem natural:

\begin{question}
\label{question:burnside}
Are there infinite, finitely generated periodic groups of diffeomorphisms
of a compact manifold $M$?  Are there infinite, finitely generated bounded exponent
groups of diffeomorphisms of a compact manifold $M$?
\end{question}

Some first results in this direction are contained in the paper of Rebelo and Silva
\cite{RebeloSilva-burnside}.

\subsection{Tits' alternative}
\label{subsection:tits}

In this section, we ask a sequence of questions related to a famous theorem
of Tits' and more recent variants on it \cite{Tits}.

\begin{theorem}
\label{theorem:tits}
Let $\Gamma$ be a finitely generated linear group.  Then either $\Gamma$
contains a free subgroup on two generators or $\Gamma$ is virtually solvable.
\end{theorem}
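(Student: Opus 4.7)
The plan is to follow Tits' original strategy, which combines Zariski-closure arguments with a dynamical ping-pong construction carried out over an appropriate local field. First I would pass to the Zariski closure $G=\overline{\Gamma}^Z$ inside $GL(n,K)$, where $K$ is the field of definition, and then replace $\Gamma$ by a finite-index subgroup so that $G$ becomes connected. If the connected component $G^0$ is solvable, then $\Gamma \cap G^0$ is solvable of finite index and we are in the virtually solvable alternative. So assume $G^0$ is not solvable; passing to the quotient by the solvable radical, I can arrange that $G$ has a nontrivial semisimple quotient, and after further reduction I may assume $G$ itself is semisimple of positive dimension and that the image of $\Gamma$ is Zariski dense there.

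The next step is to produce, for a suitable representation of $G$, an element $\gamma_0 \in \Gamma$ which is \emph{very proximal} on the associated projective space: it has a unique attracting fixed point $p^+$ and a complementary repelling hyperplane $H^-$ (and likewise for $\gamma_0^{-1}$), with a definite gap between the top eigenvalue (in absolute value) and the next. The central technical device here is to enlarge the field: since $\Gamma$ is finitely generated, its matrix entries lie in a finitely generated subring of $K$, which by a theorem of specialization admits a non-archimedean absolute value $|\cdot|_v$ for which some chosen element of $\Gamma$ has an eigenvalue of absolute value $>1$. Embedding into the completion $k_v$, one obtains the analytic setting in which proximality can be verified using valuation estimates. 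One must invoke representation theory of semisimple groups (highest weight theory) to guarantee that a Zariski-dense subgroup of the semisimple group $G$ always contains an element that is diagonalizable over $k_v$ with a dominant eigenvalue in some irreducible representation.

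Once one very proximal $\gamma_0$ is in hand, a second very proximal element $\gamma_1$ must be found whose attracting/repelling data $(p^+_1, H^-_1)$ is in general position with respect to that of $\gamma_0$. Zariski density of $\Gamma$ and the fact that the ``bad'' configurations form a proper Zariski-closed subset allow one to conjugate $\gamma_0$ by a generic element $g \in \Gamma$ to obtain such a $\gamma_1 = g\gamma_0 g^{-1}$. Replacing $\gamma_0$ and $\gamma_1$ by sufficiently high powers $\gamma_0^N, \gamma_1^N$, one arranges small disjoint neighborhoods $U_0^\pm, U_1^\pm$ of the attracting/repelling points such that each $\gamma_i^{\pm N}$ maps the complement of $U_i^\mp$ into $U_i^\pm$. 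The ping-pong lemma then yields that $\langle \gamma_0^N, \gamma_1^N\rangle$ is free of rank two, completing the alternative.

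The main obstacle is the specialization/local field step: guaranteeing that for the finitely generated group $\Gamma$ there exists a local field $k_v$ and an element (or a conjugate of an element) of $\Gamma$ which, in some representation, has the dominant-eigenvalue behavior required for proximality. This requires the nontrivial algebraic fact that on a finitely generated integral domain one can find valuations making any prescribed non-root-of-unity element have nonzero valuation, combined with highest-weight representation theory of semisimple groups to convert ``an element with an eigenvalue of absolute value $\neq 1$'' into ``an element with a unique dominant weight line,'' thereby producing genuine proximality on a projective space rather than merely noncompactness of the image.
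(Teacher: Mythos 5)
The paper does not prove this statement: it is quoted verbatim as Tits' theorem with a citation to \cite{Tits}, so the only ``proof'' on record is Tits' original one, and your sketch is a faithful reconstruction of exactly that argument (Zariski-closure reductions, specialization of the finitely generated ring of matrix entries to a local field, highest-weight theory to manufacture very proximal elements, and ping-pong on projective space). The one step you leave implicit --- that in the non-virtually-solvable case $\Gamma$ actually contains an element with an eigenvalue that is not a root of unity, which follows from a Burnside--Schur type argument since the eigenvalues lie in a finitely generated field --- is also present in Tits' paper, so your outline matches the cited source in both structure and substance.
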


The following conjecture of Ghys is a reasonable alternative to this for
groups of diffeomorphisms.

\begin{conjecture}
\label{conjecture:ghys}
Let $M$ be a compact manifold and $\Gamma$ a finitely generated group of
smooth diffeomorphisms of $M$.  Then either $\Gamma$ contains a free group
on two generators or $\Gamma$ preserves some measure on $M$.
\end{conjecture}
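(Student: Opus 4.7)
The plan is a dichotomy based on amenability, with a ping-pong argument in the non-amenable case. Consider the natural $\Gamma$-action on the space $\mathrm{Prob}(M)$ of Borel probability measures on $M$, which is a convex, weak-$*$ compact subset of $C(M)^*$. If $\Gamma$ is amenable as an abstract group, the fixed point characterization of amenability gives an invariant element of $\mathrm{Prob}(M)$ and we are done. So I may assume $\Gamma$ is non-amenable, and the task reduces to producing a copy of $F_2$ inside $\Gamma$.

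Next, I would produce contracting dynamics on $M$ via stationary measure theory. Fix a symmetric probability measure $\mu$ on $\Gamma$ whose support generates $\Gamma$, and let $\nu \in \mathrm{Prob}(M)$ be a $\mu$-stationary measure, which exists by compactness and convexity. Since $\Gamma$ preserves no measure on $M$, $\nu$ is not $\Gamma$-invariant. The martingale convergence theorem applied to the sequence $(g_1 g_2 \cdots g_n)_* \nu$ along a sample path $(g_n)$ of the random walk produces, almost surely, weak-$*$ limit measures $\nu_\omega$, and the map $\omega \mapsto \nu_\omega$ is a $\Gamma$-equivariant Furstenberg-type boundary map from the Poisson boundary of $(\Gamma,\mu)$ to $\mathrm{Prob}(M)$. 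Non-invariance of $\nu$ forces the $\nu_\omega$ to depend genuinely on $\omega$; this is the measure-theoretic form of proximality, from which one tries to extract elements whose high powers push almost all of $M$ into small sets.

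The final step would be a classical Klein ping-pong construction: find $g, h \in \Gamma$ and four pairwise disjoint open sets $U^+, U^-, V^+, V^- \subset M$ such that $g^n$ maps $M \setminus U^{-}$ into $U^{+}$ for all sufficiently large $n$, $g^{-n}$ maps $M \setminus U^{+}$ into $U^{-}$, and the analogous statements hold for $h$ with $V^\pm$. The ping-pong lemma then yields $\langle g,h \rangle \cong F_2$. Candidate elements would come from long generic words in the random walk, whose action on $M$ is close to pushing $\nu$ to a highly concentrated limit; non-amenability of $\Gamma$ should allow one to choose two such words with disjoint attracting regions, by a Hausdorff-dimension or category argument on the boundary.

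The main obstacle, and the reason the conjecture is open in dimension at least two, is precisely the passage from measure-theoretic proximality to topological ping-pong. In dimension one, Margulis' theorem on $\Homeo(S^1)$ handles this because the cyclic order of the circle provides automatic control: two proximal homeomorphisms with disjoint attracting fixed points already satisfy the ping-pong hypotheses on the complements of small neighborhoods. In higher dimensions, the limit measures $\nu_\omega$ may be supported on complicated subsets of $M$, and even when one finds two elements with disjoint measure-theoretic attractors there is no evident mechanism forcing uniform topological contraction on the complements. A natural intermediate target is therefore the conjecture under the additional hypothesis that some element of $\Gamma$ is partially hyperbolic in the sense of Section \ref{section:hyperbolic}, where the stable and unstable foliations furnish the geometric skeleton that a ping-pong argument requires.
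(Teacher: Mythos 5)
There is nothing in the paper to compare your argument against: the statement you are addressing is Conjecture \ref{conjecture:ghys}, an open conjecture of Ghys, and the paper offers no proof. The only supporting evidence it cites is Margulis' theorem establishing the case $M=S^1$. So your write-up should be read as a strategy outline for an open problem, not as a proof, and indeed your own final paragraph concedes exactly where it breaks down.

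To make the gap concrete: the amenable half of your dichotomy is fine (an amenable group acting by homeomorphisms on a compact space fixes a point of the convex weak-$*$ compact set $\mathrm{Prob}(M)$), and the construction of a stationary measure $\nu$ and of the boundary map $\omega\mapsto\nu_\omega$ via martingale convergence is standard. But from that point on, two genuine obstacles remain. First, non-amenability of $\Gamma$ does not by itself yield a free subgroup --- the von Neumann--Day problem has a negative answer in general --- so everything must come from the dynamics, and the dynamical input you have (non-constancy of $\omega\mapsto\nu_\omega$) is far weaker than what ping-pong requires: the limit measures $\nu_\omega$ need not be point masses, their supports may be complicated, overlapping, or all of $M$, and there is no mechanism forcing any element of $\Gamma$ to contract the \emph{complement} of an open set uniformly into another open set. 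Second, even granting two elements with disjoint measure-theoretic attractors, nothing controls their behavior off the supports of the limit measures, which is precisely where the ping-pong hypotheses live. Margulis' proof for $S^1$ closes this gap using the cyclic order of the circle (after passing to a minimal set and a suitable finite cover, contraction of intervals propagates topologically); no analogue of this is available in dimension at least two, which is why the conjecture remains open. Your suggestion to first treat the case where some element is uniformly partially hyperbolic is a reasonable intermediate problem, but it is a new problem, not a step in a proof.
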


The best evidence for this conjecture to date is a
theorem of Margulis which proves the conjecture for $M=S^1$ \cite{Margulis-tits}.
Ghys has also asked if the more exact analogue of Tits' theorem might be
true for analytic diffeomorphisms.

A recent related line of research for linear groups concerns uniform
exponential growth.  A finitely generated group $\G$ has uniform exponential growth
if the number of elements in a ball of radius $r$ in a Cayley graph for $\G$
grows at least as $\lambda^r$ for some $\lambda>1$ that does not depend on the
choice of generators.  For linear groups, exponential growth implies
exponential growth by a theorem of Eskin, Mozes and Oh \cite{EskinMozesOh}.
There are examples of groups having exponential but not uniform exponential
growth due to Wilson \cite{Wilson}. This raises the following question.

\begin{question}
\label{question:uniformexponential}
Let $M$ be a compact manifold and $\Gamma$ a finitely generated subgroup
of $\Diff(M)$.  If $\Gamma$ has exponential growth does $\Gamma$ have uniform
exponential growth?
\end{question}

The question seems most likely to have a positive answer if one further assumes
that $\Gamma$ is non-amenable.

\bibliographystyle{amsplain}
\bibliography{zimmersurvey}

\end{document}